\documentclass[a4paper,11pt]{amsart}
\usepackage{amsmath}
\usepackage{amsthm,amsfonts,amssymb,graphics}
\usepackage{mathtools}
\usepackage[foot]{amsaddr}
\usepackage{hyperref}
\usepackage{enumerate}
\usepackage{geometry}
\geometry{
 a4paper,
 left=17mm,
 right=17mm,
 top=20mm,
 bottom=13mm
 }

\usepackage{dsfont}
\usepackage{caption,subcaption}
\usepackage{circledsteps}

\usepackage{tikz}
\usetikzlibrary{patterns, calc, arrows.meta}

\newtheorem{theorem}{Theorem}[section]
\newtheorem{lemma}[theorem]{Lemma}
\newtheorem{proposition}[theorem]{Proposition}
\newtheorem{corollary}[theorem]{Corollary}
\newtheorem{assumption}{Assumption}
\newtheorem{question}{Question}
\theoremstyle{remark}
\newtheorem{remark}{Remark}
\newtheorem{example}{Example}

\numberwithin{equation}{section}

\newcommand {\R} {\mathbb{R}}
\newcommand {\E} {\mathbb{E}}
\newcommand {\N} {\mathbb{N}}
\newcommand {\Z} {\mathbb{Z}}
\renewcommand{\P} {\mathbb{P}}
\newcommand {\Var} {\mathrm{Var}}
\newcommand {\Vol} {\mathrm{Vol}}
\newcommand {\SA} {\mathrm{SA}}
\newcommand {\EC} {\mathrm{EC}}

\newcommand {\diam} {\mathrm{diam}}
\newcommand {\Cov} {\mathrm{Cov}}
\newcommand {\dist} {\mathrm{dist}}
\newcommand{\ind}{\mathds{1}}

\begin{document}
\title[Three CLTs for the unbounded excursion component of a Gaussian field]{Three central limit theorems for the unbounded excursion component of a Gaussian field} 
\author{Michael McAuley\textsuperscript{1}}
\address{\textsuperscript{1}School of Mathematics and Statistics, Technological University Dublin}
\email{m.mcauley@cantab.net}
\subjclass[2010]{60G60, 60G15, 60K35}
\keywords{Gaussian fields, percolation, excursion sets, central limit theorem} 
\begin{abstract}
For a smooth, stationary Gaussian field $f$ on Euclidean space with fast correlation decay, there is a critical level $\ell_c$ such that the excursion set $\{f\geq\ell\}$ contains a (unique) unbounded component if and only if $\ell$ is below $\ell_c$. We prove central limit theorems for the volume, surface area and Euler characteristic of this unbounded component restricted to a growing box. For planar fields, the results hold at all supercritical levels (i.e.\ all $\ell<\ell_c$). In higher dimensions the results hold at all sufficiently low levels (all $\ell<-\ell_c<\ell_c$) but could be extended to all supercritical levels by proving the decay of truncated connection probabilities. Our proof is based on the martingale central limit theorem.
\end{abstract}
\date{\today}
\thanks{}

\maketitle
\section{Introduction and main results}\label{s:introduction}
\subsection{Background and motivation}\label{ss:Motivation}
Percolation theory studies the long-range connectivity properties of random processes. Aside from being an elegant mathematical theory, this topic is motivated by a desire to understand phase transitions in different areas of science \cite{sah23}, particularly statistical physics \cite{ghm01}. Classical percolation models typically consist of random subsets of a lattice or a large graph. (See \cite{gri99} for a thorough background on classical percolation or \cite{dc18} for a shorter overview.)

More recently, progress has been made in understanding the percolative properties of continuous models, in particular the excursion sets of Gaussian fields (see \cite{bel23} for a recent survey). Let $f:\R^d\to\R$ be a $C^2$-smooth stationary Gaussian field. The excursion sets and level sets of $f$ are defined respectively as
\begin{displaymath}
\{f\geq\ell\}:=\left\{x\in\R^d\;\middle|\;f(x)\geq\ell\right\}\quad\text{and}\quad\{f=\ell\}:=\left\{x\in\R^d\;\middle|\;f(x)=\ell\right\}\quad\text{for }\ell\in\R.
\end{displaymath}
From the perspective of percolation, one may then ask about the topological behaviour of these sets on large scales and how this depends on $\ell$ or the distribution of $f$.

Let us mention a few sources of motivation for studying percolation of Gaussian fields:
\begin{enumerate}
    \item \underline{Applications}\\
    Smooth Gaussian fields are used for modelling phenomena across science (e.g.\ in medical imaging \cite{wor96} and quantum chaos \cite{js17}) and so understanding their topological/geometric properties frequently has applications in such areas. For example, the `component count' of a field (defined as the number of connected components of $\{f\geq\ell\}$ in some large domain) is a natural object of study in percolation theory and has been used in statistical testing of cosmological data \cite{pra19}.
    \item \underline{Stimulating mathematical theory}\\
    Gaussian fields offer new and interesting challenges in percolation theory because they often lack some of the important properties of classical discrete models (such as independence on disjoint domains, positive association, finite energy etc - see \cite{gri99} for definitions). Overcoming these obstacles requires new ideas, leading to a richer mathematical theory. As an example, \cite{mrv23} developed new general methods to prove the existence of a phase transition for Gaussian fields without positive association.
    \item \underline{Exploring universality}\\
    Certain properties of percolation models on discrete lattices have been observed numerically to be `universal' in the sense that they depend on the dimension of the model but not on the individual lattice \cite{lpps92,lh98}. Some of these properties also seem to be invariant under certain changes to the dependence structure of the model \cite{ck95,bs99}. There is great interest in understanding this universality; in particular, can we classify the models which have the same behaviour? Gaussian fields provide a new family of percolation models with which to probe these concepts, and numerical evidence \cite{bk13} suggests their behaviour may match that of some simple lattice models (at least for one important example of a Gaussian field). It would be of great interest to prove rigorous results in this direction for Gaussian fields.
\end{enumerate}
The most fundamental result in percolation theory is the existence of a (non-trivial) phase transition for long range connections. For a given Gaussian field $f$, this can be stated as follows: there exists $\ell_c\in\R$ such that
\begin{equation}\label{e:PhaseTransition}
    \P\big(\{f\geq\ell\}\text{ has an unbounded component}\big)=\begin{cases}
        1 &\text{if }\ell<\ell_c,\\
        0 &\text{if }\ell>\ell_c.
    \end{cases}
\end{equation}
In recent years, the existence of such a phase transition for Gaussian fields has been proven under increasingly general assumptions. We will describe precise conditions on $f$ which ensure this existence below. For now we just mention that if the covariance function of $f$ decays sufficiently quickly at infinity (and some other regularity conditions hold) then the phase transition occurs and the unbounded component of $\{f\geq\ell\}$ is unique almost surely for each $\ell<\ell_c$ (see Figure~\ref{fig:three graphs} for an illustration).

\begin{figure}[h]
     \centering
     \begin{subfigure}[b]{0.3\textwidth}
         \centering
         \includegraphics[width=\textwidth]{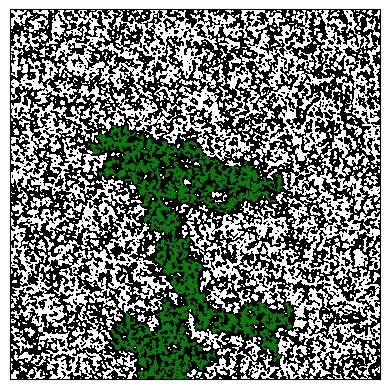}
         \caption{$\ell=0.05$}
         \label{fig:subcritical}
     \end{subfigure}
     \hfill
     \begin{subfigure}[b]{0.3\textwidth}
         \centering
         \includegraphics[width=\textwidth]{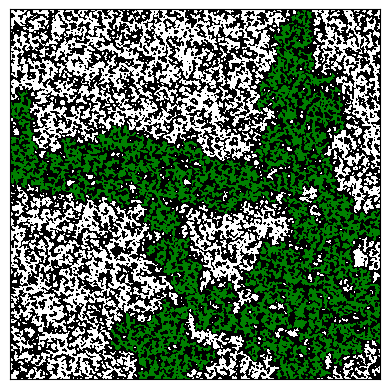}
         \caption{$\ell=\ell_c=0$}
         \label{fig:critical}
     \end{subfigure}
     \hfill
     \begin{subfigure}[b]{0.3\textwidth}
         \centering
         \includegraphics[width=\textwidth]{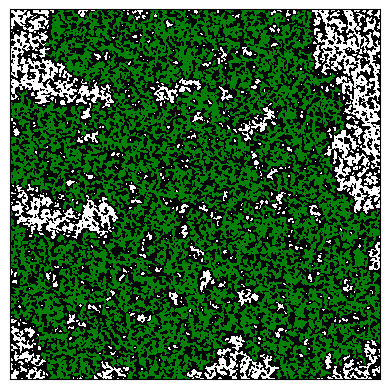}
         \caption{$\ell=-0.05$}
         \label{fig:supercritical}
     \end{subfigure}
        \caption{The excursion sets $\{f\geq\ell\}$ are shown in white for the Mat\'ern Gaussian field $f:\R^2\to\R$ with parameter $\nu=10$ (see Example~\ref{ex:fields} for a description of this field). The largest connected component of the excursion set is highlighted in green. The critical level $\ell_c$ for this model is known to be zero.}
        \label{fig:three graphs}
\end{figure}

Once this phase transition has been established, it is then a very natural progression to consider the geometry of the unbounded component in the supercritical regime (i.e.\ when $\ell<\ell_c$). In particular one might ask;
\begin{question}\label{q:Geometry}
    How do basic geometric statistics for the unbounded component behave?
\end{question}
\begin{question}\label{q:Dependence}
     How does this behaviour depend on the distribution of $f$?
\end{question}
Perhaps the most elementary geometric property of a subset of Euclidean space is its size or volume and this turns out to be an interesting quantity to study in the context of percolation theory. Let ${\{f\geq\ell\}_\infty}$ denote the unbounded component of $\{f\geq\ell\}$ (when it exists) and let $\Lambda_n=[-n,n]^d$. An application of the ergodic theorem (after imposing suitable conditions on $f$) shows that
\begin{displaymath}
    (2n)^{-d}\Vol\big[\{f\geq\ell\}_\infty\cap\Lambda_n\big]\to\P\big(0\in\{f\geq\ell\}_\infty\big)\qquad\text{as }n\to\infty,
\end{displaymath}
where $\Vol[\cdot]$ denotes the $d$-dimensional volume (Hausdorff measure) of a set and convergence occurs almost surely and in $L^1$. The latter term is the \emph{percolation probability} for the field $f$. We therefore see that, aside from being a natural quantity of interest in its own right, the normalised volume of the infinite component provides a consistent estimator for an important theoretical expression. Could one find the asymptotic variance and distribution of this estimator?

Another very natural geometric quantity for a set is the size of its boundary (according to some appropriate Hausdorff measure). The boundary of our excursion sets are $C^2$-smooth and so we consider their $(d-1)$-dimensional Hausdorff measure which we refer to as surface area (see Figure~\ref{fig:boundary}). Since the boundary of excursion sets (of a continuous function) are level sets, they have a clear interpretation. This also leads to the question of whether an unbounded component of the level set $\{f=\ell\}$ exists and, if so, what its geometric properties are. We discuss this briefly in Section~\ref{s:Discussion}.

\begin{figure}[h]
     \centering
     \hspace{0.15\textwidth}
     \begin{subfigure}[b]{0.3\textwidth}
         \centering
         \includegraphics[width=\textwidth]{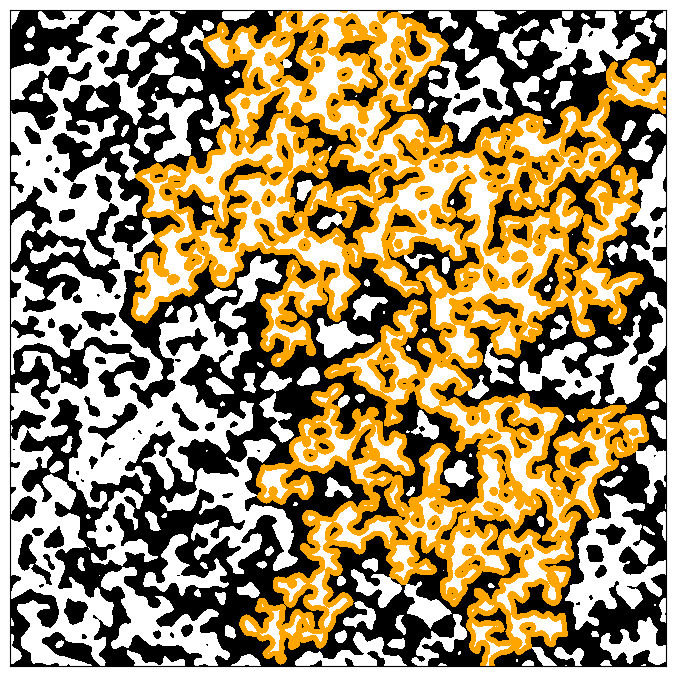}
         \caption{$\ell=-0.01$}
         \label{fig:boundary_mid}
     \end{subfigure}
     \hfill
     \begin{subfigure}[b]{0.3\textwidth}
         \centering
         \includegraphics[width=\textwidth]{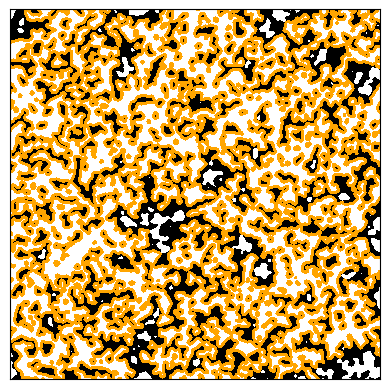}
         \caption{$\ell=-0.1$}
         \label{fig:boundary_low}
     \end{subfigure}
     \hspace{0.15\textwidth}
        \caption{The excursion sets $\{f\geq\ell\}$ are shown in white for the Bargmann-Fock field $f:\R^2\to\R$ (this model is described in Example~\ref{ex:fields}). The boundary of the largest connected component of the excursion set is highlighted in orange.}
        \label{fig:boundary}
\end{figure}

One might also ask; what is the topological shape of the unbounded excursion component? The Euler characteristic is an integer-valued quantity which provides topological information about a set. For a `nice' set $A\subset\R^2$, the Euler characteristic is the number of components of $A$ minus the number of `holes' in $A$ (see Figure~\ref{fig:ECDef}). Analogous definitions hold in higher dimensions. For a thorough background on the Euler characteristic and its study in the context of excursion sets of Gaussian fields we recommend \cite{at07}.
\begin{figure}[ht]
    \centering
    \includegraphics[width=0.3\textwidth]{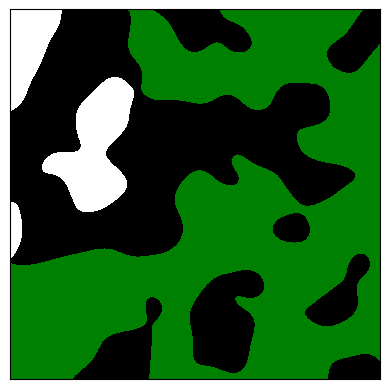}
    \caption{The largest component (highlighted in green) of the excursion set $\{f\geq\ell\}\cap\Lambda_n$ surrounds three `holes' and hence has an Euler characteristic of minus two. The Euler characteristic is well defined for the class of `basic complexes' which includes excursion sets of smooth Gaussian fields (see \cite[Chapter~6]{at07} for details).}
    \label{fig:ECDef}
\end{figure}

In this paper we study the previous three geometric quantities (volume, surface area and Euler characteristic) for the unbounded excursion component restricted to a large box. We show that if the covariance function of $f$ decays rapidly at infinity then each of these quantities satisfies a central limit theorem (CLT) as the size of the box increases. We discuss the significance of these results in Section~\ref{s:Discussion}, after stating them precisely.

\subsection{Statement of main results}
Let $f:\R^d\to\R$ be a stationary Gaussian field with mean zero, variance one and continuous covariance function $K(x-y):=\E[f(x)f(y)]$. We assume that $f$ can be represented as
\begin{displaymath}
f=q\ast W
\end{displaymath}
for some $q\in L^2(\R^d)$ which is Hermitian (i.e.\ $q(x)=q(-x)$) where $W$ denotes a Gaussian white noise on $\R^d$ and $\ast$ denotes convolution (we give a precise definition of the white noise at the beginning of Section~\ref{ss:Outline}). We note that such a representation always exists if $K\in L^1(\R^d)$; by Bochner's theorem $K$ is the Fourier transform of a probability measure and if $K$ is integrable this measure has a density $\rho$ with respect to Lebesgue measure, one can then set $q=\mathcal{F}[\sqrt{\rho}]$ (where $\mathcal{F}[g](x)=\int_{\R^d}e^{it\cdot x}g(t)\;dt$ denotes the Fourier transform) and observe that $q\ast W$ will have covariance $q\ast q=K$ as required.

Throughout our work we make the following assumptions:
\begin{assumption}
\label{a:clt}
\leavevmode
\begin{enumerate}
    \item (Smoothness 1) $K\in C^{2k+\epsilon}$ for some integer $k\geq 4$ and some $\epsilon>0$,
    \item (Smoothness 2) $q\in C^{3}(\R^d)$ and $\partial^\alpha q \in L^2(\R^d)$ for every $|\alpha| \le 3$,
    \item (Decay) There exist $\beta >d$ and $c \ge 1$ such that, for all $|x| \ge 1$,
    \[  \max_{|\alpha| \le 2} |\partial^\alpha q(x)|  < c |x|^{-\beta},  \]
    \item (Symmetry) $q(x)$ is invariant under sign changes and permuting the coordinates of $x$,
    \item (Positivity) If $d=2$ then $K=q\ast q\geq 0$, if $d\geq 3$ then $q\geq 0$.
\end{enumerate}
\end{assumption}
We note that the first smoothness condition above implies that $f$ is $C^{k}$ almost surely and the decay condition implies that
\begin{displaymath}
    \max_{\lvert\alpha\rvert\leq 2}\lvert\partial^\alpha K(x)\rvert< C\lvert x\rvert^{-\beta}
\end{displaymath}
for all $\lvert x\rvert\geq 1$. Conditions (2), (4) and (5) here will not play any role in our arguments, but we include them as they have been used to establish the phase transition for $f$.

We consider some examples of Gaussian fields which satisfy the above assumption:

\begin{example}\label{ex:fields}\leavevmode
\begin{enumerate}
    \item The \emph{Bargmann-Fock field} is the centred, real-analytic Gaussian field with covariance function
\begin{displaymath}
    K(x)=e^{-\frac{1}{2}\lvert x\rvert^2}.
\end{displaymath}
This field is of interest in algebraic geometry, where it arises as a local scaling limit of random homogeneous polynomials of high degree (see the introduction to \cite{bg17} for further details and motivation). The percolative properties of the Bargmann-Fock field are well studied, in part because its super-exponential decay of correlations is helpful in extending arguments from discrete percolation which rely on independence on disjoint domains. Using basic properties of the Fourier transform, one can verify that the Bargmann-Fock field has a white noise decomposition with $q(x):=(2/\pi)^{d/4}e^{-\lvert x\rvert^2}$ which evidently satisfies Assumption~\ref{a:clt} for any $\beta>d$.
\item The class of \emph{Mat\'ern fields} are defined by the covariance functions
\begin{displaymath}
    K(x)=\frac{1}{2^{\nu-1}\Gamma(\nu)}\lvert x\rvert^\nu K_\nu(\lvert x\rvert)
\end{displaymath}
where $\Gamma$ denotes the Gamma function, $K_\nu$ is the modified Bessel function of the second kind and $\nu>0$ is a parameter. These fields are widely used in machine learning \cite[Chapter~4]{rw06}. The spectral density of such a field is given by
\begin{displaymath}
    \rho(t)\propto (1+\lvert t\rvert^2)^{-\nu-d/2}.
\end{displaymath}
Hence the covariance function is $k$ times differentiable if and only if $\nu>k$, so that the parameter $\nu$ governs the smoothness of the field. Since $q=\mathcal{F}[\sqrt{\rho}]$ it follows that
\begin{displaymath}
    q(x)\propto \lvert x\rvert^{\nu^\prime}K_{\nu^\prime}(\lvert x\rvert)
\end{displaymath}
where $\nu^\prime:=\frac{\nu}{2}-\frac{d}{4}$. Using properties of the modified Bessel functions, one can then verify that all the conditions of Assumption~\ref{a:clt} are satisfied for $\nu>6+d/2$ and any $\beta>d$.
\item As an example of a field satisfying Assumption~\ref{a:clt} for a fixed value of $\beta$, one can set
\begin{displaymath}
    q(x)=(1+\lvert x\rvert^2)^{-\beta/2}.
\end{displaymath}
These fields have been less used in applications, but are of interest theoretically for understanding how the rate of decay of correlations (i.e.\ the value of $\beta$) affects the geometric properties of the field.
\end{enumerate}
\end{example}

It has previously been shown that Assumption~\ref{a:clt} ensures that the field undergoes a sharp version of the phase transition described in \eqref{e:PhaseTransition}, which we now make precise. Recall that $\Lambda_n=[-n,n]^d$ is the centred cube of side length $2n$ and for three sets $A,B,C\subseteq\R^d$ let us write $A\overset{B}{\longleftrightarrow}C$ for the event that there exists a continuous curve in $B$ which starts in $A$ and ends in $C$.
\begin{theorem}[\cite{mv20,riv21} for $d=2$, \cite{sev21,sev23} for $d\geq 3$]\label{t:PhaseTransition}
    Let $f$ satisfy Assumption~\ref{a:clt}, then there exists $\ell_c\in\R$ such that the following holds: 
\begin{itemize}
    \item for each $\ell>\ell_c$ there exists $C,c>0$ such that for all $n>2$
    \begin{displaymath}
        \P\Big(\Lambda_1\overset{\{f\geq\ell\}}{\longleftrightarrow}\partial\Lambda_n\Big)<Ce^{-cn},
    \end{displaymath}
    \item for each $\ell<\ell_c$ there exists $C,c>0$ such that for all $n>2$
    \begin{displaymath}
        \P\Big(\Lambda_n\cap\{f\geq\ell\}_\infty=\emptyset\Big)<Ce^{-cn^{d-1}}
    \end{displaymath}
    where $\{f\geq\ell\}_\infty$ denotes the union of all unbounded components of $\{f\geq\ell\}$.
\end{itemize}
Moreover for $\ell<\ell_c$, with probability one the excursion set $\{f\geq\ell\}$ has precisely one unbounded component (i.e.\ $\{f\geq\ell\}_\infty$ is connected).
\end{theorem}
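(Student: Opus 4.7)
The statement is a compilation of sharp phase transition results that have been established in the Gaussian field percolation literature (Rivera--Vanneuville, Muirhead--Rivera--Vanneuville, Severo, and related works). I would organise the proof plan around three ingredients: defining $\ell_c$, proving the two exponential decay bounds, and showing uniqueness.

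I would begin by defining
\[
\ell_c := \inf\Big\{\ell\in\R\;\Big|\;\P\big(\Lambda_1\overset{\{f\geq\ell\}}{\longleftrightarrow}\partial\Lambda_n\big)\text{ is non-summable in }n\Big\},
\]
and attack the subcritical decay first. The modern route is an OSSS-type inequality adapted to smooth Gaussian fields: using the white noise representation $f=q\ast W$ and the fast decay of $q$, one can discretise $W$, apply the OSSS inequality of Duminil-Copin--Raoufi--Tassion to a randomised decision algorithm that reveals annuli at random radii, and then let the discretisation step tend to zero. The revealment of such an algorithm on $\Lambda_n$ is controlled by $n^{-1}\sum_{k\leq n}p_k(\ell)$ with $p_k(\ell):=\P(\Lambda_1\overset{\{f\geq\ell\}}{\longleftrightarrow}\partial\Lambda_k)$, and the resulting differential inequality $\partial_\ell\log p_n(\ell)\geq cn/\sum_{k\leq n}p_k(\ell)$ yields, by a Menshikov--Aizenman--Barsky type integration, exponential decay of $p_n(\ell)$ for every $\ell>\ell_c$. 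The Hermitian/positivity conditions in Assumption~\ref{a:clt} enter here to ensure that the derivative in $\ell$ has the sign one needs and that the algorithm is well-defined on the white noise.

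For the supercritical bound I would proceed via renormalisation. In $d=2$ I would combine RSW crossing estimates (available because $K\geq 0$ gives FKG) with duality to show that rectangle crossing probabilities in $\{f\geq\ell\}$ tend to one as the aspect ratio grows, at every $\ell<\ell_c$; a standard block argument then converts this into stretched-exponential connection to infinity. In $d\geq 3$ one replaces RSW by a sprinkled decoupling scheme, using the polynomial decay of $q$ and its derivatives from condition (3) to couple $f$ restricted to distant boxes with an independent copy up to a small sprinkling in the level; Severo's framework gives a multi-scale renormalisation in which a box $Q$ is declared \emph{good} if it contains a crossing cluster locally connected to those in all neighbours, and $\P(Q\text{ bad})$ can be made as small as desired. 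The event $\{\Lambda_n\cap\{f\geq\ell\}_\infty=\emptyset\}$ then forces a $\ast$-connected surface of bad boxes separating $\Lambda_n$ from infinity; the Peierls count of such surfaces is $\exp(O(n^{d-1}))$, which, multiplied by a per-box probability that can be made arbitrarily small after sprinkling, delivers the $\exp(-cn^{d-1})$ bound.

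Uniqueness I would obtain as a byproduct of the renormalisation: two distinct unbounded components restricted to $\Lambda_n$ would yield, with positive probability, disjoint good crossing clusters at every scale, which contradicts the fact that the good-box process percolates uniquely once the bad-box density is small. Alternatively, in $d=2$ one can run the classical Burton--Keane argument directly using the insertion tolerance analogue provided by shifting $f$ by a smooth bump via Cameron--Martin, together with FKG. The hardest step in my view is the supercritical bound with the sharp $n^{d-1}$ rate: translating the soft phase transition into this quantitative surface-order estimate genuinely requires the sprinkled decoupling machinery and a careful multi-scale analysis, whereas the sharpness in the subcritical phase is by now a fairly standardised application of OSSS once the Gaussian-specific discretisation issues are handled.
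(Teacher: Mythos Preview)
Your proposal is correct in spirit and substantially more detailed than what the paper actually does: the paper's ``proof'' of this theorem is simply a citation, pointing to \cite{mv20} and \cite{riv21} for $d=2$ and to \cite{sev21} and \cite{sev23} for $d\geq 3$, with no argument given in the text. What you have written is a fair high-level summary of the techniques those references employ (OSSS-type differential inequalities for subcritical sharpness, RSW plus duality in the plane, sprinkled decoupling and multi-scale renormalisation in higher dimensions, and a Burton--Keane/insertion-tolerance argument for uniqueness). So you are not taking a different route from the paper; rather, you are sketching the content of the works the paper defers to. For the purposes of this paper the theorem is treated as a black box imported from the literature, and your outline would be appropriate for a survey or for the cited papers themselves rather than here.
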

We remark that in the planar case (i.e.\ when $d=2$) it was proven that $\ell_c=0$. In higher dimensions ($d\geq 3$) it is expected that $\ell_c>0$ and this has been proven \cite{drrv23} when Assumption~\ref{a:clt} is satisfied for a sufficiently large $\beta$ .

One more key ingredient is required for our results.
\begin{assumption}[Truncated connection decay]\label{a:Decay}
For a given $\ell<\ell_c$, the probability that $\Lambda_1$ intersects a bounded component of $\{f\geq\ell\}$ of diameter greater than $n$ decays super-polynomially in $n$.
\end{assumption}
Letting $\{f\geq\ell\}_{<\infty}:=\{f\geq\ell\}\setminus\{f\geq\ell\}_\infty$ be the union of all bounded excursion components, Assumption~\ref{a:Decay} is equivalent to saying that for any $k\in\N$
\begin{displaymath}
    n^k\cdot\P\Big(\Lambda_1\overset{\{f\geq\ell\}_{<\infty}}{\longleftrightarrow}\partial\Lambda_n\Big)\to 0\qquad\text{as }n\to\infty.
\end{displaymath}
Truncated connection decay is closely related to a stronger local form of uniqueness for the unbounded component which is conjectured to hold for all supercritical levels. We discuss this assumption further in Section~\ref{s:Discussion}. When $\ell<-\ell_c$, we show that truncated connection decay is an easy consequence of the subcritical connection decay of Theorem~\ref{t:PhaseTransition} and symmetry of the Gaussian distribution.

\begin{proposition}\label{p:BCDecay}
Let $f$ satisfy Assumption~\ref{a:clt}, then Assumption~\ref{a:Decay} holds for all $\ell<-\ell_c$. In particular if $d=2$ then Assumption~\ref{a:Decay} holds for the entire supercritical regime $\ell<\ell_c=0$.
\end{proposition}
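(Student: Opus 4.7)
The plan is to reduce the event $A_n:=\{\Lambda_1\overset{\{f\geq\ell\}_{<\infty}}{\longleftrightarrow}\partial\Lambda_n\}$ to two sub-events of super-polynomially small probability by combining symmetry with both bullets of Theorem~\ref{t:PhaseTransition}. Since $\ell<-\ell_c$ we have $-\ell>\ell_c$, and the distributional identity $f\overset{d}{=}-f$ gives $\{f\leq\ell\}\overset{d}{=}\{f\geq-\ell\}$. The first bullet of Theorem~\ref{t:PhaseTransition} therefore yields constants $C,c>0$ such that
\[
\P\Big(\Lambda_1\overset{\{f\leq\ell\}}{\longleftrightarrow}\partial\Lambda_m\Big)\leq Ce^{-cm}\qquad\text{for all }m>2.
\]

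Next I would unpack the geometric content of $A_n$. On this event there is a bounded component $C$ of $\{f\geq\ell\}$ meeting $\Lambda_1$ with $\diam(C)\geq n-\sqrt d$. Let $A_\infty$ be the unbounded connected component of $\R^d\setminus C$ and $\Sigma:=\partial A_\infty$. Almost surely $\ell$ is a regular value of $f$ on any compact set (by Bulinskaya's lemma), so $\Sigma$ is a closed $(d-1)$-dimensional $C^2$-hypersurface in $\{f=\ell\}\subseteq\{f\leq\ell\}$ enclosing $C$, with $\diam(\Sigma)\geq\diam(C)$. The key observation is that $\{f\geq\ell\}_\infty$ is connected, unbounded and disjoint from the bounded component $C$, hence $\{f\geq\ell\}_\infty\subseteq A_\infty$. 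I then split $A_n$ according to whether $\Sigma\cap\Lambda_{n/4}\neq\emptyset$. If yes, $\Sigma$ is a connected subset of $\{f\leq\ell\}$ of diameter $\geq n-\sqrt d$ meeting $\Lambda_{n/4}$, so by the triangle inequality it contains a point of $\Lambda_{n/4}$ joined in $\{f\leq\ell\}$ to a point at Euclidean distance $\geq(n-\sqrt d)/2$; a union bound over the $O(n^d)$ unit sub-cubes of $\Lambda_{n/4}$, combined with stationarity and the estimate from the first paragraph, bounds this case by $Cn^d e^{-cn/2}$. If no, the connected set $\Lambda_{n/4}$ is disjoint from $\Sigma$ yet meets $C\subseteq\R^d\setminus A_\infty$, forcing $\Lambda_{n/4}\subseteq\R^d\setminus A_\infty$, so that $\Lambda_{n/4}\cap\{f\geq\ell\}_\infty=\emptyset$; the second bullet of Theorem~\ref{t:PhaseTransition} bounds this case by $Ce^{-c(n/4)^{d-1}}$.

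Adding the two bounds gives $\P(A_n)=o(n^{-k})$ for every $k\in\N$, which is Assumption~\ref{a:Decay}. The claim in dimension $d=2$ follows immediately because $\ell_c=0$ there, so $-\ell_c=\ell_c$ and $\ell<-\ell_c$ describes the entire supercritical regime. The step I expect to require the most care is the topological set-up above: verifying that $\Sigma$ is a genuinely closed separating hypersurface and that the inclusion $\{f\geq\ell\}_\infty\subseteq A_\infty$ really holds. Both should reduce to standard consequences of $\ell$ being a.s.\ a regular value of $f$, but deserve explicit treatment in a full proof.
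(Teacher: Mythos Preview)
Your argument is correct, but it takes a somewhat different route from the paper's. The paper avoids the case split and uses only the \emph{subcritical} bullet of Theorem~\ref{t:PhaseTransition}: given the bounded component $A$ reaching $\partial\Lambda_n$, it lets $N$ be the largest integer with $A\cap\partial\Lambda_{N+1}\neq\emptyset$, observes that the component $E$ of $\{f\leq\ell\}$ meeting the outer boundary of $A$ has $\diam(E)\geq\diam(A)\geq N$ and touches some cube $B_x$ with $x\in\Lambda_{N+2}\setminus\Lambda_N$, and then bounds $\P(B_x\overset{\{f\leq\ell\}}{\longleftrightarrow}x+\partial\Lambda_{N/\sqrt d-1})$ via symmetry and subcritical decay; a union bound over $N\geq n-1$ and over the $O(N^{d-1})$ choices of $x$ finishes. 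This yields genuinely exponential decay and, more importantly, carries over verbatim to the perturbed field $f+\rho$ with $\rho\geq 0$ (Proposition~\ref{p:UnifTruncConnecDecay}), which the paper needs later for the positivity of variance. Your approach also works and your case~2 can be adapted to the perturbed setting (since $\{f+\rho\geq\ell\}_\infty\supseteq\{f\geq\ell\}_\infty$ when $\rho\geq 0$), but the extra case split and the appeal to the second bullet are not needed.

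On the topological point you flagged: connectedness of $\Sigma=\partial A_\infty$ is indeed the crux (the paper also uses it implicitly when speaking of \emph{the} component $E$ meeting the outer boundary). It follows from the fact that the filled set $\widehat C:=\R^d\setminus A_\infty$ is compact and connected with connected complement in $S^d$, hence has connected boundary for $d\geq 2$. Your inclusion $\{f\geq\ell\}_\infty\subseteq A_\infty$ is immediate: any unbounded component of $\{f\geq\ell\}$ is connected, unbounded and disjoint from $C$, hence lies in the unique unbounded component of $\R^d\setminus C$. And $\diam(\Sigma)\geq\diam(C)$ is correct because the two extreme points of $C$ realising its diameter are easily seen to lie in $\partial A_\infty$.
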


This result essentially follows because any bounded component of $\{f\geq\ell\}$ must be `surrounded' by a component of $\{f\leq\ell\}$. If the former has large diameter then so must the latter, but for $\ell<-\ell_c$ this probability decays exponentially in the diameter by Theorem~\ref{t:PhaseTransition} (and the fact that $\{f\leq\ell\}$ has the same distribution as $\{f\geq -\ell\}$). The complete argument is given in Section~\ref{s:Topological}.

We can now state our main results. Let $f$ satisfy Assumption~\ref{a:clt} for some given $\ell<\ell_c$. (This choice of $f$ and $\ell$ is fixed throughout most of the paper.) For any compact Borel set $A$, we define our three quantities of interest as
\begin{align*}
    \mu_\Vol(A)&:=\Vol[A\cap\{f\geq\ell\}_\infty]\\
    \mu_\EC(A)&:=\EC[A\cap\{f\geq\ell\}_\infty]\\
    \mu_\SA(A)&:=\mathcal{H}^{d-1}[A\cap\partial(\{f\geq\ell\}_\infty)]
\end{align*}
where $\Vol[\cdot]$ denotes the volume, $\EC[\cdot]$ denotes the Euler characteristic and $\mathcal{H}^k[\cdot]$ denotes the $k$-dimensional Hausdorff measure. Our first significant result is a `law of large numbers' for these functionals as the domain size increases:

\begin{theorem}\label{t:FirstOrder}
    Let $f$ satisfy Assumption~\ref{a:clt} and $\ell<\ell_c$. Given $\star\in\{\Vol,\EC,\SA\}$ there exists $c_\star(\ell)\in\R$ such that
    \begin{displaymath}
        \frac{\mu_\star(\Lambda_n)}{(2n)^d}\to c_\star(\ell) \qquad\text{as }n\to\infty,
    \end{displaymath}
    where convergence occurs almost surely and in $L^1$. Furthermore
    \begin{align*}
        c_\Vol(\ell)&=\P(0\in\{f\geq\ell\}_\infty),\\
        c_\SA(\ell)&=\E[\lvert\nabla f(0)\rvert\ind_{0\in\{f\geq\ell\}_\infty}|f(0)=\ell]\phi(\ell),\qquad\text{and}\\
        c_\EC(\ell)&=(-1)^d\E[\det\nabla^2 f(0)\ind_{0\in\{f\geq\ell\}_\infty}|\nabla f(0)=0]\varphi_{\nabla f(0)}(0)
    \end{align*}
    where $\phi$ and $\varphi_{\nabla f(0)}$ denote the density of a standard Gaussian and $\nabla f(0)$ respectively. In particular,
    \begin{displaymath}
        0<c_\Vol(\ell)<1-\Phi(\ell),\qquad 0<c_\SA(\ell)<\E[\lvert\nabla f(0)\rvert]\phi(\ell),
    \end{displaymath}
    where $\Phi$ denotes the standard Gaussian cumulative distribution function, and as $\ell\to -\infty$,
    \begin{displaymath}
        c_\Vol(\ell)\to 1,\qquad c_\SA(\ell)\to0\qquad \text{and}\qquad c_\EC(\ell)\to 0.
    \end{displaymath}
    Moreover, if $d=2$, then $c_\EC(\ell)<0$.
\end{theorem}

The proof of this result follows from the ergodic theorem (along with some topological arguments to characterise the limiting functional $c_\EC(\ell)$).

Our next result describes the asymptotic variance and limiting distribution of our functionals. Given $k\geq 4$, which describes the smoothness of our field in Assumption~\ref{a:clt}, we define
\begin{displaymath}
    \beta_\Vol(k)=3d,\qquad\beta_\EC(k)=\frac{k-1}{k-3}3d,\qquad\beta_\SA(k)=\frac{9k(k+1)-42}{k(k+1)-8}d.
\end{displaymath}

\begin{theorem}\label{t:clt}
Let $\star\in\{\Vol,\EC,\SA\}$ and suppose Assumption~\ref{a:clt} holds for some $k\geq 4$ and $\beta>\beta_\star(k)$. Suppose also that Assumption~\ref{a:Decay} holds, then there exists $\sigma=\sigma_\star(\ell)\geq 0$ such that as $n\to\infty$
\begin{displaymath}
\frac{\Var[\mu_\star(\Lambda_n)]}{(2n)^d}\to\sigma^2\qquad\text{and}\qquad \frac{\mu_\star(\Lambda_n)-\E[\mu_\star(\Lambda_n)]}{(2n)^{d/2}}\xrightarrow{d}\sigma Z
\end{displaymath}
where $Z$ is a standard normal random variable.
\end{theorem}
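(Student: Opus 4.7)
The plan is to apply the martingale central limit theorem to the white noise decomposition $f = q\ast W$. First I would tile $\R^d$ by unit cubes $\{Q_i\}_{i\in\Z^d}$ and decompose $W = \sum_i W_i$ where $W_i$ is the white noise restricted to $Q_i$. Enumerating the cubes as $Q_{i_1},Q_{i_2},\dots$ (e.g.\ in a spiral ordering starting from the origin) and setting $\mathcal{F}_j = \sigma(W_{i_1},\dots,W_{i_j})$, define the martingale differences
\begin{displaymath}
D_j^n := \E[\mu_\star(\Lambda_n)\mid\mathcal{F}_j] - \E[\mu_\star(\Lambda_n)\mid\mathcal{F}_{j-1}],
\end{displaymath}
so that $\mu_\star(\Lambda_n) - \E[\mu_\star(\Lambda_n)] = \sum_j D_j^n$. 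The martingale CLT then reduces the theorem to (i) convergence in probability of the conditional quadratic variation $\sum_j \E[(D_j^n)^2 \mid \mathcal{F}_{j-1}]/(2n)^d$ to a constant $\sigma_\star^2$, and (ii) a Lindeberg-type control on the $D_j^n$.

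The core step is a \emph{localisation} of $D_j^n$. I would introduce a resampled version $\widetilde D_j^n$ obtained by replacing the white noise outside the ball of radius $r$ around $Q_{i_j}$ with an independent copy, and bound $\|D_j^n-\widetilde D_j^n\|_{L^2}$. Two mechanisms should produce this bound: the decay $|\partial^\alpha q(x)|\lesssim |x|^{-\beta}$ ensures that $f$ and its derivatives near $Q_{i_j}$ differ from those of the resampled field by a remainder whose $L^p$-norm decays polynomially in $r$; and Assumption~\ref{a:Decay} ensures that, up to a super-polynomially small event, whether a point $x$ near $Q_{i_j}$ lies in $\{f\geq\ell\}_\infty$ rather than in a bounded component is determined by $f$ in a neighbourhood of $x$, so resampling distant noise alters $\mu_\star$ near $Q_{i_j}$ only on a small event. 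Combining the two gives $\|D_j^n-\widetilde D_j^n\|_{L^2}\to 0$ polynomially in $r$, uniformly for bulk cubes. Part (i) would then follow from stationarity, since the bulk contributions $\E[(D_j^n)^2\mid\mathcal{F}_{j-1}]$ are well approximated by stationary local quantities whose spatial average converges by the ergodic theorem; part (ii) would follow from uniform $L^p$-bounds on $D_j^n$ that come from Gaussian concentration and from Kac--Rice/Morse-theoretic representations of $\mu_\EC$ and $\mu_\SA$ as integrals of polynomials in $(\nabla f,\nabla^2 f)$ against $\ind_{\{f\ge\ell\}_\infty}$.

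The hard part will be the quantitative localisation estimate, and especially calibrating the decay exponent $\beta_\star(k)$. For $\mu_\Vol$ the functional is bounded pointwise by $\ind_{\{f\geq\ell\}_\infty}$, so only the field's values enter and one gets away with $\beta>\beta_\Vol(k)=3d$. For $\mu_\SA$ and $\mu_\EC$ one must control first and second derivatives of $f$ respectively, and apply anti-concentration (Kac--Rice) to absorb the effect of small perturbations on level sets and on critical points of $f$; the exponents $\beta_\SA(k)$ and $\beta_\EC(k)$ should emerge from balancing $C^k$-H\"older norms of $f$ (with moments growing in $k$) against $\beta$. An additional subtle point is bounding the probability that resampling distant noise merges or detaches a previously bounded component with the unbounded one inside $\Lambda_n$ — a ``long-range'' topological event that is precisely what Assumption~\ref{a:Decay} is designed to rule out, and which contributes additively to the final decorrelation estimate.
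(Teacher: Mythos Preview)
Your framework (martingale CLT via the white-noise decomposition) is the paper's, but two steps would not go through as written.

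First, the spiral enumeration. The convergence of the conditional quadratic variation (your (i)) is proved in the paper by applying the multi-variate ergodic theorem to $X_v^2:=\E[\Delta_v\mid\mathcal{F}_v]^2$, and this requires the translation covariance $\mathcal{F}_v(W)=\mathcal{F}_0(\tau_{-v}W)$ so that $X_v=X_0\circ\tau_{-v}$. That identity holds for the lexicographic filtration and fails for a spiral: the ``past'' of a spiral at cube $v$ is not the translate of the past at $0$. The paper flags this explicitly in the Remark after Theorem~\ref{t:GenCLT}. Your claim that the bulk squared increments are ``well approximated by stationary local quantities'' is exactly where this breaks; with a spiral filtration the conditional expectations are not a stationary family and the ergodic theorem does not apply.

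Second, your proposed representation of $\mu_\EC$ and $\mu_\SA$ as integrals of polynomials in $(\nabla f,\nabla^2 f)$ against $\ind_{\{f\ge\ell\}_\infty}$ is not available: such Morse/Kac--Rice formulae exist for the \emph{full} excursion set, but the unbounded-component indicator is itself non-local and destroys the structure that makes those formulae tractable. The paper does not attempt this. Instead it studies the change $\Delta_v(B_w)$ when resampling a \emph{single} cube $B_v$ and splits it into (a) a local topological contribution (the level set in $B_w$ passes through a stratified critical point as $t$ varies in $f+tp_v$), (b) a non-local topological contribution (a change in some other cube connects/disconnects a component in $B_w$ from infinity), and (c) a purely geometric contribution with topology fixed. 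Contributions (a)--(b) are captured by a \emph{finite-range unstable set} $\mathcal{U}_{\mathrm{FR}}(f,p_v)$ whose probability decays like $(1+|v-w|)^{-\beta+\delta}$ (Lemma~\ref{l:ProbExtUnst}; this is where Assumption~\ref{a:Decay} enters). On unstable cubes $|\Delta_v(B_w)|$ is bounded crudely by $1$, by total surface area, or by the stratified critical-point count, with moment bounds supplied by Propositions~\ref{p:SAMoments} and~\ref{p:TMB}. Contribution (c) is trivial for volume, vanishes for Euler characteristic by topological invariance, and for surface area requires a separate normal-projection argument (Section~\ref{s:SA_perturb}). The exponents $\beta_\star(k)$ then emerge from H\"older-interpolating the crude moment bound against $\P(w\in\mathcal{U}_{\mathrm{FR}})$---your intuition about ``balancing $C^k$-norms against $\beta$'' is correct in spirit, but the mechanism is this interpolation, not a Kac--Rice formula with $\ind_{\{f\ge\ell\}_\infty}$ inserted.
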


A semi-explicit expression for $\sigma_\star(\ell)$ is given in Theorem~\ref{t:GenCLT}. By imposing slightly stronger conditions, we can ensure that the limiting variance in Theorem~\ref{t:clt} is strictly positive and hence the limiting distribution is non-degenerate:
\begin{theorem}
\label{t:posvar}
Let $\star\in\{\Vol,\EC,\SA\}$ and let $\ell<-\ell_c$. If $\star=\EC$ we assume that $d=2$. Suppose the conditions for Theorem~\ref{t:clt} are satisfied and in addition that $q \geq 0$, then
\begin{displaymath}
    \sigma_\star(\ell) > 0.
\end{displaymath}
\end{theorem}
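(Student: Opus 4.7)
Since Theorem~\ref{t:clt} is proved via the martingale CLT, its proof expresses the limit variance as $\sigma_\star(\ell)^2=\E[D_\infty^2]$, where $D_\infty$ is the stationary-limit martingale difference measuring the influence on $\mu_\star$ of the white noise restricted to a single unit cube $B$. To show $\sigma_\star(\ell)>0$ it therefore suffices to exhibit, for a fixed unit cube $B$ and uniformly in $n$, a positive-probability event on the exterior white noise $W|_{B^c}$ on which two sub-events of positive probability for $W|_{B}$ produce values of $\mu_\star(\Lambda_n)$ differing by at least a uniform amount $\epsilon>0$.

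\textbf{Local modification.} Pick a smooth non-negative bump $\phi$ supported in the interior of $B$ with $\|\phi\|_{L^2}=1$, and decompose $W|_B=S\phi+W^\perp$, where $S\sim N(0,1)$ and $W^\perp$ is independent white noise in the orthogonal complement of $\phi$ in $L^2(B)$. Then
\[
f=S\,(q\ast\phi)+q\ast W^\perp+f_{B^c},\qquad f_{B^c}:=q\ast(W\ind_{B^c}).
\]
Since $q\geq 0$, the function $q\ast\phi$ is bounded below by some $c_0>0$ on a neighborhood of $B$. Hence, on the event $\{S\geq A\}$ (for a large constant $A$), together with typical values of $W^\perp$ and $f_{B^c}$, the field $f$ exceeds $\ell+1$ on a neighborhood of $B$, forcing this neighborhood into the unbounded excursion component; on $\{S\leq -A\}$, $f$ falls below $\ell-1$ throughout an interior ball $B''\subset B$, carving a clean hole in the excursion set. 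Both $\{S\ge A\}$ and $\{S\le -A\}$ have positive Gaussian probability.

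\textbf{The good outside event.} Let $B'\supset B$ be a cube of side $R\gg 1$ and let $\mathcal{E}$ be the event on $(W|_{B^c},W^\perp)$ that: (i) $\{f_{B^c}\geq\ell\}_\infty$ exists, surrounds $B'$, and covers a positive fraction of a fixed annulus around $B'$; (ii) no bounded component of $\{f_{B^c}\geq\ell\}$ meeting $B'$ has diameter exceeding $R$; and (iii) $|f_{B^c}|$ and $|q\ast W^\perp|$, together with their derivatives up to order two, are bounded on $B'$ by a fixed constant. Because $\ell<-\ell_c$, Proposition~\ref{p:BCDecay} provides truncated connection decay at level $\ell$, and applying Theorem~\ref{t:PhaseTransition} to $-f$ at the subcritical level $-\ell>\ell_c$ supplies the exponential estimates needed for (i)--(ii); (iii) follows from standard Gaussian tail bounds. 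Hence $\P(\mathcal{E})\geq\delta>0$ uniformly in $n$. On $\mathcal{E}\cap\{S\geq A\}$ versus $\mathcal{E}\cap\{S\leq -A\}$, the ``outside'' geometry is unchanged, and the local modification produces: a difference of at least $\Vol(B'')/2$ in $\mu_\Vol(\Lambda_n)$; a difference of at least $\mathcal{H}^{d-1}(\partial B'')/2$ in $\mu_\SA(\Lambda_n)$; and, when $d=2$, a change of $\pm 1$ in $\mu_\EC(\Lambda_n)$ since the hole sits inside the unbounded component. These lower bounds yield $\E[D_\infty^2]>0$.

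\textbf{Main obstacle.} The delicate point is to control the ``leakage'' of the interior shift outside $B'$: because $q$ has only polynomial tails, $S\,(q\ast\phi)$ is not compactly supported and could a priori alter the global topology of $\{f\geq\ell\}_\infty$. This is handled by Assumption~\ref{a:clt}(3), which bounds $|q\ast\phi(x)|$ by $C|x|^{-\beta}$ for $|x|\gg 1$, together with the uniform bounds on $f_{B^c}$ from (iii) and the absence of large bounded components from (ii): for $R$ large enough, the excursion set outside $B'$ is perturbed only by a uniformly small amount, which by (ii) cannot trigger any macroscopic reconfiguration of $\{f\geq\ell\}_\infty$. For $\mu_\EC$, the restriction $d=2$ is needed so that a simply-connected hole produces an unambiguous $\pm 1$ change in the Euler characteristic, mirroring the role of $d=2$ in the sign condition $c_\EC(\ell)<0$ of Proposition~\ref{p:FirstOrder}.
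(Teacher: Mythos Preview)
Your overall strategy has a gap. From $\sigma_\star^2 = \E[\E[\Delta_0 \mid \mathcal{F}_0]^2] \geq \Var_S[G(S)]$ with $G(s) := \lim_n \E[\mu_\star(\Lambda_n) \mid S = s]$, what you must show is that $G$ is non-constant. Exhibiting a positive-probability event $\mathcal{E}$ on $(W|_{B^c}, W^\perp)$ on which $\mu_\star$ differs between $\{S \geq A\}$ and $\{S \leq -A\}$ does \emph{not} give this: the contribution to $G(A)-G(-A)$ from $\mathcal{E}^c$ could have the opposite sign and cancel. For $\star=\Vol$ you can rescue the argument via monotonicity (since $q\geq 0$ implies $q\ast\phi\geq 0$, the map $s\mapsto\mu_\Vol$ is non-decreasing, so the $\mathcal{E}^c$-contribution is non-negative), but for $\star\in\{\SA,\EC\}$ there is no monotonicity and you have no control on $\mathcal{E}^c$. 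Your leakage paragraph is also too vague: ruling out that the global perturbation $2A(q\ast\phi)$ flips some faraway component between bounded and unbounded is exactly the finite-range unstable set analysis (Lemma~\ref{l:ProbExtUnst}), which yields probabilistic rather than pathwise bounds and so cannot be packaged into a deterministic condition on $\mathcal{E}$.

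The paper avoids both issues by working entirely in expectation. It first rescales so that resampling on $B_0$ becomes resampling on a large cube $\Lambda_m$, then conditions only on the scalar $Z_0=\mathcal{W}(B_0)$ (the average of the white noise on that cube). Shifting $Z_0\mapsto Z_0+s$ amounts to adding $\rho=s(2m)^{-d/2}q\ast\ind_{\Lambda_m}$ to $f$, which is approximately a constant on $\Lambda_m$ and decays outside. The change in $\E[\mu_\star(\Lambda_n)]$ is then decomposed into a main term $\sim(c_\star(\ell)-c_\star(\ell-s'))(2m)^d$ on an inner cube (nonzero for $s'$ in a set of positive measure by Proposition~\ref{p:FirstOrder}; this is where $d=2$ enters for $\star=\EC$) plus boundary and exterior terms shown to be $o(m^d)$ via the stability estimates of Section~\ref{s:Topological} (Lemmas~\ref{l:FR_general} and~\ref{l:sa_general_perturbation_bounds}). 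Taking $m$ large makes the main term dominate. This treats all three functionals uniformly, never appeals to monotonicity, and never requires pathwise control on a complement event.
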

    The assumption that $\ell<-\ell_c$ is of course vacuous for planar fields since $\ell_c=0$. We discuss how this condition could be weakened in the next subsection.
\begin{remark}
    The proof of positivity relies on the existence of sets of positive measure on which $c_\star$, defined in Theorem~\ref{t:FirstOrder}, takes different values (we explain this more in the next subsection). This assumption is verified in Theorem~\ref{t:FirstOrder} for all $d\geq 2$ when $\star\in\{\Vol,\SA\}$ but only for $d=2$ when $\star=\EC$, explaining the restrictions in the statement of Theorem~\ref{t:posvar}. The obstacle in the latter case is that $\mu_\EC$ can take positive and negative values, so that in principle it may have an (asymptotic) expectation of zero at all levels (although intuitively this seems very unlikely). If one could show that $c_\EC$ is non-zero on a set of positive measure for other values of $d$, this would immediately extend the conclusion of Theorem~\ref{t:posvar} to such values.
\end{remark}

Whilst the geometric quantities $\mu_\star$ for $\star\in\{\Vol,\SA,\EC\}$ are very natural from a theoretical perspective, they are less useful in applications as they cannot be observed from a bounded domain. That is, if we look at a realisation of $\{f\geq\ell\}$ on a large domain $\Lambda_n$ for some $\ell<\ell_c$, we expect the set to be dominated by one large component which is part of $\{f\geq\ell\}_\infty$. However we cannot tell from within $\Lambda_n$ which parts of the excursion set that intersect the boundary are contained in the unbounded component. Thus it would be useful to have a `finitary' version of our CLT. Fortunately such a statement follows without much difficulty from our main result (Theorem~\ref{t:clt}).

\begin{figure}[ht]
    \centering
\begin{tikzpicture}[scale=0.5]

\begin{scope}
\draw[thick,dashed] (-6,-6) rectangle (6,6);
\node[left] at (-6,6) {$\Lambda_{(1+\epsilon)n}$};
\draw [thick,dashed](-4,-4) rectangle (4,4);
\node[left] at (-4,4) {$\Lambda_n$};
\clip (-6,-6) rectangle (6,6);
\draw [thick,pattern=north west lines,pattern color=gray] plot [smooth cycle,tension=0.5] coordinates {(-3.5,6.5)(-2.5,5.5)(-2.5,5)(-3.5,4.5)(-3.5,3.5)(-4,2.5)(-5,2)(-6.5,2)(-6.5,0)(-5,0)(-3.5,-1)(-4.5,-2)(-6.5,-2.5)(-6.5,-3)(-5.5,-4)(-4,-3.5)(-3,-2)(-2,-1)(-0.5,0)(2,0)(3.5,-0.5)(5,-1)(6.5,-1)(6.5,4)(4,6.5)(2.5,6.5)(1.5,5)(0,3.5)(-1.5,4.5)(-1,6)(-0.5,6.5)};

\draw [thick,pattern=north west lines,pattern color=gray] plot [smooth cycle,tension=0.5] coordinates {(6.5,-4)(5,-3)(3.5,-2.5)(1.5,-2.5)(0.5,-4)(1,-5)(1.5,-6.5)(3,-6.5)(4,-5)(6.5,-5)};

\draw [thick,pattern=north west lines,pattern color=gray] plot [smooth cycle,tension=0.5] coordinates {(0,-2)(-2,-3.5)(-3,-5)(-1.5,-5)};

\draw [thick,pattern=north west lines,pattern color=gray] plot [smooth cycle,tension=0.5] coordinates {(-6.5,-5)(-5,-5)(-5,-6.5)(-6.5,-6.5)};
\end{scope}

\begin{scope}
\clip (-4,-4) rectangle (4,4);
\draw [thick,fill=gray, fill opacity=0.5] plot [smooth cycle,tension=0.5] coordinates {(-3.5,6.5)(-2.5,5.5)(-2.5,5)(-3.5,4.5)(-3.5,3.5)(-4,2.5)(-5,2)(-6.5,2)(-6.5,0)(-5,0)(-3.5,-1)(-4.5,-2)(-6.5,-2.5)(-6.5,-3)(-5.5,-4)(-4,-3.5)(-3,-2)(-2,-1)(-0.5,0)(2,0)(3.5,-0.5)(5,-1)(6.5,-1)(6.5,4)(4,6.5)(2.5,6.5)(1.5,5)(0,3.5)(-1.5,4.5)(-1,6)(-0.5,6.5)};

\draw [thick,fill=gray, fill opacity=0.5]plot [smooth cycle,tension=0.5] coordinates {(6.5,-4)(5,-3)(3.5,-2.5)(1.5,-2.5)(0.5,-4)(1,-5)(1.5,-6.5)(3,-6.5)(4,-5)(6.5,-5)};

\draw [thick,fill=white] plot [smooth cycle,tension=0.5] coordinates {(0,2)(2,2)(3,3)};

\end{scope}
\node[left] at (-7,0) {$\{f\geq\ell\}$};
\draw (-7,0)--(-3,1);
\draw (-7,0)--(3,-3);
\draw (-7,0)--(-1.5,-3.5);
\draw (-7,0)--(-5.7,-5.7);

\end{tikzpicture}
    \caption{The dashed area shows the excursion set $\{f\geq\ell\}$ restricted to $\Lambda_{(1+\epsilon)n}$. The dark grey area corresponds to $\{f\geq\ell\}_{n,\epsilon}$. In particular, the lower left excursion component in $\Lambda_n$ is not shaded because it is not connected to $\partial\Lambda_{(1+\epsilon)n}$.}
    \label{fig:Finitary}
\end{figure}
Let $\epsilon>0$ be fixed and for $n\in\N$ let $\{f\geq\ell\}_{n,\epsilon}$ be the union of all components of $\{f\geq\ell\}\cap\Lambda_n$ which are connected to $\partial\Lambda_{(1+\epsilon)n}$ in $\{f\geq\ell\}$ (see Figure~\ref{fig:Finitary}).
We then define
\begin{align*}
    \mu_\Vol(\Lambda_n,\epsilon)&:=\Vol[\{f\geq\ell\}_{n,\epsilon}]\\
    \mu_\EC(\Lambda_n,\epsilon)&:=\EC[\{f\geq\ell\}_{n,\epsilon}]\\
    \mu_\SA(\Lambda_n,\epsilon)&:=\mathcal{H}^{d-1}[\{f\geq\ell\}_{n,\epsilon}\cap\{f=\ell\}].
\end{align*}

As a consequence of Assumption~\ref{a:Decay}, components of $\{f\geq\ell\}\cap\Lambda_n$ have very small probability of connecting to $\partial\Lambda_{(1+\epsilon)n}$ unless they are part of the unbounded component. One can therefore show that $\{f\geq\ell\}_{n,\epsilon}$ and $\{f\geq\ell\}_\infty\cap\Lambda_n$ have essentially the same geometric statistics, meaning that the former also satisfies a CLT:
\begin{corollary}\label{c:FinitaryCLT}
    Let $\epsilon>0$ be fixed, then the statement of Theorem~\ref{t:clt} holds verbatim if $\mu_\star(\Lambda_n)$ is replaced by $\mu_\star(\Lambda_n,\epsilon)$. Moreover the limiting variance $\sigma^2_\star(\ell)$ takes the same value after this replacement.
\end{corollary}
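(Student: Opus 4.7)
The strategy is a Slutsky-type argument: I will show that the difference $D_n := \overline{\mu}_\star(\Lambda_n,\epsilon) - \mu_\star(\Lambda_n)$ satisfies $\E[D_n^2] = o((2n)^d)$. Once this is established, the distributional conclusion of Theorem~\ref{t:clt} transfers to $\overline{\mu}_\star$: the decomposition $(\overline{\mu}_\star - \E\overline{\mu}_\star)/(2n)^{d/2} = (\mu_\star - \E\mu_\star)/(2n)^{d/2} + (D_n - \E D_n)/(2n)^{d/2}$ combined with Chebyshev and Slutsky gives the CLT, while the identity $\Var[\overline{\mu}_\star] = \Var[\mu_\star] + 2\Cov(\mu_\star, D_n) + \Var[D_n]$ together with Cauchy-Schwarz on the covariance term gives convergence of variances to the same $\sigma^2_\star(\ell)$.

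The residual set $R_n := \{f\geq\ell\}_{n,\epsilon} \setminus \{f\geq\ell\}_\infty$ consists of bounded excursion components that pass through $\Lambda_n$ and reach $\partial\Lambda_{(1+\epsilon)n}$; every component of $\{f\geq\ell\}_\infty\cap\Lambda_n$ already lies in $\{f\geq\ell\}_{n,\epsilon}$ because it is unbounded in $\R^d$ and so connects to $\partial\Lambda_{(1+\epsilon)n}$ through $\{f\geq\ell\}$. Hence every point of $R_n$ lies in a bounded excursion component of diameter at least $\epsilon n$, an event whose probability $p_n$ decays super-polynomially by stationarity and Assumption~\ref{a:Decay}. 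For $\star=\Vol$, we have $D_n = \Vol(R_n)$; the crude estimate $\Vol(R_n)^2 \leq (2n)^d \Vol(R_n)$ and a first-moment integration yield $\E[D_n^2] \leq (2n)^{2d}p_n = o((2n)^d)$. For $\star=\SA$, we have $D_n = \mathcal{H}^{d-1}(\{f=\ell\}\cap R_n)$, which I would control by applying the Kac-Rice second-moment formula for the level set, using Cauchy-Schwarz to separate the Jacobian factors $|\nabla f|$ from the bad-component indicator. For $\star=\EC$, disjointness of $R_n$ and $\{f\geq\ell\}_\infty\cap\Lambda_n$ together with additivity of $\EC$ over disjoint basic complexes give $D_n = \EC(R_n)$, and Morse theory applied component-by-component yields $|\EC(R_n)| \leq \#\{x \in R_n \cap \Lambda_n : \nabla f(x) = 0\}$, reducing the task to a Kac-Rice bound on bad critical points analogous to the SA case.

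The main technical obstacle is verifying that the super-polynomial decay of the bad-component probability is preserved under the Gaussian conditioning required by Kac-Rice (on $f(x)=\ell$ for SA, on $\nabla f(x)=0$ for EC, and the corresponding two-point conditionings needed for the second moment). This is standard but delicate: conditioning on a finite number of values or derivatives of $f$ at a fixed point shifts the mean of $f$ by a localized profile that decays at the rate of $K$ (by Assumption~\ref{a:clt}), so on scales $\gg 1$ the conditional field is, up to a bounded distortion of probabilities supported near the conditioning point, a copy of the unconditional field, and Assumption~\ref{a:Decay} transfers with super-polynomial decay preserved. Once this conditional decay is in place, the second-moment bounds on $D_n$ follow and the corollary is immediate.
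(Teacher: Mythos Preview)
Your overall Slutsky framework---show $\E[D_n^2]=o((2n)^d)$ and deduce both the CLT and the variance convergence---is exactly right and matches the paper. Your treatment of $\star=\Vol$ is essentially the same as the paper's.

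For $\star=\SA$ and $\star=\EC$, however, you are working much harder than necessary, and the hard part (transferring Assumption~\ref{a:Decay} under Gaussian conditioning) is only sketched. The paper avoids all of this with a single observation: $D_n$ vanishes unless some bounded excursion component connects $\partial\Lambda_n$ to $\partial\Lambda_{(1+\epsilon)n}$, i.e.\ unless the single event
\[
A_n:=\Big\{\partial\Lambda_n\overset{\{f\geq\ell\}_{<\infty}}{\longleftrightarrow}\partial\Lambda_{(1+\epsilon)n}\Big\}
\]
occurs. On $A_n$ one bounds $|D_n|$ crudely by a \emph{global} dominating quantity $\mu_\star^+(\Lambda_n)$ (namely $(2n)^d$, $\mathcal{H}^{d-1}[\{f=\ell\}\cap\Lambda_n]$, or $c_d\,\overline{N}_\mathrm{Crit}(\Lambda_n,f)$), so that $|D_n|\le \mu_\star^+(\Lambda_n)\ind_{A_n}$. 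Then H\"older gives
\[
\E\big[\mu_\star^+(\Lambda_n)^2\ind_{A_n}\big]\le \E\big[\mu_\star^+(\Lambda_n)^3\big]^{2/3}\,\P(A_n)^{1/3}\le C n^{2d}\,\P(A_n)^{1/3}\to 0,
\]
using stationarity and Proposition~\ref{p:SAMoments}/Proposition~\ref{p:TMB} for the third moment and a union bound over boundary cubes together with Assumption~\ref{a:Decay} for the super-polynomial decay of $\P(A_n)$. No Kac--Rice conditioning is needed.

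Your route via conditional Kac--Rice is not wrong in spirit, but the step ``Assumption~\ref{a:Decay} transfers under conditioning on $f(x)=\ell$ or $\nabla f(x)=0$'' is genuinely nontrivial: the conditional law is that of $f$ plus a deterministic profile decaying like $K$, and the paper only establishes truncated connection decay for perturbed fields (Proposition~\ref{p:UnifTruncConnecDecay}) under the extra hypotheses $\rho\ge 0$ and $\ell<-\ell_c$. Pushing your argument through at the stated generality would require new input, whereas the H\"older trick sidesteps the issue entirely. Also, for $\star=\EC$ your Morse bound should use \emph{stratified} critical points on $\Lambda_n$ (Lemma~\ref{l:ECBound}), not just interior ones, since $R_n$ may touch $\partial\Lambda_n$.
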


\subsection{Outline of the proof}\label{ss:Outline}
We now describe our method of proof for Theorems~\ref{t:clt} and~\ref{t:posvar}:

\medskip
\underline{White noise CLT:} We define a Gaussian white noise (on $\R^d$) to be a centred Gaussian process $W$ indexed by $L^2(\R^d)$ such that for $g,h\in L^2(\R^d)$, $\E[W(g)W(h)]=\int_{\R^d}g(x)h(x)\;dx$. One can think of $W$ as a random distribution, with test functions in $L^2(\R^d)$, and we emphasise this by writing $W(g)=:\int g(x)W(dx)$. In particular, this justifies the definition of $f$ given earlier: for $x\in\R^d$
\begin{displaymath}
    f(x)=q\ast W(x):=\int q(x-y)W(dy).
\end{displaymath}
For a Borel set $A\subset \R^d$ (with finite Lebesgue measure) we define $W(A):=W(\ind_A)$. We fix a Gaussian white noise $W$, to be used throughout our analysis, and for $v\in\Z^d$ let $W_v$ be the restriction of $W$ to the cube $B_v:=v+[0,1]^d$ (i.e.\ $W_v(g):=W(g\ind_{B_v})$ for all $g\in L^2(\R^d)$). For each $n\in\N$ and $v\in\Z^d$ let $\mu(v+\Lambda_n):=F(v+\Lambda_n,W)$ where $F$ is some deterministic functional. We call $\mu$ a \emph{stationary white-noise functional} if it is invariant under a common translation of both arguments; in other words, for all $u,v\in\Z^d$ and $n\in\N$
\begin{displaymath}
    F(u+v+\Lambda_n,W(\cdot-u))=F(v+\Lambda_n,W),
\end{displaymath}
where $W(\cdot-u)$ denotes the functional defined by $A\mapsto W(A-u)$ for any Borel set $A$. We note that $\mu_\star$ is a stationary white-noise functional for each $\star\in\{\Vol,\SA,\EC\}$ (courtesy of the white noise representation for $f$).

Our first step is to apply (a slight generalisation of) the classical martingale-array CLT to the stationary white-noise functional $\mu$. Let $\preceq$ denote the standard lexicographic ordering on $\Z^d$. We define a family of $\sigma$-algebras $(\mathcal{F}_v)_{v\in\Z^d}$ by $\mathcal{F}_v:=\sigma(W_u\;|\;u\preceq v)$ and a collection of random variables
\begin{displaymath}
    S_{n,v}:=\frac{\E[\mu(\Lambda_n)\;|\;\mathcal{F}_v]-\E[\mu(\Lambda_n)]}{(2n)^{d/2}}\qquad n\in\N,v\in\Z^d.
\end{displaymath}
This collection forms a martingale array with respect to the lexicographic ordering (we define this notion precisely in Section~\ref{s:CompletingProof}). Moreover if we take each coordinate of $v$ to infinity then one can show that $S_{n,v}$ converges to $S_{n,\infty^\ast}:=(2n)^{-d/2}(\mu(\Lambda_n)-\E[\mu(\Lambda_n)])$. For $v\in\Z^d$ let $v^-$ denote the element of $\Z^d$ immediately preceding $v$ in the lexicographic ordering. We write $U_{n,v}:=S_{n,v}-S_{n,v^-}$ for the increment of our lexicographic martingale array. The martingale CLT states that $S_{n,\infty^\ast}$ is asymptotically Gaussian, provided that the increments $U_{n,v}$ satisfy certain probabilistic bounds. We show that these follow from corresponding bounds on the change in $\mu$ when locally resampling the white noise, which we now describe.

For $v\in\Z^d$ let $\widetilde{W}^{(v)}$ denote $W$ after resampling $W_v$ independently. That is, let $W^\prime$ be an independent copy of $W$ and for any Borel set $A$ we define
\begin{displaymath}
    \widetilde{W}^{(v)}(A)=W(A\setminus B_v)+W^\prime(A\cap B_v).
\end{displaymath}
We then define
\begin{displaymath}
    \Delta_v(u+\Lambda_n)=F(u+\Lambda_n,W)-F(u+\Lambda_n,\widetilde{W}^{(v)})
\end{displaymath}
to be the change in our functional when resampling the white noise on the cube $B_v$. Then by definition of $\mathcal{F}_v$
\begin{displaymath}
    U_{n,v}=(2n)^{-d/2}\E[\Delta_v(\Lambda_n)\;|\;\mathcal{F}_v]\quad\text{a.s.},
\end{displaymath}
which allows us to relate the conditions of the martingale CLT to $\Delta_v(\Lambda_n)$. By assuming also that $\mu$ is additive over distinct unit cubes, we finally show that a CLT holds for $S_{n,\infty^\ast}$ provided that for all $v,w\in\Z^d$
\begin{equation}\label{e:outline_sufficient}
    \E\left[\lvert\Delta_v(B_w)\rvert+\lvert\Delta_v(B_w)\rvert^{2+\epsilon}\right]\leq C(1+\lvert v-w\rvert)^{-3d-\delta}
\end{equation}
where $C,\epsilon,\delta>0$ are independent of $v$ and $w$.

\medskip\underline{Application to unbounded component:} Theorem~\ref{t:clt} is proven by verifying condition \eqref{e:outline_sufficient} for $\mu_\star$ when $\star\in\{\Vol,\SA,\EC\}$. In this setting, resampling the white noise on $B_v$ is equivalent to replacing $f$ by $\widetilde{f}_v:=q\ast\widetilde{W}^{(v)}$. We can control the difference between the excursion sets of these two functions using a basic Morse theoretic argument: consider the family of excursion sets $\{(1-t)f+t\widetilde{f}_v\geq\ell\}$ for $t\in[0,1]$, as $t$ increases, the level/excursion sets will deform continuously unless they pass through a critical point.

We now focus on the case $\star=\Vol$ and ask, how can the volume of the unbounded component contained in some fixed cube $B_w$ change as we vary $t$? If the level sets $\{(1-t)f+t\widetilde{f}_v=\ell\}$ pass through any critical points inside $B_w$ (including `boundary critical points' which will be defined later), then their topology may change (see case (i) in Figure~\ref{fig:LocalvsNonLocal}). We call such a cube \emph{locally unstable}. In this case we do not know which components of $\{f\geq\ell\}\cap B_w$ and $\{\widetilde{f}_v\geq\ell\}\cap B_w$ are contained in the corresponding unbounded components. We say that there is a \emph{local topological contribution} to the change in volume of the unbounded component and we bound this contribution trivially:
\begin{equation}\label{e:TrivialBound}
    \lvert\Delta_v(B_w)\rvert\leq 1.
\end{equation}

If the level sets do not pass through any critical points, then their topology inside $B_w$ is preserved and in this case the volume of the unbounded component (restricted to $B_w$) can change due to excursion components changing in volume (see case (ii) in Figure~\ref{fig:LocalvsNonLocal}). We describe this as a \emph{geometric contribution} to the change in volume. It is not difficult to control this contribution in terms of the regions where $f-\ell$ is small (relative to $f-\widetilde{f}_v$).

However there is one more way in which the volume of the unbounded component may change. Even if the topology of level sets within $B_w$ is unchanged, it is possible that topological changes elsewhere will disconnect some of the excursion components in $B_w$ from the unbounded component (see case (iii) in Figure~\ref{fig:LocalvsNonLocal}). We describe this as a \emph{non-local topological contribution}. In this case one of the components of $\{f\geq\ell\}\cap B_w$ must be contained in $\{f\geq\ell\}_\infty$ and the corresponding component of $\{\widetilde{f}_v\geq\ell\}\cap B_w$ must not be contained in $\{\widetilde{f}_v\geq\ell\}_\infty$ (or vice versa). This implies that $B_w$ must be connected by a bounded component of $\{f\geq\ell\}$ (or $\{\widetilde{f}_v\geq\ell\}$) to some locally unstable cube $B_u$. We note that $B_u$ can, in principle, be arbitrarily far from $B_w$. Once again, we bound such a contribution trivially by one.
\begin{figure}[ht]
    \centering
\begin{tikzpicture}[scale=0.8]
\draw (-1,-1) rectangle (1,1);
\node[left] at (-1,0.6) {$B_w$};
\draw[thick,pattern=north west lines,pattern color=gray] plot [smooth,tension=0.5] coordinates {(3/2,0.8)(1,0.2)(0.8,0)(0.6,0)(0.4,0.2)(0.2,0.2)(0,0)(-0.1,-0.2)(0,-0.4)(0.2,-0.6)(0.4,-0.6)(0.6,-0.4)(0.8,-0.4)(1,-0.6)(3/2,-1.2)};
\node[fill=white] at (1.5,-0.2) {$\infty$};
\draw[thick,pattern=north west lines,pattern color=gray] (0.4,1)circle (0.4);
\node[below] at (-1,-1.2) {$\{f\geq\ell\}_\infty$};
\draw (-1,-1.2)--(0.2,-0.4);
\node[above] at (-1,1.4) {$\{f\geq\ell\}_{<\infty}$};
\draw (-1,1.4)--(0.2,1.1);

\draw[-{Triangle[width=18pt,length=8pt]}, line width=10pt](-3,-2.5) -- (-4,-3.5);
\draw[-{Triangle[width=18pt,length=8pt]}, line width=10pt](0,-2.5) -- (0,-3.5);
\draw[-{Triangle[width=18pt,length=8pt]}, line width=10pt](3,-2.5) -- (4,-3.5);

\begin{scope}[shift={(-6,-6)}]
\node at (-2.8,1.5) {(i)};
\draw (-1,-1) rectangle (1,1);
\node[left] at (-1,0.6) {$B_w$};

\draw[thick,pattern=north west lines,pattern color=gray] (0.4,1)circle (0.2);
\draw[thick,pattern=north west lines,pattern color=gray] plot [smooth,tension=0.5] coordinates {(3/2,0.7)(1,0.1)(0.9,0)(0.8,-0.2)(0.9,-0.4)(1,-0.5)(3/2,-1.1)};
\draw[thick,pattern=north west lines,pattern color=gray] plot [smooth cycle,tension=0.5] coordinates {(0.6,-0.2)(0.5,0)(0.4,0.1)(0.2,0.1)(0.1,0)(0,-0.2)(0.1,-0.4)(0.2,-0.5)(0.4,-0.5)(0.5,-0.4)};
\node[fill=white] at (1.5,-0.2) {$\infty$};
\node[below] at (-1,-1.2) {$\{\widetilde{f}_v\geq\ell\}_\infty$};
\draw (-1,-1.2)--(1.1,-0.5);
\node[above] at (-1,1.4) {$\{\widetilde{f}_v\geq\ell\}_{<\infty}$};
\draw (-1,1.4)--(0.3,1.1);
\draw (-1,1.4)--(0.2,-0.1);
\end{scope}

\begin{scope}[shift={(0,-6)}]
\draw (-1,-1) rectangle (1,1);
\node[left] at (-1,0.6) {$B_w$};
\node at (-2.8,1.5) {(ii)};

\draw[dashed] plot [smooth,tension=0.5] coordinates {(3/2,0.8)(1,0.2)(0.8,0)(0.6,0)(0.4,0.2)(0.2,0.2)(0,0)(-0.1,-0.2)(0,-0.4)(0.2,-0.6)(0.4,-0.6)(0.6,-0.4)(0.8,-0.4)(1,-0.6)(3/2,-1.2)};
\draw[dashed] (0.4,1)circle (0.4);

\draw[thick,pattern=north west lines,pattern color=gray] (0.4,1)circle (0.2);
\draw[thick,pattern=north west lines,pattern color=gray] plot [smooth,tension=0.5] coordinates {(3/2,0.7)(1,0.1)(0.9,0)(0.7,-0.1)(0.5,0)(0.4,0.1)(0.2,0.1)(0.1,0)(0,-0.2)(0.1,-0.4)(0.2,-0.5)(0.4,-0.5)(0.5,-0.4)(0.7,-0.3)(0.9,-0.4)(1,-0.5)(3/2,-1.1)};
\node[fill=white] at (1.5,-0.2) {$\infty$};
\node[below] at (-1,-1.2) {$\{\widetilde{f}_v\geq\ell\}_\infty$};
\draw (-1,-1.2)--(1.1,-0.5);
\node[above] at (-1,1.4) {$\{\widetilde{f}_v\geq\ell\}_{<\infty}$};
\draw (-1,1.4)--(0.3,1.1);

\clip (-1,-1) rectangle (1,1);
\fill[color=gray,opacity=0.5] plot [smooth,tension=0.5] coordinates {(3/2,0.8)(1,0.2)(0.8,0)(0.6,0)(0.4,0.2)(0.2,0.2)(0,0)(-0.1,-0.2)(0,-0.4)(0.2,-0.6)(0.4,-0.6)(0.6,-0.4)(0.8,-0.4)(1,-0.6)(3/2,-1.2)(3/2,-1.1)(1,-0.5)(0.9,-0.4)(0.7,-0.3)(0.5,-0.4)(0.4,-0.5)(0.2,-0.5)(0.1,-0.4)(0,-0.2)(0.1,0)(0.2,0.1)(0.4,0.1)(0.5,0)(0.7,-0.1)(0.9,0)(1,0.1)(3/2,0.7)};

\draw ($(1,2)+(90:0.6)$)arc (90:450:0.4);
\fill[color=gray,opacity=0.5] ($(0.4,1)+(90:0.4)$)arc (90:450:0.4)--($(0.4,1)+(450:0.2)$) arc(450:90:0.2);
\draw[dashed] (0.4,1)circle (0.4);
\end{scope}

\begin{scope}[shift={(6,-6)}]
\draw (-1,-1) rectangle (1,1);
\node[left] at (-1,0.6) {$B_w$};
\node at (-2.8,1.5) {(iii)};

\draw[thick] plot [smooth,tension=0.5] coordinates {(-0.3,1.5)(-0.2,1.2)(-0.1,0.9)(0.1,0.7)(0.3,0.6)(0.5,0.6)(0.7,0.7)(0.9,1)(1,1.1)(1.2,1.1)(1.3,1)(1,0.3)(0.8,0.1)(0.6,0.1)(0.4,0.3)(0.2,0.3)(0,0.1)(-0.1,-0.2)(0,-0.5)(0.2,-0.7)(0.4,-0.7)(0.6,-0.5)(0.8,-0.5)(1,-0.7)(3/2,-1.4)};

\fill[pattern=north west lines,pattern color=gray] plot [smooth cycle,tension=0.5] coordinates {(-0.3,1.5)(-0.2,1.2)(-0.1,0.9)(0.1,0.7)(0.3,0.6)(0.5,0.6)(0.7,0.7)(0.9,1)(1,1.1)(1.2,1.1)(1.3,1)(1,0.3)(0.8,0.1)(0.6,0.1)(0.4,0.3)(0.2,0.3)(0,0.1)(-0.1,-0.2)(0,-0.5)(0.2,-0.7)(0.4,-0.7)(0.6,-0.5)(0.8,-0.5)(1,-0.7)(3/2,-1.4)(1.7,-1.3)(1.8,-0.5)(1.8,0.7)(1.7,1.3)(1,1.5)(0.2,1.6)};

\node[fill=white] at (1.5,-0.2) {$\infty$};
\node[below] at (-1,-1.2) {$\{\widetilde{f}_v\geq\ell\}_\infty$};
\draw (-1,-1.2)--(1.1,-0.5);
\draw (-1,-1.2)--(0.3,1.1);

\end{scope}

\end{tikzpicture}

    \caption{Three ways in which the volume of the unbounded component restricted to a cube can change: in case (i) the topology within the cube changes and we bound $\lvert\Delta_v(B_w)\rvert$ by $1$. In case (ii) there is no topological change in $B_w$ or in any cube which is connected to $B_w$ by a finite excursion component so we bound $\lvert\Delta_v(B_w)\rvert$ by the volume of the solid grey region in the figure. In case (iii) there is a topological change outside $B_w$ affecting which components inside $B_w$ are part of the unbounded component, so we bound $\lvert\Delta_v(B_w)\rvert$ by $1$.}
    \label{fig:LocalvsNonLocal}
\end{figure}
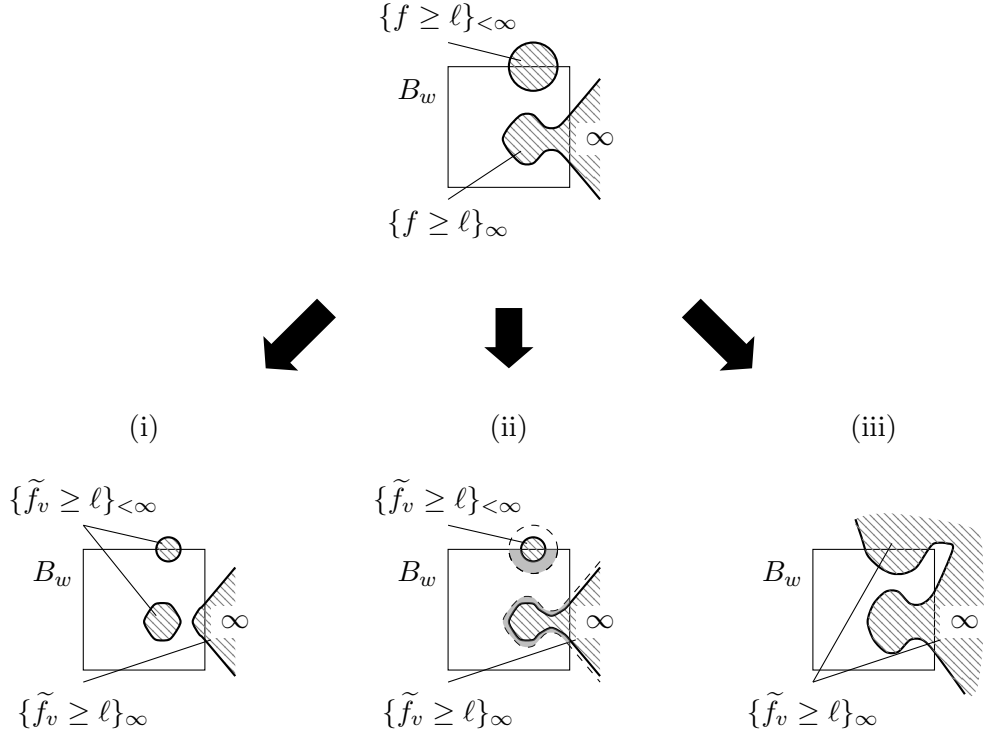

To prove condition \eqref{e:outline_sufficient} for $\mu_\Vol$, we simply sum each of the three contributions above. This requires us to have quantitative decay bounds on (i) the probability of $B_w$ being locally unstable when we resample $W_v$ as $\lvert w-v\rvert\to\infty$ and (ii) the probability of $B_w$ being connected to $B_u$ by a bounded component of $\{f\geq\ell\}$ as $\lvert u-w\rvert\to\infty$. The latter is precisely Assumption~\ref{a:Decay}. The former follows from the fact that
\begin{equation}\label{e:PerturbDef}
    p_v(x):=f(x)-\widetilde{f}_v(x)=\int_{B_v}q(x-u)\;d(W-\widetilde{W}^{(v)})(u)
\end{equation}
and its derivatives are small when $x\in B_w$, which in turn follows from our decay assumptions on $q$.

We prove condition~\eqref{e:outline_sufficient} for $\star=\SA$ and $\star=\EC$ in a very similar way by decomposing the change into local/non-local topological and geometric contributions. In place of the trivial upper bound \eqref{e:TrivialBound} we use the total surface area for $f$ and $\widetilde{f}_v$ in $B_w$ or their total number of critical points when $\star=\SA,\EC$ respectively. This is the underlying reason why Theorem~\ref{t:clt} requires stronger covariance decay (i.e.\ larger values of $\beta$) when $\star=\SA,\EC$ as compared to $\star=\Vol$: our argument interpolates between moment bounds for these upper bounds and decay of the probability that a cube is unstable. The finiteness of moments of surface area and critical points follows from smoothness of the underlying field (i.e.\ larger $k$ in Assumption~\ref{a:clt}) whereas the trivial constant bound on the volume has finite moments of all order without any smoothness assumptions.

\medskip\underline{Positivity of variance:} The proof of Theorem~\ref{t:clt} gives the following semi-explicit expression for the limiting variance:
\begin{equation}\label{e:LimitingVariance}
    \sigma_\star^2(\ell)=\E\Big[\E\big[\lim_{n\to\infty}\Delta_0(\Lambda_n)\;|\;\mathcal{F}_0\big]^2\Big]=\E\Big[\E\big[\lim_{n\to\infty}\mu_\star(\Lambda_n,f)-\mu_\star(\Lambda_n,\widetilde{f}_0)\;|\;\mathcal{F}_0\big]^2\Big].
\end{equation}
Applying the result to a rescaled version of the field $f$, we show that \eqref{e:LimitingVariance} still holds if we instead resample the white noise on $\Lambda_m$, for some large but fixed $m$, rather than $B_0$.  We then find a lower bound for $\sigma^2$ by resampling only the \emph{average value} of the white noise on $\Lambda_m$ (i.e.\ the value of $W(\Lambda_m)=\int\ind_{\Lambda_m}dW(u)$). Since $f=q\ast W$, this resampling is equivalent to adding $Z_0q\ast\ind_{\Lambda_m}$ to $f$ where $Z_0$ is a standard Gaussian variable. When $\int q(x)\;dx\neq 0$ and $m$ is large, the latter is roughly equivalent to perturbing $f$ by a constant on $\Lambda_{m}$ and by some decaying term on $\R^d\setminus\Lambda_{m}$.

Assuming that there is some $\ell^\prime$ for which $c_\star(\ell^\prime)\neq c_\star(\ell)$, where $c_\star$ is defined in Theorem~\ref{t:FirstOrder}, the perturbation on $\Lambda_{m}$ can cause a change in $\mu_\star(\Lambda_n)$ (where $n>>m$) of order $m^d$. Finally we show that the effect on $\mu_\star(\Lambda_n)$  of the perturbation outside $\Lambda_{m}$ is of order $o(m^d)$, so that by taking $m$ sufficiently large the overall effect of the perturbation is non-zero, yielding a positive variance. The latter argument relies on having a statement about truncated connection decay for a perturbed version of $f$. Such a statement follows from subcritical decay (i.e.\ the first part of Theorem~\ref{t:PhaseTransition}) when $\ell<-\ell_c$, which explains why we impose this assumption in Theorem~\ref{t:posvar}. However we note that the statement could alternatively be proven using a `sprinkled' version of truncated arm decay which would conjecturally hold for all $\ell<\ell_c$.

\subsection{Discussion}\label{s:Discussion}
We now discuss some possible extensions of our work and related open questions.

\medskip\underline{Related work:} Our proof of Theorem~\ref{t:clt} builds upon a versatile CLT for stationary functionals of spatial white noise due to Penrose \cite{pen01}. This result unified the approach of several other works which applied the classical martingale CLT to spatial probabilistic processes \cite{kl96,kz97,lee97,zha01}. In particular we note that Zhang \cite{zha01} proved a CLT for the size of the unbounded component of Bernoulli percolation on $\Z^d$ (a discrete analogue of our volume CLT) and Penrose \cite{pen01} proved a corresponding result for the largest component in a finite box.

Additional complications arise in our setting: the functionals $\mu_\star$ have infinite range of dependence on the underlying white noise process and controlling the behaviour of the unbounded component in this continuous setting presents some technical difficulties. Moreover, verifying positivity of the limiting variance requires new techniques. A central limit theorem for the number of connected components of the excursion/level set of a Gaussian field was proven in \cite{bmm24} using methods closely related to those in this work. Although controlling changes in the latter functional under perturbation is simpler as there are no geometric or non-local topological contributions. More recently these methods have been adapted to prove a functional central limit theorem for more general topological functionals (including the Betti numbers) \cite{hlr24}.

\medskip\underline{Truncated arm decay:} Whilst our results are valid for all supercritical levels in the planar case, the assumption of truncated connection decay has not been verified (for all levels) in higher dimensions. Based on the behaviour of Bernoulli percolation \cite{ccn87} and similarities between this model and fast-decay Gaussian fields (see \cite{bel23} and references therein) it is natural to conjecture that truncated connection probabilities should decay exponentially at any supercritical level for fields satisfying Assumption~\ref{a:clt}. That is, we would expect for each $\ell<\ell_c$ there exists $C,c>0$ such that
\begin{displaymath}
    \P\Big(\Lambda_1\overset{\{f\geq\ell\}_{<\infty}}{\longleftrightarrow}\partial\Lambda_n\Big)<Ce^{-cn}\qquad\text{for all }n\in\N.
\end{displaymath}
See \cite[Section~1.3]{sev21} for further details and references. A proof of the latter statement, which seems to require new ideas when compared to the case of Bernoulli percolation, would immediately extend our CLTs to all supercritical levels in $d\geq 3$.

\medskip\underline{Martingale methods for non-local functionals:} A geometric functional of a field is said to be \emph{local} if it can be expressed as an integral of a pointwise function of the field (and possibly its derivatives). For example, the volume of the excursion set of a field is local but the volume of the unbounded component of the excursion set is \emph{non-local} since one needs to know which parts of the excursion set belong to this component to compute the volume. The statistics of many local functionals are well understood: under conditions roughly equivalent to Assumption~\ref{a:clt}, CLTs have been proven for local functionals including the Lipschitz-Killing curvatures of excursion sets \cite{el16,mul17,kv18} and the number of critical points \cite{nic17}. These results are proven using the Wiener chaos expansion (see \cite[Chapter 2]{jan97} for background). The Wiener chaos expansion of non-local functionals is generally intractable, and so these quantities have proven more challenging to analyse. Although recently the chaos expansion method has been successfully applied to a non-local functional of a discrete Gaussian field \cite{mm25}.

One of the key takeaways we would like to impress upon the reader, is the potential value of martingale methods in studying non-local functionals. Our results show that such methods offer significant insight into $\mu_\Vol$, $\mu_\EC$ and $\mu_\SA$ (all of which are non-local). Moreover the same general approach was applied to the component count (which is also non-local) in \cite{bmm24} and to higher order Betti numbers in \cite{hlr24}. Theorem~\ref{t:GenCLT} gives general sufficient conditions for such functionals to satisfy a CLT. In Proposition~\ref{p:Sufficient_Moments}, we give a simpler set of sufficient conditions for functionals that are local when restricted to the unbounded component. With a little work, this result would likely allow one to prove CLTs for the following quantities (restricted to $\Lambda_n$ as $n\to\infty$):
\begin{itemize}
    \item the number of critical points of $f$ in $\{f\geq\ell\}_\infty$,
    \item the volume of $\{f\geq\ell\}_\infty$ restricted to some fixed lower dimensional hyperplane,
    \item the volume of $\{f_1\geq\ell\}_\infty\cap\{f_2\geq\ell\}_\infty$ for two independent fields $f_1$ and $f_2$.
\end{itemize}
One could also study `supercritical geometry' of Gaussian fields via the unbounded components of $\{f=\ell\}$ or $\{\ell_1\leq f\leq\ell_2\}$. These components have been proven to exist for certain fields and levels \cite{drrv23,mrv23} although more work is needed for a full characterisation.

The use of martingale methods in our work relies on the existence of a convolution-white noise representation (i.e.\ $f=q\ast W$ for suitable $q$). This imposes certain restrictions on $f$, including the fact that it must have integrable covariance function. It is an open question as to whether there are alternative representations of more general Gaussian fields which would also be amenable to martingale arguments.

\medskip\underline{Geometry and dependence structure:} Recall that the motivation for our work (Questions~\ref{q:Geometry} and~\ref{q:Dependence}) was to understand the geometry of the unbounded excursion component and its dependence on the distribution of $f$. Our results do not provide any evidence for major differences in supercritical geometric behaviour amongst fields, but known results for other functionals suggest that we should expect this. Let us consider three broad classes of dependence structure and their resulting geometric properties:
\begin{enumerate}
    \item \textbf{Short-range correlations}\\
    We call a (smooth, stationary) Gaussian field \emph{short-range correlated} if $K\in L^1(\R^d)$ (where $K$ is the covariance function). Morally speaking, this is roughly equivalent to Assumption~\ref{a:clt} for some $\beta>d$. The quintessential example is the Bargmann-Fock field although we also mention the family of Cauchy fields with covariance function $K_\beta(x):=(1+\lvert x\rvert^2)^{-\beta/2}$ for $\beta>d$.

    A variety of geometric functionals restricted to growing boxes $\Lambda_n$ are known to have similar behaviour for short-range correlated fields; namely, the mean and variance of these quantities scale like the volume $n^d$ and an asymptotic CLT holds with Gaussian limit. Specifically under Assumption~\ref{a:clt}, this behaviour has been verified for the Lipschitz-Killing curvatures \cite{el16,mul17,kv18}, the number of critical points \cite{nic17,agls25} and the component count \cite{bmm24} (although the last result was proven only in the case that $\beta>3d$).
    \item \textbf{Regularly varying long-range correlations}\\
    We say that a field $f$ is \emph{regularly varying with index $\beta\in(0,d)$ and remainder $L$} if $K(x)=\lvert x\rvert^{-\beta}L(\lvert x\rvert)$ where $L$ is slowly varying at infinity. (The reader unfamiliar with this terminology can consult \cite{bgt87} for a definition or just think of $K$ as decaying like $\lvert x\rvert^{-\beta}$ at infinity.). Cauchy fields with covariance kernel $K_\beta$ for $\beta\in(0,d)$ fall within this class.

    Geometric properties of such fields are less well characterised compared to the short-range correlated case but may be expected to generally exhibit super-volume order variance scaling. That is, the variance of the geometric functionals considered above (Lipschitz-Killing curvatures, component count etc) restricted to the box $\Lambda_n$ could have variance of order $n^{2d-\beta}L(n)$ for some slowly varying function $L$. In general the limiting fluctuations may be Gaussian or non-Gaussian; see \cite{leo99} for a thorough development of such results for local functionals. Analogous behaviour has recently been proven in \cite{mm25} for a non-local functional of a discrete Gaussian field, while upper and lower variance bounds of the conjectured order have been established for the component count of smooth fields \cite{bmm22,bmm24b}.
    \item \textbf{Oscillating long-range correlations}\\
    We say that a field has \emph{oscillating long-range correlations} if $K$ is not integrable and can take negative values. The most important representative of this class is the \emph{monochromatic random wave}: the field with spectral measure supported uniformly on $\mathbb{S}^{d-1}$.

    Fascinating geometric results have been proven for monochromatic random waves, mainly in the two-dimensional case. The broad picture which emerges for a number of functionals related to excursion sets is the following: at most levels $\ell$ the functionals of $\{f\geq\ell\}$ have super-volume order scaling for the variance and a CLT is known. However for a small number of \emph{anomalous levels} the functionals have variance of lower order (typically proportional to the volume, up to some logarithmic factors) and CLTs are sometimes known in these cases. This pattern of behaviour has been observed for the area \cite{mw11,mar14}, boundary length \cite{npr19,ros16}, Euler characteristic \cite{cmw16,cm18} and number of critical points \cite{cmw16b,cm20} of two-dimensional monochromatic random waves. (To be more precise, most of these results hold for families of Gaussian fields on the sphere which converge locally to the monochromatic random wave but we expect the behaviour to be similar.) Progress has been made recently in extending these results to more general covariance structures using spectral arguments \cite{mn24,gas25}.

    The phenomenon of anomalous levels was first observed in \cite{ber02} for the length of the nodal set (i.e.\ the zero level set) and is known, at least in this case, as \emph{Berry cancellation}. The previous results have been proven using either the Kac-Rice formula (see \cite[Chapter~6]{aw09} for background) or the Wiener chaos expansion. Both of these methods rely on locality of the functionals.
\end{enumerate}
So should we expect the previous relationship between geometric functionals and dependence structure to manifest for the unbounded excursion component?

Our results verify analogous behaviour for a fairly general subset of the short-range correlated fields, so we expect this holds for the entire short-range correlated class. For a large class of fields with regularly varying long-range correlations, a sharp phase transition is known to occur \cite{ms24,mui24} whilst for the monochromatic random wave, a non-trivial phase transition has been proven \cite{mrv23,mui23}. Hence one can legitimately ask about the geometry of the unbounded excursion component, but our results provide no insight as such fields fail to have a convolution-white noise decomposition. We can only pose the following questions:
\begin{question}
    If a field is regularly varying with index $\beta\in(0,d)$, do the local functionals of its unbounded excursion component restricted to $\Lambda_n$ have variance of order $n^{2d-\beta}$? Are there anomalous levels for which the variance has lower order? Are the limiting fluctuations Gaussian?
\end{question}
\begin{question}
    For monochromatic random waves in dimension $d$, do the local functionals of the unbounded excursion component restricted to $\Lambda_n$ have variance of order $n^{d+1}$? Are there anomalous levels with lower order variance? Are the limiting fluctuations Gaussian?
\end{question}
The suggested variance order $n^{d+1}$ in the latter case is based on results for the volume of excursion sets of a model related to monochromatic random waves in dimensions $d\geq 2$ \cite{mar15}.

Proving the existence of anomalous levels for non-local variables, such as those related to the unbounded component, could be particularly challenging as the Wiener chaos expansion may be intractable. We note that upper and lower bounds on the variance of the component count (a non-local variable) have been proven for fields with long-range correlations \cite{bmm22,bmm24b} and the former gives a necessary condition for anomalous levels. However it seems that the methods used in these works cannot be extended to prove the existence of anomalous levels. Whilst our methods cannot be applied directly to answer any of the above questions, the general approach of using an abstract martingale CLT along with some decomposition of the field might hold promise.

\subsection{Acknowledgements}
Part of this work was completed at the Department of Mathematics and Statistics at the University of Helsinki and was supported by the European Research Council Advanced Grant QFPROBA (grant number 741487). I would like to thank Stephen Muirhead for suggesting Theorem~\ref{t:FirstOrder} and conversations which led to generalising Theorem~\ref{t:posvar}. I would also like to thank Dmitry Beliaev for comments which improved the clarity of Section~\ref{s:introduction} and previous collaboration which inspired this work. Finally I would like to thank two anonymous reviewers for comments and corrections improving this work and in particular for suggesting the more general version of Theorem~\ref{t:FirstOrder} which now appears.

\section{A CLT for white noise functionals}\label{s:CompletingProof}
In this section we prove a CLT for stationary white noise functionals which generalises the result of Penrose \cite{pen01} to the case of infinite range of dependence. We then derive a simpler sufficient condition for this CLT to hold when the white noise functional is (approximately) additive.

We begin by stating an abstract CLT for martingale arrays. Recall that $\preceq$ denotes the standard lexicographic ordering on $\Z^d$. Let $\{\mathcal{F}_{n,v}\;|\;v\in\Z^d,n\in\N\}$ be a set of $\sigma$-algebras such that for each $n$, $v\preceq w$ implies $\mathcal{F}_{n,v}\subseteq\mathcal{F}_{n,w}$. We say that a collection of random variables $\{S_{n,v}\;|\;v\in\Z^d,n\in\N\}$ is a \emph{lexicographic martingale array} if the following holds:
\begin{enumerate}
    \item $S_{n,v}$ is $\mathcal{F}_{n,v}$-measurable for each $n$ and $v$,
    \item if $v\preceq w$ then $\E[S_{n,w}\;|\;\mathcal{F}_{n,v}]=S_{n,v}$ for each $n$.
\end{enumerate}
We say that the array is mean-zero if $\E[S_{n,v}]=0$ for all $n,v$ and square integrable if $\sup_{v\in\Z^d}\E[S_{n,v}^2]<\infty$ for each $n\in\N$. We say that a sequence of points in $\Z^d$ converges to $\pm\infty^*$ if all coordinates of the points converge to $\pm\infty$. For a square-integrable martingale array, the $L^p$-convergence theorems for forward/reverse martingales imply that the following limits exist
\begin{displaymath}
    S_{n,\infty^*}:=\lim_{v\to\infty^*}S_{n,v}\qquad\text{and}\qquad S_{n,-\infty^*}:=\lim_{v\to-\infty^*}S_{n,v}
\end{displaymath}
where convergence occurs almost surely and in $L^2$.

Finally we wish to impose a condition on our array which ensures that the martingale (or equivalently the family of $\sigma$-algebras $\mathcal{F}_{n,v}$) does not make any `jumps' when some coordinates tend to infinity. (To illustrate; for $d=2$ we would like to know that $\lim_{k\to\infty}S_{n,(0,k)}=\lim_{j\to-\infty}S_{n,(1,j)}$ almost surely, so that the important behaviour of the martingale can be captured on bounded domains.) We therefore say that $S_{n,v}$ is \emph{regular at infinity} if for all $n\in\N$, $i\in\{1,\dots,d-1\}$ and $(v_1,\dots,v_i)\in\Z^i$
\begin{displaymath}
    \lim_{v_{i+1},\dots,v_d\to\infty}S_{n,v}=\lim_{v_{i+1},\dots,v_d\to-\infty}S_{n,v^\prime}
\end{displaymath}
almost surely where $v=(v_1,\dots,v_d)$ and $v^\prime=(v_1,\dots,v_{i-1},v_i+1,v_{i+1},\dots,v_d)$. For $v\in\Z^d$ we let $v^-$ denote the element of $\Z^d$ immediately preceding $v$ in the lexicographic order and we define the differences of the martingale array as $U_{n,v}:=S_{n,v}-S_{n,v^-}$. A straightforward consequence of being regular at infinity is that
\begin{displaymath}
    S_{n,\infty^*}-S_{n,-\infty^*}=\sum_{v\in\Z^d}U_{n,v}
\end{displaymath}
whenever $\sum_{v\in\Z^d}\lvert U_{n,v}\rvert<\infty$.

The following result was proven in \cite{bmm24}. It is a straightforward generalisation of a classical CLT for finite martingale arrays which can be found, for example, in \cite[Chapter~3]{hh80}.

\begin{theorem}\label{t:Lex_MCLT}
Let $\{S_{n,v},\mathcal{F}_{n,v}:v\in\Z^d, n\in\N\}$ be a mean-zero square-integrable lexicographic martingale array which is regular at infinity such that $S_{n,-\infty^*}=0$ for each $n$. Suppose that
\begin{align}
&\sup_{v\in\Z^d}\lvert U_{n,v}\rvert\xrightarrow{p}0\quad\text{as }n\to\infty\label{e:MCLT1}\\
&\sup_n\E\Big[\sup_{v\in\Z^d}U_{n,v}^2\Big]<\infty\label{e:MCLT2}\\
&\sum_{v\in\Z^d}U_{n,v}^2\xrightarrow{L^1}\eta^2\in[0,\infty)\quad\text{as }n\to\infty\label{e:MCLT3}\\
&\E\Big[\sum_{v\in\Z^d}\lvert U_{n,v}\rvert\Big]<\infty\quad\text{for all }n\in\N.\label{e:MCLT4}
\end{align}
Then $\mathrm{Var}[S_{n,\infty^*}]\to\eta^2$ and $S_{n,\infty^*}\xrightarrow{d}Z$ as $n\to\infty$ where $Z\sim\mathcal{N}(0,\eta^2)$.
\end{theorem}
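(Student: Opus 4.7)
The plan is to deduce Theorem~\ref{t:Lex_MCLT} from the classical martingale CLT for one-dimensional finite arrays \cite[Theorem~3.2]{hh80} by truncating $\Z^d$ to finite boxes and passing to the limit. The two features distinguishing our setting from that classical statement are that the index set $\Z^d$ is infinite and, for $d\geq 2$, that the lexicographic order on $\Z^d$ is not order-isomorphic to $\Z$; both are controlled by the regularity-at-infinity hypothesis together with \eqref{e:MCLT4}.

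First I would use \eqref{e:MCLT4} to show that for each $n$ the series $\sum_{v\in\Z^d}U_{n,v}$ converges absolutely almost surely, and combine this with $S_{n,-\infty^*}=0$ and regularity at infinity to telescope to $S_{n,\infty^*}=\sum_{v\in\Z^d}U_{n,v}$ a.s. Next, for each $N\in\N$ set $\mathcal{I}_N:=[-N,N]^d\cap\Z^d$, enumerate it in lex order as $w_1^{(N)}\prec\cdots\prec w_{K_N}^{(N)}$, and form the finite one-dimensional martingale difference array
\begin{displaymath}
X_{n,k}^{(N)}:=S_{n,w_k^{(N)}}-S_{n,w_{k-1}^{(N)}}\qquad(1\leq k\leq K_N,\ S_{n,w_0^{(N)}}:=0)
\end{displaymath}
with respect to $\mathcal{F}_{n,w_k^{(N)}}$. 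Its partial sums telescope to $S_{n,(N,\ldots,N)}$, which converges in $L^2$ to $S_{n,\infty^*}$ as $N\to\infty$. Choosing $N=N(n)\to\infty$ slowly enough and applying the classical finite CLT to $\{X_{n,k}^{(N(n))}\}$ would give asymptotic normality of $S_{n,(N(n),\ldots,N(n))}$; the $L^2$ tail bound
\begin{displaymath}
\bigl\|S_{n,\infty^*}-S_{n,(N,\ldots,N)}\bigr\|_2^2=\E\Bigl[\sum_{v\not\in\mathcal{I}_N}U_{n,v}^2\Bigr],
\end{displaymath}
combined with \eqref{e:MCLT3}, then transfers the CLT to $S_{n,\infty^*}$ itself and likewise yields $\mathrm{Var}[S_{n,\infty^*}]\to\eta^2$.

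The main technical obstacle is reconciling the enumerated increments $X_{n,k}^{(N)}$ with the intrinsic increments $U_{n,v}$. For most $k$ these agree, but at ``slice boundaries'' of the enumeration---for instance when $w_{k-1}^{(N)}=(a,N,\ldots,N)$ and $w_k^{(N)}=(a+1,-N,\ldots,-N)$---the difference $X_{n,k}^{(N)}$ encodes the jump of the martingale across all lex-intermediate indices absent from $\mathcal{I}_N$, and so is not a priori small. This is precisely what regularity at infinity is designed for: it forces $\lim_{v_2,\ldots,v_d\to\infty}S_{n,(a,v_2,\ldots,v_d)}=\lim_{v_2,\ldots,v_d\to-\infty}S_{n,(a+1,v_2,\ldots,v_d)}$ almost surely, so these boundary contributions vanish as $N\to\infty$. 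With this in hand, verifying the maximum-increment and quadratic-variation hypotheses of \cite[Theorem~3.2]{hh80} for $\{X_{n,k}^{(N(n))}\}$ reduces, modulo negligible boundary terms, to \eqref{e:MCLT1}--\eqref{e:MCLT3}; condition \eqref{e:MCLT2} supplies the uniform integrability needed to make the limit exchange rigorous, and iterating the boundary-vanishing argument on each coordinate handles the multidimensional lex order inductively.
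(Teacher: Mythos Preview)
The paper does not supply its own proof of this theorem; it simply cites \cite{bmm24} and describes the result as ``a straightforward generalisation of a classical CLT for finite martingale arrays'' from \cite[Chapter~3]{hh80}. Your proposal is exactly such a reduction---truncate to a finite lex-ordered box, apply the classical result to the resulting one-dimensional array, and use regularity at infinity together with \eqref{e:MCLT4} to control the slice-boundary increments $X_{n,k}^{(N)}$ that aggregate infinitely many $U_{n,v}$---so it matches the intended route. One small inaccuracy: your displayed tail identity should be an inequality rather than an equality, since by orthogonality of martingale differences $\|S_{n,\infty^*}-S_{n,(N,\ldots,N)}\|_2^2=\sum_{v\succ(N,\ldots,N)}\E[U_{n,v}^2]$ and $\{v:v\succ(N,\ldots,N)\}\subsetneq\{v\notin\mathcal{I}_N\}$; the bound you need for the transfer step is unaffected.
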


We now state our generalised CLT. Recall that $W$ is a Gaussian white noise on $\R^d$ and $W_v$ denotes its restriction to the unit cube $B_v$ for $v\in\Z^d$. For each $n\in\N$ and $v\in\Z^d$ we let $\mu(v+\Lambda_n):=F(v+\Lambda_n,W)$ where $F$ is some deterministic functional. We say that $\mu$ is a \emph{stationary white-noise functional} if $\mu(v+\Lambda_n)$ is unchanged when we translate $v$ and $W$ by the same vector in $\Z^d$ (i.e.\ $F(u+v+\Lambda_n,W(\cdot-u))=F(v+\Lambda_n,W)$ for any $u\in\Z^d$). We define the differences of $\mu$ as
\begin{displaymath}
    \Delta_u(v+\Lambda_n)=F(v+\Lambda_n,W)-F(v+\Lambda_n,\widetilde{W}^{(u)})
\end{displaymath}
where we recall that $\widetilde{W}^{(u)}$ denotes the white noise $W$ with $W_u$ resampled independently.

\begin{theorem}\label{t:GenCLT}
    Let $\mu$ be a stationary white-noise functional satisfying the following:
\begin{enumerate}
\item (Finite second moments) $\E[\mu(v+\Lambda_n)^2]<\infty$ for all $v\in \Z^d$ and $n\in\N$,
\item (Stabilisation) there exists a random variable $\Delta_0$ such that for any sequence of cubes $D_n:=v_n+\Lambda_n$ satisfying $\liminf_nD_n=\R^d$ we have
\begin{displaymath}
\Delta_0(D_n)\to \Delta_0    
\end{displaymath}
in probability as $n\to\infty$,
\item (Bounded moments) There exists $\epsilon>0$ such that
\begin{displaymath}
\sup_{v\in\Z^d,n\in\N}\E\left[\lvert\Delta_v(\Lambda_n)\rvert^{2+\epsilon}\right]<\infty,
\end{displaymath}
\item (Moment decay) There exists $c,\epsilon,\zeta>0$ and $\gamma>\max\{d,\zeta\}$ such that for all $v\in\Z^d\setminus\Lambda_n$
\begin{displaymath}
\E\left[\lvert\Delta_v(\Lambda_n)\rvert+\lvert\Delta_v(\Lambda_n)\rvert^{2+\epsilon}\right]\leq c\;n^\zeta\;\dist(v,\Lambda_n)^{-\gamma}
\end{displaymath}
where $\dist(v,\Lambda_n)$ denotes the Euclidean distance from $v$ to $\Lambda_n$.
\end{enumerate}
Let $\mathcal{F}_v$ be the $\sigma$-algebra generated by $\{W_u\;|\;u\preceq v\}$. Then as $n\to\infty$,
\begin{displaymath}
    \frac{\mathrm{Var}[\mu(\Lambda_n)]}{(2n)^d}\to\sigma^2\quad\text{and}\quad \frac{\mu(\Lambda_n)-\E[\mu(\Lambda_n)]}{(2n)^{d/2}}\xrightarrow{d}\sigma Z
\end{displaymath}
where $Z\sim\mathcal{N}(0,1)$ and $\sigma^2=\E\big[\E[\Delta_0|\mathcal{F}_0]^2\big]$.
\end{theorem}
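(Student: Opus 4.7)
The plan is to apply the abstract lexicographic martingale CLT (Theorem~\ref{t:Lex_MCLT}) to the array $S_{n,v}$ defined in the statement, taking $\mathcal{F}_{n,v}:=\mathcal{F}_v$. The structural prerequisites are straightforward: $(S_{n,v})$ is a square-integrable, mean-zero lex-martingale by the tower property together with assumption~(1); it is regular at infinity because the independence of $W$ on disjoint unit cubes makes the $\sigma$-algebras generated by sending coordinates to $\pm\infty^*$ agree across each successor jump in the lex order. Standard forward/reverse $L^2$-martingale convergence then gives $S_{n,-\infty^*}=0$ (via Kolmogorov's zero-one law applied to the tail of an independent product) and $S_{n,\infty^*}=(2n)^{-d/2}(\mu(\Lambda_n)-\E[\mu(\Lambda_n)])$ (via L\'evy's upward theorem). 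The crucial identity
\[
U_{n,v}=(2n)^{-d/2}\,\E[\Delta_v(\Lambda_n)\mid\mathcal{F}_v]\quad\text{a.s.}
\]
relating the martingale increments to the resampling differences follows because, conditional on $\mathcal{F}_v$, the noise $\widetilde{W}^{(v)}$ has the same conditional law as $W$ does given $\mathcal{F}_{v^-}$; hence $\E[\mu(\Lambda_n,\widetilde{W}^{(v)})\mid\mathcal{F}_v]=\E[\mu(\Lambda_n,W)\mid\mathcal{F}_{v^-}]$.

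With this identity, conditions~\eqref{e:MCLT1}, \eqref{e:MCLT2} and \eqref{e:MCLT4} reduce to moment estimates on $\Delta_v(\Lambda_n)$. Using Jensen, $\E[|U_{n,v}|^{2+\epsilon}]\leq(2n)^{-d(2+\epsilon)/2}\E[|\Delta_v(\Lambda_n)|^{2+\epsilon}]$. Condition~\eqref{e:MCLT4} then follows by splitting $\sum_v\E[|\Delta_v(\Lambda_n)|]$ into bulk terms (bounded by assumption~(3)) and tail terms (summable by the moment decay~(4) together with $\gamma>d$). A Markov inequality and union bound over $v\in\Z^d$, using the same dichotomy between the $(2n+1)^d$ bulk cubes and the decaying exterior, delivers~\eqref{e:MCLT1}, while \eqref{e:MCLT2} follows from $\E[\sup_v U_{n,v}^2]\leq\E[\sum_v U_{n,v}^2]$ once the latter is controlled (see below).

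The substantive step is the convergence of the quadratic variation, condition~\eqref{e:MCLT3}. The stationarity of $\mu$ combined with the translation-invariance of white noise yields $\E[U_{n,v}^2]=(2n)^{-d}a_n(v)$ where $a_n(v):=\E\bigl[\E[\Delta_0(\Lambda_n-v)\mid\mathcal{F}_0]^2\bigr]$, so the analysis of $\E[\sum_v U_{n,v}^2]$ reduces to a lattice sum over shifted cubes. I would partition $\Z^d$ into a deep bulk $\{v:\dist(v,\partial\Lambda_n)\geq r_n\}$ with $r_n\to\infty$ and $r_n=o(n)$, a boundary layer of width $r_n$, and the exterior $\Z^d\setminus\Lambda_n$. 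On the deep bulk the cube $\Lambda_n-v$ expands to $\R^d$ in the sense required by the stabilisation hypothesis, giving $\Delta_0(\Lambda_n-v)\to\Delta_0$ in probability; the bounded $(2+\epsilon)$-moment of~(3) upgrades this to $L^2$ convergence of the corresponding conditional expectations, so each deep-bulk term equals $\sigma^2+o(1)$ uniformly, and averaging contributes $\sigma^2$ to the limit. The boundary layer contributes $O(r_n/n)\to 0$, and the exterior contribution is controlled by H\"older-interpolating between the $L^1$ and $L^{2+\epsilon}$ bounds of condition~(4) to derive an $L^2$ bound for $\Delta_0(\Lambda_n-v)$, then summing over $v\notin\Lambda_n$ with the help of $\gamma>d$ (and, if necessary, choosing $r_n$ to absorb the $n^\zeta$ factor).

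The hardest step will be upgrading mean convergence $\E[\sum_v U_{n,v}^2]\to\sigma^2$ to genuine $L^1$ convergence of the random sum. Since the target $\sigma^2$ is deterministic, it is enough to prove convergence in probability and then invoke uniform integrability from the $(2+\epsilon)$-moment bound. Convergence in probability will come from an ergodic-theorem argument for the Bernoulli $\Z^d$-shift action on $W$: by stationarity, each deep-bulk summand $\E[\Delta_v(\Lambda_n)\mid\mathcal{F}_v]^2$ is the shift by $v$ of the single ergodic observable $\E[\Delta_0\mid\mathcal{F}_0]^2$, up to an error governed by the same stabilisation and moment-decay bounds used above. The bulk average then converges almost surely to $\E\bigl[\E[\Delta_0\mid\mathcal{F}_0]^2\bigr]=\sigma^2$, while the error and exterior contributions vanish in $L^1$ by the estimates above. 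With all four hypotheses of Theorem~\ref{t:Lex_MCLT} verified, its conclusion $\Var[S_{n,\infty^*}]\to\sigma^2$ and $S_{n,\infty^*}\xrightarrow{d}\sigma Z$ is exactly the statement of Theorem~\ref{t:GenCLT} after unwinding the normalisation.
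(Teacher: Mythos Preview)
Your proposal is correct and follows essentially the same route as the paper: apply the lexicographic martingale CLT via the identity $U_{n,v}=(2n)^{-d/2}\E[\Delta_v(\Lambda_n)\mid\mathcal{F}_v]$, dispose of \eqref{e:MCLT1}, \eqref{e:MCLT2}, \eqref{e:MCLT4} by bulk/exterior splitting with the moment bounds, and handle \eqref{e:MCLT3} by replacing deep-bulk summands with the stabilised observable $X_v^2=\E[\Delta_v\mid\mathcal{F}_v]^2=X_0^2(\tau_{-v}\xi)$ and invoking the multivariate ergodic theorem. One small correction: your appeal to ``uniform integrability from the $(2+\epsilon)$-moment bound'' for the full sum $\sum_v U_{n,v}^2$ is not quite right as stated, since that bound controls individual terms, not their sum; but this is harmless because the ergodic theorem you invoke (for $X_0^2\in L^{1+\epsilon/2}$) already delivers $L^1$ convergence of the bulk average directly, and the remaining pieces vanish in $L^1$ by your own estimates, so no separate UI step is needed.
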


\begin{remark}
    This result and its proof closely follow that of Penrose \cite[Theorem~2.1]{pen01} with two differences: first, Penrose considered more general sequences of growing domains whereas we work only with the cubes $(\Lambda_n)_{n\in\N}$ for simplicity, and second, Penrose assumed that $\mu(\Lambda_n)$ depends only on the white noise restricted to $\Lambda_n$ whereas we allow $\mu$ to have potentially infinite range of dependence. This is why we require a lexicographic version of the martingale CLT (i.e.\ Theorem~\ref{t:Lex_MCLT}) rather than the standard version. To control such infinite-range functionals, we impose condition (4) which is not required for Penrose's result.
\end{remark}


First we specify the martingale to which we will apply Theorem~\ref{t:Lex_MCLT}:

\begin{lemma}\label{l:Regular}
    Let $\mu$ satisfy the conditions of Theorem~\ref{t:GenCLT}, then
    \begin{displaymath}
        S_{n,v}:=(2n)^{-d/2}(\E[\mu(\Lambda_n)\;|\;\mathcal{F}_v]-\E[\mu(\Lambda_n)])\quad\text{and}\quad\mathcal{F}_{n,v}:=\mathcal{F}_v
    \end{displaymath}
    define a mean-zero, square-integrable lexicographic martingale array which is regular at infinity. Moreover $S_{n,-\infty^*}=0$ and $S_{n,\infty^*}=(2n)^{-d/2}(\mu(\Lambda_n)-\E[\mu(\Lambda_n)])$.
\end{lemma}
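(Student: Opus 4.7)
The plan is to verify each of the four claims by combining the tower property of conditional expectation with the $L^2$ convergence theorems for forward and reverse martingales, together with independence of white noise on disjoint cubes. The first three items are essentially routine; the subtle point is regularity at infinity.

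First I would check the lexicographic martingale array axioms. The $\mathcal{F}_v$-measurability of $S_{n,v}$ is immediate from the definition, and for $v\preceq w$ we have $\mathcal{F}_v\subseteq\mathcal{F}_w$ so the tower property gives $\E[S_{n,w}\mid\mathcal{F}_v]=S_{n,v}$. Mean-zero is another tower application, and square integrability follows from the conditional Jensen inequality
\begin{displaymath}
\E[S_{n,v}^2]\leq (2n)^{-d}\Var[\mu(\Lambda_n)]\leq (2n)^{-d}\E[\mu(\Lambda_n)^2],
\end{displaymath}
which is finite and independent of $v$ by hypothesis (1) of Theorem~\ref{t:GenCLT}.

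For the boundary values, as $v\to-\infty^*$ the filtration $\mathcal{F}_v$ is a decreasing family. For any fixed cube $B_u$, once some coordinate of $v$ is strictly below the corresponding coordinate of $u$ one has $u\not\preceq v$, so $W_u$ is independent of $\mathcal{F}_v$; it then follows from independence of white noise on disjoint cubes that $\bigcap_v\mathcal{F}_v$ is trivial. The $L^2$ reverse martingale convergence theorem yields $\E[\mu(\Lambda_n)\mid\mathcal{F}_v]\to\E[\mu(\Lambda_n)]$, so $S_{n,-\infty^*}=0$. By the symmetric argument $\bigvee_v\mathcal{F}_v=\sigma(W)$ as $v\to\infty^*$, and since $\mu(\Lambda_n)=F(\Lambda_n,W)$ is $\sigma(W)$-measurable by construction, forward $L^2$ martingale convergence gives $S_{n,\infty^*}=(2n)^{-d/2}(\mu(\Lambda_n)-\E[\mu(\Lambda_n)])$.

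The main obstacle is the regularity at infinity, which requires careful bookkeeping in the lexicographic order. Fix $i\in\{1,\dots,d-1\}$ and $(v_1,\dots,v_i)\in\Z^i$; let $v=(v_1,\dots,v_d)$ and $v'=(v_1,\dots,v_{i-1},v_i+1,v_{i+1},\dots,v_d)$. As $v_{i+1},\dots,v_d\to\infty$ the family $\mathcal{F}_v$ increases to the sigma-algebra $\mathcal{G}$ generated by $\{W_u:(u_1,\dots,u_i)\prec(v_1,\dots,v_i)\}\cup\{W_u:(u_1,\dots,u_i)=(v_1,\dots,v_i)\}$, because the residual constraint $(u_{i+1},\dots,u_d)\preceq(N,\dots,N)$ is eventually satisfied for every $u$ in this regime. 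As $v_{i+1},\dots,v_d\to-\infty$ the family $\mathcal{F}_{v'}$ decreases, and the limiting constraint on $u$ reduces to exactly the same $\mathcal{G}$: the case $(u_1,\dots,u_{i-1},u_i)=(v_1,\dots,v_{i-1},v_i+1)$ would force $(u_{i+1},\dots,u_d)\preceq(-N,\dots,-N)$, which fails for large $N$, while the cases $(u_1,\dots,u_i)\prec(v_1,\dots,v_i)$ and $u_i=v_i$ survive unchanged. Independence of white noise on disjoint cubes lets me identify the join and intersection of these monotone families with $\mathcal{G}$, and forward/reverse $L^2$ martingale convergence then give
\begin{displaymath}
\lim_{v_{i+1},\dots,v_d\to\infty}S_{n,v}=\frac{\E[\mu(\Lambda_n)\mid\mathcal{G}]-\E[\mu(\Lambda_n)]}{(2n)^{d/2}}=\lim_{v_{i+1},\dots,v_d\to-\infty}S_{n,v'}
\end{displaymath}
almost surely, as required.
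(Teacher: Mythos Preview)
Your treatment of the martingale-array axioms, square integrability, and the boundary values $S_{n,\pm\infty^*}$ matches the paper's proof essentially verbatim.

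For regularity at infinity, however, there is a genuine gap. You correctly show that the increasing family $\mathcal{F}_v$ has join equal to $\mathcal{G}:=\sigma\big(W_u:(u_1,\dots,u_i)\preceq(v_1,\dots,v_i)\big)$, and you correctly determine which individual variables $W_u$ survive in every $\mathcal{F}_{v'}$ as $v_{i+1},\dots,v_d\to-\infty$. But the intersection $\mathcal{F}^+:=\bigcap\mathcal{F}_{v'}$ is \emph{not} automatically the $\sigma$-algebra generated by those surviving variables: it may also contain tail events of the ``lost'' variables $W_u$ with $(u_1,\dots,u_i)=(v_1,\dots,v_{i-1},v_i+1)$. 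Your sentence ``Independence of white noise on disjoint cubes lets me identify the join and intersection of these monotone families with $\mathcal{G}$'' is the correct intuition but is not a proof; one only gets $\mathcal{G}\subseteq\mathcal{F}^+$ for free.

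The paper closes this gap with a short Kolmogorov--type argument showing that the completion of $\mathcal{F}^+$ equals that of $\mathcal{G}$. For $A\in\mathcal{F}^+$ one introduces the increasing family $\mathcal{G}_m:=\sigma(W_u:k^{(m)}\preceq u\preceq k^{(1)})$, where $k^{(m)}$ is a sequence realising $v'_{i+1},\dots,v'_d\to-\infty$. Since $\mathcal{F}^+\subseteq\mathcal{F}_{k^{(n)}}$ for all $n$ and $\mathcal{F}_{k^{(n)}}$ is independent of $\mathcal{G}_m$ once $k^{(n)}\prec k^{(m)}$, one has $\E[\ind_A\mid\sigma(\mathcal{G},\mathcal{G}_m)]=\E[\ind_A\mid\mathcal{G}]$. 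Letting $m\to\infty$ and using L\'evy's upward theorem (noting $\sigma(\mathcal{G},\mathcal{G}_\infty)\supseteq\mathcal{F}_{k^{(1)}}\supseteq\mathcal{F}^+$) gives $\ind_A=\E[\ind_A\mid\mathcal{G}]$ almost surely, so $A$ lies in the completion of $\mathcal{G}$. This is the missing ingredient in your argument.
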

\begin{proof}
    It is clear from the definition that $S_{n,v}$ is a mean-zero lexicographic martingale array. Since $\mu$ has finite second moments, the array is square integrable. By L\'evy's downward and upward convergence theorems respectively
    \begin{align*}
        \lim_{v\to-\infty^*}\E[\mu(\Lambda_n)\;|\;\mathcal{F}_v]&=\E[\mu(\Lambda_n)\;|\;\cap_{v\in\Z^d}\mathcal{F}_v]=\E[\mu(\Lambda_n)]\\
        \lim_{v\to\infty^*}\E[\mu(\Lambda_n)\;|\;\mathcal{F}_v]&=\E[\mu(\Lambda_n)\;|\;\sigma(\cup_{v\in\Z^d}\mathcal{F}_v)]=\mu(\Lambda_n)
    \end{align*}
    where the right-most equalities hold because the tail $\sigma$-algebra $\cap_{v\in\Z^d}\mathcal{F}_v$ is trivial and $W$ is measurable with respect to $\sigma(\cup_{v\in\Z^d}\mathcal{F}_v)$. Hence $S_{n,-\infty^*}=0$ and $S_{n,\infty^*}=\mu(\Lambda_n)$, as required.
    
    It remains to show that $S_{n,v}$ is regular at infinity. We fix two sequences $j^{(m)}$ and $k^{(m)}$ in $\Z^d$ such that
    \begin{align*}
        j^{(m)}&=(u_1,\dots,u_i,a^{(m)}_1,\dots,a^{(m)}_{d-i})\\
        k^{(m)}&=(u_1,\dots,u_i+1,b^{(m)}_1,\dots,b^{(m)}_{d-i})
    \end{align*}
    where $a^{(m)}_1,\dots,a^{(m)}_{d-i}\to\infty$ and $b^{(m)}_1,\dots,b^{(m)}_{d-i}\to-\infty$ with $m$. By L\'evy's upward and downward theorems, as $m\to\infty$ we have
    \begin{align*}
        \E[\mu(\Lambda_n)\;|\;\mathcal{F}_{j^{(m)}}]&\to\E[\mu(\Lambda_n)\;|\;\mathcal{F}^-]\\
        \E[\mu(\Lambda_n)\;|\;\mathcal{F}_{k^{(m)}}]&\to\E[\mu(\Lambda_n)\;|\;\mathcal{F}^+]
    \end{align*}
    where
    \begin{displaymath}
        \mathcal{F}^-=\sigma\Big(\bigcup_{m\in\N}\mathcal{F}_{j^{(m)}}\Big)\quad\text{and}\quad\mathcal{F}^+=\bigcap_{m\in\N}\mathcal{F}_{k^{(m)}}.
    \end{displaymath}
    Regularity at infinity then follows if we can show that the completions of $\mathcal{F}^+$ and $\mathcal{F}^-$ coincide. We note that $\mathcal{F}^+$ is generated by events in $\mathcal{F}^-$ together with those measurable with respect to a `tail' of independent variables. We can therefore argue by generalising the proof of Kolmogorov's zero-one law. Specifically for $A\in\mathcal{F}^+$, defining
    \begin{align*}
        \mathcal{G}_m:=\sigma(W_v\;|\;k^{(m)}\preceq v\preceq k^{(1)})\quad\text{and}\quad\mathcal{G}_\infty=\sigma\Big(\bigcup_m\mathcal{G}_m\Big)
    \end{align*}
    we may apply L\'evy's upward theorem once more to see that
    \begin{equation}\label{e:LevyUpward}
        \E[\ind_A\;|\;\sigma(\mathcal{F}^-,\mathcal{G}_m)]\to \E[\ind_A\;|\;\sigma(\mathcal{F}^-,\mathcal{G}_\infty)]=\ind_A
    \end{equation}
    where the final equality follows since $\sigma(\mathcal{F}^-,\mathcal{G}_\infty)\supseteq\mathcal{F}_{k^{(1)}}\supseteq\mathcal{F}^+$. However since $\mathcal{F}^+$ is independent of $\mathcal{G}_m$ we have
    \begin{displaymath}
        \E[\ind_A\;|\;\sigma(\mathcal{F}^-,\mathcal{G}_m)]=\E[\ind_A\;|\;\mathcal{F}^-].
    \end{displaymath}
    Combined with \eqref{e:LevyUpward} this implies that $A$ is measurable with respect to the completion of $\mathcal{F}^-$, as required.
\end{proof}

Next we record an elementary lemma which we will use repeatedly in calculations. Recall that $\dist(\cdot,\cdot)$ denotes the Euclidean distance between two sets (or a point and a set).
\begin{lemma}\label{l:ElementarySum}
    Let $\gamma>d$, then there exists a constant $c>0$ depending only on $d$ and $\gamma$ such that for $n\in\N$ and $r\geq 1$
    \begin{displaymath}
        \sum_{v\in\Z^d\;:\;\dist(v,\Lambda_n)>r}\dist(v,\Lambda_n)^{-\gamma}\leq cr^{-\gamma+1}\max\{r,n\}^{d-1}.
    \end{displaymath}
\end{lemma}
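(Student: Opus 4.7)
The plan is to reduce the sum over lattice points outside $\Lambda_n$ to a one-dimensional sum over shells indexed by distance to $\Lambda_n$, and then compare to an integral.

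First I would observe that, by elementary geometry, there is a constant $C_d$ depending only on $d$ such that for every integer $k \geq 0$
\[
    \#\bigl\{ v \in \Z^d : \dist(v,\Lambda_n) \in [k,k+1) \bigr\} \leq C_d (n+k+1)^{d-1},
\]
since such lattice points lie in the cube annulus $\Lambda_{n+k+1} \setminus \Lambda_{n+k}$, which has volume bounded by a constant times $(n+k+1)^{d-1}$. Using this to dominate the sum shell by shell gives
\[
    \sum_{v \in \Z^d:\dist(v,\Lambda_n)>r} \dist(v,\Lambda_n)^{-\gamma} \leq C_d \sum_{k \geq \lfloor r \rfloor} (n+k+1)^{d-1} k^{-\gamma},
\]
with the convention that the $k=0$ term is absent because the constraint $\dist(v,\Lambda_n)>r \geq 1$ forces $k \geq 1$.

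Next I would split into the two regimes that appear in the right-hand side. In the regime $r \geq n$, one has $n+k+1 \leq 3k$ for all $k$ in the range of summation, so the sum is bounded by a constant multiple of $\sum_{k \geq r} k^{d-1-\gamma}$; since $\gamma > d$, an integral comparison yields a bound of order $r^{d-\gamma} = r^{-\gamma+1} r^{d-1} = r^{-\gamma+1} \max\{r,n\}^{d-1}$. In the regime $r < n$, I would split the sum at $k = n$. For $\lfloor r \rfloor \leq k \leq n$ one bounds $(n+k+1)^{d-1} \leq (2n+1)^{d-1}$ and the remaining factor $\sum_{k \geq r} k^{-\gamma}$ is $O(r^{1-\gamma})$ by integral comparison, giving a contribution of order $n^{d-1} r^{1-\gamma}$. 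For $k > n$ one uses $(n+k+1)^{d-1} \leq (3k)^{d-1}$ and bounds the tail by $n^{d-\gamma}$, which is at most $n^{d-1} r^{1-\gamma}$ since $r \leq n$ and $1-\gamma < 0$. Both contributions match the target bound $r^{-\gamma+1} \max\{r,n\}^{d-1}$.

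This is a purely computational lemma; there is no real obstacle, only the mild bookkeeping of handling the two regimes $r \geq n$ and $r < n$ separately and tracking that the constant $c$ depends only on $d$ and $\gamma$ (through the lattice point count $C_d$ and the integral comparison constant, which is finite because $\gamma > d$).
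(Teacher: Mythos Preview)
Your proposal is correct. The only slightly loose point is the justification of the shell count: because $\dist(\cdot,\Lambda_n)$ is Euclidean, points with $\dist(v,\Lambda_n)\in[k,k+1)$ need not lie in the \emph{cube} annulus $\Lambda_{n+k+1}\setminus\Lambda_{n+k}$ (near corners the sup-norm distance can be much smaller than the Euclidean one). But the bound $\#\{v:\dist(v,\Lambda_n)\in[k,k+1)\}\leq C_d(n+k+1)^{d-1}$ is certainly true, e.g.\ by the Steiner formula for the volume of $\Lambda_n+sB$, so this is a cosmetic fix.

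The paper's proof takes a genuinely different route. Rather than slicing by distance shells, it partitions $\{v:\dist(v,\Lambda_n)>r\}$ according to the closest point $C(v)\in\partial\Lambda_n$, and then groups those closest points by the dimension $i$ of the face of $\Lambda_n$ they lie in. For $x$ in an $i$-dimensional face, the preimage $\{v:C(v)=x\}$ is contained in the $(d-i)$-dimensional affine subspace normal to the face at $x$, so $\sum_{v\in S_x}|v-x|^{-\gamma}\leq c\,r^{-\gamma+d-i}$; multiplying by the $O(n^i)$ points in each $i$-face and summing over $i=0,\dots,d-1$ gives the bound directly without a case split in $r$ versus $n$. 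Your shell decomposition is the more standard textbook move and requires only crude volume bounds, at the cost of the $r\gtrless n$ case analysis; the paper's face-by-face decomposition is a bit more geometric and yields the $\max\{r,n\}^{d-1}$ factor in one stroke. Both are equally elementary and neither offers a real advantage here.
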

\begin{proof}
    For $v\in\Z^d$ let $C(v)$ be the point in $\Lambda_n\cap\Z^d$ nearest to $v$. The idea of the proof is to partition $\{v\;|\;\dist(v,\Lambda_n)>r)\}$ according to the point in $\Lambda_n\cap\Z^d$ which $v$ is nearest to.

    For $i=0,1,\dots,d$ let $\mathrm{Face}_i$ denote the points $x\in\Lambda_n\cap\Z^d$ such that $d+i$ of their nearest neighbours (in $\Z^d$) are also contained in $\Lambda_n$. So for example $\mathrm{Face}_0$ denotes the corners of the cube $\Lambda_n$ and $\mathrm{Face}_d$ denotes the points of $\Z^d$ in the interior of $\Lambda_n$. We note that the number of points in $\mathrm{Face}_i$ is at most $c_d n^i$, where $c_d>0$ is some constant depending only on $d$, by elementary geometric considerations.

    We define $S_x=\{v\in\Z^d\;|\;\dist(v,\Lambda_n)>r, C(v)=x\}$. Observe that $S_x=\emptyset$ whenever $x\in\mathrm{Face}_d$. Moreover when $x\in\mathrm{Face}_i$ and $v\in S_x$, $x-v$ must be orthogonal to each of the $i$ directions in which both neighbours of $x$ are contained in $\Lambda_n$. In other words $S_x-x$ is contained in a subspace of dimension $d-i$ and hence
    \begin{displaymath}
        \sum_{v\in S_x}\lvert v-x\rvert^{-\gamma}\leq \sum_{y\in\Z^{d-i}\;:\;\lvert y\rvert>r}\lvert y\rvert^{-\gamma}\leq c_\gamma r^{-\gamma+d-i}.
    \end{displaymath}
    We then conclude that
    \begin{align*}
        \sum_{v\in\Z^d\;:\;\dist(v,\Lambda_n)>r}\dist(v,\Lambda_n)^{-\gamma}=\sum_{i=0}^{d-1}\sum_{x\in\mathrm{Face}_i}\sum_{v\in S_x}\lvert v-x\rvert^{-\gamma}&\leq \sum_{i=0}^{d-1}c_d n^i c_\gamma r^{-\gamma+d-i}\leq cr^{-\gamma+1}\max\{r,n\}^{d-1}
    \end{align*}
    as required.
\end{proof}

Finally we state a version of the ergodic theorem which we will make use of in proving Theorem~\ref{t:GenCLT}:

\begin{theorem}[Multi-variate ergodic theorem {\cite[Theorem~25.12]{kal21}}]\label{t:ergodic}
    Let $\xi$ be a random element in some set $S$ with distribution $\nu$. Let $T_1,\dots,T_d$ be $\nu$-preserving transformations of $S$. Assume that the invariant $\sigma$-algebra of each $T_i$ is trivial and let $G\in L^p(\nu)$ for some $p>1$, then as $n\to\infty$
    \begin{displaymath}
        \frac{1}{n^d}\sum_{i=1}^d\sum_{k_i\leq n}G(T_1^{k_1}\dots T_d^{k_d}\xi)\to\E[G(\xi)]
    \end{displaymath}
    where convergence occurs almost surely and in $L^p$.
\end{theorem}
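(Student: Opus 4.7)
The plan is to prove this multi-dimensional ergodic theorem by iterating Birkhoff's one-dimensional pointwise ergodic theorem, exploiting the hypothesis that each $T_i$ has trivial invariant $\sigma$-algebra. The result is standard (attributed to \cite[Theorem~25.12]{kal21}); I will only sketch the argument.

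Interpreting $T_1^{k_1}\dots T_d^{k_d}\xi$ as a well-defined function of $(k_1,\dots,k_d)$ we assume the $T_i$ commute (or else we fix this particular order of composition). Define the averaging operators $A_n^{(i)} H(\xi) := n^{-1}\sum_{k=1}^n H(T_i^k \xi)$ on $L^p(\nu)$; each is a contraction. The multi-dimensional average in the statement is $M_n := A_n^{(1)} A_n^{(2)}\dots A_n^{(d)} G$, and I will prove $M_n \to c := \E[G(\xi)]$ both almost surely and in $L^p$ by induction on $d$.

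The base case $d=1$ is Birkhoff's pointwise ergodic theorem together with the $L^p$ mean ergodic theorem: the limit is $\E[G\mid\mathcal{I}_1]$, which equals $c$ by triviality of $\mathcal{I}_1$. For the inductive step, set $H_n := A_n^{(2)}\dots A_n^{(d)} G$, so that $M_n = A_n^{(1)} H_n$; by the inductive hypothesis $H_n \to c$ almost surely and in $L^p$. The $L^p$ convergence of $M_n$ follows from the contractivity of $A_n^{(1)}$: since $c$ is a constant, $\|M_n - c\|_p = \|A_n^{(1)}(H_n - c)\|_p \leq \|H_n - c\|_p \to 0$.

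The main obstacle is the almost sure convergence, because the averaging operator $A_n^{(1)}$ uses the same index $n$ as $H_n$, so one cannot simply invoke the pointwise limit of $H_n$. The standard fix is to dominate: define the envelope $R_m^*(\xi) := \sup_{n\geq m}|H_n(\xi) - c|$, which satisfies $R_m^* \downarrow 0$ almost surely as $m \to \infty$, and whose integrability $R_1^* \in L^p(\nu)$ is provided by the multi-parameter maximal ergodic theorem (Wiener's inequality in higher dimensions, where $p>1$ is essential). For every $n \geq m$ one then has the pointwise estimate
\begin{displaymath}
|M_n(\xi) - c| \leq A_n^{(1)}|H_n - c|(\xi) \leq A_n^{(1)} R_m^*(\xi),
\end{displaymath}
and applying Birkhoff's theorem to $R_m^*$ gives $\limsup_{n\to\infty}|M_n(\xi) - c| \leq \E[R_m^*\mid\mathcal{I}_1](\xi) = \E[R_m^*]$ for $\nu$-a.e.\ $\xi$. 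Sending $m \to \infty$ and invoking dominated convergence with dominator $R_1^*$ yields $\E[R_m^*] \to 0$, completing the induction and establishing the theorem.
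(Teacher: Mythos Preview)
The paper does not prove this theorem at all; it merely states it as a known result and cites \cite[Theorem~25.12]{kal21}. Your sketch is a reasonable outline of the standard iterated-Birkhoff argument and is essentially correct.

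Two minor remarks. First, with the operator ordering $A_n^{(1)}A_n^{(2)}\cdots A_n^{(d)}G$ one obtains $G(T_d^{k_d}\cdots T_1^{k_1}\xi)$ rather than $G(T_1^{k_1}\cdots T_d^{k_d}\xi)$, so without commutativity you should either reverse the order in which you peel off the averages (induct on the last index rather than the first) or note explicitly that the argument is symmetric in the labelling. Second, the integrability $R_1^*\in L^p$ does not require the full multi-parameter Wiener maximal inequality: it follows by iterating the one-dimensional Hopf maximal inequality in $L^p$ (valid for $p>1$), using $\sup_n|A_n^{(2)}\cdots A_n^{(d)}G|\leq M^{(2)}\cdots M^{(d)}|G|$ where $M^{(i)}H:=\sup_n A_n^{(i)}|H|$. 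This is worth spelling out since it is the only place where $p>1$ is used.
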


\begin{proof}[Proof of Theorem~\ref{t:GenCLT}]
By Lemma~\ref{l:Regular}, the variance convergence and CLT that we wish to prove will follow if we can verify the numbered conditions in Theorem~\ref{t:Lex_MCLT} (along the way we will also derive the claimed expression for the limiting variance $\sigma^2$).

First we observe, since $W$ is independent on disjoint domains, that
\begin{displaymath}
    U_{n,v}=(2n)^{-d/2}\E[\Delta_v(\Lambda_n)\;|\;\mathcal{F}_v]
\end{displaymath}
almost surely. Combining the bounded moments and moment decay assumptions with Lemma~\ref{l:ElementarySum} shows that
\begin{equation}\label{e:CLTProof0}
    \sum_{v\in\Z^d}\E\big[\lvert \Delta_v(\Lambda_n)\rvert^{2+\epsilon}\big]\leq cn^d +\sum_{v\in\Z^d\setminus \Lambda_{2n}}cn^\zeta\;\dist(v,\Lambda_n)^{-\gamma}\leq c^\prime n^d
\end{equation}
for some $\epsilon,c,c^\prime>0$ (since $\gamma>\max\{d,\zeta\}$). By interpolating the moment decay assumption (i.e.\ using Littlewood's $L^p$ inequality) we have
\begin{equation}\label{e:CLTProof0.5}
    \E\big[\lvert \Delta_v(\Lambda_n)\rvert^2\big]\leq c n^\zeta\dist(v,\Lambda_n)^{-\gamma}.
\end{equation}
Hence by Jensen's inequality and the bounded moments condition, we see that \eqref{e:CLTProof0} also holds for $\epsilon=0$ (with a different constant $c^\prime$). Applying Markov's inequality and the conditional form of Jensen's inequality, for any $\delta>0$
\begin{align*}
    \P\Big[\sup_{v\in\Z^d}\lvert U_{n,v}\rvert>\delta\Big]\;\delta^{2+\epsilon}\leq\sum_{v\in\Z^d}\E\big[\lvert U_{n,v}\rvert^{2+\epsilon}\big]\leq (2n)^{-d\frac{2+\epsilon}{2}}\sum_{v\in\Z^d}\E\big[\lvert \Delta_v(\Lambda_n)\rvert^{2+\epsilon}\big]\leq cn^{-d\frac{\epsilon}{2}}
\end{align*}
which tends to zero, verifying \eqref{e:MCLT1}. By identical reasoning in the case $\epsilon=0$, we have
\begin{align*}
    \E\Big[\sup_{v\in\Z^d}U_{n,v}^2\Big]\leq\E\Big[\sum_{v\in\Z^d}U_{n,v}^2\Big]\leq (2n)^{-d}\E\Big[\sum_{v\in\Z^d}\lvert\Delta_v(\Lambda_n)\rvert^2\Big]\leq c
\end{align*}
where $c$ is independent of $n$, verifying \eqref{e:MCLT2}. Applying the conditional and unconditional forms of Jensen's inequality, the bounded moment/moment decay assumptions and Lemma~\ref{l:ElementarySum} yields that for each $n\in\N$
\begin{align*}
    (2n)^{d/2}\E\Big[\sum_{v\in\Z^d}\lvert U_{n,v}\rvert\Big]&\leq \sum_{v\in\Z^d}\E\Big[\lvert \Delta_v(\Lambda_n)\rvert\Big]\\
    &\leq \sum_{\dist(v,\Lambda_n)\leq n}\E\Big[\lvert \Delta_v(\Lambda_n)\rvert^{2+\epsilon}\Big]^{\frac{1}{2+\epsilon}}+\sum_{\dist(v,\Lambda_n)>n}cn^\zeta\;\dist(v,\Lambda_n)^{-\gamma}<\infty
\end{align*}
verifying \eqref{e:MCLT4}.

It remains to verify \eqref{e:MCLT3}. For $v\in\Z^d$ let $\tau_v$ denote translation by $v$. Let $D_n=w_n+\Lambda_n$ where $w_1,w_2,\dots\in\Z^d$ then by the definitions of $\mu$ and $\Delta_v$, for any $v\in\Z^d$ the sequences of random variables $\Delta_v(D_n)$ and $\Delta_0(\tau_{-v}D_n)$ have the same distribution. Hence if $\liminf_nD_n=\R^d$, then by the stabilisation assumption there exists a random variable $\Delta_v$ such that $\Delta_v(D_n)\to\Delta_v$ in probability. We now simplify notation by defining
\begin{displaymath}
    X_v(\Lambda_n):=\E[\Delta_v(\Lambda_n)\;|\;\mathcal{F}_v]\qquad\text{and}\qquad X_v:=\E[\Delta_v\;|\;\mathcal{F}_v]
\end{displaymath}
so that the statement we need to prove is
\begin{equation}\label{e:CLTProof1}
    (2n)^{-d}\sum_{v\in\Z^d}X_v^2(\Lambda_n)\xrightarrow{L^1}\sigma^2,\qquad\text{as }n\to\infty.
\end{equation}
We do this in three steps: (i) we show that the contribution to this sum from terms $X_v^2(\Lambda_n)$ where $v$ is outside $\Lambda_n$ or near the boundary of $\Lambda_n$ is small, (ii) we show that the remaining terms are well approximated in $L^1$ by their limits $X_v^2$ and (iii) we apply the ergodic theorem to the sum of these limit terms.

If we choose $\lambda\in(0,1)$ sufficiently small, then by \eqref{e:CLTProof0.5} and the conditional Jensen inequality
\begin{equation}\label{e:CLTProof2}
    (2n)^{-d}\sum_{v\;:\;\dist(v,\Lambda_n)>n^{1-\lambda}}\E[X_v^2(\Lambda_n)]\leq (2n)^{-d}\sum_{v:\dist(v,\Lambda_n)>n^{1-\lambda}}cn^\zeta\;\dist(v,\Lambda_n)^{-\gamma}\to 0
\end{equation}
as $n\to\infty$ (using Lemma~\ref{l:ElementarySum}). By the bounded moments condition, for the same value of $\lambda>0$ we have
\begin{equation}\label{e:CLTProof3}
\begin{aligned}
    (2n)^{-d}\sum_{v\;:\;\dist(v,\partial\Lambda_n)\leq n^{1-\lambda}}\E[X_v^2(\Lambda_n)]&\leq (2n)^{-d}\sum_{v\;:\;\dist(v,\partial\Lambda_n)\leq n^{1-\lambda}}\E[X_v^{2+\epsilon}(\Lambda_n)]^\frac{1}{2+\epsilon}\leq c n^{-\lambda}\to 0  
\end{aligned}
\end{equation}
as $n\to\infty$.

Now let $n^-=n-n^{1-\lambda}$. We claim that
\begin{equation}\label{e:CLTProof4}
    \max_{v\in\Z^d\cap\Lambda_{n^{-}}}\E\Big[\big\lvert X_v^2(\Lambda_n)-X_v^2\big\rvert\Big]\to 0\quad\text{as }n\to\infty.
\end{equation}
Let $v_n\in\Z^d$ be the vertex which maximises the moment above for a given $n$ and let $D_n=-v_n+\Lambda_n$. Observe that $\liminf_nD_n=\R^d$ since $\dist(v_n,\partial\Lambda_n)\to\infty$. Therefore $\Delta_v(D_n)\to\Delta_v$ in probability. Since the $(2+\epsilon)$-moments of $\Delta_v(D_n)$ are bounded uniformly over $n$, Vitali's convergence theorem implies that $\Delta_v(D_n)\to\Delta_v$ in $L^2$. Then using the fact that conditional expectation is a contraction on $L^p$ we have
\begin{displaymath}
    X_v(D_n)=\E[\Delta_v(D_n)\;|\;\mathcal{F}_v]\xrightarrow{L^2}\E[\Delta_v\;|\;\mathcal{F}_v]=X_v
\end{displaymath}
and in particular \eqref{e:CLTProof4} holds.

Combining \eqref{e:CLTProof2}, \eqref{e:CLTProof3} and \eqref{e:CLTProof4} we see that
\begin{equation}\label{e:CLTProof5}
    (2n)^{-d}\sum_{v\in\Z^d}X_v^2(\Lambda_n)-(2n)^{-d}\sum_{v\in\Z^d\cap\Lambda_{n^-}}X_v^2\xrightarrow{L^1}0.
\end{equation}
It remains for us to apply the ergodic theorem to the finite sum above. Let $\xi=(W_v,W_v^\prime)_{v\in\Z^d}$ denote the pair of white noise processes used to define $\mu$ and $\Delta_v(\cdot)$. Let $T_1,\dots,T_d$ respectively denote translation by distance one in the positive direction of the $d$ axes of $\R^d$. Since $W$ and $W^\prime$ are stationary, each $T_i$ is a measure-preserving transformation of $\xi$. Since $(W_v,W^\prime_v)$ are independent for different $v\in\Z^d$, the $\sigma$-algebra of invariant events associated with each $T_i$ is trivial (by Kolmogorov's 01-law). We now augment our previous notation to include the dependency on $\xi$; so we write $X_v(D_n,\xi)=X_v(D_n)$ and $X_v(\xi)=X_v$ etc. We then claim that for any $v\in\Z^d$
\begin{equation}\label{e:CLTProof6}
    X_v(\xi)=X_0(\tau_{-v}\xi)
\end{equation}
where $\tau_{-v}\xi=(\tau_{-v}W_u,\tau_{-v}W^\prime_u)_{u\in\Z^d}=(W_{u+v},W^\prime_{u+v})_{u\in\Z^d}$. To prove the claim, we first note that for any $n$
\begin{displaymath}
    \Delta_v(\Lambda_n,\xi)=\Delta_0(-v+\Lambda_n,\tau_{-v}\xi)
\end{displaymath}
using the fact that $\mu$ is stationary (recall the definition before Theorem~\ref{t:GenCLT}). Then taking $n\to\infty$ and using the stabilisation assumption, we see that the term on the left converges in probability to $\Delta_v(\xi)$ while the term on the right converges to $\Delta_0(\tau_{-v}\xi)$ and so these limits must coincide almost surely.

Next we observe that by definition of the lexicographic ordering
\begin{equation}\label{e:lex_filtration}
    \mathcal{F}_0(\tau_{-v}W):=\sigma(\tau_{-v}W_u\;|\;u\preceq 0)=\sigma(W_{u+v}\;|\;u\preceq 0)=\sigma(W_u\;|\;u\preceq v)=:\mathcal{F}_v(W).
\end{equation}
Therefore
\begin{displaymath}
    X_v(\xi)=\E[\Delta_v(\xi)\;|\;\mathcal{F}_v(W)]=\E[\Delta_0(\tau_{-v}\xi)\;|\;\mathcal{F}_0(\tau_{-v}W)]=X_0(\tau_{-v}\xi)
\end{displaymath}
completing the proof of \eqref{e:CLTProof6}. Finally we note that by Fatou's lemma and the bounded moments assumption
\begin{displaymath}
    \E\left[\lvert X_0(\xi)\rvert^{2+\epsilon}\right]\leq \liminf_{n\to\infty}\E\left[\lvert X_0(\Lambda_n,\xi)\rvert^{2+\epsilon}\right]<\infty
\end{displaymath}
so $X_0^2\in L^{1+\epsilon/2}(\nu)$ where $\nu$ denotes the distribution of $\xi$. Then since $\tau_v=T_1^{v_1}\dots T_d^{v_d}$ for $v\in\Z^d$, we may apply the ergodic theorem (Theorem~\ref{t:ergodic}) to conclude that
\begin{displaymath}
    (2n)^{-d}\sum_{v\in\Z^d\cap\Lambda_n^-}X_v^2=(2n)^{-d}\sum_{v\in\Z^d\cap\Lambda_n^-}X_0^2(\tau_{-v}\xi)\xrightarrow{L^1}\E[X_0^2]=\E\big[\E[\Delta_0\;|\;\mathcal{F}_0]^2\big]=:\sigma^2.
\end{displaymath}
Combining this with \eqref{e:CLTProof5} proves \eqref{e:CLTProof1} and hence completes the proof of the theorem.
\end{proof}

\begin{remark}
    When applying the martingale CLT, the most difficult condition to verify is often the convergence of the sum of squared increments (i.e.\ the analogue of \eqref{e:MCLT3}). Penrose \cite{pen01} recognised that for stationary white-noise functionals, this convergence could be elegantly dealt with  using the ergodic theorem and this insight is crucial for the proof we have just given. It is also interesting to note the importance of the lexicographic filtration in this context: \eqref{e:lex_filtration} was essential in our application of the ergodic theorem and this property holds only for the standard lexicographic ordering of $\Z^d$ (up to reflections and reorderings of axes).
\end{remark}

If the white-noise functional in the statement of Theorem~\ref{t:GenCLT} is additive, which is the case for our three functionals of interest, it is natural to expect that one could find somewhat simpler versions of conditions (2)-(4) in terms of how the functional changes on \emph{unit cubes} when resampling the white-noise (i.e.\ conditions on $\Delta_v(B_w)$ rather than $\Delta_v(\Lambda_n)$). The following result shows that this is indeed true and will simplify the proof of the CLTs for $\mu_\Vol$, $\mu_\SA$ and $\mu_\EC$.

\begin{proposition}\label{p:Sufficient_Moments}
    Let $\mu$ be a stationary white-noise functional such that with probability one the following holds: for any distinct $w_1,\dots,w_n\in\Z^d$
    \begin{equation}\label{e:Additivity}
        \mu\left(\bigcup_{i=1}^nB_{w_i}\right)=\sum_{i=1}^n\mu(B_{w_i}).
    \end{equation}
    Suppose that there exists a family of random variables $Y_v(w)\geq 0$ for $v,w\in\Z^d$ such that
    \begin{equation}\label{e:Moment_Suff1}
        \left\lvert \Delta_v\left(B_{w}\right)\right\rvert \leq Y_v(w)
    \end{equation}
    and
    \begin{equation}\label{e:Moment_Suff2}
        \E[Y_v(w)]+\E\big[Y_v(w)^{2+\epsilon}\big]\leq c(1+\lvert v-w\rvert)^{-(3d+\delta)}
    \end{equation}
    for some $\epsilon,\delta,c>0$ which depend only on the distribution of $f$. Then $\mu$ satisfies conditions (2)-(4) of Theorem~\ref{t:GenCLT}.

    If we replace \eqref{e:Additivity} and \eqref{e:Moment_Suff1} with the weaker assumption that for any $D:=u+\Lambda_n$ (where $u\in\Z^d$)
    \begin{equation}\label{e:Moment_Suff3}
        \left\lvert \Delta_v\left(D\right)\right\rvert\leq\sum_{w\in \Z^d\cap D}Y_v(w),
    \end{equation}
    where $Y_v(w)$ again satisfies \eqref{e:Moment_Suff2}, then $\mu$ satisfies conditions (3)-(4) of Theorem~\ref{t:GenCLT}.
\end{proposition}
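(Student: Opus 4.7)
The plan is to use the cube-wise bound $|\Delta_v(B_w)| \leq Y_v(w)$ together with additivity \eqref{e:Additivity} (or its weaker substitute \eqref{e:Moment_Suff3}) to express
$$|\Delta_v(\Lambda_n)| \leq \sum_{w \in \Z^d \cap \Lambda_n} Y_v(w),$$
and then to convert the polynomial moment decay \eqref{e:Moment_Suff2} of each $Y_v(w)$ into the global moment bounds demanded by conditions (3) and (4) via Minkowski's inequality. Condition (2) will follow separately from $L^1$ absolute convergence of the cube-indexed series $\sum_w \Delta_0(B_w)$.

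For (3) and (4), I would apply Minkowski in $L^p$ (for $p = 1$ and $p = 2+\epsilon$) to obtain
\begin{equation*}
\|\Delta_v(\Lambda_n)\|_p \;\leq\; \sum_{w \in \Z^d \cap \Lambda_n} \|Y_v(w)\|_p \;\leq\; c' \sum_{w \in \Z^d \cap \Lambda_n} (1+|v-w|)^{-(3d+\delta)/p}.
\end{equation*}
For $v \notin \Lambda_n$, each summand is at most $\dist(v,\Lambda_n)^{-(3d+\delta)/p}$ and there are at most $(2n+1)^d$ of them, giving
$$\E\bigl[|\Delta_v(\Lambda_n)|^p\bigr] \;\leq\; C\, n^{dp}\, \dist(v,\Lambda_n)^{-(3d+\delta)}.$$
Adding the bounds for $p = 1$ and $p = 2+\epsilon$ yields (4) with $\gamma = 3d+\delta$ and $\zeta = d(2+\epsilon)$; the constraint $\gamma > \max\{d,\zeta\}$ reduces to $\epsilon < 1 + \delta/d$, which we may enforce by shrinking $\epsilon$ (log-convexity of moments preserves the decay rate $3d+\delta$, since $\E[Y_v(w)]$ and $\E[Y_v(w)^{2+\epsilon}]$ both decay at this rate). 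For (3), the same Minkowski argument summed over \emph{all} $w \in \Z^d$ rather than only over $\Lambda_n$ gives $\sup_{v,n}\|\Delta_v(\Lambda_n)\|_{2+\epsilon} < \infty$ provided the lattice series $\sum_w (1+|v-w|)^{-(3d+\delta)/(2+\epsilon)}$ converges uniformly in $v$, which is again $\epsilon < 1 + \delta/d$.

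For (2), the fact that $\sum_{w \in \Z^d}\E[Y_0(w)] < \infty$ (once more from $3d + \delta > d$) implies that $\sum_{w}\Delta_0(B_w)$ is absolutely convergent almost surely; let $\Delta_0$ denote the resulting random variable. For any sequence $D_n = v_n + \Lambda_n \to \R^d$, additivity \eqref{e:Additivity} gives $\Delta_0(D_n) = \sum_{w \in D_n \cap \Z^d} \Delta_0(B_w)$, so
$$|\Delta_0(D_n) - \Delta_0| \;\leq\; \sum_{w \in \Z^d \setminus D_n} Y_0(w),$$
and this tail tends to zero almost surely because the full series $\sum_w Y_0(w)$ is a.s.\ finite and each $w$ eventually belongs to $D_n$. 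Almost sure convergence of course implies convergence in probability, verifying (2). The final assertion of the proposition---that dropping additivity and assuming only \eqref{e:Moment_Suff3} still yields (3)--(4)---is immediate, because the Minkowski computation above used only the bound $|\Delta_v(\Lambda_n)| \leq \sum_{w \in \Lambda_n} Y_v(w)$, whereas the construction of $\Delta_0$ genuinely required cube-wise additivity.

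I do not anticipate a substantial obstacle: once the Minkowski decomposition is in place, the entire argument is a bookkeeping exercise in counting lattice points at prescribed distances. The only point requiring a little care is the calibration of $\epsilon$, which must be chosen small enough that both the tail series in $p = 2+\epsilon$ converges and the exponents in (4) satisfy $\gamma > \zeta$; taking $\epsilon < \min\{\epsilon_0, 1\}$ (where $\epsilon_0$ is the exponent supplied by the hypothesis) suffices in all cases.
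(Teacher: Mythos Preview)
Your proposal is correct and follows the same overall plan as the paper: reduce to the cube-wise bound $|\Delta_v(\Lambda_n)|\le\sum_{w\in\Lambda_n}Y_v(w)$, derive conditions~(3)--(4) from the moment decay of the summands, and verify~(2) via almost-sure absolute convergence of $\sum_w\Delta_0(B_w)$ together with additivity.

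The one technical difference lies in how the $(2+\epsilon)$-moment of the sum is handled. You apply Minkowski's inequality directly, obtaining $\|\Delta_v(\Lambda_n)\|_{2+\epsilon}\le\sum_w\|Y_v(w)\|_{2+\epsilon}$ and hence $\zeta=d(2+\epsilon)$, $\gamma=3d+\delta$, with the calibration $\epsilon<1+\delta/d$ ensuring $\gamma>\zeta$. The paper instead writes $\bigl(\sum_w Y_v(w)\bigr)^{2+\epsilon}=\bigl(\sum_{w,x,y}Y_v(w)Y_v(x)Y_v(y)\bigr)^{(2+\epsilon)/3}$, uses the elementary subadditivity $(\sum a_i)^p\le\sum a_i^p$ for $p=(2+\epsilon)/3<1$, and then applies H\"older across the three factors; this gives $\zeta=3d$, so $\gamma>\zeta$ holds automatically for any $\delta>0$. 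Your route is more standard and slightly sharper in $\zeta$; the paper's cube trick avoids tying $\zeta$ to $\epsilon$ but requires $\epsilon<1$ for the subadditivity step. Either way the constraint on $\epsilon$ is harmless since it can always be decreased. The stabilisation argument is essentially identical in both versions.
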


\begin{proof}
Suppose that \eqref{e:Moment_Suff3} and \eqref{e:Moment_Suff2} hold. Taking $v\in\Z^d$ and $n\in\N$ as given, we write $Y(w)=Y_v(w)$. By \eqref{e:Moment_Suff3} and the inequality $\left(\sum_{i=1}^my_i\right)^p\leq\sum_{i=1}^my_i^p$, which holds for any $p\in(0,1)$ and $y_1,\dots,y_m\geq0$, we have
\begin{displaymath}
    \E\left[\lvert\Delta_v(\Lambda_n)\rvert^{2+\epsilon}\right]\leq\E\left[\left(\sum_{w,x,y\in\Z^d\cap\Lambda_n}Y(w)Y(x)Y(y)\right)^{\frac{2+\epsilon}{3}}\right]
    \leq \sum_{w,x,y\in\Z^d\cap\Lambda_n}\E\left[\big(Y(w)Y(x)Y(y)\big)^\frac{2+\epsilon}{3}\right].
\end{displaymath}
Applying H\"older's inequality to the right hand side along with \eqref{e:Moment_Suff2}
\begin{equation}\label{e:Moment_Suff4}
\begin{aligned}
    \E\left[\lvert\Delta_v(\Lambda_n)\rvert^{2+\epsilon}\right]\leq\left(\sum_{w\in\Lambda_n}\E\left[Y(w)^{2+\epsilon}\right]^\frac{1}{3}\right)^3\leq \left(\sum_{w\in\Lambda_n}c(1+\lvert v-w\rvert)^\frac{-3d-\delta}{3}\right)^3.
\end{aligned}
\end{equation}
Hence
\begin{displaymath}
    \sup_{v\in\Z^d,n\in\N}\E\left[\lvert\Delta_v(\Lambda_n)\rvert^{2+\epsilon}\right]\leq \left(\sum_{w\in\Z^d}c(1+\lvert w\rvert)^{-d-\frac{\delta}{3}}\right)^3<\infty
\end{displaymath}
verifying the bounded moments condition.

Now suppose that $v\notin\Lambda_n$. Then by \eqref{e:Moment_Suff4}
\begin{align*}
    \E\left[\lvert\Delta_v(\Lambda_n)\rvert^{2+\epsilon}\right]&\leq\left(cn^d\;\dist(v,\Lambda_n)^{-d-\delta/3}\right)^3
\end{align*}
which verifies the moment decay condition for the $(2+\epsilon)$-moments. By \eqref{e:Moment_Suff3} and \eqref{e:Moment_Suff2}
\begin{align*}
    \E\left[\lvert\Delta_v(\Lambda_n)\rvert\right]\leq\sum_{w\in\Lambda_n}\E[Y_v(w)]&\leq cn^d\;\dist(v,\Lambda_n)^{-3d-\delta}
\end{align*}
which verifies the other part of the moment decay condition.

Now assume that \eqref{e:Additivity} holds and fix a sequence $D_n:=v_n+\Lambda_n$ such that $\liminf_nD_n=\R^d$. By \eqref{e:Moment_Suff2} and the Borel-Cantelli lemma
\begin{displaymath}
    \sum_{w\in\Z^d}Y_0(w)<\infty
\end{displaymath}
almost surely. Given $m\in\N$, we may choose $N\in\N$ large enough that $\cap_{n\geq N}D_n\supset\Lambda_m$. Then for any $n_1,n_2>N$ by \eqref{e:Additivity} and \eqref{e:Moment_Suff1}
\begin{align*}
    \lvert\Delta_0(D_{n_1})-\Delta_0(D_{n_2})\rvert\leq\lvert\Delta_0(D_{n_1}\setminus D_{n_2})\rvert+\lvert\Delta_0(D_{n_2}\setminus D_{n_1})\rvert\leq \sum_{v\in\Z^d\setminus\Lambda_m}Y_0(w).
\end{align*}
Since the right-hand side converges to zero as $m\to\infty$, we see that $\Delta_0(D_n)$ is almost surely Cauchy and hence convergent to some limit $\Delta_0$.

To see that the limit does not depend on the choice of domains $D_n$, we may take any other suitable sequence $D_n^\prime$ and consider $(D_1,D_1^\prime,D_2,D_2^\prime,\dots)$. By our above argument we see that $\Delta_0(D_n^\prime)$ and $\Delta_0(D_n)$ converge almost surely to some $\Delta_0^*$ but the latter convergence implies that $\Delta_0^*=\Delta_0$ almost surely.
\end{proof}

\section{Topological stability}\label{s:Topological}
As described in Section~\ref{ss:Outline} there are three different potential contributions to the change in each of our functionals (volume, surface area and Euler characteristic) under perturbation: local/non-local topological contributions and geometric contributions. In this subsection, we control the probability of topological contributions using concepts from (stratified) Morse theory.

Dealing first with the local contributions; if we consider the level sets $\{f+tp_w=\ell\}\cap B_v$ as $t$ varies in $[0,1]$ (recall that $p_w$, defined in \eqref{e:PerturbDef}, is the perturbation induced by resampling the white noise on $B_w$), it is intuitively clear that the level sets should deform continuously, preserving their topology, unless they pass through a critical point. Thinking more carefully, one might realise that we also need to control how the topology changes near the boundary of $B_v$, which can be done by considering `boundary critical points'. This is essentially the logic of the first fundamental result of stratified Morse theory which we now make rigorous.

In this and the remaining sections we will state and prove a number of results involving perturbations of smooth functions, for which we adopt the following notational conventions: $p$ will denote an arbitrary perturbation in the statement of deterministic results, $p_v$ is the random perturbation defined by resampling the white noise in $B_v$ and $\rho$ is a deterministic function that will eventually be chosen equal to $sq\ast\ind_{\Lambda_m}$ for some values of $s$ and $m$. Perturbing by the latter expression will be needed in the proof of positivity of the limiting variance (see the end of Section~\ref{ss:Outline}).

Each unit cube $B_v$ for $v\in\Z^d$ can be viewed as a \emph{stratified set} by partitioning it into the finite union of each of its open faces of dimension $0,1,\dots,d$ which we refer to as \emph{strata}. For example, if $d=2$ then the stratification of $B_v$ consists of the (two-dimensional) interior of the square, four one-dimensional edges and four zero-dimensional corners. Let $A$ be a stratum of $B_v$ and $x\in A$. We say that $x$ is a \emph{stratified critical point} of a function $g\in C^1(B_v)$ if $\nabla_A g(x)=0$ where $\nabla_A$ denotes the gradient restricted to the affine space $A$. If $A$ is a zero-dimensional stratum then $\nabla_Ag\equiv 0$ by convention (i.e.\ all corners of the cube are critical points). We say that the \emph{level} of the critical point is $g(x)$.

Given $g,p\in C^1(B_v)$, we say that $(g,p)$ is \emph{locally topologically stable} (for domain $B_v$ at level $\ell$) if for all $t\in[0,1]$, $g+tp$ has no stratified critical points in $B_v$ at level $\ell$. Note that local topological stability is equivalent for $(g,p)$ and $(g+p,-p)$.

We will often work with a stratified set $D$ which is either a finite union of unit cubes or closed faces of unit cubes. In both cases we equip $D$ with the stratification consisting of all open faces of unit cubes contained in the set. We define a \emph{stratified isotopy} of $D$ to be a continuous map $H:D\times[0,1]\to D$ such that for each $t\in[0,1]$
\begin{enumerate}
    \item $H(\cdot,t):D\to D$ is a homeomorphism, and
    \item for any stratum $A$ of $D$, $H(A,t)=A$.
\end{enumerate}
With these definitions we may state our first stability result:

\begin{lemma}\label{l: morse continuity}
For $v_1,\dots,v_n\in\Z^d$, let $D:=\cup_{i=1}^nB_{v_i}$ and let $g,p\in C^2(D)$. If $(g,p)$ is locally topologically stable on each $B_{v_i}$ at level $\ell$ then there exists a stratified isotopy $H$ of $D$ such that
\begin{enumerate}
    \item $H(\{g\geq\ell\},t)=\{g+tp\geq\ell\}$ for all $t\in[0,1]$,
    \item $H(\cdot,0):D\to D$ is the identity map.
\end{enumerate}
\end{lemma}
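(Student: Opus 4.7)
The plan is to realise $H$ as the time-$t$ flow of a stratum-preserving, time-dependent vector field $V$ on $D$, constructed so that $g+tp$ is preserved along flow trajectories. This is a parametric version of the first fundamental theorem of stratified Morse theory (cf.\ Goresky--MacPherson).

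On each open stratum $A$ of $D$, define in a neighbourhood of $\{g+tp=\ell\}\cap A$ within $A$ the tangential vector field
\[ V_A(x,t) := -\frac{p(x)\,\nabla_A(g+tp)(x)}{|\nabla_A(g+tp)(x)|^2}, \]
where $\nabla_A$ denotes the projection of the gradient onto the affine direction of $A$. Local topological stability guarantees $\nabla_A(g+tp)\neq 0$ on $\{g+tp=\ell\}\cap A$ for every $t\in[0,1]$, so by continuity and compactness of $[0,1]$ the denominator is uniformly bounded away from zero in a neighbourhood. A direct differentiation shows that along any trajectory $\gamma$ of $V_A$ (staying in $A$),
\[ \tfrac{d}{dt}(g+tp)(\gamma(t)) = p(\gamma(t)) + \nabla_A(g+tp)(\gamma(t))\cdot V_A(\gamma(t),t)=0, \]
so $V_A$ is tailor-made to preserve $g+tp$.

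Next, assemble the local fields into a continuous vector field $V$ on $D\times[0,1]$ that is tangent to every stratum, agrees with $V_A$ near $\{g+tp=\ell\}\cap A$ for each stratum $A$, and is supported in a small neighbourhood of the compact set $K := \bigcup_{t\in[0,1]}(\{g+tp=\ell\}\cap D)$. This is the main obstacle: matching the tangential fields continuously across shared faces while keeping the field stratum-tangent is the heart of Thom's first isotopy lemma in the stratified setting. It is handled by standard controlled vector field constructions from stratified Morse theory; in our set-up the strata are affine faces of unit cubes, so a partition of unity adapted to this piecewise-linear structure suffices to make the construction explicit.

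Finally, set $H(x,t)$ to be the flow of $V$ at time $t$ starting from $x$. Since $V$ is bounded on the compact set $D\times[0,1]$ and tangent to each stratum, $H(\cdot,t)$ is a well-defined stratum-preserving homeomorphism of $D$ with $H(\cdot,0)=\mathrm{id}$. The invariance of $g+tp$ along trajectories through $K$ yields $H(\{g=\ell\},t)=\{g+tp=\ell\}$, hence $H$ maps the boundary of $\{g\geq\ell\}$ onto that of $\{g+tp\geq\ell\}$. Since $H(\cdot,t)$ is a homeomorphism, $H(\{g\geq\ell\},t)$ is a closed set with this same boundary, so it is either $\{g+tp\geq\ell\}$ or $\{g+tp\leq\ell\}$; the former is enforced by continuity in $t$ together with $H(\cdot,0)=\mathrm{id}$, which prevents the two excursion sides from swapping.
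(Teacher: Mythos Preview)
Your proposal is correct and follows essentially the same route as the paper: both invoke Thom's first isotopy lemma, realised via a stratum-tangent controlled vector field whose flow transports $\{g+tp=\ell\}$. The paper simply cites this construction from \cite[Lemma~4.1]{bmm24b} (noting that $D$ is a Whitney stratified space), whereas you sketch the vector-field construction explicitly; your flow automatically has $H(\cdot,0)=\mathrm{id}$, which the paper instead arranges a posteriori by precomposing an arbitrary isotopy $\widetilde{H}$ with $\widetilde{H}(\cdot,0)^{-1}$.
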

\begin{proof}
    The existence of a stratified isotopy satisfying the first property was proven for boxes in \cite[Lemma~12]{bmm24b} using Thom's isotopy lemma. The proof remains valid for $D$ since the latter is a Whitney stratified space. The second property is trivial: if $\widetilde{H}$ satisfies the first property then defining $h=\widetilde{H}(\cdot,0)$ we can set $H(x,t)=\widetilde{H}(h^{-1}(x),t)$.
\end{proof}

This result allows us to deduce some topological control over excursion sets from pointwise conditions on $g$, $p$ and their derivatives. We next give a probabilistic statement for stability of $f$ under the perturbations $p_v$ (defined in \eqref{e:PerturbDef}) and $p_v+\rho$. (We recall that our results will eventually be applied with $\rho=sq\ast\ind_{\Lambda_m}$ for some values of $s$ and $m$; see Section~\ref{ss:Outline}.) Given $C^1$ functions $g,p:\R^d\to\R$ we define the \textit{locally unstable set} as
\begin{displaymath}
    \mathcal{U}_\mathrm{Loc}(g,p)=\big\{w\in\Z^d\;\big|\;(g,p)\text{ is locally unstable for }B_w\text{ at level }\ell\big\}.
\end{displaymath}
See Figure~\ref{fig:unstable_set}. Note that this set depends on the level $\ell$ but we omit this from our notation as it will be fixed throughout our analysis.

\begin{figure}[ht]
    \centering
    \begin{tikzpicture}

\foreach \i in {0,2,4,6,8}
{\draw[dashed] (\i,0)--(\i,4);}
\foreach \i in {0,2,4}
{\draw[dashed] (0,\i)--(8,\i);}

\begin{scope}[scale=1.4, shift = {(-0.2,-0.7)}]
    \draw[thick,pattern=north west lines,pattern color=gray] plot [smooth cycle,tension=0.5] coordinates {(2,2.8)(1.9,3)(1.7,3.2)(1.5,3.2)(1.3,2.9)(1.1,2.9)(0.9,3.2)(0.7,3.2)(0.5,3)(0.4,2.8)(0.5,2.6)(0.7,2.4)(0.9,2.4)(1.1,2.7)(1.3,2.7)(1.5,2.4)(1.7,2.4)(1.9,2.6)};
\end{scope}

\draw[thick,pattern=north west lines,pattern color=gray] (1.7,1.5)circle (0.1);

\begin{scope}[shift = {(4.56,0.5)},scale = 1.1]
\draw[thick,pattern=north west lines,pattern color=gray] plot [smooth cycle,tension=0.5] coordinates {(0,0)(0.2,0.5)(0.7,1)(1.2,1.1)(1.4,0.9)(1.3,0.7)(1.0,0.4)(0.5,0)};
\end{scope}
\node[above] at (5,2.8) {$\{g\geq\ell\}$};
\draw (5,2.8)--(5,1.1);
\draw (5,2.8)--(2.2,2.8);
\draw (5,2.8)--(1.7,1.5);

\draw [fill] (0,0) circle (2pt);
\node[below left] at (0,0) {$v_2$};
\draw [fill] (0,2) circle (2pt);
\node[below left] at (0,2) {$v_1$};
\draw [fill] (4,0) circle (2pt);
\node[below left] at (4,0) {$v_3$};
\draw [fill] (6,0) circle (2pt);
\node[below left] at (6,0) {$v_4$};

\end{tikzpicture}

    \caption{For the displayed excursion set, a small perturbation could create critical points in $B_{v_1}$ and $B_{v_2}$ (a saddle point and local maximum respectively) and a stratified critical point in $B_{v_3}\cap B_{v_4}$ (as the excursion component `shrinks' out of $B_{v_4})$. For a function $p$ which is not too small (and not too large), $\mathcal{U}_{\mathrm{Loc}}(g,p)$ would consist of $\{v_1,\dots,v_4\}$.}
    \label{fig:unstable_set}
\end{figure}

For an open set $U\subset\R^d$, a compact set $F\subset U$, $k\in\N\cup\{0\}$ and $g\in C^k(U)$ we recall that
\begin{displaymath}
    \|g\|_{C^k(F)}:=\sup_{x\in F}\sup_{\lvert\alpha\rvert\leq k}\lvert\partial^\alpha g(x)\rvert.
\end{displaymath}

\begin{lemma}\label{l:Prob_unstable_decay}
Let $f$ satisfy Assumption~\ref{a:clt} and let $\rho:\R^d\to\R$ be a $C^1$ deterministic function. Then for each $\delta>0$ there exists $c>0$ (which is independent of $\rho$ but may depend on the distribution of $f$), such that
\begin{displaymath}
    \P(w\in\mathcal{U}_\mathrm{Loc}(f,\rho))\leq c\|\rho\|^{1-\delta}_{C^1(w+\Lambda_2)}\qquad\text{for all }w\in\Z^d
\end{displaymath}
and
\begin{displaymath}
\P\left(w\in \mathcal{U}_\mathrm{Loc}(f,p_v+\rho)\right)\leq c\|\rho\|_{C^1(w+\Lambda_2)}^{1-\delta}+ c( 1+ \lvert v-w\rvert)^{-\beta+\delta}\qquad\text{for all }v,w\in\Z^d
\end{displaymath}
where $\beta$ is the decay exponent given in Assumption~\ref{a:clt}.
\end{lemma}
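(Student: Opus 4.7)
The plan is to prove the first inequality via a quantitative Bulinskaya-type argument for the field $f$, and then derive the second by conditioning on the high-probability event that $p_v$ is uniformly small on $w + \Lambda_2$. Starting with the first bound, I would observe that if $w \in \mathcal{U}_\mathrm{Loc}(f, \rho)$ then there must exist $t \in [0,1]$, a stratum $A$ of $B_w$, and $x \in A$ with $(f + t\rho)(x) = \ell$ and $\nabla_A(f + t\rho)(x) = 0$, and hence $|f(x) - \ell| \leq \|\rho\|_{C^0(B_w)}$ and $|\nabla_A f(x)| \leq \|\rho\|_{C^1(B_w)}$. Taking a union bound over the finitely many strata of $B_w$, the task reduces to showing
\[
\P\bigl(\exists x \in A \cap B_w : |f(x) - \ell| \leq \epsilon,\ |\nabla_A f(x)| \leq \epsilon\bigr) \leq c_\delta\, \epsilon^{1 - \delta}
\]
with $\epsilon = \|\rho\|_{C^1(B_w)}$. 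My strategy for this key estimate is a discretisation: cover $A \cap B_w$ by $O(\epsilon^{-\dim A})$ relatively open cubes of side $\epsilon$, and on the event that $\|\nabla f\|_{C^0(w + \Lambda_2)}$ and $\|\mathrm{Hess}(f)\|_{C^0(w + \Lambda_2)}$ are both bounded by $M = \alpha (\log 1/\epsilon)^{1/2}$, transfer the near-critical condition to each cube centre $c_i$ at the cost of enlarging the two thresholds by $\epsilon M$. The joint Gaussian law of $(f(c_i), \nabla_A f(c_i))$ is non-degenerate under Assumption~\ref{a:clt} (since $-\mathrm{Hess}(K)(0)$ is positive definite), so its density is bounded and each cube contributes at most $c(\epsilon M)^{1+\dim A}$. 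Summing over the cubes and using Borell--TIS to control the complementary event then produces $c_\delta \epsilon^{1-\delta}$ after taking $\alpha$ large enough to absorb the logarithmic losses into $\delta$. (For zero-dimensional strata the same conclusion is immediate, as only the level condition contributes.)

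For the second bound I would fix a small $\eta > 0$, set $M := (1 + |v - w|)^{-\beta + \eta}$ and split on $E := \{\|p_v\|_{C^1(w + \Lambda_2)} \leq M\}$. On $E$, local instability of $(f, p_v + \rho)$ implies, exactly as in the first step, the existence of $x \in B_w$ and stratum $A$ with $|f(x) - \ell|$ and $|\nabla_A f(x)|$ both bounded by $M + \|\rho\|_{C^1(w + \Lambda_2)}$, an event that depends only on $f$; the first bound then estimates its probability by $c(M + \|\rho\|_{C^1})^{1-\delta} \leq c M^{1-\delta} + c \|\rho\|_{C^1(w + \Lambda_2)}^{1-\delta}$. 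For $\P(E^c)$ I would use the representation $\partial^\alpha p_v(x) = \int_{B_v} \partial^\alpha q(x - u)\, d(W - W')(u)$: the decay hypothesis on $q$ gives $\mathrm{Var}\bigl(\partial^\alpha p_v(x)\bigr) \leq c(1 + |v - w|)^{-2\beta}$ uniformly for $x \in w + \Lambda_2$ and $|\alpha| \leq 1$, and standard Gaussian suprema estimates (Borell--TIS combined with Dudley's entropy bound) then give $\E[\|p_v\|_{C^1(w + \Lambda_2)}^{2k}] \leq c_k (1 + |v - w|)^{-2k\beta}$ for every $k \in \N$. Markov therefore yields $\P(E^c) \leq c_k (1 + |v - w|)^{-2k\eta}$, and choosing $k$ large and $\eta,\delta$ sufficiently small ensures that both $M^{1-\delta}$ and $\P(E^c)$ are dominated by $c(1 + |v - w|)^{-\beta + \delta'}$, yielding the claim after relabelling $\delta'$ as $\delta$.

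The hard part will be the Bulinskaya-style estimate in the first step: the covering argument is essentially standard but the balance between the cube side $\epsilon$, the concentration scale $M$ and the dimensional factor $(\epsilon M)^{1 + \dim A}$ must be chosen carefully so that only logarithmic losses (absorbed into $\delta$) appear in the final bound, and one must verify the non-degeneracy of the joint law of $(f, \nabla_A f)$ on each stratum. Once this estimate is in hand, the remainder is largely bookkeeping via Gaussian moment estimates for $p_v$.
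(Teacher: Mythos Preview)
Your proposal is correct and follows essentially the same strategy as the paper. The paper invokes \cite[Lemma~2.4]{bmm24} directly, which packages exactly the two ingredients you describe---a quantitative Bulinskaya estimate (from \cite{ns16}) bounding $\P(\inf_{B_w}\max\{|f-\ell|,|\nabla_A f|\}<\tau)\le c\tau^{1-\epsilon}$, together with Gaussian suprema bounds for $p_v$---into a single inequality of the form $c_\epsilon\inf_{\tau}\bigl(\tau^{1-\epsilon}+e^{-(\tau-M_1-c\sqrt{M_2})^2/(2M_2)}\bigr)$ and then optimises over $\tau$; your split on the event $\{\|p_v\|_{C^1}\le M\}$ with $M=(1+|v-w|)^{-\beta+\eta}$ is the same optimisation done by hand.
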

\begin{proof}
    The first statement follows from \cite[Lemma~2.4]{bmm24}. The same lemma also states that for any $\epsilon>0$, the probability that $w\in\mathcal{U}_\mathrm{Loc}(f,p_v+\rho)$ is at most
    \begin{equation}\label{e:LocStabBound}
        c_\epsilon\inf_{\tau>M_1+c\sqrt{M_2}}\left(\tau^{1-\epsilon}+e^{-\frac{(\tau-M_1-c\sqrt{M_2})^2}{2M_2}}\right)
    \end{equation}
    where
    \begin{displaymath}
        M_1:=\|\rho\|_{C^1(w+\Lambda_2)},\qquad M_2:=\sup_{x,y\in w+\Lambda_2}\sup_{\lvert\alpha\rvert,\lvert\gamma\rvert\leq 2}\left\lvert\partial^\alpha_x\partial^\gamma_y\Cov[p_v(x),p_v(y)]\right\rvert
    \end{displaymath}
    and $c>0$ is an absolute constant. (The proof of the cited lemma uses standard estimates for the supremum of a Gaussian field, given in \cite[Section~3]{bmm24}, and a bound on the probability of $f$ and $\nabla f$ being simultaneously small, from \cite{ns16}.) From the white noise representation for $p_v$ (in \eqref{e:PerturbDef})
    \begin{displaymath}
        \lvert\partial_x^\alpha\partial_y^\gamma\Cov[p_v(x),p_v(y)]\rvert=2\left\lvert\int_{B_v}\partial_x^\alpha q(x-u)\partial_y^\gamma q(y-u)\;du\right\rvert
    \end{displaymath}
    (where taking the derivative inside the integral follows from dominated convergence since $q\in C^3(\R^d)$ and $B_v$ is compact). Therefore by Assumption~\ref{a:clt}, $M_2\leq C(1+\lvert v-w\rvert)^{-2\beta}$ for some $C>0$ independent of $v$ and $w$. Using this observation and setting $\tau=2(M_1+c\sqrt{M_2})^{1-\epsilon}$ in \eqref{e:LocStabBound} we have
    \begin{displaymath}
        \P\left(w\in \mathcal{U}_\mathrm{Loc}(f,p_v+\rho)\right)\leq c_1\Big(\|\rho\|_{C^1(w+\Lambda_2)}^{1-2\epsilon}+(1+ \lvert v-w\rvert)^{-\beta(1-2\epsilon)}+e^{-c_2(1+\lvert v-w\rvert)^{2\epsilon\beta}}\Big)
    \end{displaymath}
    for some $c_1,c_2>0$ which depend on $\epsilon$. Choosing $\epsilon>0$ sufficiently small (depending on $\beta$ and $\delta$) then proves the bound in the statement of the lemma.
\end{proof}

Next we turn to non-local topological contributions. Recall that such contributions occur whenever the topology of the level sets within a unit cube $B_v$ does not change but topological changes in another cube connect/disconnect some components in $B_v$ to/from infinity. Our next result shows that, for such a change to occur, $B_v$ must be connected to a locally unstable cube via a bounded component.

Recall that for three sets $A,B,C\subset\R^d$ we write $A\overset{B}{\longleftrightarrow}C$ if there exists a continuous curve in $B$ joining a point in $A$ to a point in $C$. For a $C^2$ function $g:\R^d\to\R$, let $\{g\geq\ell\}_\infty$ denote the union of any unbounded components of $\{g\geq\ell\}$ and let $\{g\geq\ell\}_{<\infty}:=\{g\geq\ell\}\setminus\{g\geq\ell\}_\infty$ be the union of the bounded components. Then for a pair of $C^2$ functions $(g,p)$ we define the \emph{finite-range unstable set} as
\begin{equation}\label{e:finite_range_unstable}
    \mathcal{U}_{\mathrm{FR}}(g,p)=\mathcal{U}_\mathrm{Loc}(g,p)\cup\bigcup_{v\in\mathcal{U}_\mathrm{Loc}(g,p)}\Big\{w\in\Z^d\;\Big|\;B_w\overset{\{g\geq\ell\}_{<\infty}}{\longleftrightarrow}B_v\text{ or }B_w\overset{\{g+p\geq\ell\}_{<\infty}}{\longleftrightarrow}B_v\Big\}.
\end{equation}
Note that this set depends on the level $\ell$. We say that $B_w$ is finite-range stable if $w\notin\mathcal{U}_\mathrm{FR}(g,p)$.

\begin{lemma}\label{l:TopStab}
    Let $D:=\cup_{i=1}^nB_{v_i}$ be equipped with the stratification consisting of all open faces of $B_{v_1},\dots,B_{v_n}$ and let $g,p:\R^d\to\R$ be $C^2$. Assume that $\{g\geq\ell\}\cap D$ and $\{g+p\geq\ell\}\cap D$ have at most a finite number of components. If $B_{v_i}$ is finite-range stable for each $i$, then there exists a stratified isotopy $H$ of $D$ such that
    \begin{enumerate}
        \item $H(\{g\geq\ell\}\cap D,t)=\{g+tp\geq\ell\}\cap D$ for all $t\in[0,1]$,
        \item $H(\cdot,0):D\to D$ is the identity map,
        \item $H(\{g\geq\ell\}_{\infty}\cap D,1)=\{g+p\geq\ell\}_\infty\cap D$.
    \end{enumerate}
\end{lemma}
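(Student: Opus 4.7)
The plan is to apply Lemma~\ref{l: morse continuity} not to $D$ itself but to a suitably enlarged domain $M$ containing the cube-neighborhoods of every bounded excursion component of either $\{g\geq\ell\}$ or $\{g+p\geq\ell\}$ that meets $D$. The finite-range stability assumption is tailored precisely so that this enlargement still consists entirely of locally stable cubes, and the restriction to $D$ of the resulting isotopy will be our $H$.

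Concretely, I would enumerate the bounded components $\tilde C^{(1)},\ldots,\tilde C^{(m)}$ of $\{g\geq\ell\}$ and $\tilde C'^{(1)},\ldots,\tilde C'^{(m')}$ of $\{g+p\geq\ell\}$ that meet $D$; these lists are finite by the hypothesis that $\{g\geq\ell\}\cap D$ and $\{g+p\geq\ell\}\cap D$ have finitely many components. Setting $N(A):=\bigcup\{B_w:w\in\Z^d,\;B_w\cap A\neq\emptyset\}$ and
\[
M:=D\cup\bigcup_{i=1}^m N(\tilde C^{(i)})\cup\bigcup_{j=1}^{m'}N(\tilde C'^{(j)}),
\]
$M$ is a finite union of unit cubes. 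Every cube $B_w\subseteq M$ is locally stable: for $B_w\subseteq D$ this is immediate from finite-range stability, and for $B_w\subseteq N(\tilde C^{(i)})$ the bounded component $\tilde C^{(i)}$ provides a path in $\{g\geq\ell\}_{<\infty}$ from $B_w$ to some $B_{v_j}\subseteq D$, so $w\in\mathcal{U}_\mathrm{Loc}(g,p)$ would contradict the finite-range stability of $v_j$; the argument for $N(\tilde C'^{(j)})$ is analogous. Lemma~\ref{l: morse continuity} applied to $M$ then produces a stratified isotopy $\tilde H$ of $M$ with $\tilde H(\{g\geq\ell\}\cap M,t)=\{g+tp\geq\ell\}\cap M$ and $\tilde H(\cdot,0)=\mathrm{id}$. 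Since $D$ is a union of strata of $M$, restricting $\tilde H$ to $D$ yields a stratified isotopy $H$ of $D$ satisfying (1) and (2).

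For (3), I take a component $C$ of $\{g\geq\ell\}\cap D$ whose global component $\tilde C$ in $\{g\geq\ell\}$ is bounded, so $\tilde C=\tilde C^{(i)}$ for some $i$. A short unit-cube covering argument shows $\tilde C\cap\partial N(\tilde C)=\emptyset$ (any $x\in\tilde C\cap\partial N(\tilde C)$ would lie in a closed cube $B_u\not\subseteq N(\tilde C)$, forcing $B_u\cap\tilde C\ni x$, a contradiction), so $\tilde C$ lies in the topological interior of $M$. Since a stratified isotopy preserves $\mathrm{int}(M)$ and $\partial M$ setwise, $\tilde H(\tilde C,1)\subseteq\mathrm{int}(M)$, so $\tilde H(\tilde C,1)$ is a component of $\{g+p\geq\ell\}\cap M$ disjoint from $\partial M$; this forces it to be a bounded global component of $\{g+p\geq\ell\}$, since any global $\{g+p\geq\ell\}$-path leaving $\tilde H(\tilde C,1)$ would either connect two distinct components of $\{g+p\geq\ell\}\cap M$, or produce a point of $\partial M$ in the same $\{g+p\geq\ell\}\cap M$-component as $\tilde H(\tilde C,1)$, both impossible. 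Since $H(C,1)=\tilde H(C,1)\subseteq\tilde H(\tilde C,1)$, this gives the forward implication; the reverse (that $\tilde C$ unbounded forces $H(C,1)$ to lie in an unbounded global component of $\{g+p\geq\ell\}$) follows by applying the same argument to $(g+p,-p)$, whose locally unstable cubes and bounded-component connectivity coincide with those of $(g,p)$.

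The main obstacle I anticipate is the local-stability verification for the enlarged domain $M$. The definition of $\mathcal{U}_\mathrm{FR}$ has been designed exactly to make this go through, but one must carefully track that connectivity via bounded components of \emph{either} $\{g\geq\ell\}$ or $\{g+p\geq\ell\}$ is covered, which is why both collections $\{\tilde C^{(i)}\}$ and $\{\tilde C'^{(j)}\}$ appear in the definition of $M$. The secondary topological point $\tilde C\cap\partial N(\tilde C)=\emptyset$ is short but deserves explicit justification given the subtlety of treating cubes as closed versus open.
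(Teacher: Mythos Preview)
Your proposal is correct and follows essentially the same approach as the paper: you enlarge $D$ to a domain $M$ (the paper calls it $D^+$) consisting of $D$ together with all unit cubes touched by bounded excursion components of $g$ or $g+p$ that meet $D$, apply Lemma~\ref{l: morse continuity} there, argue that bounded components avoid $\partial M$ and hence so do their images, and then restrict to $D$. Your write-up is somewhat more explicit about the boundary-avoidance step ($\tilde C\cap\partial N(\tilde C)=\emptyset$) and the reverse implication via $(g+p,-p)$, but the structure and the key ideas match the paper's proof.
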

\begin{proof}
    Let $D^+$ be the union of all unit cubes which are connected to $D$ by bounded excursion components of $g$ or $g+p$, that is
    \begin{displaymath}
        D^+:=D\cup\bigcup_{i=1}^n\bigcup\Big\{B_v\;\Big|\;B_v\overset{\{g\geq\ell\}_{<\infty}}{\longleftrightarrow}B_{v_i}\;\text{or}\;B_v\overset{\{g+p\geq\ell\}_{<\infty}}{\longleftrightarrow}B_{v_i}\Big\}.
    \end{displaymath}
    Since $\{g\geq\ell\}\cap D$ and $\{g+p\geq\ell\}\cap D$ have finitely many components, we see that $D^+$ is the union of finitely many unit cubes. Since each cube in $D$ is finite-range stable, each cube in $D^+$ is locally topologically stable, so we may apply Lemma~\ref{l: morse continuity} to obtain a stratified isotopy $H:D^+\times[0,1]\to D^+$ such that $H(\{g\geq\ell\}\cap D^+,t)=\{g+tp\geq\ell\}\cap D^+$.
    
    Let $A$ be a component of $\{g\geq\ell\}_{<\infty}$ which intersects $D$. Observe that $A$ cannot intersect the boundary of $D^+$ (if it did, then some cube not contained in $D^+$ would be connected to $D$ by a bounded excursion component). Hence, since $H$ preserves strata, $H(A,1)$ cannot intersect the boundary of $D^+$. In particular $H(A,1)$ is a bounded component of $\{g+p\geq\ell\}$. This is true for any such component $A$, and so
    \begin{displaymath}
        H(\{g\geq\ell\}_{<\infty}\cap D,1)\subseteq\{g+p\geq\ell\}_{<\infty}.
    \end{displaymath}
    By the same reasoning, any component of $\{g+p\geq\ell\}_{<\infty}$ which intersects $D$ must be the image under $H(\cdot,1)$ of a component of $\{g\geq\ell\}_{<\infty}$. Therefore, recalling that $H(D,1)=D$, we have
    \begin{displaymath}
        H(\{g\geq\ell\}_{<\infty}\cap D,1)=\{g+p\geq\ell\}_{<\infty}\cap D.
    \end{displaymath}
    The restriction of $H$ to $D$ then satisfies the conditions in the statement of the lemma.
\end{proof}

Our next objective is to obtain probabilistic bounds for finite-range stability. As an input to such a bound, we will require the following generalisation of Proposition~\ref{p:BCDecay} which allows the field $f$ to be perturbed by a deterministic, non-negative function.

\begin{proposition}\label{p:UnifTruncConnecDecay}
    Let $f$ satisfy Assumption~\ref{a:clt} and let $\rho:\R^d\to\R$ be non-negative, continuously differentiable and deterministic. For each $\ell<-\ell_c$ there exists $C,c>0$ (independent of $\rho$) such that for all $n\in\N$ and $v\in\Z^d$
    \begin{displaymath}
        \P\left(B_v\overset{\{f+\rho\geq\ell\}_{<\infty}}{\longleftrightarrow}v+\partial\Lambda_n\right)\leq Ce^{-cn}.
    \end{displaymath}
\end{proposition}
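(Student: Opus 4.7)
The plan is to follow the same strategy as Proposition~\ref{p:BCDecay}: reduce to the subcritical decay of Theorem~\ref{t:PhaseTransition} applied to the field $-f$ at level $-\ell$, exploiting the distributional identity $\{f \leq \ell\} \stackrel{d}{=} \{f \geq -\ell\}$. The key observation that allows $\rho$ to play no quantitative role is the pointwise inclusion $\{f+\rho \leq \ell\} \subseteq \{f \leq \ell\}$, which is immediate from $\rho \geq 0$. Consequently any connection in $\{f+\rho \leq \ell\}$ is also a connection in $\{f \leq \ell\}$, and since $-\ell > \ell_c$, the first bullet of Theorem~\ref{t:PhaseTransition} yields exponential decay with constants depending only on the distribution of $f$ and on the gap $-\ell - \ell_c$, uniformly in $v$ and $\rho$.

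The substantive step is topological: I claim that on the event $\{B_v \overset{\{f+\rho\geq\ell\}_{<\infty}}{\longleftrightarrow} v+\partial\Lambda_n\}$ there exists $w \in \Z^d$ with $|w-v|$ bounded by a dimensional constant and a dimensional $c_d > 0$ such that $B_w \overset{\{f+\rho\leq\ell\}}{\longleftrightarrow} v+\partial\Lambda_{c_d n}$. This is the analogue of the ``surrounding'' argument sketched after Proposition~\ref{p:BCDecay}: if $C$ is the bounded component of $\{f+\rho \geq \ell\}$ mediating the connection, let $U$ be the unbounded component of $\R^d \setminus \overline{C}$; then $\partial U \subseteq \partial C \subseteq \{f+\rho = \ell\}$. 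Since $C$ meets $B_v$ and extends to $v+\partial\Lambda_n$, the outer frontier $\partial U$ must contain points within distance $\sqrt{d}$ of $B_v$ and points at distance of order $n$ from $v$, and one extracts from $\partial U$ a connected continuum joining these two regions. A routine thickening argument using the continuity of $f+\rho$ then produces the required continuous path in the open set $\{f+\rho < \ell\} \subseteq \{f+\rho \leq \ell\}$.

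With these two ingredients in hand, the proposition follows by a union bound over the $O(1)$ candidate cubes $B_w$ near $B_v$, followed by an application of Theorem~\ref{t:PhaseTransition} to the subcritical level $-\ell$ (the replacement of $n$ by $c_d n$ being absorbed into the constants). The main obstacle is making the surrounding step rigorous in arbitrary dimension: in $d=2$ it reduces essentially to the Jordan curve theorem applied to a regular approximation of $\partial C$, but for $d \geq 3$ one must work harder to produce a \emph{connected} piece of $\partial U$ of diameter comparable to $n$ that touches a bounded neighbourhood of $B_v$. This can be handled by standard continuum-theoretic arguments (e.g., the fact that the frontier of the unbounded component of the complement of a bounded connected set is itself a continuum containing points arbitrarily close to that set), combined with a Bulinskaya-type observation to ensure that the degenerate level set $\{f+\rho = \ell\}$ may be replaced by strict inequality after an arbitrarily small inflation.
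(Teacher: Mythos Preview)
Your overall strategy is correct and matches the paper's: use $\rho\geq 0$ to obtain $\{f+\rho\leq\ell\}\subseteq\{f\leq\ell\}$, then reduce to the subcritical estimate of Theorem~\ref{t:PhaseTransition} at level $-\ell>\ell_c$ via the symmetry $\{f\leq\ell\}\overset{d}{=}\{f\geq -\ell\}$. However, the topological claim underpinning your union bound is false as stated, and this is not merely a matter of rigour in $d\geq 3$.

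You assert that the outer frontier $\partial U$ contains points within distance $\sqrt{d}$ of $B_v$. This need not hold: take $C$ to be (say) a ball of radius $R\gg n$ containing $B_v$ deep in its interior. Then $C$ certainly connects $B_v$ to $v+\partial\Lambda_n$, yet $\partial U$ is the sphere of radius $R$ and lies nowhere near $B_v$; indeed $\{f+\rho\leq\ell\}$ is empty in a neighbourhood of $B_v$. So the promised ``$O(1)$ cubes near $B_v$'' union bound cannot work.

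The paper's remedy is to anchor the sub-level connection at the \emph{outermost} part of $C$ rather than at $B_v$. Let $N$ be the largest integer with $C\cap\partial\Lambda_{N+1}\neq\emptyset$ (so $N\geq n-1$), and choose $x\in\Z^d\cap(\Lambda_{N+2}\setminus\Lambda_N)$ such that the surrounding component $E\subset\{f+\rho\leq\ell\}$ meets $B_x$. Since $E$ encloses $C$, its diameter is at least $N$, whence $E$ connects $B_x$ to $x+\partial\Lambda_{N/\sqrt{d}-1}$. One then applies Theorem~\ref{t:PhaseTransition} at this $x$ and takes a union bound over all $N\geq n-1$ and over the $O(N^{d-1})$ candidate cubes in $\Lambda_{N+2}\setminus\Lambda_N$; the polynomial factor $\sum_{N\geq n-1}N^{d-1}e^{-cN}$ is harmlessly absorbed into the exponential. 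Your argument can be repaired along exactly these lines.
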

\begin{proof}[Proof of Proposition~\ref{p:BCDecay}]
    Set $\rho\equiv 0$ and apply Proposition~\ref{p:UnifTruncConnecDecay}.
\end{proof}

\begin{proof}[Proof of Proposition~\ref{p:UnifTruncConnecDecay}]
Since the distribution of $f$ is translation invariant (as are our assumptions on the function $\rho$) it is enough to prove the bound only for $v=0$. Suppose that $B_0$ is connected to $\partial\Lambda_n$ by some bounded component of $\{f+\rho\geq\ell\}$. Let $A$ denote this component and let $N$ be the largest integer such that $A$ intersects $\partial\Lambda_{N+1}$ (see Figure~\ref{fig:arm_decay}). So the diameter of $A$ is at least $N$ and $N\geq n-1$. By Bulinskaya's lemma \cite[Lemma~11.2.10]{at07} $f+\rho$ almost surely has no critical points at level $\ell$. Hence the boundary of $A$ is $C^1$-smooth and so there is a component $E$ of $\{f+\rho\leq\ell\}$ which `surrounds' $A$. More precisely, $E$ is the component of $\{f+\rho\leq\ell\}$ which intersects the outer boundary of $A$. Then by definition of $N$, there exists some $x\in\Z^d\cap\Lambda_{N+2}\setminus\Lambda_{N}$ such that $E$ intersects $B_x$. Moreover since the diameter of $E$ is at least as large as the diameter of $A$ we see that $E$ connects $B_x$ to $x+\partial\Lambda_{N/\sqrt{d}-1}$. By non-negativity of $\rho$ and symmetry of the normal distribution
\begin{displaymath}
    \P\left(B_x\overset{\{f+\rho\leq\ell\}}{\longleftrightarrow}x+\partial\Lambda_{\frac{N}{\sqrt{d}}-1}\right)\leq\P\left(B_x\overset{\{f\leq\ell\}}{\longleftrightarrow}x+\partial\Lambda_{\frac{N}{\sqrt{d}}-1}\right)=\P\left(B_x\overset{\{f\geq-\ell\}}{\longleftrightarrow}x+\partial\Lambda_{\frac{N}{\sqrt{d}}-1}\right).
\end{displaymath}
Since $-\ell>\ell_c$, using sharpness of the phase transition (i.e.\ Theorem~\ref{t:PhaseTransition}) the latter probability is bounded above by $Ce^{-cN}$ where $C,c>0$ depend only on $\ell$ and the distribution of $f$. Therefore by the union bound
\begin{equation*}
    \P\Big(\Lambda_1\overset{\{f\geq\ell\}_{<\infty}}{\longleftrightarrow}\partial\Lambda_n\Big)\leq\sum_{N\geq n-1}\sum_{x\in\Lambda_{N+2}\setminus\Lambda_{N}}Ce^{-cN}\leq \sum_{N\geq n-1} C^\prime N^{d-1}e^{-cN}\leq C^{\prime\prime}e^{-cn}
\end{equation*}
for constants $C,C^\prime,C^{\prime\prime},c>0$, as required.
\end{proof}

\begin{figure}[ht]
    \centering
    \begin{tikzpicture}[scale=0.6]

\draw[dashed] (-5,-5) rectangle (5,5);
\draw[dashed] (-3,-3) rectangle (3,3);
\node[left] at (-5,-5) {$\partial\Lambda_{N+1}$};
\node[left] at (-3,-3) {$\partial\Lambda_{n}$};
\draw [fill] (0,0) circle (2pt);
\node[below left] at (0,0) {$0$};
\draw[thick,pattern=north west lines,pattern color=gray] plot [smooth cycle,tension=0.5] coordinates {(-1,-1)(-1,0)(-0.5,0.5)(1,0.5)(2.5,0.5)(3.5,0.8)(4.5,1.2)(5.2,1.1)(5.2,0.9)(4.5,0.5)(3.5,0)};
\node[right] at (6,0.2) {$A$};
\draw (6,0.2)--(4.3,0.7);
\node[above] at (-3,3.5) {$\{f+\rho\geq\ell\}$};
\draw (-3,3.5)--(-2.5,1.5);
\draw (-3,3.5)--(-0.5,0);

\draw (5,1.6) rectangle (5.4,2);
\node [right] at (6,2.5) {$B_x$};
\draw(6,2.5)--(5.2,1.8);

\clip (-6,-6) rectangle (6,6);
\draw[thick,pattern=north west lines,pattern color=gray] plot [smooth cycle,tension=0.5] coordinates {(-2.5,0)(-1.5,1)(0,2)(2,2.5)(4,2.5)(6.2,3.5)(6.5,4.5)(5.5,5)(3.5,4.5)(2.5,3.5)(0.5,3.5)(-1.5,2.5)(-3.5,1.5)(-5.5,1)(-6.5,0.5)(-6.5,-0.5)(-4.5,0)(-3,-1.5)(-1.5,-3)(1,-3.5)(3,-3.5)(5,-3)(6.5,-2.5)(6.5,-1.5)(4.5,-2)(2.5,-2.5)(0.5,-2.5)(-1.5,-1.5)};

\end{tikzpicture}

    \caption{A possible excursion set in which the origin is connected to $\partial\Lambda_n$ by a bounded component. The set $E$, defined in the proof of Proposition~\ref{p:UnifTruncConnecDecay}, is the unshaded region surrounding $A$.}
    \label{fig:arm_decay}
\end{figure}

We now use this result to get a probabilistic bound for finite-range stability.

\begin{lemma}\label{l:ProbExtUnst}
Let $f$ satisfy Assumptions~\ref{a:clt} and \ref{a:Decay}, then for all $\delta>0$ there exists $c>0$ such that
\begin{displaymath}
    \P(w\in\mathcal{U}_\mathrm{FR}(f,p_v))\leq c(1+\lvert v-w\rvert)^{-\beta+\delta}\qquad\text{for }v,w\in\Z^d.
\end{displaymath}
In particular, by the Borel-Cantelli lemma, $\mathcal{U}_\mathrm{FR}(f,p_0)$ is almost surely finite. 

Let $\rho:\R^d\to\R$ be a deterministic function such that
\begin{displaymath}
    \max_{\lvert\alpha\rvert\leq 2}\lvert\partial^\alpha\rho(x)\rvert\leq C(1+\lvert x\rvert)^{-\beta}
\end{displaymath}
for some $C>0$ and all $x\in\R^d$. Suppose in addition that $\ell<-\ell_c$, then for all $\delta>0$ there exists $c>0$ such that
\begin{displaymath}
    \P(w\in\mathcal{U}_\mathrm{FR}(f,p_0+\rho))\leq c(1+\lvert w\rvert)^{-\beta+\delta}\qquad\text{for }w\in\Z^d.
\end{displaymath}
\end{lemma}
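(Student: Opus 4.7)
My approach is to decompose the event $\{w \in \mathcal{U}_\mathrm{FR}(f,p_v)\}$ into three sub-events: local instability at $w$ itself, local instability at a nearby site reachable from $w$, and a bounded excursion component from $w$ to a distant locally unstable site. Writing $d(v,w) := 1 + |v-w|$ and setting the truncation radius $r := \lceil d(v,w)^{\delta/(2d)}\rceil$, one has the inclusion
\begin{equation*}
\{w \in \mathcal{U}_\mathrm{FR}(f,p_v)\} \subseteq \{w \in \mathcal{U}_\mathrm{Loc}(f,p_v)\} \cup \bigcup_{|u-w| \leq r}\{u \in \mathcal{U}_\mathrm{Loc}(f,p_v)\} \cup \mathcal{E}_r,
\end{equation*}
where $\mathcal{E}_r$ is the event that $B_w$ is joined to $w + \partial\Lambda_{r-1}$ by a bounded component of either $\{f \geq \ell\}$ or $\{f + p_v \geq \ell\}$. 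The third term handles the case $|u-w| > r$, since any bounded component joining $B_w$ to such a $B_u$ must cross $w+\partial\Lambda_{r-1}$.

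Lemma~\ref{l:Prob_unstable_decay} (applied with $\rho \equiv 0$) bounds the first term by $c\, d(v,w)^{-\beta + \delta/2}$. The same lemma, together with the estimate $|v-u| \geq d(v,w)/2$ (valid for $d(v,w)$ large by the smallness of $r$), bounds each summand in the middle term; a union bound over the at most $c\,r^d$ indices yields a total contribution of order $r^d\, d(v,w)^{-\beta+\delta/2} \leq c\, d(v,w)^{-\beta + \delta}$, which matches the target. The remaining event $\mathcal{E}_r$ is controlled by truncated connection decay: for the $\{f \geq \ell\}$ piece, Assumption~\ref{a:Decay} gives super-polynomial decay in $r$, hence super-polynomial in $d(v,w)^{\delta/(2d)}$, which dominates any polynomial rate in $d(v,w)$.

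The $\{f + p_v \geq \ell\}$ piece is more delicate because $p_v$ is random and coupled with $f$. My plan is to condition on the high-probability event $\{\sup_{w+\Lambda_r}|p_v| \leq \epsilon\}$ for a small fixed $\epsilon > 0$. On this event a bounded-component connection in $\{f + p_v \geq \ell\}$ through $w+\Lambda_r$ forces a corresponding connection in $\{f \geq \ell - \epsilon\}$, to which the (possibly slightly shifted) truncated connection decay applies. The complementary event has super-polynomial tail by Borell--TIS together with the variance bound $|\Cov[p_v(x),p_v(y)]| \leq c\,d(v,w)^{-2\beta}$ already derived in the proof of Lemma~\ref{l:Prob_unstable_decay}. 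The almost-sure finiteness of $\mathcal{U}_\mathrm{FR}(f,p_0)$ is then an immediate Borel--Cantelli consequence of the first bound at $v=0$, because $\sum_{w\in\Z^d}(1+|w|)^{-\beta+\delta}<\infty$ once $\delta < \beta - d$.

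The second statement of the lemma follows the same outline with $r := \lceil(1+|w|)^{\delta/(2d)}\rceil$: the local term uses the full form of Lemma~\ref{l:Prob_unstable_decay} together with the decay hypothesis $\|\rho\|_{C^1(w+\Lambda_2)} \leq C(1+|w|)^{-\beta}$, the middle term is treated identically, and the truncated connection step invokes Proposition~\ref{p:UnifTruncConnecDecay}. It is here that the hypothesis $\ell < -\ell_c$ is essential: it supplies the slack to apply Proposition~\ref{p:BCDecay} at the shifted level $\ell - \epsilon < -\ell_c$ for small enough $\epsilon$, delivering exponential truncated connection decay for the perturbed excursion set. The main technical obstacle throughout is exactly this transfer of decay from the unperturbed field to the perturbed sets $\{f + p_v \geq \ell\}$ and $\{f + p_0 + \rho \geq \ell\}$; the shift-and-condition strategy above is the natural resolution, and works cleanly once the decay assumption is robust enough to absorb a small level shift (automatically so when $\ell < -\ell_c$, and slightly more care is needed in the first part where Assumption~\ref{a:Decay} is hypothesised at a single level).
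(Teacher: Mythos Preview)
Your decomposition into a local piece, a nearby-union piece, and a far-connection piece $\mathcal{E}_r$ matches the paper's structure, and your treatment of the first two pieces via Lemma~\ref{l:Prob_unstable_decay} and a union bound is correct. The gap is in your handling of the truncated-connection event for the \emph{perturbed} excursion set.

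Your shift-and-condition argument does not work. Conditioning on $\sup_{w+\Lambda_r}|p_v|\le\epsilon$ and using $\{f+p_v\ge\ell\}\cap(w+\Lambda_r)\subseteq\{f\ge\ell-\epsilon\}$ produces a path in $\{f\ge\ell-\epsilon\}$, but this inclusion does \emph{not} preserve the bounded-component property: a bounded component of $\{f+p_v\ge\ell\}$ may well sit inside the \emph{unbounded} component of $\{f\ge\ell-\epsilon\}$, so truncated connection decay is simply not available for the resulting path. Moreover, Assumption~\ref{a:Decay} is hypothesised at the single level $\ell$, and there is no mechanism here to transfer it to $\ell-\epsilon$; you flag this yourself but do not resolve it. The same objection applies to your treatment of the $\{f+p_0+\rho\ge\ell\}$ piece in the second statement.

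The paper avoids this entirely by exploiting a distributional identity you have overlooked: the resampled field $\widetilde f_v$ has the \emph{same law} as $f$. Hence the third term in the decomposition --- the truncated connection in the perturbed set --- is, when $\rho\equiv 0$, identical in distribution to the second term, and Assumption~\ref{a:Decay} applies to it directly at level $\ell$ with no shift. For the second statement the perturbed field is $\widetilde f_0+\rho\overset{d}{=}f+\rho$, and Proposition~\ref{p:UnifTruncConnecDecay} (which is where $\ell<-\ell_c$ enters) gives the required truncated connection decay for $f+\rho$ directly, again without any level shift or conditioning on the size of $p_0$.
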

\begin{proof}
    We first argue for $p_0+\rho$. By definition of the finite-range unstable set and the union bound
    \begin{equation}\label{e:FRBound}
    \begin{aligned}
        \P(w\in\mathcal{U}_\mathrm{FR}(f,p_0+\rho))\leq\; &\P(w\in\mathcal{U}_\mathrm{Loc}(f,p_0+\rho))\\
        &+\sum_{u\in\Z^d}\P\Big(\{u\in\mathcal{U}_\mathrm{Loc}(f,p_0+\rho)\}\cap\big\{B_w\overset{\{f\geq\ell\}_{<\infty}}{\longleftrightarrow}B_u\big\}\Big)\\
        &\quad+\sum_{u\in\Z^d}\P\Big(\{u\in\mathcal{U}_\mathrm{Loc}(f,p_0+\rho)\}\cap\big\{B_w\overset{\{\widetilde{f}_0+\rho\geq\ell\}_{<\infty}}{\longleftrightarrow}B_u\big\}\Big).
    \end{aligned}
    \end{equation}
By Lemma~\ref{l:Prob_unstable_decay} the first term on the right hand side is bounded by $c(1+\lvert w\rvert)^{-\beta+\delta}$. For any $\epsilon\in(0,\delta/2d)$, the second term can be bounded by
\begin{equation}\label{e:FRunstable}
    \sum_{\lvert u-w\rvert\leq\lvert w\rvert^\epsilon}\P(u\in\mathcal{U}_\mathrm{Loc}(f,p_0+\rho))+\sum_{\lvert u-w\rvert>\lvert w\rvert^\epsilon}\P\Big(B_w\overset{\{f\geq\ell\}_{<\infty}}{\longleftrightarrow}B_u\Big).
\end{equation}
By Lemma~\ref{l:Prob_unstable_decay} the first sum in \eqref{e:FRunstable} is at most
\begin{align*}
    \sum_{\lvert u-w\rvert\leq\lvert w\rvert^\epsilon}c(1+\lvert u\rvert)^{-\beta+\delta/2}\leq c^\prime\lvert w\rvert^{d\epsilon}(1+\lvert w\rvert)^{-\beta+\delta/2}\leq c^\prime(1+\lvert w\rvert)^{-\beta+\delta}
\end{align*}
for constants $c,c^\prime>0$. By Assumption~\ref{a:Decay} and Lemma~\ref{l:ElementarySum}, the second sum in \eqref{e:FRunstable} decays super-polynomially in $\lvert w\rvert$. If $\rho\equiv 0$, then the third term of \eqref{e:FRBound} is identical to the second term (since $\widetilde{f}_0$ has the same distribution as $f$). Hence we conclude that
\begin{displaymath}
    \P(w\in\mathcal{U}_\mathrm{FR}(f,p_0))\leq c(1+\lvert w\rvert)^{-\beta+\delta},
\end{displaymath}
and the first statement of the lemma follows by stationarity.

If $\rho$ is not identically zero, then we can bound the third term of \eqref{e:FRBound} using the same argument as for the second term, except that we use Proposition~\ref{p:UnifTruncConnecDecay} (and the assumption that $\ell<-\ell_c$) to control
\begin{displaymath}
    \sum_{\lvert u-w\rvert>\lvert w\rvert^\epsilon}\P\Big(B_w\overset{\{\widetilde{f}_0+\rho\geq\ell\}_{<\infty}}{\longleftrightarrow}B_u\Big)
\end{displaymath}
rather than Assumption~\ref{a:Decay}.
\end{proof}

\section{Volume, surface area and Euler characteristic of the unbounded component}\label{s:Pert}
To summarise our progress so far: by Theorem~\ref{t:GenCLT} and Proposition~\ref{p:Sufficient_Moments} our functionals $\mu_\star$ will each satisfy a CLT if we can obtain appropriate moment bounds for the change in the value of the functional under perturbation. In the previous section we obtained bounds on the probability of topological contributions to this change. It remains to control the magnitude of such contributions as well as the geometric contributions. In the next three subsections, which can be read independently, we do this using arguments specific to each of the three functionals.

\subsection{Volume}\label{ss:Volume}
Controlling the magnitude of topological contributions to $\Delta_v$ is trivial for the volume functional since the volume of any unit cube is bounded deterministically. It is also straightforward to bound the geometric contributions to $\Delta_v$ in this case, as the following lemma shows.

For a $C^2$ function $g:\R^d\to\R$ let $\Vol_\infty[D,g]=\Vol[\{g\geq\ell\}_\infty\cap D]$.

\begin{lemma}\label{l:ChangeDecomp}
Let $g,p:\R^d\to\R$ be $C^2$.
\begin{enumerate}
\item For any distinct $v_1,\dots,v_n\in\Z^d$
\begin{displaymath}
    \Vol_\infty\left[\bigcup_{i=1}^nB_{v_i},g\right]=\sum_{i=1}^n\Vol_\infty[B_{v_i},g].
\end{displaymath}
    \item For any $v\in\Z^d$ 
\begin{displaymath}
    \lvert\Vol_\infty[B_v,g]-\Vol_\infty[B_v,g+p]\rvert\leq 1.
\end{displaymath}
    \item For $v\notin\mathcal{U}_\mathrm{FR}(g,p)$, suppose that $g$ and $g+p$ have at most a finite number of stratified critical points on $B_v$, then
\begin{displaymath}
    \lvert\Vol_\infty[B_v,g]-\Vol_\infty[B_v,g+p]\rvert\leq\Vol\big[B_v\cap\{\lvert g-\ell\rvert\leq\lvert p\rvert\}\big].
\end{displaymath}
\end{enumerate}
\end{lemma}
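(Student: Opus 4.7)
Parts (1) and (2) are routine. Additivity in (1) follows from countable additivity of Lebesgue measure, since distinct cubes $B_{v_i}$ overlap only on measure-zero faces; and (2) is immediate because both $\Vol_\infty[B_v,g]$ and $\Vol_\infty[B_v,g+p]$ lie in $[0,\Vol[B_v]]=[0,1]$.

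For part (3), the plan is to start from the pointwise bound
\[
\lvert\Vol_\infty[B_v,g]-\Vol_\infty[B_v,g+p]\rvert\leq\Vol\bigl[B_v\cap(\{g\geq\ell\}_\infty\triangle\{g+p\geq\ell\}_\infty)\bigr]
\]
and then to establish the set-theoretic containment
\[
B_v\cap\bigl(\{g\geq\ell\}_\infty\triangle\{g+p\geq\ell\}_\infty\bigr)\;\subseteq\;B_v\cap\{\lvert g-\ell\rvert\leq\lvert p\rvert\}.
\]
I will prove the containment by fixing $x\in B_v$ with $\lvert g(x)-\ell\rvert>\lvert p(x)\rvert$ and showing that $x$ lies in both unbounded components or in neither. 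A sign argument shows that $g(x)+tp(x)-\ell$ has the same (strict) sign as $g(x)-\ell$ for every $t\in[0,1]$: so if $g(x)<\ell$ then $x$ lies outside both excursion sets, and hence outside both unbounded components, and we are done. The remaining case is $g(x)>\ell$.

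In this case I will apply Lemma~\ref{l:TopStab} to $D=B_v$, which is admissible because $v\notin\mathcal{U}_\mathrm{FR}(g,p)$. This produces a stratified isotopy $H$ of $B_v$ satisfying $H(\{g\geq\ell\}_\infty,1)=\{g+p\geq\ell\}_\infty$ on $B_v$. The Thom-type vector field underlying the construction in Lemma~\ref{l: morse continuity} can be taken to be supported in a neighbourhood of the union of level sets $\bigcup_{t\in[0,1]}\{g+tp=\ell\}$. The uniform gap $\lvert g(x)+tp(x)-\ell\rvert\geq\lvert g(x)-\ell\rvert-\lvert p(x)\rvert>0$ together with continuity of $g$ and $p$ produces a neighbourhood of $x$ disjoint from that union, so the isotopy can be chosen to fix $x$ throughout $[0,1]$. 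Consequently $x=H(x,1)\in\{g+p\geq\ell\}_\infty\cap B_v$ if and only if $x\in\{g\geq\ell\}_\infty\cap B_v$, which completes the containment.

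The main obstacle is justifying that the isotopy can be chosen to fix the particular point $x$. This is a standard property of the Thom isotopy construction (the flow field vanishes away from the level sets it is built to transport), but since Lemma~\ref{l:TopStab} and Lemma~\ref{l: morse continuity} only record the existence of some isotopy, a short unpacking of the construction from \cite[Lemma~4.1]{bmm24b} will be required. A fallback topological argument avoiding this issue also seems feasible: if the $\{g\geq\ell\}$-component of $x$ were unbounded while its $\{g+p\geq\ell\}$-component were bounded (or vice versa), that bounded component would necessarily link $B_v$ to a locally unstable cube inside $\{g+p\geq\ell\}_{<\infty}$ (respectively $\{g\geq\ell\}_{<\infty}$), contradicting $v\notin\mathcal{U}_\mathrm{FR}(g,p)$.
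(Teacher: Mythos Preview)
Parts (1) and (2) coincide with the paper's proof. For part (3) your architecture also matches: bound by the volume of the symmetric difference and, for $x\in B_v$ with $\lvert g(x)-\ell\rvert>\lvert p(x)\rvert$, show that membership in the unbounded component is the same for $g$ and $g+p$. The difference is in the key technical step.

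You propose to re-engineer the Thom vector field so that the isotopy $H$ literally fixes $x$. This is legitimate (near $(x,t)$ the level $\{g+tp=\ell\}$ is absent, so the trivial lift $\partial_t$ works there), but note that the isotopy in Lemma~\ref{l:TopStab} is built on the enlarged domain $D^+$ from that proof, so your bump-function modification must be carried out there and then restricted; only then do you retain the crucial property $H(\{g\geq\ell\}_\infty,1)=\{g+p\geq\ell\}_\infty$ on $B_v$. The paper avoids this unpacking entirely and uses $H$ as a black box: with $A$ the component of $\{g\geq\ell\}\cap B_v$ through $x$, one has $x\in H(\{g\geq\ell\},t)$ for all $t$, and a short intermediate-value argument using
\[
\epsilon:=\min_{A_1\neq A_2}\min_{t\in[0,1]}\dist\bigl(H(A_1,t),H(A_2,t)\bigr)>0
\]
forces $x\in H(A,t)$ for every $t$, in particular $x\in H(A,1)$. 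Applying this to components of $\{g\geq\ell\}\cap B_v$ and of $\{g\leq\ell\}\cap B_v$ yields both inclusions simultaneously. So the paper's route is shorter and uses only the properties of $H$ already recorded; yours also works but costs the extra construction-level step.

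A caution on your fallback: as stated it is not actually simpler. Knowing that the bounded $\{g+p\geq\ell\}$-component $C$ through $x$ meets only locally stable cubes tells you (via Lemma~\ref{l: morse continuity} on a neighbourhood of $C$) that $C$ is the isotopy image of some component of $\{g\geq\ell\}$, but it does not by itself tell you that \emph{this} component contains $x$; you are back to needing either the fixing property or the separation argument above.
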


\begin{proof}
The first statement follows immediately from the fact that $\Vol_\infty[\cdot,g]$ is a measure such that $\Vol_\infty[\partial B_v,g]=0$ for any $v\in\Z^d$. The second statement is trivial since $0\leq\Vol_\infty[B_v,g],\Vol_\infty[B_v,g+p]\leq 1$.

Now assume $v\notin\mathcal{U}_\mathrm{FR}(g,p)$. If $g$ and $g+p$ have a finite number of stratified critical points, then $\{g\geq\ell\}\cap B_v$ and $\{g+p\geq\ell\}\cap B_v$ have at most finitely many components (since each component has a maximum which is a stratified critical point). Hence we can apply Lemma~\ref{l:TopStab} and let $H$ denote the corresponding stratified isotopy on $B_v$. Let $S=\{\lvert g-\ell\rvert>\lvert p\rvert\}\cap B_v$ and $S^c=B_v\setminus S$. We claim that if $A$ is a component of $\{g\geq\ell\}\cap B_v$ then
\begin{displaymath}
    A\cap S=H(A,1)\cap S.
\end{displaymath}
Applying the claim to any component $A$ of $\{g\geq\ell\}_\infty\cap B_v$ and using the third point of Lemma~\ref{l:TopStab}, we see that
\begin{displaymath}
    \Vol[A]-\Vol[H(A,1)]=\Vol[A\cap S^c]-\Vol[H(A,1)\cap S^c].
\end{displaymath}
Summing over the components $A$ we have
\begin{displaymath}
    \lvert\Vol_\infty[B_v,g]-\Vol_\infty[B_v,g+p]\rvert=\lvert\Vol[\{g\geq\ell\}_\infty\cap S^c]-\Vol[\{g+p\geq\ell\}_\infty\cap S^c]\rvert\leq \Vol\big[S^c\big]
\end{displaymath}
which verifies the third statement of the lemma.

It remains to prove the claim. We define
\begin{displaymath}
    \epsilon=\min_{A_1\neq A_2}\min_{t\in[0,1]}\dist(H(A_1,t),H(A_2,t))
\end{displaymath}
where $A_1$ and $A_2$ are components of $\{g\geq\ell\}\cap B_v$. In other words $\epsilon$ is the minimum distance between the images under $H$ of any two distinct excursion components. Since $H$ is continuous and $H(\cdot,t)$ is a homeomorphism for each $t$, we see that $\epsilon>0$.

Let $A$ be a component of $\{g\geq\ell\}\cap B_v$ and let $x$ be a point in $A\cap S$ or $H(A,1)\cap S$. In either case, $x\in\{g+tp\geq\ell\}=H(\{g\geq\ell\},t)$ for all $t\in[0,1]$. Suppose that $x\in H(A^\prime,t^\prime)$ for some excursion component $A^\prime\neq A$ and $t^\prime\in(0,1]$. Then $\dist(x,H(A,t^\prime))\geq\epsilon$ and so by the intermediate value theorem there exists $t^{\prime\prime}\in(0,t^\prime)$ such that $\dist(x,H(A,t^{\prime\prime}))=\epsilon/2>0$. But then $x$ is not contained in $H(\{g\geq\ell\},t^{\prime\prime})$ which yields a contradiction. We conclude that $x\in H(A,t)$ for all $t\in[0,1]$, proving the claim.
\end{proof}

Armed with this lemma, we can easily obtain moment bounds on the geometric contributions to $\Delta_v$ using Gaussian tail inequalities. We remind the reader that $\rho$ is an additional deterministic perturbation that will be required to prove positivity of the limiting variance in Section~\ref{s:positivity}.

\begin{lemma}\label{l:VolMoments}
Let $f$ satisfy Assumption~\ref{a:clt} and let $\rho:\R^d\to\R$ be a deterministic $C^2$ function. Then there exists an absolute constant $c>0$ such that for $w\in\Z^d$
\begin{displaymath}
    \E\Big[\Vol[B_w\cap\{\lvert f-\ell\rvert\leq\lvert\rho\rvert\}]\Big]\leq c\|\rho\|_{C(B_w)}.
\end{displaymath}
If $\rho$ also satisfies the conditions in Lemma~\ref{l:ProbExtUnst}, then for each $\delta>0$ there exists $c>0$ such that for $w\in\Z^d$
\begin{displaymath}
    \E\Big[\Vol[B_w\cap\{\lvert f-\ell\rvert\leq\lvert p_0+\rho\rvert\}]\Big]\leq c(1+\lvert w\rvert)^{-\beta+\delta}.
\end{displaymath}
In particular, by stationarity, for any $v\in\Z^d$,
\begin{displaymath}
\E\Big[\Vol[B_w\cap\{\lvert f-\ell\rvert\leq\lvert p_v\rvert\}]\Big]\leq c(1+\lvert v-w\rvert)^{-\beta+\delta}.
\end{displaymath}
\end{lemma}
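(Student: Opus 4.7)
The plan is to apply Fubini and reduce each statement to a pointwise Gaussian estimate. For the first inequality, I would use that $f(x)$ is a standard Gaussian, so the density of $f(x)-\ell$ is uniformly bounded by $(2\pi)^{-1/2}$, giving $\P(|f(x)-\ell|\leq a)\leq ca$ for every $a\geq 0$. Applying Fubini-Tonelli with $a=|\rho(x)|$ then yields
\begin{displaymath}
\E[\Vol[B_w\cap\{|f-\ell|\leq|\rho|\}]]=\int_{B_w}\P(|f(x)-\ell|\leq|\rho(x)|)\,dx\leq c\int_{B_w}|\rho(x)|\,dx\leq c\|\rho\|_{C(B_w)}.
\end{displaymath}

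For the second inequality, the new difficulty is that $p_0$ is random and so cannot simply be pulled into a sup-norm. The key observation is that $p_0(x)$ is a centered Gaussian with
\begin{displaymath}
\Var(p_0(x))=2\int_{B_0}q(x-u)^2\,du\leq c(1+|x|)^{-2\beta}
\end{displaymath}
by Assumption~\ref{a:clt}(3), while $|\rho(x)|\leq C(1+|x|)^{-\beta}$ by hypothesis. I would split on $|w|$: for $|w|\leq 2$ the trivial bound $\E[\Vol[B_w\cap\{\cdots\}]]\leq 1$ combined with $(1+|w|)^{-\beta+\delta}\geq c'>0$ gives the claim. For $|w|>2$ and $x\in B_w$ one has $|x|\geq|w|/2$, and a union bound with threshold $a_x:=(1+|x|)^{-\beta+\delta}$ yields
\begin{displaymath}
\P(|f(x)-\ell|\leq|p_0(x)+\rho(x)|)\leq\P(|f(x)-\ell|\leq a_x)+\P(|p_0(x)+\rho(x)|>a_x).
\end{displaymath}
The first term is at most $ca_x\leq c(1+|w|)^{-\beta+\delta}$ by the same Gaussian density bound used above, while $p_0(x)+\rho(x)$ is Gaussian with mean and standard deviation both of order $(1+|x|)^{-\beta}$, so a standard Gaussian tail bound makes the second term of order $\exp(-c''(1+|x|)^{2\delta})$, which is superpolynomially small in $|w|$. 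Integrating over $B_w$ (unit volume) finishes the argument, and the final displayed inequality then follows from the second (specialised to $\rho\equiv 0$) together with translation invariance of $W$.

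The only real subtlety is that $f(x)$ and $p_0(x)$ are correlated, since both depend on $W|_{B_0}$, which would make a direct bivariate computation cumbersome. I would sidestep this by observing that the union-bound inclusion $\{|f-\ell|\leq|p_0+\rho|\}\subseteq\{|f-\ell|\leq a_x\}\cup\{|p_0+\rho|>a_x\}$ is purely set-theoretic, so the joint law of $(f(x),p_0(x))$ never enters the estimate. The main obstacle one might have anticipated, namely a delicate conditioning argument or a decomposition of $W$ into independent parts, is therefore avoided, and the proof reduces entirely to one-dimensional Gaussian anti-concentration and tail bounds.
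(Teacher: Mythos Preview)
Your proposal is correct and matches the paper's proof essentially line for line: Fubini reduces both statements to pointwise probabilities, the first is handled by the standard Gaussian density bound, and the second by the same union-bound split (the paper uses threshold $\eta=c(1+|w|)^{-\beta+\delta}$, equivalent to your $a_x$ since $x\in B_w$) followed by the Gaussian tail estimate on $p_0(x)$ using $\Var(p_0(x))\leq c(1+|x|)^{-2\beta}$. Your remark that the union bound sidesteps the $(f(x),p_0(x))$ correlation is exactly how the paper avoids any joint-law computation.
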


\begin{proof}
By Fubini's theorem
\begin{align*}
\E\Big[\Vol[B_w\cap\{\lvert f-\ell\rvert\leq\lvert \rho\rvert\}]\Big]=\int_{B_w}\P(\lvert f(x)-\ell\rvert\leq\lvert \rho(x)\rvert)\;dx&\leq \int_{B_w}\sqrt{\frac{2}{\pi}}\lvert \rho(x)\rvert\;dx
\end{align*}
using the (standard Gaussian) density of $f(x)$. The right hand side is at most $\sqrt{2/\pi}\|p\|_{C(B_w)}$, which proves the first statement of the lemma.

Applying Fubini's theorem once more yields
\begin{equation}\label{e:Volbound1}
\E\Big[\Vol[B_w\cap\{\lvert f-\ell\rvert\leq\lvert p_0+\rho\rvert\}]\Big]=\int_{B_w}\P(\lvert f(x)-\ell\rvert\leq\lvert p_0(x)+\rho(x)\rvert)\;dx.
\end{equation}
By the union bound, for any $\eta>0$
\begin{equation}\label{e:Volbound2}
\begin{aligned}
\P(\lvert f(x)-\ell\rvert\leq\lvert p_0(x)+\rho(x)\rvert)&\leq \P(\lvert f(x)-\ell\rvert<\eta)+\P(\lvert p_0(x)+\rho(x)\rvert>\eta)\\
&\leq \sqrt{2/\pi}\eta+\P(\lvert p_0(x)\rvert>\eta-\lvert \rho(x)\rvert).
\end{aligned}
\end{equation}
From the definition of $p_0$ (in \eqref{e:PerturbDef}) and the decay of $q$ (i.e.\ Assumption~\ref{a:clt}), $p_0(x)$ is normally distributed with mean zero and variance
\begin{displaymath}
    2\int_{B_0}q^2(x-u)\;du\leq c(1+\lvert x\rvert)^{-2\beta}
\end{displaymath}
for some $c>0$. Setting $\eta=c(1+\lvert w\rvert)^{-\beta+\delta}$ (and recalling that $\lvert\rho(x)\rvert<c(1+\lvert x\rvert)^{-\beta}$) we obtain for $x\in B_w$\begin{equation}\label{e:Volbound3}
\P(\lvert p_0(x)\rvert>\eta-\lvert \rho(x)\rvert)\leq c_1\exp\left(-c_2(1+\lvert w\rvert)^{2\delta}\right)
\end{equation}
for constants $c_1,c_2>0$. Combining \eqref{e:Volbound1}-\eqref{e:Volbound3} proves the second statement of the lemma.
\end{proof}

\begin{proof}[Proof of Theorem~\ref{t:clt} (for $\star=\Vol$)]
    Taking $\mu:=\mu_\Vol$ it is enough to verify the four conditions of Theorem~\ref{t:GenCLT}. It is trivial that $\mu_\Vol$ has finite second moments since $\mu_\Vol(v+\Lambda_n)\leq (2n)^d$ and the remaining conditions will follow if we can justify an application of Proposition~\ref{p:Sufficient_Moments}.

    Condition \eqref{e:Additivity} follows immediately from Lemma~\ref{l:ChangeDecomp}. Given $v,w\in\Z^d$ we define
    \begin{displaymath}
        Y_v(w)=\Vol\big[B_w\cap\{\lvert f-\ell\rvert\leq\lvert p_v\rvert\}\big]+\ind_{w\in\mathcal{U}_\mathrm{FR}(f,p_v)}
    \end{displaymath}
    then by Lemma~\ref{l:ChangeDecomp} we have $\lvert\Delta_v(B_w)\rvert\leq Y_v(w)$, verifying \eqref{e:Moment_Suff1}. By Lemma~\ref{l:VolMoments}
    \begin{displaymath}
        \E\left[\Vol\big[B_w\cap\{\lvert f-\ell\rvert\leq\lvert p_v\rvert\}\big]^{2+\epsilon}\right]\leq\E\left[\Vol\big[B_w\cap\{\lvert f-\ell\rvert\leq\lvert p_v\rvert\}\big]\right]\leq c(1+\lvert v-w\rvert)^{-\beta+\delta}
    \end{displaymath}
    for any $\delta>0$ and some $c$ depending on $\delta$. Combined with Lemma~\ref{l:ProbExtUnst} we see that
    \begin{displaymath}
        \E[Y_v(w)+Y_v(w)^{2+\epsilon}]\leq c(1+\lvert v-w\rvert)^{-\beta+\delta}.
    \end{displaymath}
    Since $\beta>3d$, by taking $\delta>0$ sufficiently small we verify \eqref{e:Moment_Suff2} and so we may apply Proposition~\ref{p:Sufficient_Moments} which completes the proof.
\end{proof}

\subsection{Surface area}\label{ss:SurfaceArea}
We now move on to consider the surface area functional. Our first order of business is to control the magnitude of topological and geometric contributions to $\Delta_v(B_w)$. As in the previous subsection, we begin with some deterministic arguments.

Let $g,p:\R^d\to\R$ be $C^2$ such that $g$ and $g+p$ have no stratified critical points at level $\ell$ on any unit cube $B_v$ for $v\in\Z^d$. For any finite union of unit cubes $D$, we define
\begin{displaymath}
    \SA_\infty[g,D]=\mathcal{H}^{d-1}[D\cap\partial(\{g\geq\ell\}_\infty)]
\end{displaymath}
where $\mathcal{H}^{d-1}$ denotes $(d-1)$-dimensional Hausdorff measure (and we define $\SA_\infty[g+p,D]$ analogously). We assume that the level sets of $g$ and $g+p$ restricted to the boundaries of unit cubes have zero surface area, that is
\begin{displaymath}
    \mathcal{H}^{d-1}[\{g=\ell\}\cap\partial B_v]=\mathcal{H}^{d-1}[\{g+p=\ell\}\cap\partial B_v]=0\qquad\text{for all }v\in\Z^d.
\end{displaymath}
This assumption will allow us to ignore such boundary components when using additivity of $\SA_\infty$ over unions of cubes below and will of course be satisfied with probability one for our intended application to the Gaussian fields $f$ and $\widetilde{f}_v$.

The topological contribution to $\Delta_v(B_w)$ is simple to control, as we can bound the change in surface area for unstable unit cubes by the total surface area for the original and perturbed functions:

\begin{lemma}\label{l:SATrivialBound}
    Let $g,p$ be as above and $w\in\Z^d$, then
    \begin{align*}
        \lvert \SA_\infty[g,B_w]-\SA_\infty[g+p,B_w]\rvert\leq\mathcal{H}^{d-1}[\{g=\ell\}\cap B_w]+\mathcal{H}^{d-1}[\{g+p=\ell\}\cap B_w].
    \end{align*}
\end{lemma}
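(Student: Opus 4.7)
The plan is to observe that each of $\SA_\infty[g,B_w]$ and $\SA_\infty[g+p,B_w]$ is bounded above by the corresponding total level set measure in $B_w$, and then to conclude by a trivial triangle-inequality bound rather than any stability argument.

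First I would verify the pointwise containment $\partial(\{g\geq\ell\}_\infty)\subseteq\{g=\ell\}$. Since $g$ is continuous, $\{g\geq\ell\}$ is closed, and any unbounded component is also closed; hence its topological boundary consists of points $x$ with $g(x)\geq\ell$ which are limits of points outside the component. If such an $x$ satisfied $g(x)>\ell$, then by continuity a whole neighbourhood would lie in $\{g\geq\ell\}$, and by connectedness this neighbourhood would lie in the same component as $x$, contradicting $x$ being a boundary point. Therefore $g(x)=\ell$, so the boundary of the unbounded component is contained in the level set $\{g=\ell\}$. (The hypothesis that $g$ has no stratified critical points at level $\ell$ is not strictly required for this containment, but it ensures the level set is a smooth hypersurface, making the Hausdorff measure well defined and finite on compacts.)

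Next I would intersect with $B_w$ and invoke monotonicity of $(d-1)$-dimensional Hausdorff measure to obtain
\begin{equation*}
\SA_\infty[g,B_w]=\mathcal{H}^{d-1}\big[B_w\cap\partial(\{g\geq\ell\}_\infty)\big]\leq\mathcal{H}^{d-1}\big[B_w\cap\{g=\ell\}\big]=\mathcal{H}^{d-1}\big[\{g=\ell\}\cap B_w\big],
\end{equation*}
and an identical inequality with $g+p$ in place of $g$. (The standing assumption that $\mathcal{H}^{d-1}[\{g=\ell\}\cap\partial B_w]=0$ lets us write $B_w$ without worrying whether we mean the closed or open cube.)

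Finally, since both $\SA_\infty[g,B_w]$ and $\SA_\infty[g+p,B_w]$ are non-negative,
\begin{equation*}
\big\lvert\SA_\infty[g,B_w]-\SA_\infty[g+p,B_w]\big\rvert\leq\max\big\{\SA_\infty[g,B_w],\SA_\infty[g+p,B_w]\big\}\leq\SA_\infty[g,B_w]+\SA_\infty[g+p,B_w],
\end{equation*}
and combining this with the two inequalities from the previous step yields the claim. There is no real obstacle here; the lemma is essentially a bookkeeping statement whose content is that on any unit cube a change in the unbounded component's surface area is bounded by the total level-set measure for the two functions — this is the surface-area analogue of the trivial volume bound $\lvert\Delta_v(B_w)\rvert\leq 1$ used in Lemma~\ref{l:ChangeDecomp}, and it will later be combined with finite-moment estimates for the Hausdorff measure of Gaussian level sets.
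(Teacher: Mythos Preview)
Your proposal is correct and follows essentially the same approach as the paper's own proof, which simply invokes the triangle inequality together with the containment $\partial(\{g\geq\ell\}_\infty)\subseteq\{g=\ell\}$ (and similarly for $g+p$). You have merely spelled out in more detail why this containment holds and why monotonicity of Hausdorff measure then yields the bound.
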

\begin{proof}
    This follows from the triangle inequality along with the fact that the boundaries of $\{g\geq\ell\}_\infty$ and $\{g+p\geq\ell\}_\infty$ are subsets of $\{g=\ell\}$ and $\{g+p=\ell\}$ respectively.
\end{proof}

It remains to control the geometric contribution to changes in the surface area. Since we are only interested in the boundary of $\{g\geq\ell\}_\infty$ we cannot simply use the total change in area of level sets. Instead we need to compare components of $\{g=\ell\}$ and $\{g+p=\ell\}$ which are both contained in unbounded excursion sets. The isotopy constructed in Lemma~\ref{l:TopStab} allows us to do this.

Consider a unit cube $B_w$ which is finite-range stable for $(g,p)$ (i.e.\ $w\notin\mathcal{U}_\mathrm{FR}(g,p)$). Let $\mathrm{Comp}$ denote the set of components of $\{g=\ell\}\cap B_w$ and let $H$ be the stratified isotopy of $B_w$ specified in Lemma~\ref{l:TopStab}.

\begin{lemma}\label{l:SAComponentDecomp}
    Let $g,p$ be as above and $w\notin\mathcal{U}_\mathrm{FR}(g,p)$, then
    \begin{displaymath}
        \lvert\SA_\infty[g,B_w]-\SA_\infty[g+p,B_w]\rvert\leq\sum_{L\in\mathrm{Comp}}\lvert\mathcal{H}^{d-1}[H(L,1)]-\mathcal{H}^{d-1}[L]\rvert.
    \end{displaymath}
\end{lemma}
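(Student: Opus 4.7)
The plan is to read this off the stratified isotopy produced by Lemma~\ref{l:TopStab} applied with $D=B_w$. The hypotheses of that lemma are satisfied because $g$ and $g+p$ have no stratified critical points at level $\ell$, which forces their level sets on the compact cube $B_w$ to consist of only finitely many components. Writing $h:=H(\cdot,1)$, property~(1) of the isotopy implies $h$ is a homeomorphism of $B_w$ that carries $\{g=\ell\}\cap B_w$ onto $\{g+p=\ell\}\cap B_w$, and hence induces a bijection $L\mapsto h(L)$ between $\mathrm{Comp}$ and the analogous component set $\mathrm{Comp}'$ for $g+p$.

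Next I would observe that $\partial(\{g\geq\ell\}_\infty)\cap B_w$ is a disjoint union of some elements of $\mathrm{Comp}$. Indeed, the interior of $\{g\geq\ell\}_\infty$ equals $\{g>\ell\}\cap\{g\geq\ell\}_\infty$, so its boundary lies in $\{g=\ell\}\cap\{g\geq\ell\}_\infty$; and any path inside $\{g=\ell\}$ is automatically a path inside the closed set $\{g\geq\ell\}$, so each $L\in\mathrm{Comp}$ sits in a single connected component of $\{g\geq\ell\}$, making the property $L\subseteq\{g\geq\ell\}_\infty$ constant on $\mathrm{Comp}$. Let $\mathrm{Comp}_\infty\subseteq\mathrm{Comp}$ be the subcollection with this property; then
\[
\SA_\infty[g,B_w]=\sum_{L\in\mathrm{Comp}_\infty}\mathcal{H}^{d-1}[L],
\]
and analogously $\SA_\infty[g+p,B_w]=\sum_{L'\in\mathrm{Comp}'_\infty}\mathcal{H}^{d-1}[L']$ for some $\mathrm{Comp}'_\infty\subseteq\mathrm{Comp}'$. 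Intersecting $h(\{g\geq\ell\}_\infty\cap B_w)=\{g+p\geq\ell\}_\infty\cap B_w$ (property~(3) of Lemma~\ref{l:TopStab}) with $h(\{g=\ell\}\cap B_w)=\{g+p=\ell\}\cap B_w$ then gives $h(\mathrm{Comp}_\infty)=\mathrm{Comp}'_\infty$, so the component bijection of the first paragraph restricts to a bijection $\mathrm{Comp}_\infty\leftrightarrow\mathrm{Comp}'_\infty$.

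The lemma then follows from the triangle inequality:
\[
\bigl|\SA_\infty[g,B_w]-\SA_\infty[g+p,B_w]\bigr|\leq\sum_{L\in\mathrm{Comp}_\infty}\bigl|\mathcal{H}^{d-1}[L]-\mathcal{H}^{d-1}[h(L)]\bigr|\leq\sum_{L\in\mathrm{Comp}}\bigl|\mathcal{H}^{d-1}[H(L,1)]-\mathcal{H}^{d-1}[L]\bigr|.
\]
The step I would check most carefully is the transport of ``bordering the unbounded component'' across $h$, which is exactly where property~(3) of Lemma~\ref{l:TopStab}, and hence the hypothesis $w\notin\mathcal{U}_\mathrm{FR}(g,p)$, is used. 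Without finite-range stability, $h$ would still pair up level-set components of $g$ and $g+p$ but could permute which of them are attached to the unbounded excursion component, causing the identification $h(\mathrm{Comp}_\infty)=\mathrm{Comp}'_\infty$ to break down.
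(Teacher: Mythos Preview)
Your proof is correct and follows essentially the same approach as the paper's: both use property~(3) of Lemma~\ref{l:TopStab} to deduce that $L\in\mathrm{Comp}$ lies in $\{g\geq\ell\}_\infty$ if and only if $H(L,1)$ lies in $\{g+p\geq\ell\}_\infty$, then express $\SA_\infty$ as a sum over the corresponding components and apply the triangle inequality. You spell out a few points (finiteness of components, why each $L$ sits in a single excursion component) that the paper leaves implicit, but the argument is the same.
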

\begin{proof}
By Lemma~\ref{l:TopStab} we know that $L\in\mathrm{Comp}$ is a subset of $\{g\geq\ell\}_\infty$ if and only if $H(L,1)$ is a subset of $\{g+p\geq\ell\}_\infty$. Therefore
\begin{displaymath}
    \SA_\infty[g,B_w]-\SA_\infty[g+p,B_w]=\sum_{L\in\mathrm{Comp}\;:\; L\subseteq \{g\geq\ell\}_\infty}\mathcal{H}^{d-1}[L]-\mathcal{H}^{d-1}[H(L,1)].
\end{displaymath}
The statement of the lemma then follows from the triangle inequality and adding the non-negative terms which correspond to $L\subseteq\{g\geq\ell\}_{<\infty}$.
\end{proof}

Hence the geometric contribution to $\Delta_v(B_w)$ is bounded by the change in surface area for each component of the level set. Our strategy for controlling the latter is the following: assuming that the level set $\{g=\ell\}\cap B$ is somewhat stable compared to the perturbation $p$, we show that $\{g=\ell\}\cap B$ can be projected normally onto $\{g+p=\ell\}\cap B$ to yield a bijection (up to boundary effects) that identifies components which correspond under $H$ (i.e.\ the bijection maps $L$ to $H(L,1)$). The area formula and the implicit function theorem then allow us to bound the change in surface area for each component.

The stability assumptions we need are the following:
\begin{samepage}
\begin{assumption}\label{a:SA}
    Let $w\in\Z^d$. We assume that $g,p:\R^d\to\R$ are $C^2$ and satisfy
\begin{enumerate}
    \item $(g,p)$ is finite-range stable on $B_w$ at level $\ell$,
    \item for $x\in B_w$, if $g(x)=\ell$ then $\lvert\nabla g(x)\rvert> A_1$,
    \item $\lvert\nabla^2 g(x)\rvert< A_2$ for all $x\in B_w$,
    \item $\|p\|_{C^1(B_w)}<A_0$,
\end{enumerate}
for $A_1\in(0,1)$, $A_2\geq 1$ and $A_0< A_1^2/(C_dA_2)$ where $C_d>20$ depends only on the dimension $d$ and will be specified in the proof of Lemma~\ref{l:SA_AreaFormula}.
\end{assumption}
\end{samepage}

Given functions $g,p$ and constants $A_0,A_1,A_2$ as above, we define
\begin{displaymath}
    \mathcal{U}_\SA(g,p):=\left\{w\in\Z^d\;\middle|\;\inf_{x\in B_w:g(x)=\ell}\lvert\nabla g(x)\rvert<A_1\text{ or }\sup_{B_w}\lvert\nabla^2 g\rvert>A_2\text{ or }\|p\|_{C^1(B_w)}>A_0\right\}.
\end{displaymath}

The geometric contribution to the change in the surface area functional can then be bounded (using the argument described above) as follows:

\begin{lemma}\label{l:SA_Stable}
    Let $g,p:\R^d\to\R$ be $C^2$ functions such that $g$ and $g+p$ have no stratified critical points at level $\ell$ on any unit cube $B_v$ for $v\in\Z^d$. If $w\notin\mathcal{U}_\mathrm{FR}(g,p)\cup\mathcal{U}_\SA(g,p)$, then
    \begin{align*}
        \lvert\SA_\infty[g,B]-\SA_\infty[g+p,B]\rvert\leq C_d^\prime\frac{A_2A_0}{A_1^2}\mathcal{H}^{d-1}[\{g=\ell\}\cap B]&+2\mathcal{H}^{d-1}\Big[\{g=\ell\}\cap (\partial B)_{+\frac{7A_0}{A_1}}\Big]\\
        &+2\mathcal{H}^{d-1}\Big[\{g+p=\ell\}\cap (\partial B)_{+\frac{7A_0}{A_1}}\Big]
    \end{align*}
    where $B:=B_w$, $(\partial B)_{+a}:=\{x\in B\;|\;\dist(x,\partial B)\leq a\}$ and $C_d^\prime>0$ depends only on $d$.
\end{lemma}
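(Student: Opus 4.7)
By Lemma~\ref{l:SAComponentDecomp} it suffices to bound $\sum_{L\in\mathrm{Comp}}\lvert\mathcal{H}^{d-1}[H(L,1)]-\mathcal{H}^{d-1}[L]\rvert$. The plan is to realise the component map $L\mapsto H(L,1)$ as an explicit normal projection $\Phi$ along $\nabla g$, to show (using Assumption~\ref{a:SA}) that the tangential Jacobian of $\Phi$ differs from $1$ by at most $C_dA_2A_0/A_1^2$, and then to invoke the area formula. Points within distance $7A_0/A_1$ of $\partial B$ cannot be controlled by $\Phi$ (either their image may leave $B$, or the inverse projection from $\{g+p=\ell\}$ may leave $B$), and these points produce the two boundary collar terms.

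\textbf{Construction of $\Phi$.} For $x\in\{g=\ell\}\cap B$, set $\nu(x):=\nabla g(x)/\lvert\nabla g(x)\rvert$ and $\phi_x(t):=g(x+t\nu(x))+p(x+t\nu(x))$. Then $\phi_x(0)=\ell+p(x)$ with $\lvert p(x)\rvert\leq A_0$, while Taylor expansion together with Assumption~\ref{a:SA} yields $\phi_x'(t)\geq A_1/2$ for all $\lvert t\rvert\leq 7A_0/A_1$ (the constant $C_d$ in Assumption~\ref{a:SA} is chosen large enough to absorb the second order correction $t A_2$ and the $\nabla p\cdot\nu$ term). Hence $\phi_x$ has a unique zero $t(x)\in[-2A_0/A_1,2A_0/A_1]$, and I set $\Phi(x):=x+t(x)\nu(x)\in\{g+p=\ell\}$. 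Implicit differentiation of $\phi_x(t(x))=\ell$ in $x$ along a vector $v\in T_xL=\nu(x)^\perp$ gives
\[ dt(x)[v]=-\frac{v\cdot\nabla g(\Phi(x))+v\cdot\nabla p(\Phi(x))}{\phi_x'(t(x))}; \]
since $v\cdot\nabla g(x)=0$, a Taylor expansion of the first numerator at $x$ plus the bound $\lVert p\rVert_{C^1}\leq A_0$ show that $\lvert dt(x)[v]\rvert\leq C_dA_2A_0/A_1^2\cdot\lvert v\rvert$.

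\textbf{Identification with $H$.} Applying the same construction with $sp$ in place of $p$, I obtain a continuous family $\Phi_s:\{g=\ell\}\cap B\to\{g+sp=\ell\}$ with $\Phi_0=\mathrm{id}$, $\Phi_1=\Phi$ and $\lvert t_s(x)\rvert\leq 2sA_0/A_1$. Provided $\dist(x,\partial B)>7A_0/A_1$, the whole trajectory $s\mapsto\Phi_s(x)$ stays in the interior of $B$, so $s\mapsto\Phi_s(L)$ is a continuous deformation of $L$ through connected subsets of $\{g+sp=\ell\}\cap B$. On the other hand, for each $s$ the stratified isotopy of Lemma~\ref{l:TopStab} (applied to the pair $(g,sp)$) sends $L$ to a well-defined component $H(L,s)$ of $\{g+sp=\ell\}\cap B$, and by connectedness and continuity in $s$, $\Phi_s(L\setminus(\partial B)_{+7A_0/A_1})$ must lie in this component for every $s$. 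At $s=1$ this gives $\Phi(L\setminus(\partial B)_{+7A_0/A_1})\subseteq H(L,1)$. The reverse inclusion $H(L,1)\setminus(\partial B)_{+7A_0/A_1}\subseteq\Phi(L)$ follows from running the same argument backwards, using normal projection along $\nabla(g+p)$ from $\{g+p=\ell\}$ to $\{g=\ell\}$.

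\textbf{Area estimate and collar terms.} The tangential Jacobian of $\Phi$ is a product of $(d-1)$ singular values, each within $C_dA_2A_0/A_1^2$ of $1$ by the bound on $dt$ above and a direct computation of $d\Phi|_{\nu^\perp}$; hence $\lvert J_\Phi-1\rvert\leq C_d'A_2A_0/A_1^2$. Applying the area formula on $L':=L\setminus(\partial B)_{+7A_0/A_1}$ gives
\[ \big\lvert\mathcal{H}^{d-1}[\Phi(L')]-\mathcal{H}^{d-1}[L']\big\rvert\leq C_d'\tfrac{A_2A_0}{A_1^2}\mathcal{H}^{d-1}[L']. \]
Summing over components $L$, the pieces $L\setminus L'$ and $H(L,1)\setminus\Phi(L')$ lie inside the respective collars $\{g=\ell\}\cap(\partial B)_{+7A_0/A_1}$ and $\{g+p=\ell\}\cap(\partial B)_{+7A_0/A_1}$, giving precisely the two boundary terms. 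The main obstacle is the topological identification $\Phi(L)\subseteq H(L,1)$: one must rule out the possibility that the continuous family $\Phi_s(L)$ jumps between components of $\{g+sp=\ell\}\cap B$ at some intermediate $s$. This requires combining finite-range stability (so that $H$ is available and $\Phi_s(L)$ stays in the same cube) with the quantitative bound $\lvert t_s(x)\rvert\leq 7A_0/A_1$ from Assumption~\ref{a:SA} (so that $\Phi_s(L)$ stays uniformly away from $\partial B$ throughout the deformation).
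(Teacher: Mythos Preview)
Your approach is essentially the paper's: reduce via Lemma~\ref{l:SAComponentDecomp}, build a normal projection along $\nabla g$, bound its tangential Jacobian by $1+O(A_0A_2/A_1^2)$, and apply the area formula, with boundary effects absorbed into collar terms. Two technical points in your sketch need more care.

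First, the area formula requires $\Phi$ to be injective on $L'$, which you do not argue. The paper proves this directly (Lemma~\ref{l:SA_bijection}): if $\Phi(x)=\Phi(y)$ then $y-x=s_x\nabla g(x)-s_y\nabla g(y)$ with $|s_x|,|s_y|\le 3A_0/A_1^2$; dotting with $y-x$ and Taylor-expanding $\nabla g$ forces $|y-x|^2\le(9A_0A_2/A_1^2)|y-x|^2$, hence $x=y$.

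Second, your ``reverse inclusion'' $H(L,1)\setminus(\partial B)_{+7A_0/A_1}\subseteq\Phi(L)$ does \emph{not} follow from backward projection along $\nabla(g+p)$: that map $\Psi$ is not $\Phi^{-1}$, since the two normal directions differ. The paper sidesteps this by first multiplying $p$ by a cutoff $\theta$ vanishing near $\partial B$; then $\varphi$ is the identity near $\partial B$ and becomes a genuine bijection $\{g=\ell\}\cap B\to\{g+\bar p=\ell\}\cap B$, with surjectivity onto each $\bar H(L,1)$ obtained by an open/closed argument in the subspace topology. A separate step (using Lemma~\ref{l:TopSeparation}) shows $H(L,1)$ and $\bar H(L,1)$ coincide on $B\setminus(\partial B)_{+3A_0/A_1}$, where $p=\bar p$. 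Your symmetry idea can be salvaged, but only by applying the area formula to $\Psi$ with its own Jacobian bound to get the \emph{other} inequality on Hausdorff measures, not by asserting a set inclusion for $\Phi$.
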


The proof of this lemma is given in Section~\ref{s:SA_perturb}. 

We will require one more ingredient to control the magnitude of topological/geometric contributions: a moment bound for (total) surface area.

\begin{proposition}\label{p:SAMoments}
    Let $f:\R^d\to\R$ be a stationary $C^k$ Gaussian field ($k\geq 2$) with spectral density and let $\rho:\R^d\to\R$ be a deterministic $C^k$ function, then
    \begin{displaymath}
        \E\Big[\mathcal{H}^{d-1}[B_0\cap\{f+\rho=\ell\}]^{k^\prime}\Big]\leq C<\infty
    \end{displaymath}
    where $k^\prime=\frac{k(k+1)}{2}-2$ and $C\in(0,\infty)$ depends only on $\|\rho\|_{C^k(B_0)}$ and the distribution of $f$.

    Furthermore for any compact set $E$
    \begin{displaymath}
        \E\Big[\mathcal{H}^{d-1}[E\cap\{f+\rho=\ell\}]\Big]\leq C(1+\|\rho\|_{C^1(E)})\Vol[E]
    \end{displaymath}
    where $C\in(0,\infty)$ depends only on the distribution of $f$.
\end{proposition}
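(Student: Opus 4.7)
The second statement follows from a direct application of the Kac--Rice formula. Writing $g := f + \rho$, we have
\begin{equation*}
\E\bigl[\mathcal{H}^{d-1}(E \cap \{g = \ell\})\bigr] = \int_E \E\bigl[|\nabla g(x)| \bigm| g(x) = \ell\bigr] \, p_{g(x)}(\ell)\,dx.
\end{equation*}
Since $f$ has unit variance, $p_{g(x)}(\ell) \leq (2\pi)^{-1/2}$. Because $f(x)$ and $\nabla f(x)$ are independent (a consequence of stationarity: $\nabla K(0)=0$), the conditional expectation equals $\E[|\nabla f(0) + \nabla \rho(x)|] \leq \E[|\nabla f(0)|] + \|\rho\|_{C^1(E)}$, where $\E[|\nabla f(0)|]$ is finite by the $C^2$-smoothness of $f$. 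Integrating over $E$ yields the desired bound proportional to $(1+\|\rho\|_{C^1(E)})\Vol[E]$.

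For the $k'$-th moment, the plan is to use the generalised Kac--Rice formula for the $m$-th power of the level-set measure: with $m := k'$,
\begin{equation*}
\E\bigl[\mathcal{H}^{d-1}(B_0 \cap \{g=\ell\})^m\bigr] = \int_{B_0^m} \E\Bigl[\prod_{i=1}^m |\nabla g(x_i)| \,\Big|\, g(x_i) = \ell \;\forall i\Bigr] \, p_{m}(x_1,\dots,x_m)\,dx_1\cdots dx_m,
\end{equation*}
where $p_m$ denotes the joint density of $(g(x_i))_{i=1}^m$ evaluated at $(\ell,\dots,\ell)$. The conditional product of gradient norms is uniformly bounded (by a constant depending on $\|\rho\|_{C^1(B_0)}$ and high-order Gaussian moments of $\nabla f$ conditional on finitely many linear values of $f$), so the task reduces to showing that $\int_{B_0^m} p_m \,dx_1\cdots dx_m < \infty$.

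The main obstacle is the singularity of $p_m$ on the generalised diagonal, where clusters of points $x_i$ coalesce and the covariance matrix of $(f(x_i))_i$ degenerates. The approach is to exploit the $C^k$ regularity of $f$: for each cluster of $r$ points close to a common base point $x_0$, Taylor expand
\begin{equation*}
f(x_i) = \sum_{|\alpha| \leq k-1} \frac{(x_i-x_0)^\alpha}{\alpha!} \partial^\alpha f(x_0) + R_i, \qquad \Var(R_i) = O(|x_i-x_0|^{2k}),
\end{equation*}
and change variables from $(f(x_i))_i$ to a sub-vector of the (non-degenerate) Gaussian $(\partial^\alpha f(x_0))_{|\alpha|\leq k-1}$ via a Vandermonde-type linear map whose Jacobian vanishes at precisely the rate needed to cancel the covariance degeneracy. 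After this substitution, the contribution of the cluster to $p_m$ factorises into a bounded density of the derivative vector times an explicit Jacobian, and the integral over $B_0^m$ can be estimated by partitioning the domain by inter-point separation scales and applying the transformation cluster-by-cluster. The combinatorial exponent $k' = k(k+1)/2 - 2$ is then the largest $m$ for which the resulting integrand remains integrable on $B_0^m$. I expect the bookkeeping required to make this partition and substitution rigorous, especially when several clusters of points must be desingularised simultaneously, will be the most technically demanding step; I would borrow heavily from the framework of Azaïs--Wschebor \cite{aw09} to handle it.
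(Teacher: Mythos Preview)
Your proof of the second statement matches the paper's exactly: Kac--Rice, bounded density from unit variance, independence of $f(x)$ and $\nabla f(x)$ from stationarity, triangle inequality.

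For the first statement the paper does not argue at all; it simply invokes \cite[Theorem~5.2]{aadlm23}. Your sketch of a direct proof via the $m$-point Kac--Rice formula has the right overall architecture---divided-difference/Vandermonde desingularisation near the diagonal is indeed the relevant tool---but the reduction step contains a genuine gap. You assert that the conditional expectation $\E\bigl[\prod_i|\nabla g(x_i)|\,\big|\,g(x_j)=\ell\ \forall j\bigr]$ is uniformly bounded and then say the task reduces to showing $\int_{B_0^m}p_m<\infty$. The first claim is correct, but the reduced integral is in general \emph{divergent}: already for $m=2$ in $d=1$ one has $p_2(x_1,x_2)\asymp|x_1-x_2|^{-1}$, which is not integrable over $[0,1]^2$, and in any dimension $\int p_m$ diverges once $m$ is large enough. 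What makes the full Kac--Rice integrand integrable is precisely that the conditional gradient product \emph{vanishes} on the diagonal at a rate that compensates the blow-up of $p_m$; for instance $\E\bigl[|f'(x_1)f'(x_2)|\,\big|\,f(x_1)=f(x_2)=\ell\bigr]=O(|x_1-x_2|^2)$ in the one-dimensional example. The two factors therefore cannot be decoupled: the change of variables must be applied to the combined integrand, and it is this joint analysis---tracking how many derivative constraints the coalescence imposes against how many gradient factors are forced to vanish---that produces the specific exponent $k'=k(k+1)/2-2$. If you pursue this, the relevant reference is \cite{aadlm23} rather than \cite{aw09}; the latter supplies the divided-difference framework, but the sharp exponent and the cluster bookkeeping are worked out in the former.
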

\begin{proof}
    The first statement is a direct application of \cite[Theorem~5.2]{aadlm23}.

    By the Kac-Rice theorem \cite[Theorem~6.8 and Proposition~6.12]{aw09}
    \begin{equation}\label{e:kac_rice1}
        \E\Big[\mathcal{H}^{d-1}[E\cap\{f+\rho=\ell\}]\Big]=\int_E \E\Big[\lvert\nabla(f+\rho)(x)\rvert\;\Big|\;(f+\rho)(x)=\ell \Big]p_{(f+\rho)(x)}(\ell)\;dx
    \end{equation}
    where $p_{(f+\rho)(x)}$ denotes the density of the Gaussian random variable $(f+\rho)(x)$. Since $f$ is stationary and has unit variance, this density is bounded above uniformly in $x$ by an absolute constant. By the triangle inequality and the fact that $\rho$ is deterministic
    \begin{equation}\label{e:kac_rice2}
        \E\Big[\lvert\nabla(f+\rho)(x)\rvert\;\Big|\;(f+\rho)(x)=\ell \Big]\leq \|\rho\|_{C^1(E)} +\E\Big[\lvert\nabla f(x)\rvert\;\Big|\;f(x)=\ell-\rho(x) \Big].
    \end{equation}
    For any $C^1$ Gaussian field with constant variance, the value of the field at a point is independent of its gradient at the same point (this is a standard fact in the literature, see \cite[Section~5.6]{at07}). Therefore
    \begin{equation}\label{e:kac_rice3}
        \E\Big[\lvert\nabla f(x)\rvert\;\Big|\;f(x)=\ell-\rho(x) \Big]=\E\Big[\lvert\nabla f(x)\rvert\Big]=\E\Big[\lvert\nabla f(0)\rvert\Big].
    \end{equation}
    Combining \eqref{e:kac_rice1}-\eqref{e:kac_rice3} proves the second statement of the proposition.
\end{proof}

We can now combine all of these estimates to verify the conditions of Proposition~\ref{p:Sufficient_Moments} for $\mu=\mu_\SA$. In the following two lemmas we will give slightly more general statements which will be useful later when proving positivity of the limiting variance (Theorem~\ref{t:posvar}).

\begin{lemma}\label{l:additivity_SA}
    Let $f$ satisfy Assumption~\ref{a:clt} and $\rho:\R^d\to\R$ be a deterministic $C^2$ function. The following holds with probability one: for any distinct $v_1,\dots,v_n\in\Z^d$
    \begin{displaymath}
        \SA_\infty\left[f+\rho,\bigcup_{i=1}^nB_{v_i}\right]=\sum_{i=1}^n\SA_\infty\left[f+\rho,B_{v_i}\right].
    \end{displaymath}
\end{lemma}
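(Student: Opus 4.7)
The plan is to reduce the claim, via inclusion–exclusion, to showing that the level set $\{f+\rho=\ell\}$ has vanishing $\mathcal{H}^{d-1}$-measure on the boundary of every unit cube, and then to verify the latter by a direct Fubini argument. First I would establish the deterministic inclusion $\partial(\{f+\rho\geq\ell\}_\infty)\subseteq\{f+\rho=\ell\}$: if $x$ lies on this boundary, continuity of $f+\rho$ gives $f(x)+\rho(x)\geq\ell$; were the inequality strict, a neighbourhood $V$ of $x$ would satisfy $f+\rho>\ell$ on $V$ and so lie in the same excursion component as $x$, while some point of $(\{f+\rho\geq\ell\}_\infty)^c$ arbitrarily close to $x$ would fall into $V$ and hence into the unbounded component, a contradiction. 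Setting $A:=\partial(\{f+\rho\geq\ell\}_\infty)$ and using that the cubes $B_{v_i}$ have disjoint interiors, so that $B_{v_i}\cap B_{v_j}\subseteq\partial B_{v_i}$ for $i\neq j$, subadditivity together with the standard inclusion–exclusion bound yields
\begin{displaymath}
0\leq \sum_{i=1}^n \mathcal{H}^{d-1}[A\cap B_{v_i}] - \mathcal{H}^{d-1}\Bigl[A\cap\bigcup_{i=1}^n B_{v_i}\Bigr]\leq \sum_{i=1}^n\mathcal{H}^{d-1}\bigl[\{f+\rho=\ell\}\cap \partial B_{v_i}\bigr],
\end{displaymath}
reducing additivity to the almost-sure vanishing of $\mathcal{H}^{d-1}[\{f+\rho=\ell\}\cap\partial B_v]$ for every $v\in\Z^d$.

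For the remaining step, I would fix $v$ and write $\partial B_v$ as a finite union of closed $(d-1)$-dimensional faces together with lower-dimensional sets of zero $\mathcal{H}^{d-1}$-measure. On each such face $F$, Fubini's theorem gives
\begin{displaymath}
\E\bigl[\mathcal{H}^{d-1}[\{f+\rho=\ell\}\cap F]\bigr]=\int_F \P\bigl(f(x)+\rho(x)=\ell\bigr)\,d\mathcal{H}^{d-1}(x)=0,
\end{displaymath}
since $f(x)+\rho(x)$ is a unit-variance Gaussian and therefore continuously distributed. A countable intersection over all pairs $(v,F)$ with $v\in\Z^d$ and $F$ a face of $B_v$ then produces a probability-one event on which the desired additivity holds simultaneously for every finite collection of distinct cubes.

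No substantive obstacle is anticipated. The only mildly delicate point is the topological inclusion in the first step, which relies on the observation that a point in the interior of $\{f+\rho\geq\ell\}\subseteq\R^d$ automatically lies in the interior of whichever excursion component contains it. A Bulinskaya-type argument could be invoked in place of Fubini to bound the surface measure on each face, but the Fubini calculation is sufficient and arguably cleaner.
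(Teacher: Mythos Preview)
Your argument is correct and follows the same reduction as the paper: both observe that additivity over unit cubes holds once one knows $\mathcal{H}^{d-1}[\{f+\rho=\ell\}\cap\partial B_v]=0$ almost surely for every $v\in\Z^d$. The only difference is in how this last fact is established. The paper applies the Kac--Rice theorem on each stratum $S\subset\partial B_v$ of dimension $d'\le d-1$ to get $\mathcal{H}^{d'-1}[\{f+\rho=\ell\}\cap S]<\infty$ almost surely, and then notes that finite $(d'-1)$-measure forces zero $(d-1)$-measure. Your Fubini argument on the top-dimensional faces, combined with the trivial observation that lower-dimensional faces already have zero $\mathcal{H}^{d-1}$-measure, is more elementary and equally valid; it avoids invoking Kac--Rice altogether. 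Either route suffices, and your topological justification of $\partial(\{f+\rho\ge\ell\}_\infty)\subseteq\{f+\rho=\ell\}$ is correct as stated.
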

\begin{proof}
    Since $\SA_\infty[f+\rho,\cdot]$ is a measure, it is enough to show that $\SA_\infty[f+\rho,\partial B_v]=0$ almost surely for any $v\in\Z^d$. Fixing such a $v$, consider a stratum $S\subset B_v$ of dimension $d^\prime\leq d-1$. By the Kac-Rice theorem (\cite[Theorem~6.8 and Proposition~6.12]{aw09})
    \begin{displaymath}
        \mathcal{H}^{d^\prime-1}[\{f+\rho=\ell\}\cap S]<\infty
    \end{displaymath}
    almost surely, and hence
    \begin{displaymath}
        \SA_\infty[f+\rho,S]\leq\mathcal{H}^{d-1}[\{f+\rho=\ell\}\cap S]=0.
    \end{displaymath}
    Summing over the different strata of $\partial B_v$ completes the proof.
\end{proof}

\begin{lemma}\label{l:sa_perturbation_moments}
    Let $f$ satisfy Assumptions~\ref{a:clt} and~\ref{a:Decay} and let $p:\R^d\to\R$ be a (possibly degenerate) $C^{k}$ Gaussian field such that $f+p-\E[p]$ is stationary with a spectral density. For $w\in\Z^d$ let
    \begin{displaymath}
        M_2(w)=\sup_{\lvert\alpha\rvert,\lvert\gamma\rvert\leq 2}\sup_{x,y\in (w+\Lambda_2)}\left\lvert\partial_x^\alpha\partial_y^\gamma\Cov[p(x),p(y)]\right\rvert.
    \end{displaymath}
    We assume that $M_2(w)$ and $\|\E[p]\|_{C^1(B_w)}$ are bounded uniformly over $w$. Then there exists a collection of random variables $Y(w)$ such that
    \begin{equation}\label{e:sa_perturb_moments1}
        \lvert\mu_\SA(B_w,f,\ell)-\mu_\SA(B_w,f+p,\ell)\rvert\leq Y(w)
    \end{equation}
    almost surely and for any $\delta>0$ there exists $C_\delta>0$ (independent of $w$) such that
    \begin{equation}\label{e:sa_perturb_moments2}
        \E[Y(w)]\leq C_\delta\P\left(w\in\mathcal{U}_\mathrm{FR}(f,p)\right)^\frac{k^\prime-1}{k^\prime}+C_\delta\left( \sqrt{M_2(w)}+\|\E[p]\|_{C^1(B_w)}\right)^{\frac{k^\prime-1}{3k^\prime-1}-\delta}.
    \end{equation}
    Moreover if $p=p_v$ and $\beta>\beta_\SA(k)$ then there exists $\epsilon,\delta,C>0$ (independent of $w$) such that
    \begin{equation}\label{e:sa_perturb_moments3}
        \E[Y(w)]+\E[Y(w)^{2+\epsilon}]\leq C(1+\lvert v-w\rvert)^{-3d-\delta}
    \end{equation}
\end{lemma}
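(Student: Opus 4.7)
My plan is to build $Y(w)$ by case analysis on whether $w$ lies in the ``bad'' set $\mathcal{U}_\mathrm{FR}(f,p) \cup \mathcal{U}_\SA(f,p)$, where the threshold parameters $(A_0, A_1, A_2)$ defining $\mathcal{U}_\SA$ are tuned to the scale $M := \sqrt{M_2(w)} + \|\E[p]\|_{C^1(B_w)}$ of the perturbation. On the bad event I would use Lemma~\ref{l:SATrivialBound} to bound the change in $\mu_\SA$ by the total surface areas $\mathcal{H}^{d-1}[\{f=\ell\}\cap B_w] + \mathcal{H}^{d-1}[\{f+p=\ell\}\cap B_w]$, while on the good event Lemmas~\ref{l:TopStab} and~\ref{l:SA_Stable} deliver the sharper geometric bound
\[
\frac{C_d' A_0 A_2}{A_1^2}\, \mathcal{H}^{d-1}[\{f=\ell\}\cap B_w] + 2\mathcal{H}^{d-1}\!\left[\{f=\ell\}\cap (\partial B_w)_{+7A_0/A_1}\right] + 2\mathcal{H}^{d-1}\!\left[\{f+p=\ell\}\cap (\partial B_w)_{+7A_0/A_1}\right].
\]
I would then set $Y(w)$ to be the sum of these two pieces and choose $A_0 = M^{1-\alpha}$, $A_1 = M^{(1-\alpha)/2 - \gamma}$, with $A_2$ a slowly varying (essentially constant) scale, for small $\alpha, \gamma > 0$ to be optimised.

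For the first-moment bound \eqref{e:sa_perturb_moments2}, the topological part of $\E[Y(w)]$ is controlled by H\"older's inequality together with the $k'$-th moment bound of Proposition~\ref{p:SAMoments}, giving a contribution of order $\P(\mathrm{bad})^{(k'-1)/k'}$. The event $\mathcal{U}_\SA$ itself decomposes into three sub-events: $\{\inf_{\{f=\ell\}\cap B_w}|\nabla f| < A_1\}$, whose probability is $\lesssim A_1^{1-\delta}$ by a standard Kac--Rice estimate; $\{\sup_{B_w}|\nabla^2 f| > A_2\}$, which is super-polynomially small in $M^{-1}$ for an appropriate (slowly growing) choice of $A_2$; and $\{\|p\|_{C^1(B_w)} > A_0\}$, which is super-polynomially small by the Borell--TIS inequality applied to the Gaussian field $p - \E[p]$. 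The geometric part of $\E[Y(w)]$ is bounded using Proposition~\ref{p:SAMoments} by $A_0 A_2/A_1^2 + A_0/A_1 \sim M^{2\gamma}$. Solving $2\gamma = [(1-\alpha)/2 - \gamma]\cdot(k'-1)/k'$ to balance the two leading exponents and sending $\alpha, \delta \to 0^+$ yields the common exponent $(k'-1)/(3k'-1)$, which is exactly the rate claimed in \eqref{e:sa_perturb_moments2} (with the $\P(\mathcal{U}_\mathrm{FR})^{(k'-1)/k'}$ contribution retained separately).

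For \eqref{e:sa_perturb_moments3} with $p = p_v$, the decay of $q$ in Assumption~\ref{a:clt} gives $M \lesssim (1+|v-w|)^{-\beta}$ (using $\E[p_v] = 0$) and Lemma~\ref{l:ProbExtUnst} provides $\P(\mathcal{U}_\mathrm{FR}(f,p_v)) \lesssim (1+|v-w|)^{-\beta + \delta}$. The $(2+\epsilon)$-moment is handled in the same spirit: H\"older with Proposition~\ref{p:SAMoments} gives $\E[\ind_{\mathrm{bad}}\,\mathcal{H}^{d-1}[\{f=\ell\}\cap B_w]^{2+\epsilon}] \lesssim \P(\mathrm{bad})^{(k'-2-\epsilon)/k'}$; the geometric contribution yields $(A_0 A_2/A_1^2)^{2+\epsilon} \sim M^{2\gamma(2+\epsilon)}$; and the boundary contribution is controlled via the log-convexity of moments, interpolating between $\E[\mathcal{H}^{d-1}[\{f=\ell\}\cap E]] \lesssim \Vol(E) \sim A_0/A_1$ and the uniformly bounded $k'$-th moment to give $(A_0/A_1)^{(k'-2-\epsilon)/(k'-1)}$. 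The main obstacle is that a single choice of $(\alpha,\gamma)$ must satisfy four resulting inequalities simultaneously (two from the first moments and two from the $(2+\epsilon)$-moments) with decay rate strictly exceeding $3d$ in the exponent of $(1+|v-w|)^{-1}$; the tightest of these pins down the critical value $\beta_\SA(k) = (18k(k+1) - 84)/(k(k+1) - 8)\cdot d$ stated in the lemma.
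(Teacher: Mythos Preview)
Your proposal is correct and follows essentially the same route as the paper's proof: the paper also defines $Y(w)$ as the sum of a ``bad event'' piece $\mathcal{L}(w)\ind_{w\in\mathcal{U}_\mathrm{FR}\cup\mathcal{U}_\SA}$ (controlled via H\"older against the $k'$-th moment from Proposition~\ref{p:SAMoments}) and a geometric piece coming from Lemma~\ref{l:SA_Stable}, with the threshold parameters tuned as $A_0\sim M$, $A_1\sim A_0^{k'/(3k'-1)}$, $A_2\sim A_0^{-2\delta}$, which is exactly your optimum after solving the balancing equation. The only cosmetic differences are that the paper writes down the optimised exponents directly rather than introducing auxiliary parameters $(\alpha,\gamma)$, and that the near-critical bound $\P(\inf\max\{|f-\ell|,|\nabla f|\}<A_1)\lesssim A_1^{1-\delta}$ is cited from \cite{ns16} rather than described as a Kac--Rice estimate.
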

\begin{remark}
    The assumption that $f+p-\E[p]$ be stationary with a spectral density is much stronger than what is required for the conclusion of this lemma to hold. We use this assumption only in order to apply Proposition~\ref{p:SAMoments} but \cite[Theorem~5.2]{aadlm23} gives much weaker (albeit less concise) sufficient conditions for the same result. We apply this lemma with $p$ equal to $p_v$, $p_v+\rho$ or $\rho$ where $\rho$ is some deterministic function. Therefore the current statement is most convenient for our purposes.
\end{remark}
\begin{proof}[Proof of Lemma~\ref{l:sa_perturbation_moments}]
    Given $w\in\Z^d$, we define
    \begin{displaymath}
        \sigma^2_{w,p}:=\sup_{\lvert\alpha\rvert\leq 1}\sup_{x\in B_w}\Var[\partial^\alpha p(x)]\leq M_2(w)
    \end{displaymath}
    and in Assumption~\ref{a:SA} set
    \begin{equation}
    \begin{aligned}
        A_0=\|\E[p]\|_{C^1(B_w)}+\E\left[\|p-\E[p]\|_{C^1(B_w)}\right]+\sigma_{w,p}^{1-\delta},\\
        A_1=A_0^{\frac{k^\prime}{3k^\prime-1}-\delta}\qquad\text{and}\qquad A_2=c_d^\prime A_0^{-2\delta}
    \end{aligned}
    \end{equation}
    where $\delta,c_d^\prime>0$ will be specified later. By Kolmogorov's theorem \cite[Appendix~A.9]{ns16} there exists an absolute constant $C>0$ such that
    \begin{equation*}
        \E\left[\max_{\lvert\alpha\rvert\leq 1}\sup_{x\in B_w}\lvert\partial^\alpha p(x)-\E[\partial^\alpha p(x)]\rvert\right]\leq C \sqrt{M_2(w)}.
    \end{equation*}
    By assumption, $M_2(w)$ and $\|\E[p]\|_{C^1(B_w)}$ are bounded uniformly over $w$ by some constant $\widetilde{C}$, hence there exists $C^\prime>0$ depending only on $\widetilde{C}$ such that
    \begin{equation}\label{e:sa_perturb_moments4}
        A_0\leq \|\E[p]\|_{C^1(B_w)}+C^\prime M_2(w)^\frac{1-\delta}{2}.
    \end{equation}
    In particular $A_0$ is bounded uniformly over $w$ by some constant (depending only on $\widetilde{C}$). We note that $A_1^2/(C_dA_2)=A_0^{2k^\prime/(3k^\prime-1)}/(C_dc_d^\prime)$ and therefore, since $A_0$ is bounded, by choosing $c_d^\prime$ sufficiently small we can ensure that this expression is greater than $A_0$ for all $w$ (as required in Assumption~\ref{a:SA}).

    We next define
    \begin{equation*}
        Y(w)=\mathcal{L}(w)\ind_{w\in\mathcal{U}_\mathrm{FR}(f,p)\cup\mathcal{U}_\SA(f,p)}+\mathcal{L}_\partial(w)
    \end{equation*}
    where
    \begin{align*}
        \mathcal{L}(w)&:=\mathcal{H}^{d-1}[B_w\cap\{f=\ell\}]+\mathcal{H}^{d-1}[B_w\cap\{f+p=\ell\}]\\
        \mathcal{L}_\partial(w)&:=s_w\mathcal{H}^{d-1}[B_w\cap\{f=\ell\}]+2\mathcal{H}^{d-1}[(\partial B_w)_{+t_w}\cap\{f=\ell\}]+2\mathcal{H}^{d-1}[(\partial B_w)_{+t_w}\cap\{f+p=\ell\}]
    \end{align*}
    and
    \begin{displaymath}
        s_w:=C_d\frac{A_2A_0}{A_1^2}=C_dc_d^\prime A_0^\frac{k^\prime-1}{3k^\prime-1}\qquad\text{and}\qquad t_w:=\frac{7A_0}{A_1}=7A_0^{\frac{2k^\prime-1}{3k^\prime-1}+\delta}.
    \end{displaymath}
    Combining Lemmas~\ref{l:SATrivialBound} and~\ref{l:SA_Stable} shows that \eqref{e:sa_perturb_moments1} holds for our definition of $Y(w)$. It remains to prove the moment bounds \eqref{e:sa_perturb_moments2} and \eqref{e:sa_perturb_moments3}

    First we bound the probability that $w\in\mathcal{U}_\SA(f,p)$. The Borell-TIS inequality \cite[Theorem~2.1.1]{at07} states that for a continuous centred Gaussian field $h(t)$ defined on a compact set $T$
    \begin{displaymath}
        \P\left(\sup_{t\in T}h(t)-\E\Big[\sup_{t\in T}h(t)\Big]>u\right)\leq e^{-\frac{u^2}{2\sigma_T^2}}
    \end{displaymath}
    for all $u>0$, where $\sigma_T^2:=\sup_{t\in T}\Var[h(t)]$. Applying this to $\pm\partial^\alpha p$ for each $\lvert\alpha\rvert\leq 1$ and using the union bound, we have
    \begin{equation*}
    \begin{aligned}
        \P(\|p\|_{C^1(B_w)}>A_0)&\leq \P\left(\|p-\E[p]\|_{C^1(B_w)}>\E\left[\|p-\E[p]\|_{C^1(B_w)}\right]+\sigma_{w,p}^{1-\delta}\right)\\
        &\leq 2(d+1)e^{-\sigma_{w,p}^{-2\delta}/2}\leq 2(d+1)e^{-A_0^{-2\delta/(1-\delta)}/2}.
    \end{aligned}
    \end{equation*}
    A similar application of the Borell-TIS inequality to the second derivatives of $f$ yields that
    \begin{displaymath}
        \P\left(\sup_{x\in B_w}\lvert\nabla^2 f(x)\rvert>A_2\right)\leq C_1e^{-c(A_0^{-\delta}-C_2)^2}
    \end{displaymath}
    provided that $A_0^\delta>C_2$, where $c,C_1,C_2>0$ are constants depending only on the distribution of $f$. By increasing $C_1$ if necessary, we can relax the requirement that $A_0^\delta>C_2$.

    Next we note that by Lemma 7 of \cite{ns16} for any $\delta^\prime>0$ there exists $C_{\delta^\prime}>0$ such that
    \begin{displaymath}
        \P\left(\inf_{x\in B_w}\max\{\lvert f(x)-\ell\rvert,\lvert\nabla f(x)\rvert\}<\tau\right)<C_{\delta^\prime}\tau^{1-\delta^\prime}
    \end{displaymath}
    for all $\tau>0$. Applying this with $\tau=A_1$ and $\delta^\prime$ sufficiently small ensures that the right-hand side is at most $C_{\delta^\prime} A_1^{1-\delta^\prime}\leq C^\prime_\delta A_0^{k^\prime/(3k^\prime-1)-2\delta}$. Combining the three bounds above with the definition of $\mathcal{U}_\SA$, we have that for all $w\in\Z^d$
    \begin{equation}\label{e:sa_perturb_moments5}
        \P(w\in\mathcal{U}_\SA(f,p))\leq C_\delta A_0^{\frac{k^\prime}{3k^\prime-1}-2\delta}.
    \end{equation}

    We now work towards \eqref{e:sa_perturb_moments2}. By H\"older's inequality, Proposition~\ref{p:SAMoments} and \eqref{e:sa_perturb_moments5}
    \begin{align*}
        \E\left[\mathcal{L}(w)\ind_{w\in\mathcal{U}_\mathrm{FR}\cup\mathcal{U}_\SA}\right]&\leq\E\left[\mathcal{L}(w)^{k^\prime}\right]^{\frac{1}{k^\prime}}\left(\P(w\in\mathcal{U}_\mathrm{FR})+\P(w\in\mathcal{U}_\SA)\right)^\frac{k^\prime-1}{k^\prime}\leq C\P(w\in\mathcal{U}_\mathrm{FR})^\frac{k^\prime-1}{k^\prime}+C_\delta A_0^{\frac{k^\prime-1}{3k^\prime-1}-2\delta}
    \end{align*}
    where we have abbreviated $\mathcal{U}_\mathrm{FR}:=\mathcal{U}_\mathrm{FR}(f,p)$ and $\mathcal{U}_\SA:=\mathcal{U}_\SA(f,p)$. By the second statement of Proposition~\ref{p:SAMoments}
    \begin{equation*}
        \E[\mathcal{L}_\partial(w)]\leq C\max\{s_w,t_w\}\leq C^\prime A_0^\frac{k^\prime-1}{3k^\prime-1}.
    \end{equation*}
    Since $\delta>0$ can be chosen arbitrarily small, combining the last two equations with \eqref{e:sa_perturb_moments4} (and the definition of $Y(w)$) proves \eqref{e:sa_perturb_moments2}.

    Let us now assume that $p=p_v$ and $\beta>\beta_\SA(k)$. By definition of $p_v$
    \begin{displaymath}
        \Cov[p_v(x),p_v(y)]=2\int_{B_v}q(x-z)q(y-z)\;dz,
    \end{displaymath}
    so by Assumption~\ref{a:clt} and \eqref{e:sa_perturb_moments4}
    \begin{displaymath}
        A_0\leq C^\prime M_2(w)^{\frac{1-\delta}{2}}\leq C^{\prime\prime}(1+\lvert v-w\rvert)^{-\beta(1-\delta)}
    \end{displaymath}
    for all $w\in\Z^d$. By Lemma~\ref{l:ProbExtUnst}
    \begin{displaymath}
        \P(w\in\mathcal{U}_\mathrm{FR}(f,p_v))\leq c_\delta(1+\lvert v-w\rvert)^{-\beta+\delta}
    \end{displaymath}
    for any $\delta>0$. Hence taking $\delta>0$ sufficiently small and using \eqref{e:sa_perturb_moments2} we have
    \begin{align*}
        \E[Y(w)]&\leq C_\delta (1+\lvert v-w\rvert)^{-\beta\left(\frac{k^\prime-1}{3k^\prime-1}-\delta\right)}\leq C_\delta (1+\lvert v-w\rvert)^{-3d-\delta^\prime}
    \end{align*}
    where the final inequality follows upon taking $\delta,\delta^\prime>0$ sufficiently small since $\beta>\beta_\SA(k)=3d\frac{3k^\prime-1}{k^\prime-2}$. This verifies the first part of \eqref{e:sa_perturb_moments3}.

    Bounding the expectation of $Y(w)^{2+\epsilon}$ uses very similar arguments: by H\"older's inequality, Proposition~\ref{p:SAMoments}, Lemma~\ref{l:ProbExtUnst} and \eqref{e:sa_perturb_moments5}
    \begin{equation}\label{e:sa_perturb_moments6}
    \begin{aligned}
        \E[\mathcal{L}(w)^{2+\epsilon}\ind_{w\in\mathcal{U}_\SA\cup\mathcal{U}_\mathrm{FR}}]&\leq\E[\mathcal{L}(w)^{k^\prime}]^{\frac{2+\epsilon}{k^\prime}}\left(\P(w\in\mathcal{U}_\mathrm{FR})+\P(w\in\mathcal{U}_\SA)\right)^\frac{k^\prime-2-\epsilon}{k^\prime}\\
        &\leq C_\delta(1+\lvert v-w\rvert)^{-(\beta-\delta)\frac{k^\prime-2-\epsilon}{k^\prime}}+C_\delta A_0^{\frac{k^\prime-2-\epsilon}{3k^\prime-1}-2\delta}\\
        &\leq C_\delta^\prime (1+\lvert v-w\rvert)^{-\beta\left(\frac{k^\prime-2-\epsilon}{3k^\prime-1}-2\delta\right)(1-\delta)}.
    \end{aligned}
    \end{equation}
    Once again, since $\beta>\beta_\SA(k)$ we can ensure that the above exponent is strictly less than $-3d$ by taking $\epsilon$ and $\delta$ sufficiently small. By Proposition~\ref{p:SAMoments}
    \begin{equation}\label{e:sa_perturb_moments7}
    \begin{aligned}
        \E[(s_w\mathcal{H}^{d-1}[B_w\cap\{f=\ell\}])^{2+\epsilon}]\leq Cs_w^{2+\epsilon}&\leq C^{\prime}(1+\lvert v-w\rvert)^{-\beta(1-\delta)(2+\epsilon)\frac{k^\prime-1}{3k^\prime-1}}\\
        &\leq C^{\prime\prime}(1+\lvert v-w\rvert)^{-4d}.
    \end{aligned}
    \end{equation}
    We temporarily denote $X:=\mathcal{H}^{d-1}[(\partial B_w)_{+t_w}\cap \{f=\ell\}]$ and define $\theta=\frac{k^\prime-2-\epsilon}{(k^\prime-1)(2+\epsilon)}$. Then by Littlewood's inequality for $L^p$ spaces and Proposition~\ref{p:SAMoments}
    \begin{equation}\label{e:sa_perturb_moments8}
        \E\left[X^{2+\epsilon}\right]\leq\E\left[X^{k^\prime}\right]^{\frac{(1-\theta)(2+\epsilon)}{k^\prime}}\E\left[X\right]^{\theta(2+\epsilon)}\leq C t_w^{\theta(2+\epsilon)}\leq C^\prime (1+\lvert v-w\rvert)^{-\beta\left(\frac{k^\prime-2-\epsilon}{3k^\prime-1}-2\delta\right)}.
    \end{equation}
    As before we can ensure that the exponent of the right-most term is strictly less than $-3d$ by taking $\epsilon,\delta>0$ sufficiently small. Combining \eqref{e:sa_perturb_moments6}-\eqref{e:sa_perturb_moments8} shows that
    \begin{displaymath}
        \E[Y(w)^{2+\epsilon}]\leq C(1+\lvert v-w\rvert)^{-3d-\delta}
    \end{displaymath}
    for some $\epsilon,\delta>0$ and all $w\in\Z^d$, which verifies the second part of \eqref{e:sa_perturb_moments3} and hence completes the proof of the lemma.    
\end{proof}

\begin{proof}[Proof of Theorem~\ref{t:clt} (for $\star=\SA$)]
    The surface area functional $\mu_\SA$ is dominated by the total area of the level set and so has finite second moments courtesy of Proposition~\ref{p:SAMoments}. Lemmas~\ref{l:additivity_SA} and~\ref{l:sa_perturbation_moments} verify the assumptions of Proposition~\ref{p:Sufficient_Moments} so we see that the stabilisation, bounded moment and moment decay conditions hold. Hence an application of Theorem~\ref{t:GenCLT} yields the desired result.
\end{proof}
\subsection{Euler characteristic}
Turning to the Euler characteristic, we start by describing some fundamental properties of this functional which we make use of below. Readers who are unfamiliar with the Euler characteristic may find it helpful to consult \cite[Chapter~6]{at07} for an overview (in the setting of smooth Gaussian fields).

The Euler characteristic is well defined for a class of sets known as `basic complexes'. The precise definition is given in \cite[Chapter~6]{at07}, however for our purposes it will be sufficient to know that this class includes the excursion sets of functions satisfying certain regularity conditions:

\begin{lemma}
    Let $D\subset\R^d$ be the union of a finite number of closed faces (of any dimension) of unit cubes. If $g:D\to\R$ is suitably regular (as defined in \cite[Chapter~6]{at07}) on $D$ at level $\ell$ then any union of components of $\{g\geq\ell\}\cap D$ is a basic complex.
\end{lemma}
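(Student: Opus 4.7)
The plan is to show that the conclusion follows essentially by unpacking the definition of a basic complex from \cite[Chapter~6]{at07} and checking that each of the finitely many ingredients in that definition is delivered by the regularity assumption on $g$ together with the combinatorial structure of $D$. Since $D$ is a finite union of closed faces of unit cubes in $\Z^d$, it carries a natural finite Whitney stratification whose strata are the relative interiors of those faces (of every dimension between $0$ and $d$). This is the same stratification implicitly used in Section~\ref{s:Topological} when discussing stratified critical points, so the compatibility with the rest of the paper is automatic.

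First I would recall what is required for $\{g\geq\ell\}\cap D$ to be a basic complex in the sense of Adler--Taylor: one needs the excursion set to be the disjoint union, over the strata $S$ of $D$, of finitely many pieces of the form $\{g\geq \ell\}\cap S$, each of which is a smooth stratified manifold-with-corners of the appropriate dimension, and one needs the level set $\{g=\ell\}$ to meet each stratum $S$ transversally and to avoid the lower-dimensional strata entirely at level $\ell$. The ``suitably regular'' hypothesis on $g$ (see \cite[Definition~9.2.1]{at07}) is designed precisely to encode these conditions: it forces $\ell$ to be a non-critical value of $g$ restricted to every stratum and forces $\{g=\ell\}$ to be disjoint from the zero-dimensional strata. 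Thus $\{g\geq\ell\}\cap D$ itself is a basic complex, and my first step would be to verify this directly from the two definitions.

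Next I would explain why passing to a union of connected components does not destroy the property. Under the regularity assumption, each connected component $A$ of $\{g\geq \ell\}\cap D$ is a clopen subset of $\{g\geq \ell\}\cap D$, and its topological boundary inside $D$ decomposes stratum-by-stratum: on each stratum $S$ the set $A\cap S$ is a union of connected components of $\{g\geq \ell\}\cap S$, bounded by a union of components of the smooth transverse level set $\{g=\ell\}\cap S$. Finiteness of the decomposition follows because $D$ has finitely many strata, $g$ is regular (so each level set $\{g=\ell\}\cap S$ is a compact smooth manifold, hence has finitely many components), and the components of $\{g\geq\ell\}\cap D$ are obtained by gluing along these finitely many boundary pieces. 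Any union of these components therefore inherits the same local structure as the full excursion set and qualifies as a basic complex.

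The main (minor) obstacle is purely bookkeeping: making sure the gluing across strata of different dimensions preserves the basic-complex structure when one restricts to a subfamily of components. The key point is that if two components $A_1,A_2$ of $\{g\geq\ell\}\cap D$ meet on a lower-dimensional stratum $S'$, then in fact they are contained in the same connected component of $\{g\geq\ell\}\cap D$ (connectivity through $S'$), so the subfamily $\bigcup_i A_i$ is itself open and closed in $\{g\geq\ell\}\cap D$, and its stratified pieces are exactly the unions of the corresponding stratified pieces of each $A_i$. This ensures the decomposition satisfies the Adler--Taylor axioms, and no further analytic work is required beyond what the regularity of $g$ already provides.
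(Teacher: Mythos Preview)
Your proposal is correct and takes essentially the same approach as the paper: the paper's proof simply cites \cite[Theorem~6.2.2]{at07} for the case of a single closed face and remarks that the argument generalises trivially to a finite union of faces, whereas you unpack the same Adler--Taylor machinery by hand. Your discussion of the ``obstacle'' is slightly overwrought, since distinct connected components of $\{g\geq\ell\}\cap D$ are disjoint by definition, but the clopen observation already suffices.
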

\begin{proof}
    This result is proven for the closed face of a cube in \cite[Theorem~6.2.2]{at07}, the proof generalises trivially to a union of such faces.
\end{proof}

\begin{lemma}
    Let $f$ satisfy Assumption~\ref{a:clt} then with probability one $f$ is suitably regular on any union of closed faces of unit cubes at level $\ell$.
\end{lemma}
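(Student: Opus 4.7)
The plan is to reduce suitable regularity at level $\ell$ on a union of closed faces of unit cubes to a countable collection of Bulinskaya-type statements on individual affine strata, and then use the spectral density of $f$ to verify the non-degeneracy hypothesis needed to apply Bulinskaya's lemma on each such stratum.

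Unpacking the definition from \cite[Chapter~6]{at07}, suitable regularity of $f$ at level $\ell$ on a union $D$ of closed faces of unit cubes amounts to the following: for every open face $S$ of $D$ (of any dimension $0\le d'\le d$), the restricted function $f|_S$ has no critical points at level $\ell$. For $d'=0$ this only asks that $f(x)\ne \ell$ at a single point $x$, which holds almost surely since $f(x)$ has an absolutely continuous distribution. For $d'\ge 1$ I would fix an affine subspace $A$ of dimension $d'$ parallel to a coordinate subspace, take a compact piece $S\subset A$ (the closure of a face), and apply \cite[Lemma~11.2.10]{at07} (Bulinskaya's lemma) to the $\R^{d'+1}$-valued field $x\mapsto (f(x),\nabla_A f(x))$ on $S$. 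The conclusion is that almost surely there is no $x\in S$ with $f(x)=\ell$ and $\nabla_A f(x)=0$, i.e.\ no critical point of $f|_S$ at level $\ell$.

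To invoke Bulinskaya's lemma I need (i) sufficient smoothness of the sample paths and (ii) a uniformly bounded joint density for $(f(x),\nabla_A f(x))$ as $x$ ranges over the compact set $S$. Smoothness follows from Assumption~\ref{a:clt}(1) which guarantees $f\in C^k$ almost surely with $k\ge 4$, so $f|_A\in C^k$. For the density bound, I would note that the covariance function $K=q\ast q$ arises from the white-noise representation $f=q\ast W$ and therefore has a spectral density $\rho = |\hat q|^2$ that is not concentrated on any hyperplane; consequently $(f(x),\nabla_A f(x))$ is a non-degenerate Gaussian vector with covariance matrix that is continuous in $x$ (in fact stationary), so its density is continuous and uniformly bounded on $S$.

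The final step is a countable union bound: there are countably many closed faces of unit cubes in $\R^d$ and at most $d+1$ possible dimensions for the affine subspaces involved, so the almost sure statements on individual strata combine to give that, almost surely, $f$ is suitably regular at level $\ell$ simultaneously on every finite union of closed faces of unit cubes. I do not foresee a serious obstacle: the only subtlety is checking that the exact definition of suitably regular in \cite[Chapter~6]{at07} is captured by the critical-point condition on each stratum, which is precisely the formulation used there for basic complexes built from piecewise-linear domains.
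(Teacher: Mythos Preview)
Your approach is correct and matches the paper's strategy: verify regularity on a single face via a result from \cite{at07}, then pass to all faces by a countable union. The paper simply cites \cite[Theorem~11.3.3]{at07} for the single-face case rather than unpacking it via Bulinskaya's lemma \cite[Lemma~11.2.10]{at07} as you propose; your own closing caveat is warranted, since ``suitably regular'' in \cite[Chapter~6]{at07} involves a few more conditions than merely the absence of stratified critical points at level $\ell$ (e.g.\ Morse-type non-degeneracy on each stratum), though these are likewise handled by Bulinskaya-type arguments and are exactly what Theorem~11.3.3 packages up.
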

\begin{proof}
    This is given for a single face by \cite[Theorem~11.3.3]{at07}. The lemma holds since the number of faces of unit cubes is countable.
\end{proof}

The Euler characteristic is well known to be additive and topologically invariant:

\begin{lemma}\label{l:ECbasics}
Let $A_1$ and $A_2$ be basic complexes.
\begin{enumerate}
    \item If $A_1\cup A_2$ and $A_1\cap A_2$ are also basic complexes then
    \begin{displaymath}
    \EC[A_1\cup A_2]=\EC[A_1]+\EC[A_2]-\EC[A_1\cap A_2]
    \end{displaymath}
    \item If $A_1$ is homotopy equivalent to $A_2$ then $\EC[A_1]=\EC[A_2]$.
\end{enumerate}
\end{lemma}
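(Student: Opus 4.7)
The plan is to deduce both statements from the standard definition and structural properties of the Euler characteristic on basic complexes developed in Chapter~6 of \cite{at07}, rather than carrying out any direct geometric computation.

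For statement (1), the approach would be to invoke the fact that $\EC$ is a valuation (a finitely additive set functional) on the class of basic complexes. Concretely, a basic complex admits a cellular decomposition into disjoint open cells of dimensions $0, 1, \dots, d$, and $\EC[A]$ can be computed as the alternating sum $\sum_{i=0}^d (-1)^i n_i(A)$ of the numbers of such cells. Under the hypothesis that $A_1 \cup A_2$ and $A_1 \cap A_2$ are also basic complexes, one can arrange compatible cellular decompositions of all four sets, so that the elementary set-theoretic identity $n_i(A_1 \cup A_2) = n_i(A_1) + n_i(A_2) - n_i(A_1 \cap A_2)$ holds in each dimension $i$; taking the alternating sum gives the desired inclusion-exclusion identity. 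This valuation property is part of the axiomatic treatment of $\EC$ on basic complexes in \cite[Chapter~6]{at07}.

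For statement (2), I would use the classical identification of $\EC$ with the alternating sum of Betti numbers, namely
\[ \EC[A] = \sum_{i=0}^d (-1)^i \dim_{\R} H_i(A;\R), \]
which for basic complexes follows from the equivalence between cellular and singular homology on CW complexes. Since singular homology, and therefore each Betti number, is a homotopy invariant, any two homotopy-equivalent basic complexes share the same alternating sum and hence the same Euler characteristic. In both parts the only substantive point is bookkeeping rather than conceptual: for (1) one needs compatible cellular decompositions, which is ensured by the assumption that all four sets are basic complexes, while for (2) one needs the passage from the cellular (combinatorial) definition of $\EC$ to the homological one, which is standard for CW structures. Since both facts are established in \cite{at07}, the proof itself reduces to a short citation.
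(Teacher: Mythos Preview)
Your proposal is correct and matches the paper's approach: the paper simply cites \cite[Theorem~6.1.1]{at07} for the additivity and \cite[Theorem~2.44]{hat02} for the homotopy invariance, with no further argument. The only minor difference is that the paper points to Hatcher rather than \cite{at07} for part (2), but your explanation via Betti numbers is the same underlying reason.
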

\begin{proof}
The first property is proven in \cite[Theorem~6.1.1]{at07} the second property in \cite[Theorem~2.44]{hat02}.
\end{proof}

We will also make use of the fact that the Euler characteristic of an excursion set can be bounded in terms of the number of critical points of the underlying function:

\begin{lemma}\label{l:ECBound}
    Let $D$ be a union of closed faces of the unit cubes $B_{v_1},\dots,B_{v_n}$ and $g:D\to\R$ be suitably regular on $D$ at level $\ell$. Let $U$ be a union of connected components of $\{g\geq\ell\}\cap D$, then there exists $c_d>0$ depending only on $d$ such that
    \begin{displaymath}
        \lvert\EC[U]\rvert\leq c_d\sum_{i=1}^n\overline{N}_\mathrm{Crit}(B_{v_i},g)
    \end{displaymath}
    where $\overline{N}_\mathrm{Crit}(A,g)$ denotes the number of stratified critical points of $g$ in $A$.
\end{lemma}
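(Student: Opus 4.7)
My plan is to invoke stratified Morse theory to express $\EC[U]$ as a signed sum over stratified critical points of $g|_D$ at levels above $\ell$, and then bound this signed sum by the corresponding unsigned count.

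First I would reduce the right-hand side of the inequality to a single global count. The stratification of $D$ consists of the open faces of the cubes $B_{v_1},\dots,B_{v_n}$, so every stratified critical point of $g|_D$ lies in an open face $S$, and this $S$ is an open face of at least one cube $B_{v_i}$. Thus each stratified critical point of $g|_D$ contributes to $\overline{N}_\mathrm{Crit}(B_{v_i},g)$ for at least one $i$, and (allowing a dimension-dependent constant $c_d$ to absorb multiple counting on shared faces) it suffices to prove
\[
|\EC[U]| \leq \#\{\text{stratified critical points of } g \text{ in } D\}.
\]

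To establish this bound I would apply stratified Morse theory to the Whitney stratified space $D$, invoking the cubical-complex version developed in \cite[Chapter~9]{at07}. Suitable regularity of $g$ guarantees that $g|_D$ is a stratified Morse function, so $\{g\geq\ell\}\cap D$ is homotopy equivalent to a CW complex in which there is exactly one cell for each stratified critical point of $g|_D$ at level $\geq\ell$, with the cell's dimension and orientation determined by the (tangential and normal) Morse data at that point. Since each such cell is attached via a map into the level set $\{g=\ell\}$, it sits inside the unique connected component of $\{g\geq\ell\}\cap D$ containing the corresponding critical point. Restricting to the subset of components that make up $U$ and using additivity of $\EC$ over components (Lemma~\ref{l:ECbasics}(1)) gives
\[
\EC[U] = \sum_{x\in U\cap\mathrm{Crit}(g|_D),\ g(x)\geq\ell} \epsilon(x),
\]
where each $\epsilon(x)\in\{-1,+1\}$. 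Taking absolute values inside the sum then yields the required bound.

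The main obstacle is the clean invocation of stratified Morse theory on the (possibly lower-dimensional) complex $D$: one has to verify that the single-cube Morse decomposition from \cite[Chapter~9]{at07} extends to a finite union of closed faces of adjacent cubes, and that each Morse cell is indeed attached inside a single connected component of the excursion set. An alternative route, which sidesteps globalising the Morse decomposition, is induction on $n$ using the inclusion--exclusion identity of Lemma~\ref{l:ECbasics}(1): one applies the single-cube Morse bound to each $B_{v_i}$ and to the excursion set restricted to each pairwise (and higher-order) intersection, which is itself a union of cube faces. This yields the same inequality at the cost of a larger dimension-dependent constant $c_d$ coming from the binomial blow-up in the inclusion--exclusion sum.
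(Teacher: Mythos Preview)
Your proposal is correct, and the paper in fact takes your alternative inclusion--exclusion route rather than the global Morse-theory argument. The difference is that the paper decomposes $D$ into its constituent closed \emph{faces} $F_1,\dots,F_m$ (not into the cubes $B_{v_i}$), invokes the single-face formula from \cite[Section~9.4]{at07} to get $|\EC[U\cap F]|\leq\overline{N}_\mathrm{Crit}(F,g)$, and then expands $\EC[U]=\EC[\bigcup_i(U\cap F_i)]$ by iterated inclusion--exclusion. Each intersection $F_{j_1}\cap\dots\cap F_{j_k}$ is again a closed face of some $B_{v_i}$, so the single-face bound applies term by term; the constant $c_d$ arises because the number of faces of unit cubes meeting any fixed $B_{v_i}$ is bounded in terms of $d$ alone, and any $k$-fold intersection with $k$ exceeding this bound is empty. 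This sidesteps entirely the need to verify that the stratified Morse decomposition globalises to $D$ and respects components, which is the delicate point in your primary approach; on the other hand, your global argument, if carried through, would give the sharper constant $c_d=1$ in the inequality $|\EC[U]|\leq\overline{N}_\mathrm{Crit}(D,g)$.
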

\begin{proof}
    Let $F$ be one of the faces which makes up $D$, then \cite[Section~9.4]{at07} gives an expression for $\EC[\{g\geq\ell\}\cap F]$ as an alternating sum of stratified critical points of $g$ contained in $\{g\geq\ell\}\cap F$. In particular taking this sum over the components of $\{g\geq\ell\}\cap F$ in $U$, this implies that $\lvert \EC[U\cap F]\rvert\leq \overline{N}_\mathrm{Crit}(F,g)$.

    Given a closed face $F$ of a unit cube we let $\Tilde{\partial}F$ denote the union of all closed faces of unit cubes that are strictly contained in $F$. So if $F$ is a two-dimensional face then $\Tilde{\partial}F$ is the union of four line segments and if $F$ is a singleton (i.e.\ vertex) then $\Tilde{\partial}F=\emptyset$. By additivity of the Euler characteristic (Lemma~\ref{l:ECbasics})
    \begin{displaymath}
        \EC[U]=\sum_{F\in\mathrm{Face}(D)}\EC[U\cap F]-\EC[U\cap\Tilde{\partial}F]
    \end{displaymath}
    where $\mathrm{Face}(D)$ denotes the set of all faces of unit cubes in $D$. Then by the conclusion of the previous paragraph
    \begin{align*}
        \lvert\EC[U]\rvert&\leq \sum_{F\in\mathrm{Face}(D)}2\overline{N}_\mathrm{Crit}(F,g)\leq\sum_{i=1}^n\sum_{F\in\mathrm{Face}(B_{v_i})}2\overline{N}_\mathrm{Crit}(F,g)
    \end{align*}
    which implies the statement of the lemma since the number of faces of each unit cube is bounded by a constant depending only on the dimension $d$.
\end{proof}

With these preliminary results we can now decompose the change in the Euler characteristic under perturbation. Since the Euler characteristic of a set depends only on its topology (Lemma~\ref{l:ECbasics}) there will be no geometric contributions to $\Delta_v(\cdot)$, which simplifies our arguments somewhat compared to the previous two subsections. Given a (suitably regular) $C^2$ function $g:\R^d\to\R$ and $D\subset\R^d$ we define $\EC_\infty[D,g]=\EC[\{g\geq\ell\}_\infty\cap D]$.

\begin{lemma}\label{l:ChangeDecompEC}
If $g$ and $g+p$ are suitably regular on $\Lambda_n$ then
\begin{displaymath}
    \lvert\EC_\infty[\Lambda_n,g]-\EC_\infty[\Lambda_n,g+p]\rvert\leq c_d\sum_{w\in\Z^d\cap\Lambda_n}\big(\overline{N}_\mathrm{Crit}(B_{w},g)+\overline{N}_\mathrm{Crit}(B_{w},g+p)\big)\ind_{w\in\mathcal{U}_{\mathrm{FR}}(g,p)}.
\end{displaymath}
\end{lemma}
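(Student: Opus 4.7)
My strategy is to split $\Lambda_n$ into a finite-range stable region and an unstable region, use the stratified isotopy of Lemma~\ref{l:TopStab} to make the stable contributions cancel between $g$ and $g+p$, and bound the remaining unstable contributions using Lemma~\ref{l:ECBound}.

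Set $V := \{w \in \Z^d : B_w \subseteq \Lambda_n\}$, $V_U := V \cap \mathcal{U}_\mathrm{FR}(g,p)$ and $V_S := V \setminus V_U$, and define $D_S := \bigcup_{w \in V_S} B_w$ and $D_U := \bigcup_{w \in V_U} B_w$, so $\Lambda_n = D_S \cup D_U$. First I would apply Lemma~\ref{l:ECbasics}(1) with $A_1 = \{g\geq\ell\}_\infty \cap D_S$ and $A_2 = \{g\geq\ell\}_\infty \cap D_U$ to obtain
\begin{displaymath}
\EC_\infty[\Lambda_n,g] = \EC_\infty[D_S,g] + \EC_\infty[D_U,g] - \EC_\infty[D_S\cap D_U,g],
\end{displaymath}
and the analogous identity for $g+p$.

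Because every cube in $V_S$ is finite-range stable and the excursion sets $\{g\geq\ell\}$, $\{g+p\geq\ell\}$ have only finitely many components in $D_S$ by suitable regularity, Lemma~\ref{l:TopStab} supplies a stratified isotopy $H$ of $D_S$ with $H(\{g\geq\ell\}_\infty \cap D_S,1) = \{g+p\geq\ell\}_\infty \cap D_S$. The interface $D_S\cap D_U$ is a union of closed faces of cubes in $V_S$, hence a union of strata of $D_S$; since $H$ preserves strata, $H(\cdot,1)$ restricts to a homeomorphism of $D_S\cap D_U$ carrying $\{g\geq\ell\}_\infty\cap D_S\cap D_U$ onto $\{g+p\geq\ell\}_\infty\cap D_S\cap D_U$. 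By topological invariance of the Euler characteristic (Lemma~\ref{l:ECbasics}(2)), both the $D_S$ and $D_S\cap D_U$ terms coincide for $g$ and for $g+p$, so subtracting the two inclusion-exclusion identities collapses the difference to
\begin{displaymath}
\EC_\infty[\Lambda_n,g] - \EC_\infty[\Lambda_n,g+p] = \EC_\infty[D_U,g] - \EC_\infty[D_U,g+p].
\end{displaymath}

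Finally, $D_U$ is a finite union of closed unit cubes indexed by $V_U$, and $\{g\geq\ell\}_\infty\cap D_U$ is a union of connected components of $\{g\geq\ell\}\cap D_U$ (each such component lies inside a single component of $\{g\geq\ell\}$, which is either unbounded or bounded). Hence Lemma~\ref{l:ECBound} yields $\lvert\EC_\infty[D_U,g]\rvert\leq c_d\sum_{w\in V_U}\overline{N}_\mathrm{Crit}(B_w,g)$, and similarly for $g+p$; the triangle inequality then produces the stated bound. The one checkpoint I would treat most carefully is confirming that $H$ restricts cleanly to $D_S\cap D_U$ and therefore induces the cancellation of the interface term, which hinges on the stratum-preserving property built into Lemma~\ref{l:TopStab}; beyond that the argument is bookkeeping.
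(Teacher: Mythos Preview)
Your proof is correct and follows essentially the same approach as the paper: decompose $\Lambda_n$ into the stable and unstable regions, apply inclusion--exclusion (Lemma~\ref{l:ECbasics}), use Lemma~\ref{l:TopStab} to cancel the stable contribution, and bound the unstable contribution via Lemma~\ref{l:ECBound}. The one minor difference is your handling of the interface $D_S\cap D_U$: the paper simply bounds $|\EC_\infty[\overline{\Lambda_n\setminus Q}\cap Q,h]|$ by the same critical-point sum (since this intersection is a union of faces of unstable cubes, Lemma~\ref{l:ECBound} applies directly), whereas you cancel it using the stratum-preserving property of $H$; both are valid, and the constants are absorbed into $c_d$ either way.
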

\begin{proof}
    We define the unstable subset of $\Lambda_n$ as
    \begin{displaymath}
        Q:=\Lambda_n\cap\bigcup_{w\in\mathcal{U}_\mathrm{FR}(g,p)}B_w
    \end{displaymath}
    then by the first point of Lemma~\ref{l:ECbasics}, for $h=g,g+p$
    \begin{equation}\label{e:ECPerturb1}
        \EC_\infty[\Lambda_n,h]=\EC_\infty[\overline{\Lambda_n\setminus Q},h]+\EC_\infty[Q,h]-\EC_\infty[\overline{\Lambda_n\setminus Q}\cap Q,h].
    \end{equation}
    By Lemma~\ref{l:TopStab}, $\{g\geq\ell\}_\infty\cap \overline{\Lambda_n\setminus Q}$ is homeomorphic to $\{g+p\geq\ell\}_\infty\cap \overline{\Lambda_n\setminus Q}$ and so by Lemma~\ref{l:ECbasics}
    \begin{equation}\label{e:ECPerturb2}
        \EC_\infty[\overline{\Lambda_n\setminus Q},g]=\EC_\infty[\overline{\Lambda_n\setminus Q},g+p].
    \end{equation}
    By Lemma~\ref{l:ECBound}, for $h=g,g+p$
    \begin{displaymath}
        \lvert\EC_\infty[Q,h]\rvert,\lvert\EC_\infty[\overline{\Lambda_n\setminus Q}\cap Q,h]\rvert\leq c_d\sum_{w\in \Lambda_n\cap\mathcal{U}_\mathrm{FR}(g,p)}\overline{N}_\mathrm{Crit}(B_w,h).
    \end{displaymath}
    Combining this with \eqref{e:ECPerturb1} and \eqref{e:ECPerturb2} proves the lemma.
\end{proof}

To obtain a probabilistic bound on the change in the Euler characteristic under perturbation, we require a moment bound on the number of critical points:

\begin{proposition}\label{p:TMB}
Let $f:\R^d\to\R$ be a stationary $C^{k}$ Gaussian field ($k\geq 2$) such that the support of its spectral measure contains an open set. Let $\rho\in C^k(\R^d)$ be deterministic such that $\|\rho\|_{C^k(\R^d)}\leq C_0$. There exists $C^\prime<\infty$ depending only on $C_0$ and the distribution of $f$ such that for all $v\in\Z^d$
\begin{displaymath}
    \E\left[N_\mathrm{Crit}(B_v,f+\rho)^{k-1}\right]\leq C^\prime
\end{displaymath}
where $N_\mathrm{Crit}(B_v,f+\rho)$ denotes the number of critical points of $f+\rho$ in $B_v$.
\end{proposition}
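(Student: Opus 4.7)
The plan is to apply the Kac--Rice formula for factorial moments to the $C^{k-1}$ Gaussian vector field $\nabla(f+\rho)=\nabla f+\nabla\rho$, whose zeros coincide with the critical points of $f+\rho$. Since ordinary moments $\E[N^m]$ are bounded by linear combinations of factorial moments $\E[N(N-1)\cdots(N-j+1)]$ with $j\leq m$, it suffices to control the $(k-1)$-th factorial moment. Setting $p=k-1$, the Kac--Rice metatheorem (as in \cite[Chapter~6]{aw09}) gives
\begin{equation*}
\E\bigl[N_\mathrm{Crit}(B_v,f+\rho)^{(p)}\bigr]=\int_{B_v^p}\E\!\left[\prod_{i=1}^p\bigl\lvert\det\bigl(\nabla^2 f(x_i)+\nabla^2\rho(x_i)\bigr)\bigr\rvert\;\Big|\;\nabla f(x_i)=-\nabla\rho(x_i)\,\forall i\right]\phi_{\mathbf{x}}\bigl(-\nabla\rho(x_1),\dots,-\nabla\rho(x_p)\bigr)\,d\mathbf{x},
\end{equation*}
where $\phi_{\mathbf{x}}$ denotes the joint density of $(\nabla f(x_1),\dots,\nabla f(x_p))$. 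It remains to bound this integral by a constant independent of $v$; stationarity of $f$ reduces this to the case $v=0$ (up to the mild dependence on $\rho$).

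The deterministic perturbation $\rho$ plays only a mild role. Inside the density, $\phi_{\mathbf{x}}$ is evaluated at a vector of norm at most $\sqrt{d}\,C_0$, and the peak value of any non-degenerate Gaussian density dominates its value at every point. For the Hessian factors, the elementary inequality $\lvert\det(A+B)\rvert\leq 2^{d-1}(\lvert\det A\rvert+\|B\|_{\mathrm{op}}^d)$ combined with $\|\nabla^2\rho\|_\infty\leq C_0$ reduces the conditional expectation to one involving $\prod_i\lvert\det\nabla^2 f(x_i)\rvert$ plus bounded terms. The conditional law of $\nabla^2 f(x_i)$ given the gradients at $x_1,\dots,x_p$ is Gaussian, with mean linear in the conditioning and covariance bounded in terms of the covariance of $f$ up to order $k$, so all its moments are uniformly bounded.

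The main obstacle is the near-diagonal behaviour of $\phi_{\mathbf{x}}$: as the $p$ points coalesce, the covariance matrix of $(\nabla f(x_1),\dots,\nabla f(x_p))$ becomes singular. This is resolved by the classical divided-differences change of variables (originating with Belyaev; see \cite[Theorem~5.2]{aadlm23} as applied in Proposition~\ref{p:SAMoments}): reparametrise the Gaussian vector in terms of $\nabla f(x_1)$ and successive finite differences of orders $1,\dots,p-1$. The Jacobian of this change of variables is a Vandermonde-type factor that exactly cancels the vanishing of the covariance determinant along the diagonal, and the transformed Gaussian vector has a uniformly bounded density because its covariance converges, in the coalescing limit, to that of $(\nabla f(x_1),\nabla^2 f(x_1),\dots,\nabla^p f(x_1))$ at a single point. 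The hypothesis that the spectral measure has support containing an open set is used precisely here: it forces the latter covariance to be non-degenerate, since any polynomial of degree $\leq p$ vanishing on an open set must be identically zero. The transformed integral is then finite on $B_v^p$, and the constant is independent of $v$ and $\rho$ subject only to $\|\rho\|_{C^k}\leq C_0$.
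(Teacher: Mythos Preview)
The paper does not give an argument here: its entire proof is the sentence ``This is a special case of \cite[Theorem~1.2, Remark~1.3]{gs23}.'' Your proposal is therefore not a comparison against a proof in the paper but an outline of what the cited Gass--Stecconi result does, and in that respect it is accurate: the Kac--Rice formula for factorial moments together with a divided-differences reparametrisation to control the diagonal, with the non-degeneracy of the limiting jet ensured by the spectral support hypothesis, is exactly the machinery underlying that reference.

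Two small points to tighten if you wanted this to stand on its own. First, the inequality $\lvert\det(A+B)\rvert\leq 2^{d-1}(\lvert\det A\rvert+\|B\|_{\mathrm{op}}^d)$ is false as stated (take $A=0$, $B=I$ in $d=1$ is fine, but try $A$ with large entries and zero determinant); what you actually need, and what suffices, is that $\det(\nabla^2 f+\nabla^2\rho)$ is a polynomial in the entries of $\nabla^2 f$ with coefficients bounded by $C_0$, so its conditional moments are controlled by those of $\nabla^2 f$. Second, the divided-differences argument must be applied jointly to the gradients \emph{and} the Hessians appearing in the numerator, not just to the density; otherwise the conditional expectation of $\prod_i\lvert\det\nabla^2 f(x_i)\rvert$ does not stay bounded near the diagonal. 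The cited reference handles both points, which is why the paper simply defers to it.
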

\begin{proof}
    This is a special case of \cite[Theorem~1.2, Remark~1.3]{gs23}.
\end{proof}
Observe that the conditions of this result hold for any field satisfying Assumption~\ref{a:clt} with the same value of $k$ since $K\in C^{2k+\epsilon}$ ensures that $f\in C^{k}$ almost surely.

With these tools we can now verify the conditions for the CLT when $\star=\EC$.

\begin{lemma}\label{l:StabilisationEC}
Let $f$ satisfy Assumption~\ref{a:clt} for $\beta>\beta_\EC(k)$ and Assumption~\ref{a:Decay}, then $\mu_\EC$ satisfies the stabilisation condition in Theorem~\ref{t:GenCLT}.

If, in addition, $\ell<-\ell_c$ and $\rho:\R^d\to\R$ satisfies the assumptions of Lemma~\ref{l:ProbExtUnst} then there exists a random variable $\Delta_0(\rho)$ such that for any sequence of cubes $D_n:=v_n+\Lambda_n$ satisfying $\liminf_nD_n=\R^d$, we have
\begin{equation}\label{e:EC_stab_gen}
    \Delta_0(D_n,\rho):=\mu_\EC(D_n,f)-\mu_\EC(D_n,\widetilde{f}_0+\rho)\to\Delta_0(\rho)
\end{equation}
almost surely as $n\to\infty$. Furthermore
\begin{displaymath}
    \E[\Delta_0(\rho)]=\lim_{n\to\infty}\E[\mu_\EC(D_n,f)-\mu_\EC(D_n,f+\rho)].
\end{displaymath}
\end{lemma}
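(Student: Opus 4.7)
My proof plan has two parts: first I construct the almost-sure limit $\Delta_0(\rho)$ by a topological cut-and-paste argument exploiting the (almost-sure) finiteness of the unstable set, then I upgrade the convergence to $L^1$ via dominated convergence in order to exchange limit and expectation.

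For the almost-sure convergence, the plan is to use Lemma~\ref{l:ProbExtUnst} to conclude that $\mathcal{U}_\mathrm{FR}(f, p_0+\rho)$ is almost surely finite (for $\rho \equiv 0$ this is the first statement of that lemma; for the general perturbation one needs $\ell < -\ell_c$ and the decay on $\rho$ to invoke the second statement). Setting $Q := \bigcup_{w \in \mathcal{U}_\mathrm{FR}(f, p_0+\rho)} B_w$, any sequence $D_n = v_n + \Lambda_n \to \R^d$ eventually satisfies $D_n \supset Q$, and the additivity of the Euler characteristic (Lemma~\ref{l:ECbasics}) gives
\begin{displaymath}
    \EC_\infty[D_n, h] = \EC_\infty[Q, h] + \EC_\infty[\overline{D_n \setminus Q}, h] - \EC_\infty[Q \cap \overline{D_n \setminus Q}, h]
\end{displaymath}
for $h \in \{f, \widetilde{f}_0 + \rho\}$. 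Every cube in $\overline{D_n \setminus Q}$ is finite-range stable by construction of $Q$, so Lemma~\ref{l:TopStab} supplies a stratified isotopy on this set identifying $\{f \geq \ell\}_\infty$ with $\{\widetilde{f}_0 + \rho \geq \ell\}_\infty$. Because the isotopy preserves strata, its restriction to $Q \cap \overline{D_n \setminus Q}$ (a union of faces of $\partial Q$) again identifies the two unbounded-component intersections. Subtracting the decompositions for $h = f$ and $h = \widetilde{f}_0 + \rho$ therefore cancels both the $\overline{D_n \setminus Q}$ and $Q \cap \overline{D_n \setminus Q}$ contributions, leaving
\begin{displaymath}
    \Delta_0(D_n, \rho) = \EC_\infty[Q, f] - \EC_\infty[Q, \widetilde{f}_0 + \rho] =: \Delta_0(\rho),
\end{displaymath}
which is independent of $n$ once $D_n \supset Q$, giving the almost-sure limit.

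For the expectation identity, I would apply dominated convergence. The generalisation of Lemma~\ref{l:ChangeDecompEC} to $D_n$ in place of $\Lambda_n$ (whose proof is identical) yields the uniform bound
\begin{displaymath}
    |\Delta_0(D_n, \rho)| \leq c_d \sum_{w \in \Z^d} \big(\overline{N}_\mathrm{Crit}(B_w, f) + \overline{N}_\mathrm{Crit}(B_w, \widetilde{f}_0 + \rho)\big) \ind_{w \in \mathcal{U}_\mathrm{FR}(f, p_0+\rho)}.
\end{displaymath}
H\"older's inequality with exponents $k-1$ and $(k-1)/(k-2)$, combined with the $(k-1)$-th moment bound for stratified critical points (obtained by applying Proposition~\ref{p:TMB} to the restriction of $f$ and $\widetilde{f}_0 + \rho$ to each stratum of $B_w$) and the tail bound $\P(w \in \mathcal{U}_\mathrm{FR}(f, p_0+\rho)) \leq c(1+|w|)^{-\beta+\delta}$ from Lemma~\ref{l:ProbExtUnst}, bounds the expectation of each summand by $C(1+|w|)^{-(\beta-\delta)(k-2)/(k-1)}$. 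Since $\beta > \beta_\EC(k) = 3d(k-1)/(k-3)$ forces this exponent to exceed $d$ for $\delta$ sufficiently small, the bound is summable in $w$, so the dominating variable is integrable and dominated convergence gives $\E[\Delta_0(\rho)] = \lim_n \E[\Delta_0(D_n, \rho)]$. The stated identity now follows because $\widetilde{f}_0 \stackrel{d}{=} f$ implies $\E[\mu_\EC(D_n, \widetilde{f}_0 + \rho)] = \E[\mu_\EC(D_n, f + \rho)]$. The main subtlety of the whole argument is ensuring that the stratified isotopy from Lemma~\ref{l:TopStab} simultaneously cancels the $\overline{D_n \setminus Q}$ and the shared-boundary $Q \cap \overline{D_n \setminus Q}$ terms in the inclusion-exclusion, which requires the isotopy to respect all strata down to corners so that Euler characteristics agree on every lower-dimensional intersection.
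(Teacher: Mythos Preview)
Your proposal is correct and follows essentially the same approach as the paper. The only cosmetic difference is that the paper encloses $\mathcal{U}_\mathrm{FR}(f,p_0+\rho)$ in a cube $\Lambda_m$ (with $\mathcal{U}_\mathrm{FR}\subseteq\Lambda_{m-1}$) and performs the inclusion--exclusion split along $\Lambda_m$ rather than along $Q$ itself; this makes the shared-boundary term $\partial\Lambda_m$ slightly cleaner to handle but is otherwise the same cut-and-paste argument, and the dominated-convergence step (H\"older with Proposition~\ref{p:TMB} and Lemma~\ref{l:ProbExtUnst}) is identical.
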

\begin{proof}
We will prove only the statements for $\Delta_0(\rho)$; the stabilisation property follows from setting $\rho=0$ (in which case it will be apparent that the proof does not require $\ell<-\ell_c$).

Let $D_n=v_n+\Lambda_n$ be given such that $\liminf_nD_n=\R^d$. By Lemma~\ref{l:ProbExtUnst}, with probability one we may choose $m$ and $N$ such that $\mathcal{U}_\mathrm{FR}(f,p_0+\rho)\subseteq\Lambda_{m-1}$ and $\Lambda_m\subseteq\cap_{n>N}D_n$. 

For $h=f,\widetilde{f}_0+\rho$ by Lemma~\ref{l:ECbasics}
\begin{equation}\label{e:ECStab}
    \EC_\infty[D_n,h]=\EC_\infty[\Lambda_m,h]+\EC_\infty[\overline{D_n\setminus\Lambda_m},h]-\EC_\infty[\partial\Lambda_m,h].
\end{equation}
Since $\mathcal{U}_\mathrm{FR}(f,p_0+\rho)\subseteq\Lambda_{m-1}$, by Lemmas~\ref{l:TopStab} and~\ref{l:ECbasics}
\begin{displaymath}
    \EC_\infty[\overline{D_n\setminus\Lambda_m},f]=\EC_\infty[\overline{D_n\setminus\Lambda_m},\widetilde{f}_0+\rho]\quad\text{and}\quad\EC_\infty[\partial\Lambda_m,f]=\EC_\infty[\partial\Lambda_m,\widetilde{f}_0+\rho].
\end{displaymath}
Substituting into \eqref{e:ECStab} we see that
\begin{align*}
    \Delta_0(D_n,\rho)&=\EC_\infty[D_n,f]-\EC_\infty[D_n,\widetilde{f}_0+\rho]=\EC_\infty[\Lambda_m,f]-\EC_\infty[\Lambda_m,\widetilde{f}_0+\rho]=:\Delta_0(\rho)
\end{align*}
which is constant for all $n>N$, proving convergence. Moreover, for any other sequence $D_n^\prime$ we will have $\Delta_0(D^\prime_n,\rho)=\Delta_0(\rho)$ for sufficiently large $n$, which verifies \eqref{e:EC_stab_gen}.

Turning to the final statement, for any $n\in\N$ by Lemma~\ref{l:ChangeDecompEC}
\begin{align*}
    \E\left[\sup_{n\in\N}\lvert\Delta_0(D_n,\rho)\rvert\right]\leq c_d\E\left[\sum_{v\in\Z^d}\left(\overline{N}_\mathrm{Crit}(B_v,f)+\overline{N}_\mathrm{Crit}(B_v,\widetilde{f}_0+\rho)\right)\ind_{v\in\mathcal{U}_\mathrm{FR}(f,p_0+\rho)}\right].
\end{align*}
By H\"older's inequality, Proposition~\ref{p:TMB} and Lemma~\ref{l:ProbExtUnst} the above quantity is bounded by
\begin{align*}
    c_d&\sum_{v\in\Z^d}\E\left[\left(\overline{N}_\mathrm{Crit}(B_v,f)+\overline{N}_\mathrm{Crit}(B_v,\widetilde{f}_0+\rho)\right)^{k-1}\right]^\frac{1}{k-1}\P(v\in\mathcal{U}_\mathrm{FR}(f,p_0+\rho))^{\frac{k-2}{k-1}}\leq c_\delta\sum_{v\in\Z^d}(1+\lvert v\rvert)^{-(\beta-\delta)\frac{k-2}{k-1}}
\end{align*}
for any $\delta>0$. Taking $\delta$ sufficiently small ensures that this expression is finite, since $\beta>\beta_\EC= 3\frac{k-1}{k-3}d$. We conclude that the sequence $\Delta_0(D_n,\rho)$ is dominated by an integrable random variable, hence by the dominated convergence theorem
\begin{align*}
    \E[\Delta_0(\rho)]=\lim_{n\to\infty}\E[\Delta_0(D_n,\rho)]&=\lim_{n\to\infty}\E[\mu_\EC(D_n,f)-\mu_\EC(D_n,\widetilde{f}_0+\rho)]\\
    &=\lim_{n\to\infty}\E[\mu_\EC(D_n,f)-\mu_\EC(D_n,f+\rho)]
\end{align*}
since $f$ and $\widetilde{f}_0$ have the same distribution. This completes the proof of the lemma.
\end{proof}

\begin{proof}[Proof of Theorem~\ref{t:clt} (for $\star=\EC$)]
    Since the Euler characteristic of an excursion set is dominated by the number of critical points (Lemma~\ref{l:ECBound}) which have finite second moments (Proposition~\ref{p:TMB}) we see that $\mu_\EC$ has finite second moments. Lemma~\ref{l:StabilisationEC} verifies the stabilisation condition for $\mu_\EC$ so if we can prove the bounded moments and moment decay conditions then an application of Theorem~\ref{t:GenCLT} will yield Theorem~\ref{t:clt} for $\star=\EC$.

    Given $v,w\in\Z^d$ we let
    \begin{displaymath}
        Y_v(w):=c_d\big(\overline{N}_\mathrm{Crit}(B_w,f)+\overline{N}_\mathrm{Crit}(B_w,f+p_v)\big)\ind_{w\in\mathcal{U}_\mathrm{FR}(f,p_v)}
    \end{displaymath}
    where $c_d>0$ is taken from the statement of Lemma~\ref{l:ChangeDecompEC}. This lemma then implies that
    \begin{displaymath}
        \lvert\Delta_v(u+\Lambda_n)\rvert\leq \sum_{w\in\Z^d\cap(u+\Lambda_n)}Y_v(w)
    \end{displaymath}
    for any $u\in\Z^d$ and $n\in\N$. Adopting the notation $\overline{N}_\mathrm{Crit}(w):=\overline{N}_\mathrm{Crit}(B_w,f)+\overline{N}_\mathrm{Crit}(B_w,f+p_v)$, by H\"older's inequality, Proposition~\ref{p:TMB} and Lemma~\ref{l:ProbExtUnst}
    \begin{displaymath}
        \E\left[Y_v(w)^{2+\epsilon}\right]\leq c^\prime_d\E\left[\overline{N}_\mathrm{Crit}(w)^{k-1}\right]^{\frac{2+\epsilon}{k-1}}\P\left(w\in\mathcal{U}_\mathrm{FR}(f,p_v)\right)^{\frac{k-3-\epsilon}{k-1}}\leq C_\delta(1+\lvert v-w\rvert)^{-(\beta-\delta)\frac{k-3-\epsilon}{k-1}}
    \end{displaymath}
    for any $\delta>0$. Since $\beta>\frac{3(k-1)}{k-3}d$, taking $\epsilon$ and $\delta$ sufficiently small ensures that the final exponent is less than $-3d$. A similar argument yields the stronger bound
    \begin{displaymath}
        \E\left[Y_v(w)\right]\leq c^\prime_d\E\left[\overline{N}_\mathrm{Crit}(w)^{k-1}\right]^{\frac{1}{k-1}}\P\left(w\in\mathcal{U}_\mathrm{FR}(f,p_v)\right)^{\frac{k-2}{k-1}}\leq C_\delta(1+\lvert v-w\rvert)^{-(\beta-\delta)\frac{k-2}{k-1}}.
    \end{displaymath}
    Together the last three equations allow us to apply Proposition~\ref{p:Sufficient_Moments} which shows that $\mu_\EC$ satisfies the bounded moments and moment decay conditions as required.
\end{proof}

\subsection{First-order behaviour and finitary CLT}
To complete this section, we prove the `law of large numbers' for our functionals (Theorem~\ref{t:FirstOrder}) and the finitary version of our CLT (Corollary~\ref{c:FinitaryCLT}).

Our characterisation of the first-order behaviour of $\mu_\star$ follows from an ergodic argument. 

\begin{proof}[Proof of Theorem~\ref{t:FirstOrder} (for $\star\in\{\Vol,\SA\}$)]
    We recall the white noise representation of our field $f=q\ast W$ and that $W_v=W|_{B_v}$ for $v\in\Z^d$. Given $u\in\Z^d$ we let $\tau_u$ denote translation by $u$ (i.e.\ $(\tau_uW)_v=W_{v-u}$). We also define $T_1,\dots,T_d$ to be translations by distance one in the positive direction of each coordinate axis respectively. Since the $W_v$ are independent and identically distributed, $T_1,\dots,T_d$ are measure preserving transformations of $W$ and the invariant $\sigma$-algebra for each $T_i$ is trivial.

    For $\star\in\{\Vol,\SA\}$, we define $G(W)=\mu_\star([0,1]^d)=\mu_\star(B_0)$. If $\star=\Vol$, then $\lvert G\rvert\leq 1$ and so $G\in L^p$ for all $p>0$. If $\star=\SA$, then by Proposition~\ref{p:SAMoments}, $G\in L^p$ for $p\leq k(k+1)/2-2$. By the definition $f=q\ast W$, for any $v\in\Z^d$ we have
    \begin{displaymath}
        G(\tau_vW)=\mu_\star(-v+[0,1]^d)=\mu_\star(B_{-v}).
    \end{displaymath}
    Hence, since $\tau_v=T_1^{v_1}\dots T_d^{v_d}$, applying the ergodic theorem (Theorem~\ref{t:ergodic}) yields
    \begin{equation}\label{e:ErgodicConv}
        \frac{1}{(2n)^d}\sum_{v\in\Z^d\cap(-n,n]^d}G(\tau_{v}W)=\frac{1}{(2n)^d}\sum_{i=1}^d\sum_{-n< k_i\leq n}G(T_1^{k_1}\dots T_d^{k_d}W)\to\E[G(W)]
    \end{equation}
    where convergence occurs almost surely and in $L^p$. By additivity of $\mu_\star$ (Lemmas~\ref{l:ChangeDecomp} and~\ref{l:additivity_SA})
    \begin{equation}\label{e:ErgodicAdditivity}
        \sum_{v\in\Z^d\cap(-n,n]^d}\mu_\star(B_{-v})=\mu_\star(\Lambda_n)
    \end{equation}
    almost surely, completing the proof of convergence.
    
    By Fubini's theorem, stationarity and Theorem~\ref{t:PhaseTransition}
    \begin{displaymath}
        \E[\mu_\Vol(B_0)]=\E\left[\int_{[0,1]^d}\ind_{x\in\{f\geq\ell\}_\infty}\;dx\right]=\P(0\in\{f\geq\ell\}_\infty)>0.
    \end{displaymath}
    Recalling that $\{f\geq\ell\}_{<\infty}$ denotes the union of all bounded components of $\{f\geq\ell\}$,
    \begin{displaymath}
        \P(0\in\{f\geq\ell\}_\infty)=\P(0\in\{f\geq\ell\})-\P(0\in\{f\geq\ell\}_{<\infty})<1-\Phi(\ell)
    \end{displaymath}
    since $f(0)$ is a standard Gaussian and the origin has a positive probability of being contained in a bounded excursion component (a precise statement of the latter fact follows from \cite[Appendix~C.2, Lemma~11]{ns16}).
    For almost any realisation of $f$, there exists an $\ell$ such that $0\in\{f\geq\ell\}_\infty$. (To see this, note that $\{f\geq\ell\}_\infty$ exists with probability one and so intersects $\Lambda_N$ for some random $N$. We can then take $\ell<\inf_{\Lambda_N}f$.) Let $\ell_0\in\Z$ be the (random) largest integer such that $0\in\{f\geq\ell_0\}_\infty$. Then
    \begin{displaymath}
        \P(0\in\{f\geq\ell\}_\infty)\geq\P(\ell<\ell_0)\to 1
    \end{displaymath}
    as $\ell\to-\infty$. This completes the proof in the case $\star=\Vol$.

    For $\star=\SA$, by the Kac-Rice theorem and stationarity
    \begin{align*}
        c_\SA(\ell)&=\E[\mu_\SA(B_0)]=\int_{B_0}\E[\lvert\nabla f(x)\rvert\ind_{x\in\{f\geq\ell\}_\infty}|f(x)=\ell]\phi(\ell)\;dx=\E[\lvert\nabla f(0)\rvert\ind_{0\in\{f\geq\ell\}_\infty}|f(0)=\ell]\phi(\ell).
    \end{align*}
    Applying Kac-Rice once more,
    \begin{displaymath}
        \E[\mathcal{H}^{d-1}[B_0\cap\{f=\ell\}]]=\E\big[\lvert\nabla f(0)\rvert\;\big|\;f(0)=\ell\big]\phi(\ell)=\E\big[\lvert\nabla f(0)\rvert\big]\phi(\ell)
    \end{displaymath}
    where the latter equality holds because $f(0)$ and $\nabla f(0)$ are independent (this follows from differentiating the covariance function at zero, see \cite[Chapter~5.6]{at07}). Letting $\mu_\SA^{<\infty}(B_0):=\mathcal{H}^{d-1}[B_0\cap\partial(\{f\geq\ell\}_{<\infty})]$, in order to prove the desired bounds on $c_\SA$, it is enough to show that neither of $\mu_\SA(B_0)$ or $\mu_\SA^{<\infty}(B_0)$ are almost surely zero. With probability one, $\{f\geq\ell\}_\infty$ and $\{f<\ell\}$ are both non-empty and therefore so is the boundary of $\{f\geq\ell\}_\infty$ (which consists of $C^1$-smooth, $(d-1)$-dimensional surfaces). Since $f$ is stationary, there is a positive probability that this boundary intersects the interior of $B_0$, and on such an event we have $\mu_\SA(B_0)>0$. Replacing $\{f\geq\ell\}_\infty$ with $\{f\geq\ell\}_{<\infty}$ in this argument proves the corresponding property for $\mu_\SA^{<\infty}(B_0)$ (the fact that $\{f\geq\ell\}_{<\infty}$ is non-empty follows from \cite[Theorem~1]{ns16} which is stated for $\ell=0$, however the proof of which is valid for all $\ell\in\R$). Since $\phi(\ell)\to 0$ as $\ell\to-\infty$, the same holds true for $c_\SA(\ell)$, completing the proof of the theorem for $\star=\SA$.
\end{proof}

\begin{proof}[Proof of Theorem~\ref{t:FirstOrder} (for $\star=\EC$)]
    We break up the proof for the different statements in the theorem:
    
    \underline{Convergence:} When $\star=\EC$, the previous argument for convergence does not quite work because \eqref{e:ErgodicAdditivity} fails (since the cubes $B_v$ overlap on their boundaries and the Euler characteristic of excursion sets can take non-zero value on these boundaries). To proceed we will consider a modified version of the functional which is additive over unit cubes. Given a box $D=[a_1,b_1]\times\dots\times[a_d,b_d]$ we define the \emph{lower left boundary} of $D$, denoted $\partial_\mathrm{LL}D$ to be
    \begin{displaymath}
        \partial_\mathrm{LL}D:=\{x\in\partial D\;|\;x_i=a_i\text{ for some }i=1,\dots,d\}.
    \end{displaymath}
    (The choice of terminology should be clear from considering the case $d=2$.) We now let
    \begin{displaymath}
        \psi(D):=\mu_\EC(D)-\mu_\EC(\partial_\mathrm{LL}D)\quad\text{and}\quad G(W)=\psi([0,1]^d).
    \end{displaymath}
    Since $G$ is dominated by (a constant times) the number of stratified critical points of $B_0$ (Lemma~\ref{l:ECBound}) and the latter has finite $p$-th moment for $p\leq k-1$ (Proposition~\ref{p:TMB}) we see that $G$ satisfies the requirements of the ergodic theorem and so \eqref{e:ErgodicConv} holds for this choice of $G$. Using the additivity in Lemma~\ref{l:ECbasics} we have
    \begin{displaymath}
        \sum_{v\in\Z^d\cap(-n,n]^d}\psi(B_{-v})=\psi(\Lambda_n)=\mu_\EC(\Lambda_n)-\mu_\EC(\partial_\mathrm{LL}\Lambda_n).
    \end{displaymath}
    The convergence result for $\star=\EC$ will now follow if we can show that $\mu_\EC(\partial_\mathrm{LL}\Lambda_n)n^{-d}\to 0$ almost surely and in $L^1$. Using Lemma~\ref{l:ECBound} once more, it is enough to show the corresponding convergence for $\overline{N}_\mathrm{Crit}(\partial_\mathrm{LL}\Lambda_n)n^{-d}$. By considering each stratum of $\partial_\mathrm{LL}\Lambda_n$ separately, using stationarity of $f$ and Proposition~\ref{p:TMB}, we have
    \begin{equation}\label{e:ErgodicBound}
        \E\big[\overline{N}_\mathrm{Crit}(\partial_\mathrm{LL}\Lambda_n)^2\big]\leq cn^{2(d-1)}
    \end{equation}
    for some constant $c>0$ depending only on the distribution of $f$. Since $\overline{N}_\mathrm{Crit}$ is non-negative, this shows that $\overline{N}_\mathrm{Crit}(\partial_\mathrm{LL}\Lambda_n)n^{-d}$ converges to zero in $L^2$. By the Cauchy-Schwarz inequality, convergence also occurs in $L^1$ as required. Almost sure convergence follows from the fact that
    \begin{displaymath}
        \E\left[\sum_{n=1}^\infty\left(\frac{\overline{N}_\mathrm{Crit}(\partial_\mathrm{LL}\Lambda_n)}{n^{d}}\right)^2\right]<\infty,
    \end{displaymath}
    where we have used \eqref{e:ErgodicBound} once more.

    \underline{Expression for mean functional:} For an open face (of a unit cube) $F$ of dimension $k$ and for $i=0,\dots,k$ we define
    \begin{displaymath}
        N_{\mathrm{Crit},\infty}(F,i)=\#\left\{x\in F\;\middle|\;\nabla|_Ff(x)=0,\;x\in\{f\geq\ell\}_\infty,\;\mathrm{ind}\nabla|_F^2 f(x)=i\right\}
    \end{displaymath}
    where $\mathrm{ind}\nabla|_F^2f(x)$ denotes the number of negative eigenvalues of $\nabla|_F^2f(x)$. Now let $E$ be a closed face of $B_0$ of dimension $k$ which contains the origin. We claim that
    \begin{equation}\label{e:EC_claim}
        \E[\mu_\EC(E)-\mu_\EC(\partial_\mathrm{LL}E)]=\sum_{i=0}^k(-1)^{k-i}\E[N_{\mathrm{Crit},\infty}(\mathrm{int}E,i)]
    \end{equation}
    where $\mathrm{int}E$ denotes the interior of $E$. Applying this claim to $B_0$, using the ergodic convergence already established, the Kac-Rice theorem and stationarity, we have
    \begin{align*}
        c_\EC(\ell)=\E[\psi([0,1]^d])]&=\int_{B_0}\sum_{i=0}^d(-1)^{d-i}\E\left[\lvert\det\nabla^2 f(x)\rvert\ind_{x\in\{f\geq\ell\}_\infty}\ind_{\mathrm{ind}\nabla^2 f(x)=i}\;\middle|\;\nabla f(x)=0\right]\varphi_{\nabla f(x)}(0)\;dx\\
        &=(-1)^d\E\left[\det\nabla^2 f(x)\ind_{x\in\{f\geq\ell\}_\infty}\;\middle|\;\nabla f(x)=0\right]\varphi_{\nabla f(x)}(0)
    \end{align*}
    as required.

    It remains to prove the claim. For $E$ as in the statement of the claim, we first observe that $\partial_\mathrm{LL}E$ can be partitioned into the set of $F\setminus\partial_\mathrm{LL}F$ where $F$ is a closed face of $E$ which contains the origin and $\partial_\mathrm{LL}\{0\}:=\emptyset$. To see this, note that for any two such faces $F$ and $G$, $F\cap G$ is a closed face of $E$ containing the origin which must therefore be contained in $\partial_\mathrm{LL}F\cap\partial_\mathrm{LL}G$. This proves that $F\setminus\partial_\mathrm{LL}F$ and $G\setminus\partial_\mathrm{LL}G$ are disjoint. Moreover for any $x\in E$ one can find such a face $F$ such that $F\setminus\partial_\mathrm{LL}F$ contains $x$ be identifying the non-zero elements of $x$. Therefore, by additivity of the Euler characteristic (Lemma~\ref{l:ECbasics}),
    \begin{equation}\label{e:EC_limiting_formula}
        \E\left[\mu_\EC(\partial_\mathrm{LL}E)\right]=\sum_{F\in\mathrm{Face}(E)\;:\;0\in F}\E\left[\mu_\EC(F)-\mu_\EC(\partial_\mathrm{LL}F)\right]
    \end{equation}
    where $\mathrm{Face}(E)$ denotes the set of closed faces of $E$.

    We now prove the claim by induction on the dimension $k$ of $E$. For $k=0$, we have $E=\{0\}$ so
    \begin{displaymath}
        \mu_\EC(E)-\mu_\EC(\partial_\mathrm{LL}E)=\mu_\EC(\{0\})=\P(0\in\{f\geq\ell\}_\infty)
    \end{displaymath}
    which matches the right-hand side of \eqref{e:EC_claim} (recall that zero-dimensional faces are considered critical points by definition). For general $k$, it is shown in the proof of \cite[Theorem~11.7.2]{at07} (see the penultimate displayed equation) that
    \begin{displaymath}
        \E[\mu_\EC(E)]=\sum_{j=0}^k\sum_{F\in \mathrm{Face}_j(E)\;:\;0\in F}\sum_{i=0}^j(-1)^{j-i}\E[N_{\mathrm{Crit},\infty}(\mathrm{int}F,i)]
    \end{displaymath}
    where $\mathrm{Face}_j(E)$ denotes the set of closed faces of $E$ of dimension $j$. Technically, the above formula is proven for the expected Euler characteristic of the excursion set $\{f\geq\ell\}$ but the proof holds verbatim if one replaces this by $\{f\geq\ell\}_\infty$ (it relies only on counting the number of stratified critical points of different types contained in the excursion set and stationarity of the field). Applying the strong inductive assumption up to dimension $k-1$ with this formula, we see that
    \begin{align*}
        \E[\mu_\EC(E)]&=\sum_{F\in \mathrm{Face}_k(E)\;:\;0\in F}\sum_{i=0}^k(-1)^{k-i}\E[N_{\mathrm{Crit},\infty}(\mathrm{int}F,i)]+\sum_{j=0}^{k-1}\sum_{F\in \mathrm{Face}_j(E)\;:\;0\in F}\E[\mu_\EC(F)-\mu_\EC(\partial_\mathrm{LL}F)]\\
        &=\sum_{F\in \mathrm{Face}_k(E)\;:\;0\in F}\sum_{i=0}^k(-1)^{k-i}\E[N_{\mathrm{Crit},\infty}(\mathrm{int}F,i)]+\E[\mu_\EC(\partial_\mathrm{LL}E)]
    \end{align*}
    where the second equality uses \eqref{e:EC_limiting_formula}. This establishes the result for dimension $k$, completing the inductive argument and so proving the claim.
    
    \underline{Limiting behaviour:} Next we show that $c_\EC(\ell)\to 0$ as $\ell\to-\infty$. With probability one, there exists a (random) $\ell_0$ such that for all $\ell<\ell_0$, $B_0\subset\{f\geq\ell\}_\infty$ and hence
    \begin{displaymath}
        \mu_\EC(B_0)=\EC[\{f\geq\ell\}_\infty\cap B_0]=\EC[B_0]=1
    \end{displaymath}
    and similarly $\mu_\EC(\partial_{\mathrm{LL}}B_0)=1$ (since $\partial_\mathrm{LL}B_0$ is contractible). Therefore
    \begin{displaymath}
        \mu_\EC(B_0),\mu_\EC(\partial_\mathrm{LL}(B_0))\to 1
    \end{displaymath}
    almost surely as $\ell\to-\infty$. Since these two variables are dominated by the number of stratified critical points of $f$ in $B$ (Lemma~\ref{l:ECBound}) which is integrable, using dominated convergence along with our previous application of the ergodic theorem we have
    \begin{displaymath}
        c_\EC(\ell)=\E[\mu_\EC(B_0)]-\E[\mu_\EC(\partial_\mathrm{LL}B_0)]\to 0\qquad\text{as }\ell\to\infty.
    \end{displaymath}
    \underline{Two-dimensional case:} Finally we fix $d=2$, $\ell<\ell_c$ and show that $c_\EC(\ell)<0$. It is well known that the Euler characteristic of a planar set can be decomposed as the number of components of the set minus the number of `holes' (i.e.\ the number of bounded components of its complement). Hence for any $n$
    \begin{equation}\label{e:EC_negative}
        \mu_\EC(\Lambda_n)=N_\mathrm{Comp}\left(\{f\geq\ell\}_\infty\cap \Lambda_n\right)-N_\mathrm{Holes}(\{f\geq\ell\}_\infty\cap\Lambda_n)
    \end{equation}
    where, for a set $A\subset\R^2$, $N_\mathrm{Comp}(A)$ denotes the number of components of $A$ and $N_\mathrm{Holes}(A)$ denotes the number of bounded components of $\R^2\setminus A$. Since there is only one unbounded excursion component, we would expect $\{f\geq\ell\}_\infty\cap\Lambda_n$ to consist of one large component along with some small components near $\partial\Lambda_n$. Hence the first term on the right hand side of \eqref{e:EC_negative} should by $o(n^2)$, as $n\to\infty$, with high probability. In contrast, by stationarity of $f$ we would expect the number of `holes' in the unbounded component (i.e.\ the second term of \eqref{e:EC_negative}) to be of order $n^2$. We make both of these intuitions rigorous below.
    
    Consider first the components of $\{f\geq\ell\}_\infty\cap\Lambda_n$, each of which is either contained in $\Lambda_n\setminus\Lambda_{n-\sqrt{n}}$ or intersects $\Lambda_{n-\sqrt{n}}$. In the former case, the point in the component which maximises $f$ must be a stratified critical point of $f$ inside $\Lambda_n\setminus\Lambda_{n-\sqrt{n}}$. Now suppose that there are two or more components of $\{f\geq\ell\}_\infty\cap\Lambda_n$ which intersect $\Lambda_{n-\sqrt{n}}$. There must exist a curve $P$ in $\{f<\ell\}$ which separates these components in $\Lambda_n$ (i.e.\ the start and end points of $P$ are in $\partial\Lambda_n$ and the two excursion components are in different components of $\Lambda_n\setminus P$). In particular $P$ must cross $\partial\Lambda_{n-\sqrt{n}}$, or in other words
    \begin{displaymath}
        E_n:=\left\{\partial\Lambda_{n-\sqrt{n}}\overset{\{f<\ell\}}{\longleftrightarrow}\partial\Lambda_n\right\}
    \end{displaymath}
    must occur. In this case, the number of components of $\{f\geq\ell\}_\infty\cap\Lambda_n$ is bounded by the number of stratified critical points of $f$ in $\Lambda_n$ (by the same reasoning as before). Combining these observations, we have
    \begin{displaymath}
        N_\mathrm{Comp}\left(\{f\geq\ell\}_\infty\cap \Lambda_n\right)\leq \overline{N}_\mathrm{Crit}( \Lambda_n\setminus\Lambda_{n-\sqrt{n}}) + 1 + \overline{N}_\mathrm{Crit}(\Lambda_n)\ind_{E_n}.
    \end{displaymath}
    By stationarity of $f$
    \begin{displaymath}
        \E[\overline{N}_\mathrm{Crit}( \Lambda_n\setminus\Lambda_{n-\sqrt{n}})]\leq cn^{3/2}
    \end{displaymath}
    for some constant $c>0$ depending only on the distribution of $f$. By the Cauchy-Schwarz inequality and Proposition~\ref{p:TMB}
    \begin{displaymath}
        \E[\overline{N}_\mathrm{Crit}(\Lambda_n)\ind_{E_n}]\leq cn^2\P(E_n)^{1/2}
    \end{displaymath}
    for some $c>0$ as before. By the first part of Theorem~\ref{t:PhaseTransition} and stationarity
    \begin{displaymath}
        \P(E_n)\leq \sum_{v\in\Z^d\;:\;d(v,\partial\Lambda_{n-\sqrt{n}})\leq 2}\P\left(B_v\overset{\{f\leq\ell\}}{\longleftrightarrow}v+\Lambda_{\sqrt{n}}\right)\leq C^\prime ne^{-cn}
    \end{displaymath}
    for some $C,C^\prime,c>0$. Combining the last four equations, we have
    \begin{equation}\label{e:EC_comps_bound}
        \E\left[N_\mathrm{Comp}\left(\{f\geq\ell\}_\infty\cap \Lambda_n\right)\right]=o(n^2)\qquad\text{as }n\to\infty.
    \end{equation}

    We now consider the holes in the unbounded component. With probability one, the unbounded component $\{f\geq\ell\}_\infty$ exists and is not equal to $\R^2$. Therefore, since the level set $\{f=\ell\}$ consists of smooth curves almost surely, the unbounded component must intersect the boundary of some component of $\{f<\ell\}$. This latter component must be bounded (courtesy of the phase transition in Theorem~\ref{t:PhaseTransition}) and hence it is surrounded by $\{f\geq\ell\}_\infty$. In summary, with probability one $\R^2\setminus\{f\geq\ell\}_\infty$ has at least one bounded component. We claim that there exists $m\in\N$ such that
    \begin{equation}\label{e:EC_holes_positivity}
        \P\left(N_\mathrm{Holes}(\{f\geq\ell\}_\infty\cap\Lambda_m)>0\right)>0.
    \end{equation}
    If this were not the case, then by taking a countable union over $m\in\N$ we could show that $N_\mathrm{Holes}(\{f\geq\ell\}_\infty)=0$ with probability one. This would contradict our previous argument, and so we verify the claim. To complete the proof, we partition $\Lambda_n$ into boxes of side-length 2m and use stationarity. Specifically, for $n>m$ we have
    \begin{displaymath}
        N_\mathrm{Holes}(\{f\geq\ell\}_\infty\cap\Lambda_n)\geq\sum_{v\in\Z^d\;:\;2mv+\Lambda_m\subset\Lambda_n}N_\mathrm{Holes}\left(\{f\geq\ell\}_\infty\cap(2mv+\Lambda_m)\right)
    \end{displaymath}
    since the domains $2mv+\Lambda_m$ will not overlap for distinct $v$. Then by stationarity
    \begin{displaymath}
        \E[N_\mathrm{Holes}(\{f\geq\ell\}_\infty\cap\Lambda_n)]\geq \sum_{v\in\Z^d\;:\;2mv+\Lambda_m\subset\Lambda_n}\E[N_\mathrm{Holes}(\{f\geq\ell\}_\infty\cap\Lambda_m)]\geq c(n/m)^2
    \end{displaymath}
    for some $c>0$ by \eqref{e:EC_holes_positivity}. Combining this with \eqref{e:EC_negative} and \eqref{e:EC_comps_bound} we see that
    \begin{displaymath}
        c_\EC(\ell)=\lim_{n\to\infty}\frac{\E[\mu_\EC(\Lambda_n)]}{n^2}<0
    \end{displaymath}
    completing the proof of the theorem.
\end{proof}
\begin{remark}
    From the previous proofs we see that the convergence stated in Theorem~\ref{t:FirstOrder} actually occurs in $L^p$ for values of $p$ greater than one. Specifically, if $\star=\Vol$ then we may choose any $p<\infty$, if $\star=\SA$ then we may choose $p=\frac{k(k+1)}{2}-2$ and if $\star=\EC$ then we may choose $p=k-1$.
\end{remark}

As a consequence of Theorem~\ref{t:clt}, Corollary~\ref{c:FinitaryCLT} follows from straightforward arguments using the decay of connection probabilities.

\begin{proof}[Proof of Corollary~\ref{c:FinitaryCLT}]
    Taking $\epsilon>0$ as given, for $n\in\N$ we define
    \begin{displaymath}
        A_n=\Big\{\partial\Lambda_n\overset{\{f\geq\ell\}_{<\infty}}{\longleftrightarrow}\partial\Lambda_{(1+\epsilon)n}\Big\}.
    \end{displaymath}
    Then by stationarity of $f$ and Assumption~\ref{a:Decay}, for any $\gamma>0$ there exists $C_\gamma$ such that
    \begin{equation}\label{e:FinitaryProb}
        \P(A_n)\leq\sum_{x\in\Z^d\cap\Lambda_n}\P\Big(B_x\overset{\{f\geq\ell\}_{<\infty}}{\longleftrightarrow} x+\partial\Lambda_{(\epsilon/2)n}\Big)\leq C_\gamma n^{-\gamma+d}
    \end{equation}
    for all $n\in\N$. We also define
    \begin{displaymath}
        \mu_\star^+(\Lambda_n)=\begin{cases}
            (2n)^d &\text{if }\star=\Vol\\
            c_d\overline{N}_\mathrm{Crit}(\Lambda_n,f) &\text{if }\star=\EC\\
            \mathcal{H}^{d-1}[\{f=\ell\}\cap\Lambda_n] &\text{if }\star=\SA
        \end{cases}
    \end{displaymath}
    where $c_d>0$ is taken from the statement of Lemma~\ref{l:ECBound}. Since $f$ is stationary, there exists $c>0$ such that $\E[(\mu_\star^+(\Lambda_n))^3]\leq cn^{3d}$. Recalling that $\mu_\star(\Lambda_n,\epsilon)$ denotes the functional $\mu_\star$ applied to components in $\Lambda_n$ which intersect $\partial\Lambda_{(1+\epsilon)n}$, we have
    \begin{displaymath}
        \lvert \mu_\star(\Lambda_n,\epsilon)-\mu_\star(\Lambda_n)\rvert\leq\mu_\star^+(\Lambda_n)\ind_{A_n}
    \end{displaymath}
    (where the case $\star=\EC$ uses Lemma~\ref{l:ECBound}). The desired result will follow from Theorem~\ref{t:clt} if we can show that the right hand side of this expression tends to zero in $L^2$. By H\"older's inequality and \eqref{e:FinitaryProb}
    \begin{displaymath}
        \E\big[\mu_\star^+(\Lambda_n)^2\ind_{A_n}\big]\leq\E[\mu_\star^+(\Lambda_n)^3]^{2/3}\P(A_n)^{1/3}\leq C_\gamma n^{2d+(d-\gamma)/3}\to 0
    \end{displaymath}
    provided that we choose $\gamma$ sufficiently large. This yields the required convergence.
\end{proof}

\section{Positivity of limiting variance}\label{s:positivity}
In this section we prove that the limiting variance in our CLT is strictly positive. Before proceeding, the reader may wish to revisit the last part of Section~\ref{ss:Outline} which gives an overview of our argument.

Our first step is to generalise the stabilisation property which was used as an input to our general CLT (Theorem~\ref{t:GenCLT}).
\begin{lemma}\label{l:StabGeneral}
    Let $f$ satisfy the conditions of Theorem~\ref{t:posvar} and let $\rho=cq\ast\ind_{2mB_0}$ for some $c\in\R$ and $m\in\N$. For any sequence of cubes $D_n:=v_n+\Lambda_n$ such that $\liminf_n D_n=\R^d$ let
    \begin{displaymath}
        \Delta_0^\star(D_n,\rho)=\mu_\star(D_n, f)-\mu_\star(D_n,\widetilde{f}_0+\rho).
    \end{displaymath}
    Then for each $\star\in\{\Vol,\EC,\SA\}$ and $\ell<-\ell_c$ there exists a random variable $\Delta_0^\star(\rho)$ such that
    \begin{displaymath}
        \Delta_0^\star(D_n,\rho)\to\Delta_0^\star(\rho)
    \end{displaymath}
    almost surely as $n\to\infty$. Furthermore
    \begin{equation}\label{e:StabDCT}
        \E[\Delta_0^\star(\rho)]=\lim_{n\to\infty}\E[\mu_\star(D_n,f)]-\E[\mu_\star(D_n,f+\rho)]
    \end{equation}
    and for any $v\in\Z^d$,
    \begin{equation}\label{e:stab_translation}
        \E[\Delta_0^\star(\rho(\cdot))]=\E[\Delta_0^\star(\rho(v+\cdot))].
    \end{equation}
\end{lemma}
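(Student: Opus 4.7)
The plan is to adapt the three stabilisation arguments already developed in the paper---Proposition~\ref{p:Sufficient_Moments} for $\star\in\{\Vol,\SA\}$ and Lemma~\ref{l:StabilisationEC} for $\star=\EC$---to the deterministic perturbation $\rho$. First I would verify that $\rho=cq\ast\ind_{2mB_0}$ satisfies the decay condition $\max_{\lvert\alpha\rvert\leq 2}\lvert\partial^\alpha \rho(x)\rvert\leq C_m(1+\lvert x\rvert)^{-\beta}$ required by Lemma~\ref{l:ProbExtUnst}; this is immediate from Assumption~\ref{a:clt}, since for $\lvert x\rvert\geq 4m$ one has $\lvert x-u\rvert\geq \lvert x\rvert/2$ uniformly over $u\in 2mB_0$. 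Writing $\widetilde{f}_0+\rho=f-p_0+\rho$, the relevant perturbation of $f$ is $-p_0+\rho$, and Lemma~\ref{l:ProbExtUnst} (using $\ell<-\ell_c$) then delivers $\P(w\in\mathcal{U}_\mathrm{FR}(f,-p_0+\rho))\leq c_\delta(1+\lvert w\rvert)^{-\beta+\delta}$ for arbitrarily small $\delta>0$.

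For almost-sure convergence I would proceed case by case. When $\star=\EC$ the proof of Lemma~\ref{l:StabilisationEC} applies essentially verbatim: once $D_n\supseteq\Lambda_m$ with $m$ large enough that $\mathcal{U}_\mathrm{FR}(f,-p_0+\rho)\subseteq\Lambda_{m-1}$ (possible almost surely by Borel--Cantelli), Lemmas~\ref{l:TopStab} and~\ref{l:ECbasics} force $\Delta_0^\EC(D_n,\rho)$ to be eventually constant in $n$. When $\star\in\{\Vol,\SA\}$ I would construct dominating variables $Y(w)\geq\lvert\Delta_0^\star(B_w,\rho)\rvert$ with $\sum_w\E[Y(w)]<\infty$, so that $\sum_w Y(w)<\infty$ almost surely, and then additivity of $\mu_\star$ over unit cubes---Lemma~\ref{l:ChangeDecomp} for $\Vol$ and Lemma~\ref{l:additivity_SA} for $\SA$---makes $\Delta_0^\star(D_n,\rho)$ Cauchy. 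For $\star=\Vol$ the bound $Y(w)$ comes from Lemma~\ref{l:ChangeDecomp} combined with Lemma~\ref{l:VolMoments}; for $\star=\SA$ from Lemma~\ref{l:sa_perturbation_moments} applied to $p:=-p_0+\rho$, noting that $f+p-\E[p]=q\ast\widetilde{W}^{(0)}$ is stationary with the same spectral density as $f$ and that $M_2(w)$ and $\|\E[p]\|_{C^1(B_w)}$ are bounded uniformly in $w$. Independence of $\Delta_0^\star(\rho)$ from the specific sequence $D_n$ follows by the interleaving trick used in the proof of Proposition~\ref{p:Sufficient_Moments}.

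Finally, equation~\eqref{e:StabDCT} would follow by dominated convergence with dominating function $\sum_w Y(w)$, combined with the distributional identity that $\widetilde{f}_0$ has the same law as $f$ (so that $\widetilde{f}_0+\rho$ has the same law as $f+\rho$). For the translation identity~\eqref{e:stab_translation}, stationarity of $f$ together with a change of variables $y=x+v$ gives $\E[\mu_\star(D_n,f+\rho(v+\cdot))]=\E[\mu_\star(v+D_n,f+\rho)]$; since both $D_n$ and $v+D_n$ tend to $\R^d$, applying~\eqref{e:StabDCT} to each sequence yields the same limit, which establishes the equality. The main obstacle will lie in the $\star=\SA$ case, where the dominating $Y(w)$ produced by Lemma~\ref{l:sa_perturbation_moments} must exhibit sufficient polynomial decay in $\lvert w\rvert$ to be summable; this requires a delicate trade-off between the moment bounds of Proposition~\ref{p:SAMoments} and the instability probabilities from Lemma~\ref{l:ProbExtUnst}, and it is precisely here that the hypothesis $\beta>\beta_\SA(k)$ together with $\ell<-\ell_c$ (the latter needed to invoke Proposition~\ref{p:UnifTruncConnecDecay} inside Lemma~\ref{l:ProbExtUnst}) plays its role.
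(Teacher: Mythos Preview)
Your proposal is correct and follows essentially the same approach as the paper: the case $\star=\EC$ is deferred to Lemma~\ref{l:StabilisationEC}, while for $\star\in\{\Vol,\SA\}$ one constructs summable dominating variables $Y(w)$ via Lemmas~\ref{l:ChangeDecomp}, \ref{l:VolMoments} and~\ref{l:sa_perturbation_moments}, uses additivity to obtain the Cauchy property, applies the interleaving trick for sequence-independence, and concludes \eqref{e:StabDCT} by dominated convergence and \eqref{e:stab_translation} by stationarity. Your identification of where $\ell<-\ell_c$ and $\beta>\beta_\SA(k)$ enter (via Proposition~\ref{p:UnifTruncConnecDecay} inside Lemma~\ref{l:ProbExtUnst}, and summability of the $\SA$ dominating variable respectively) matches the paper exactly.
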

\begin{proof}
First observe that by Assumption~\ref{a:clt}, for all $x\in\R^d$
\begin{equation}\label{e:RhoDecay}
    \sup_{\lvert\alpha\rvert\leq 2}\lvert\partial^\alpha \rho(x)\rvert\leq C(1+\lvert x\rvert)^{-\beta},
\end{equation}
in other words, $\rho$ satisfies the conditions in Lemma~\ref{l:ProbExtUnst}. Hence, when $\star=\EC$ this result (aside from \eqref{e:stab_translation}) follows from Lemma~\ref{l:StabilisationEC}. We therefore consider only $\star\in\{\Vol,\SA\}$ (until we verify \eqref{e:stab_translation} below).

We define
\begin{displaymath}
    Y^\Vol(w)=\Vol[B_w\cap\{\lvert f-\ell\rvert\leq\lvert p_0+\rho\rvert\}]+\ind_{w\in\mathcal{U}_\mathrm{FR}(f,p_0+\rho)}
\end{displaymath}
and $Y^\SA(w)$ to be the variable $Y(w)$ specified in Lemma~\ref{l:sa_perturbation_moments}.
    Then by Lemma~\ref{l:ChangeDecomp} (when $\star=\Vol$) and Lemma~\ref{l:sa_perturbation_moments} (when $\star=\SA$) we have
    \begin{equation}\label{e:StabGen1}
        \lvert\Delta_0^\star(B_w,\rho)\rvert\leq Y^\star(w)
    \end{equation}
    for any $w\in\Z^d$. By Lemma~\ref{l:ProbExtUnst} and Lemma~\ref{l:VolMoments}, for any $\delta>0$ there exists $c>0$ such that
    \begin{equation}\label{e:StabGen2}
        \E\big[Y^\Vol(w)\big]\leq c(1+\lvert w\rvert)^{-\beta+\delta}.
    \end{equation}
    By Lemma~\ref{l:sa_perturbation_moments} (specifically using \eqref{e:sa_perturb_moments2} and the fact that $\rho$ is deterministic) and Lemma~\ref{l:ProbExtUnst}
    \begin{equation}\label{e:StabGen3}
    \begin{aligned}
        \E[Y^\SA(w)]&\leq C_\delta\P\left(w\in\mathcal{U}_\mathrm{FR}(f,p_0+\rho)\right)^\frac{k^\prime-1}{k^\prime}+C_\delta\left(\sqrt{M_2(w)}+\|\rho\|_{C^1(B_w)}\right)^{\frac{k^\prime-1}{3k^\prime-1}-\delta}\\
        &\leq C^\prime_\delta (1+\lvert w\rvert)^{-(\beta-\delta)\frac{k^\prime-1}{k^\prime}}+C^\prime_\delta(1+\lvert w\rvert)^{-\beta\left(\frac{k^\prime}{3k^\prime-1}-\delta\right)}.
    \end{aligned}
    \end{equation}
    Recalling that $\beta_\SA(k)=3d\frac{3k^\prime-1}{k^\prime-2}$ and taking $\delta>0$ sufficiently small in \eqref{e:StabGen2}-\eqref{e:StabGen3} we conclude that
    \begin{displaymath}
        \E[Y^\star(w)]\leq c(1+\lvert w\rvert)^{-3d}
    \end{displaymath}
    for $\star\in\{\Vol,\SA\}$ and $w\in\Z^d$. Hence by the monotone convergence theorem
    \begin{equation}\label{e:StabGen5}
        \E\left[\sum_{w\in\Z^d}Y^\star(w)\right]\leq\sum_{w\in\Z^d}c(1+\lvert w\rvert)^{-3d}<\infty
    \end{equation}
    and in particular $\sum_{w\in\Z^d}Y^\star(w)<\infty$ almost surely.
    
    Now given $m\in\N$, let $N\in\N$ be large enough that $\cap_{n\geq N}D_n\supseteq\Lambda_{m+1}$ and let $n_1,n_2>N$. By Lemma~\ref{l:ChangeDecomp} (when $\star=\Vol$) and Lemma~\ref{l:additivity_SA} (when $\star=\SA$)
    \begin{align*}
        \lvert\Delta_0^\star(D_{n_1},\rho)-\Delta_0^\star(D_{n_2},\rho)\rvert\leq\lvert\Delta_0^\star(D_{n_1}\setminus D_{n_2},\rho)\rvert+\lvert\Delta_0^\star(D_{n_2}\setminus D_{n_1},\rho)\rvert\leq \sum_{w\in\Z^d\setminus\Lambda_m}Y^\star(w).
    \end{align*}
    The right hand term converges to zero as $m\to\infty$, and so we conclude that $\Delta_0^\star(D_n,\rho)$ is almost surely Cauchy and hence convergent to some limit $\Delta_0^\star(\rho)$. To see that the limit does not depend on the choice of domains $D_n$, we may take any other sequence $D_n^\prime$ such that $\liminf_nD_n^\prime=\R^d$ and consider $(D_1,D_1^\prime,D_2,D_2^\prime,\dots)$. By our above argument we see that $\Delta_0^\star(D_n^\prime,\rho)$ and $\Delta_0^\star(D_n,\rho)$ converge almost surely to some limit, but the latter convergence implies that this limit must equal $\Delta_0^\star(\rho)$ almost surely.

    Using Lemma~\ref{l:ChangeDecomp} (when $\star=\Vol$) and Lemma~\ref{l:additivity_SA} (when $\star=\SA$) along with \eqref{e:StabGen1}, we see that the sequence $\Delta_0^\star(D_n,\rho)$ is uniformly bounded in absolute value by
    \begin{displaymath}
        \sum_{w\in\Z^d}Y^\star(w)
    \end{displaymath}
    which, by \eqref{e:StabGen5}, is integrable. Therefore by the dominated convergence theorem and the fact that $f\overset{d}{=}\widetilde{f}_0$
    \begin{align*}
        \E[\Delta_0^\star(\rho)]=\lim_{n\to\infty}\E\left[\mu_\star(D_n,f)-\mu_\star\big(D_n,\widetilde{f}_0+\rho\big)\right]=\lim_{n\to\infty}\E[\mu_\star(D_n,f)-\mu_\star(D_n,f+\rho)]
    \end{align*}
    and so \eqref{e:StabDCT} holds.
    
    Finally, letting $\star\in\{\Vol,\EC,\SA\}$, we observe that since $f$ is stationary,
    \begin{displaymath}
        \mu_\star(f+\rho(v+\cdot),\Lambda_n)\overset{d}{=}\mu_\star(f+\rho(\cdot),v+\Lambda_n).
    \end{displaymath}
    Therefore applying \eqref{e:StabDCT} for the sequence of domains $(\Lambda_1,v+\Lambda_1,\Lambda_2,v+\Lambda_2,\dots)$ proves that $\E[\Delta_0^\star(\rho(\cdot))]=\E[\Delta_0^\star(\rho(v+\cdot))]$.
\end{proof}

We next relate positivity of the limiting variance to the first order behaviour of our functional when $f$ is perturbed by $q\ast\ind_{\Lambda_m}$.

\begin{lemma}\label{l:positivity_sufficient}
    Let the assumptions of Theorem~\ref{t:posvar} hold and suppose that there exists $m\in\N$ such that
    \begin{equation}\label{e:PositivitySufficient}
        \lim_{n\to\infty}\Big(\E[\mu_\star(\Lambda_n,f+sq\ast\ind_{\Lambda_m})]-\E[\mu_\star(\Lambda_n,f)]\Big)\neq 0,
    \end{equation}
    for all $s$ in some set $I\subset\R$ of positive measure, then $\sigma_\star(\ell)>0$.
\end{lemma}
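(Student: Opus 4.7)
The plan is to lower-bound $\sigma^2_\star(\ell)$ by applying Theorem~\ref{t:GenCLT} at a coarser scale $m$ and then passing to a still-coarser conditioning that isolates the effect of perturbations of the form $\rho_s:=sq\ast\ind_{\Lambda_m}$. Specifically, I would first re-run the proof of Theorem~\ref{t:GenCLT} using cubes of side $2m$ rather than unit cubes; the moment and stabilisation estimates of Section~\ref{s:Pert} are scale-invariant up to constants, so this is legitimate. This yields
\[
  \sigma^2_\star(\ell) \;=\; (2m)^{-d}\,\E\bigl[\E[\widetilde{\Delta}_0^{(m)}\mid\mathcal{G}_0]^2\bigr],
\]
where $\widetilde{\Delta}_0^{(m)}$ is the a.s.\ limit, furnished by Lemma~\ref{l:StabGeneral} with $\rho=0$, of $\mu_\star(D_n,f)-\mu_\star(D_n,\widetilde{f}^{(m)})$ when $\widetilde{f}^{(m)}=q\ast\widetilde{W}^{(m)}$ is obtained by resampling $W$ on $\Lambda_m$, and $\mathcal{G}_0$ is the corresponding lexicographic filtration on $2m\Z^d$ up to $\Lambda_m$.

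Next, since $W(\Lambda_m)$ is $\mathcal{G}_0$-measurable, conditional Jensen gives the cruder bound $\sigma^2_\star(\ell)(2m)^d\ge\E[h(W(\Lambda_m))^2]$ where $h(t):=\E[\widetilde{\Delta}_0^{(m)}\mid W(\Lambda_m)=t]$. I would identify $h$ by decomposing $W$ on $\Lambda_m$ orthogonally into its average $\bar s\cdot\ind_{\Lambda_m}$ with $\bar s:=W(\Lambda_m)/(2m)^d$ and an independent residual $W^{\mathrm{res}}$, so that $f=\rho_{\bar s}+f^{\mathrm{res}}$ with $f^{\mathrm{res}}:=q\ast W^{\mathrm{res}}$ independent of $\bar s$. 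Since $\widetilde{f}^{(m)}$ is independent of $W(\Lambda_m)$ (the resample removes that dependence and $W|_{\R^d\setminus\Lambda_m}$ is automatically independent of $W(\Lambda_m)$) and has the same distribution as $f$, dominated convergence gives
\[
  h(t) \;=\; \lim_{n\to\infty}\bigl(\E[\mu_\star(\Lambda_n,\rho_{t/(2m)^d}+f^{\mathrm{res}})] - \E[\mu_\star(\Lambda_n,f)]\bigr).
\]

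To conclude, the same decomposition $f\overset{d}{=}\rho_{\bar s}+f^{\mathrm{res}}$ lets one compute, for any $s\in\R$ and each $n$,
\[
  \E[\mu_\star(\Lambda_n,f+\rho_s)] - \E[\mu_\star(\Lambda_n,f)] \;=\; \E\bigl[h_n(s(2m)^d+W(\Lambda_m))\bigr] - \E\bigl[h_n(W(\Lambda_m))\bigr],
\]
where $h_n$ is the pre-limit version of $h$. Passing to the limit via \eqref{e:StabDCT}, if $h$ were Lebesgue-a.e.\ zero on $\R$, both integrals on the right would vanish for every $s$, forcing the limit in \eqref{e:PositivitySufficient} to be zero for every $s$ and contradicting the hypothesis on $I$. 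Hence $h$ is non-zero on a set of positive Lebesgue measure, so $\E[h(W(\Lambda_m))^2]>0$ (since $W(\Lambda_m)$ has a strictly positive Gaussian density on $\R$), and therefore $\sigma^2_\star(\ell)>0$. The main obstacle will be the bookkeeping: rescaling the proof of Theorem~\ref{t:GenCLT} from unit cubes to $m$-cubes, and justifying the interchanges of $n\to\infty$ limits with conditional/Gaussian expectations both in the identification of $h$ and in the convolution identity above. These should all follow from the uniform moment bounds of Section~\ref{s:Pert} (transferred to scale $m$) together with the dominated-convergence arguments already built into the proof of Lemma~\ref{l:StabGeneral}.
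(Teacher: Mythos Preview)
Your proposal is correct and follows essentially the same strategy as the paper: coarsen so that resampling a single block corresponds to perturbing by a multiple of $q\ast\ind_{\Lambda_m}$, condition on the block average $W(\Lambda_m)$, and link the resulting conditional expectation to the hypothesis via a Cameron--Martin shift and absolute continuity. The only cosmetic differences are that the paper obtains the coarse-scale variance formula by spatially rescaling $f$ to $f^{(m)}(x)=f(2mx)$ and re-applying Theorem~\ref{t:clt} verbatim (so no need to re-run the martingale argument with $m$-cubes), and that it computes $\E[F(Z_0+s)]$ directly from the shifted functional using Lemma~\ref{l:StabGeneral} rather than arguing by contrapositive through your convolution identity for $h_n$; both shortcuts spare exactly the bookkeeping you flag in your final paragraph.
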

\begin{proof}
    We begin by applying our CLT to a rescaled version of $f$, which will allow us to express the limiting variance $\sigma^2_\star(\ell)$ in terms of resampling on a larger cube. Given $g\in L^2(\R^d)$ and $m$ as in the statement of the lemma, we define $\mathcal{W}(g)=(2m)^{-d/2}W(g(\cdot/2m))$, so that $\mathcal{W}$ is a standard Gaussian white noise on $\R^d$. We then define $f^{(m)}=q^{(m)}\ast \mathcal{W}$ where $q^{(m)}(x)=(2m)^{d/2}q(2mx)$. From this definition, we have $f^{(m)}(x)=f(2mx)$ for any $x\in\R^d$. Finally we define
    \begin{displaymath}
        \mathcal{F}_0^{(m)}=\sigma(\mathcal{W}_u\;|\;u\preceq 0)=\sigma(W|_{2mB_u}\;|\;u\preceq 0)\qquad\text{and}\qquad \widetilde{f}^{(m)}_0=q^{(m)}\ast\widetilde{\mathcal{W}}^{(0)}
    \end{displaymath}
    where $\widetilde{\mathcal{W}}^{(0)}$ resamples $\mathcal{W}_0$ independently. Since $q^{(m)}$ satisfies the conditions of Theorem~\ref{t:clt}, we see that
    \begin{displaymath}
        \frac{\Var[\mu_\star(\Lambda_n,f^{(m)})]}{(2n)^d}\to\sigma^2_{\star,m}(\ell):=\E\left[\E\big[\Delta_0^{\star,m}\;|\;\mathcal{F}^{(m)}_0\big]^2\right]
    \end{displaymath}
    where
    \begin{equation}
        \Delta_0^{\star,m}:=\lim_{n\to\infty}\mu_\star(\Lambda_n,f^{(m)})-\mu_\star(\Lambda_n,\widetilde{f}^{(m)}_0).
    \end{equation}
    Since $f^{(m)}(x)=f(2mx)$, by definition of $\mu_\star$ we have
    \begin{equation}\label{e:Rescaling}
    \begin{aligned}
        \mu_\Vol(\Lambda_n,f^{(m)})&=(2m)^{-d}\mu_\Vol(\Lambda_{2mn},f),\\\mu_\SA(\Lambda_n,f^{(m)})&=(2m)^{-d+1}\mu_\SA(\Lambda_{2mn},f),\\
        \mu_\EC(\Lambda_n,f^{(m)})&=\mu_\EC(\Lambda_{2mn},f).
    \end{aligned}
    \end{equation}
    Therefore it is enough for us to verify positivity of $\sigma_{\star,m}(\ell)$.

    Now we let $Z_0:=\mathcal{W}(B_0)$. Since this variable is $\mathcal{F}_0^{(m)}$-measurable, by the conditional Jensen inequality and the tower property of conditional expectation
    \begin{displaymath}
        \sigma^2_{\star,m}(\ell)\geq \E\left[\E\big[\Delta_0^{\star,m}\;|\;Z_0\big]^2\right].
    \end{displaymath}
    Positivity of $\sigma_{\star,m}^2(\ell)$ will then follow if we can show that $\E\big[\Delta_0^{\star,m}\;|\;Z_0\big]$ is non-zero with positive probability. By a standard property of conditional expectation,
    \begin{displaymath}
        \E\big[\Delta_0^{\star,m}\;|\;Z_0\big]=F(Z_0)
    \end{displaymath}
    for some measurable function $F$. For a given $s\in\R$, the random variables $Z_0$ and $Z_0+s$ are mutually absolutely continuous. Combining these facts, it is enough to show that 
    \begin{equation}\label{e:PositivitySufficient2}
        \E[F(Z_0+s)]\neq 0\quad\text{for all }s\in J
    \end{equation}
    where $J\subset\R$ has positive Lebesgue measure.

    We now find an explicit expression for this expectation. Let $\mathcal{W}^{-}$ denote the part of the white noise $\mathcal{W}$ which is orthogonal to $Z_0$ (explicitly we can set $\mathcal{W}^-=\mathcal{W}-Z_0\ind_{B_0}$ viewed as a generalised function). We also define $\mathcal{W}^\prime$ to be the white noise used for resampling $\mathcal{W}$ (i.e.\ this is the analogue of $W^\prime$ defined in Section~\ref{ss:Outline}). Since $\Delta_0^{\star,m}$ is defined in terms of $\mathcal{W}$ and $\mathcal{W}^\prime$, we can write
    \begin{displaymath}
        \Delta_0^{\star,m}=G(Z_0,\mathcal{W}^-,\mathcal{W}^\prime)
    \end{displaymath}
    for some measurable function $G$. Since $Z_0$ is independent of the other two arguments, we have 
    \begin{displaymath}
        F(t)=\E[G(t,\mathcal{W}^-,\mathcal{W}^\prime)]
    \end{displaymath}
    for almost all $t\in\R$. Hence, using independence of these arguments once more, for almost all $s\in I$
    \begin{align*}
        \E[F(Z_0+s)]&=\E[G(Z_0+s,\mathcal{W}^-,\mathcal{W}^\prime)]=\E\Big[\lim_{n\to\infty}\mu_\star(\Lambda_n,f^{(m)}+sq^{(m)}\ast\ind_{B_0})-\mu_\star(\Lambda_n,\widetilde{f}_0^{(m)})\Big]
    \end{align*}
    where the second equality follows from the definition of $\Delta_0^{\star,m}$. By \eqref{e:Rescaling} the latter expression is equal to a constant (depending only on $\star$) times
    \begin{displaymath}
        \E\Big[\lim_{n\to\infty}\mu_\star(\Lambda_{2mn},f+s(2m)^{-d/2}q\ast\ind_{2mB_0})-\mu_\star(\Lambda_{2mn},\widetilde{f}_0)\Big].
    \end{displaymath}
    We now define $\rho=s(2m)^{-d/2}q\ast\ind_{2mB_0}$ and let $v=(m,\dots,m)\in\Z^d$. Since $(f,\widetilde{f}_0)\overset{d}{=}(\widetilde{f}_0,f)$ we can apply Lemma~\ref{l:StabGeneral} with the roles of $f$ and $\widetilde{f}_0$ reversed, to see that
    \begin{align*}
        \E\Big[\lim_{n\to\infty}\mu_\star(\Lambda_n,f+\rho)-\mu_\star(\Lambda_n,\widetilde{f}_0)\Big]=-\E[\Delta_0^\star&(\rho)]=-\E[\Delta_0^\star(\rho(v+\cdot))]\\
        &=\lim_{n\to\infty}\E\Big[\mu_\star(\Lambda_{n},f+s(2m)^{-d/2}q\ast\ind_{\Lambda_m})]-\E[\mu_\star(\Lambda_n,f)\Big].
    \end{align*}
    By the statement of the lemma, this limit is non-zero for all $s(2m)^{-d/2}$ in some set $I$ of positive measure, which verifies \eqref{e:PositivitySufficient2} and hence positivity of $\sigma_\star(\ell)$, as required.
\end{proof}

Theorem~\ref{t:posvar} will follow if we can verify condition \eqref{e:PositivitySufficient}. To do so we must decompose the effects of the perturbation $sq\ast\ind_{\Lambda_m}$ on three separate domains: inside $\Lambda_m$, near the boundary of $\Lambda_m$ and far from $\Lambda_m$ respectively. The following two lemmas will help us in this endeavour. They use techniques very similar to those we have employed earlier (in the proofs of Lemma~\ref{l:ProbExtUnst} and Theorem~\ref{t:clt} when $\star=\SA$) but adjusted to the current setting of a deterministic perturbation.

We denote $\overline{q}=q/(\int_{\R^d} q(x)\;dx)$ so that $\int_{\R^d}\overline{q}=1$. Also given $m\in\N$, we define
\begin{equation}\label{e:positivity_domains}
    m^+=m+m^{1-\lambda},\qquad m^-=m-m^{1-\lambda},\qquad m^{--}=m-2m^{1-\lambda}
\end{equation}
for a small parameter $\lambda>0$ to be determined below.

\begin{lemma}\label{l:FR_general}
Let $f$ and $\ell$ satisfy the conditions of Theorem~\ref{t:posvar}. Given an absolute constant $C>0$, $s\in[0,C]$ and $m\in\N$ let $\rho=s\overline{q}\ast\ind_{\Lambda_m}$. Then for any $\delta>0$, taking $\lambda>0$ sufficiently small in \eqref{e:positivity_domains} there exists $c_\delta>0$ such that
    \begin{enumerate}
        \item for $v\in\Z^d\cap\Lambda_{m^{--}}$
        \begin{displaymath}
            \P(v\in\mathcal{U}_\mathrm{FR}(f,\rho-s,\ell-s))\leq c_\delta m^{-\beta+2d+\delta}.
        \end{displaymath}
        \item for $v\in\Z^d\setminus\Lambda_{m^{+}}$
        \begin{displaymath}
            \P(v\in\mathcal{U}_\mathrm{FR}(f,\rho,\ell))\leq c_\delta \dist(v,\Lambda_m)^{-\beta+2d+\delta}
        \end{displaymath}
    \end{enumerate}
where $\mathcal{U}_\mathrm{FR}(g,p,a)$ denotes the finite-range unstable set at level $a$ defined in \eqref{e:finite_range_unstable}.
\end{lemma}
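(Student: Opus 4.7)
The plan is to adapt the proof of Lemma~\ref{l:ProbExtUnst}, combining a deterministic estimate on the size of the perturbation ($\rho-s$ near points deep inside $\Lambda_m$ in part (1); $\rho$ near points well outside $\Lambda_m$ in part (2)) with the local-stability bound of Lemma~\ref{l:Prob_unstable_decay} and the truncated connection decay of Proposition~\ref{p:UnifTruncConnecDecay}.

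First I would establish the deterministic $C^1$ bounds on $\rho$. Since $\int_{\R^d}\overline{q}=1$ and $\int_{\R^d}\nabla\overline{q}=0$, for any multi-index $|\alpha|\le 1$ one may write
\[
\partial^\alpha(\rho-s)(x) = -s\int_{\R^d\setminus\Lambda_m}\partial^\alpha\overline{q}(x-y)\,dy.
\]
For $v\in\Lambda_{m^{--}}$ and $x\in v+\Lambda_2$ we have $\dist(x,\R^d\setminus\Lambda_m)\ge m^{1-\lambda}-O(1)$, so Assumption~\ref{a:clt} yields $\|\rho-s\|_{C^1(v+\Lambda_2)}\le C m^{-(\beta-d)(1-\lambda)}$. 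For $v\notin\Lambda_{m^+}$ with $r:=\dist(v,\Lambda_m)\ge m^{1-\lambda}$, the crude estimate $|\partial^\alpha\rho(x)|\le sC\int_{\Lambda_m}|x-y|^{-\beta}\,dy\le C\min\{m^d,r^d\}r^{-\beta}$, handled in the two regimes $r\ge m$ and $m^{1-\lambda}\le r\le m$ separately, gives $\|\rho\|_{C^1(v+\Lambda_2)}\le C r^{-\beta+d+\delta'}$ for any prescribed $\delta'>0$, once $\lambda$ is chosen small enough in terms of $\delta'$.

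Next I would invoke Lemma~\ref{l:Prob_unstable_decay}, whose constants depend only on the distribution of $f$ and the level, so one obtains an estimate $\P(u\in\mathcal{U}_\mathrm{Loc})\le c\|\rho-s\|_{C^1(u+\Lambda_2)}^{1-\delta}$ in part (1) (and the analogous one in part (2)), uniform in $s\in[0,C]$. For the finite-range upgrade I decompose
\[
\P(v\in\mathcal{U}_\mathrm{FR})\le \P(v\in\mathcal{U}_\mathrm{Loc}) + \sum_{u\in\Z^d}\P\!\left(\{u\in\mathcal{U}_\mathrm{Loc}\}\cap\big\{B_v\overset{\mathcal{A}}{\longleftrightarrow}B_u\big\}\right),
\]
where $\mathcal{A}$ ranges over the two bounded-component sets in the definition of $\mathcal{U}_\mathrm{FR}$: in part (1), $\{f\ge\ell-s\}_{<\infty}$ and $\{f+\rho\ge\ell\}_{<\infty}$ (noting $\{f+(\rho-s)\ge\ell-s\}=\{f+\rho\ge\ell\}$), and in part (2), $\{f\ge\ell\}_{<\infty}$ and $\{f+\rho\ge\ell\}_{<\infty}$. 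Since $q\ge 0$ gives $\rho\ge 0$, and since $\ell<-\ell_c$ implies $\ell-s<-\ell_c$ as well, Proposition~\ref{p:UnifTruncConnecDecay} yields exponential truncated connection decay for each of these sets uniformly in $m$ and $s$.

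I would then split the sum at radius $m^{1-\lambda}/2$ in part (1) (resp.\ $r/2$ in part (2)). For $u$ within this radius of $v$, the pointwise bound on $\|\rho-s\|_{C^1(u+\Lambda_2)}$ (resp.\ $\|\rho\|_{C^1(u+\Lambda_2)}$) holds with the same order as at $v$, and there are $\asymp m^{d(1-\lambda)}$ (resp.\ $\asymp r^d$) such $u$; combined with the local-stability exponent this contributes a quantity of order
\[
m^{d(1-\lambda)+(-\beta+d+\delta')(1-\delta)}\qquad\text{resp.}\qquad r^{d+(-\beta+d+\delta')(1-\delta)},
\]
which can be made $\le m^{-\beta+2d+\delta}$ (resp.\ $\le r^{-\beta+2d+\delta}$) by shrinking $\lambda,\delta,\delta'$. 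For $u$ beyond that radius the exponential truncated connection decay in $|u-v|$ dominates any $O(1)$ local-stability probability and yields a super-polynomially small contribution. The main obstacle is not mathematical but bookkeeping: one must verify that $\lambda,\delta,\delta'$ can be chosen mutually consistently so that $\delta'+\delta(\beta-d)$ stays below the prescribed $\delta$ in the conclusion; this is immediate since all slack terms vanish linearly as the parameters tend to zero.
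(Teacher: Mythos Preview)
Your proposal is correct and follows essentially the same route as the paper's proof: the same deterministic $C^1$ bounds on $\rho-s$ and $\rho$, the same three-term decomposition of $\mathcal{U}_\mathrm{FR}$, and the same near/far split of the sum over $u$, with Lemma~\ref{l:Prob_unstable_decay} handling the near part and truncated connection decay handling the far part. The only cosmetic differences are that the paper obtains the sharper bound $\|\rho\|_{C^1(v+\Lambda_2)}\le C\dist(v,\Lambda_m)^{-\beta+d}$ directly via a tail integral (so no auxiliary $\delta'$ is needed), and in part~(1) it splits the sum at $\partial\Lambda_{m^-}$ rather than at radius $m^{1-\lambda}/2$ about~$v$; neither affects the argument.
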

\begin{proof}
    Proof of \textit{(1)}: First observe that $\rho-s=s(\overline{q}\ast\ind_{\Lambda_m}-1)=s(\overline{q}\ast(\ind_{\Lambda_m}-\ind_{\R^d}))$ by definition of $\overline{q}$. Hence for $v\in\Z^d\cap\Lambda_{m^-}$
    \begin{equation}\label{e:FR_general1}
    \mathtoolsset{multlined-width=0.9\displaywidth}
    \begin{multlined}
        \|\rho-s\|_{C^1(v+\Lambda_2)}=\sup_{\lvert\alpha\rvert\leq 1}\sup_{x\in v+\Lambda_2} s \left\lvert\int_{\R^d\setminus\Lambda_m}\partial^\alpha \overline{q}(x-y)\;dy\right\rvert\\
        \leq\sup_{\lvert\alpha\rvert\leq 1} C\int_{\R^d\setminus\Lambda_{m^{1-\lambda}-2\sqrt{d}}}\lvert\partial^\alpha \overline{q}(y)\rvert\;dy
        \leq C^\prime m^{-(\beta-d)(1-\lambda)}
    \end{multlined}
    \end{equation}
    where we have used the third point of Assumption~\ref{a:clt} to differentiate inside the integral and also for the final inequality. Now let $v\in\Z^d\cap\Lambda_{m^{--}}$, then by definition of $\mathcal{U}_\mathrm{FR}$,
    \begin{equation}\label{e:FR_general2}
    \begin{aligned}
        \P(v\in\mathcal{U}_\mathrm{FR}(f,\rho-s,\ell-s))&\leq\P(v\in\mathcal{U}_\mathrm{Loc}(f,\rho-s,\ell-s))\\
        &\quad+\sum_{w\in\Z^d}\P\left(\left\{w\in\mathcal{U}_\mathrm{Loc}(f,\rho-s,\ell-s)\right\}\cap\left\{v\overset{\{f\geq\ell-s\}_{<\infty}}{\longleftrightarrow}w\right\}\right)\\
        &\quad+\sum_{w\in\Z^d}\P\left(\left\{w\in\mathcal{U}_\mathrm{Loc}(f,\rho-s,\ell-s)\right\}\cap\left\{v\overset{\{f+\rho\geq\ell\}_{<\infty}}{\longleftrightarrow}w\right\}\right).
    \end{aligned}
    \end{equation}
    By Lemma~\ref{l:Prob_unstable_decay} and \eqref{e:FR_general1}, for any $\delta>0$
    \begin{displaymath}
        \P(v\in\mathcal{U}_\mathrm{Loc}(f,\rho-s,\ell-s))\leq c_\delta \|\rho-s\|_{C^1(v+\Lambda_2)}^{1-\delta}\leq c_\delta^\prime m^{-(\beta-d)(1-\lambda)(1-\delta)}.
    \end{displaymath}
    Taking $\lambda$ and $\delta$ sufficiently small, the exponent above is less than $-\beta+2d$ and so we need only consider the second and third terms on the right-hand side of \eqref{e:FR_general2}.

    By Assumption~\ref{a:Decay} and Lemma~\ref{l:Prob_unstable_decay}, for any $\gamma>d$ the second term on the right-hand side of \eqref{e:FR_general2} is at most
    \begin{multline*}
        \sum_{w\in\Z^d\setminus\Lambda_{m^-}}\P\left(v\overset{\{f\geq\ell-s\}_{<\infty}}{\longleftrightarrow}w\right)+\sum_{w\in\Z^d\cap\Lambda_{m^-}}\P\left(w\in\mathcal{U}_\mathrm{Loc}(f,\rho-s,\ell-s)\right)\\
        \leq \sum_{w\in\Z^d\setminus\Lambda_{m^-}}c_\gamma \lvert v-w\rvert^{-\gamma}+\sum_{w\in\Z^d\cap\Lambda_{m^-}}c_\delta\|\rho-s\|_{C^1(w+\Lambda_2)}^{1-\delta}\\
        \leq c_\gamma^\prime m^{-\gamma(1-\lambda)+d}+c_\delta^\prime m^dm^{-(\beta-d)(1-\lambda)(1-\delta)}
    \end{multline*}
    where the final inequality uses Lemma~\ref{l:ElementarySum}, the fact that $\dist(v,\Lambda_{m^-})\geq m^{1-\lambda}$ and \eqref{e:FR_general1}. For $\lambda,$ $\delta$ sufficiently small and $\gamma$ sufficiently large, this expression is at most $c_{\delta} m^{-\beta+2d+\delta}$ so we see that the second term of \eqref{e:FR_general2} also satisfies the necessary bound. A near-identical argument shows that the third term of \eqref{e:FR_general2} satisfies the same bound; the only difference in the argument is that Proposition~\ref{p:UnifTruncConnecDecay} is used in place of Assumption~\ref{a:Decay}. This completes the proof of the first part of the lemma.

    Proof of \textit{(2)}: Using the decay of $\overline{q}$ specified in Assumption~\ref{a:clt}, for $v\in\Z^d\setminus\Lambda_{m^+}$
    \begin{equation}\label{e:FR_general3}
    \mathtoolsset{multlined-width=0.9\displaywidth}
        \begin{multlined}
        \|\rho\|_{C^1(v+\Lambda_2)}=\sup_{\lvert\alpha\rvert\leq 1}\sup_{x\in v+\Lambda_2}s \left\lvert\int_{\Lambda_m}\partial^\alpha \overline{q}(x-y)\;dy\right\rvert\\
        \leq\sup_{\lvert\alpha\rvert\leq 1} C\int_{\lvert y\rvert\geq \dist(v,\Lambda_m)-2\sqrt{d}}\lvert\partial^\alpha \overline{q}(y)\rvert\;dy\leq C^\prime \dist(v,\Lambda_m)^{-\beta+d}.
        \end{multlined}
    \end{equation}
    Once again, by definition of $\mathcal{U}_\mathrm{FR}$
    \begin{equation}\label{e:FR_general4}
    \begin{aligned}
        \P(v\in\mathcal{U}_\mathrm{FR}(f,\rho,\ell))\leq\P(v\in\mathcal{U}_\mathrm{Loc}(f,\rho,\ell))
        &+\sum_{w\in\Z^d}\P\left(\left\{w\in\mathcal{U}_\mathrm{Loc}(f,\rho,\ell)\right\}\cap\left\{v\overset{\{f\geq\ell\}_{<\infty}}{\longleftrightarrow}w\right\}\right)\\
        &+\sum_{w\in\Z^d}\P\left(\left\{w\in\mathcal{U}_\mathrm{Loc}(f,\rho,\ell)\right\}\cap\left\{v\overset{\{f+\rho\geq\ell\}_{<\infty}}{\longleftrightarrow}w\right\}\right).
    \end{aligned}
    \end{equation}
    For $v\in\Z^d\setminus\Lambda_{m^+}$, by Lemma~\ref{l:Prob_unstable_decay} and \eqref{e:FR_general3}
    \begin{equation*}
        \P(v\in\mathcal{U}_\mathrm{Loc}(f,\rho,\ell))\leq c_\delta\|\rho\|_{C^1(w+\Lambda_2)}^{1-\delta}\leq c_\delta^\prime \dist(v,\Lambda_m)^{-(\beta-d)(1-\delta)}.
    \end{equation*}
    Hence the first term on the right-hand side of \eqref{e:FR_general4} satisfies the required bound (for $\delta$ sufficiently small).

    Using Assumption~\ref{a:Decay} and Lemma~\ref{l:Prob_unstable_decay} once more, the second term on the right-hand side of \eqref{e:FR_general4} is bounded above by
    \begin{multline*}
        \sum_{w:\lvert w-v\rvert\geq \dist(v,\Lambda_m)/2}\P\left(v\overset{\{f\geq\ell\}_{<\infty}}{\longleftrightarrow}w\right)+\sum_{w:\lvert w-v\rvert< \dist(v,\Lambda_m)/2}\P\left(w\in\mathcal{U}_\mathrm{Loc}(f,\rho,\ell)\right)\\
        \leq \sum_{w:\lvert w-v\rvert\geq \dist(v,\Lambda_m)/2}c_\gamma \lvert v-w\rvert^{-\gamma}+\sum_{w:\lvert w-v\rvert<\dist(v,\Lambda_m)/2}c_\delta\|\rho\|_{C^1(w+\Lambda_2)}^{1-\delta}\\
        \leq c_\gamma^\prime \dist(v,\Lambda_m)^{-\gamma+d}+c_\delta^\prime \dist(v,\Lambda_m)^d\dist(v,\Lambda_m)^{-(\beta-d)(1-\delta)}
    \end{multline*}
    for any $\gamma>d$, where the final inequality uses \eqref{e:FR_general3}. Taking $\delta>0$ sufficiently small and $\gamma>0$ sufficiently large, the final term above is less than $c_{\delta} \dist(v,\Lambda_m)^{-\beta+2d+\delta}$, as required. Once again, we can repeat this argument for the third term on the right-hand side of \eqref{e:FR_general4} (using Lemma~\ref{p:UnifTruncConnecDecay} instead of Assumption~\ref{a:Decay}) and deduce the same upper bound. This completes the proof of the second part of the lemma.
\end{proof}

\begin{lemma}\label{l:sa_general_perturbation_bounds}
Let $f$ and $\ell$ satisfy the conditions of Theorem~\ref{t:posvar} for $\star=\SA$, then for $\lambda>0$ chosen sufficiently small
\begin{equation}\label{e:sa_general_perturbation_a}
    m^{-d}\left\lvert\E[\mu_\SA(\Lambda_{m^{--}},f+s,\ell)-\mu_\SA(\Lambda_{m^{--}},f+\rho,\ell)]\right\rvert\to 0
\end{equation}
and
\begin{equation}\label{e:sa_general_perturbation_b}
    \sum_{v\in\Z^d\setminus\Lambda_{m^+}}\lvert\E[\mu_\SA(B_v,f,\ell)-\mu_\SA(B_v,f+\rho,\ell)]\rvert]\to 0
\end{equation}
as $m\to\infty$.
\end{lemma}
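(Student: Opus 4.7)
The plan is to handle both bounds via the same strategy: decompose $\mu_\SA$ over unit cubes using additivity (Lemma~\ref{l:additivity_SA}), invoke Lemma~\ref{l:sa_perturbation_moments} with a deterministic perturbation to control each per-cube contribution, and then sum the resulting estimates using Lemma~\ref{l:FR_general} and, for part (b), Lemma~\ref{l:ElementarySum}. Since the perturbation is deterministic in both cases, we have $M_2\equiv 0$ and $\E[p]=p$, so Lemma~\ref{l:sa_perturbation_moments} specializes to
\begin{equation*}
\lvert\E[\mu_\SA(B_v,f,\ell')-\mu_\SA(B_v,f+p,\ell')]\rvert\leq C_\delta\P(v\in\mathcal{U}_\mathrm{FR}(f,p,\ell'))^{\frac{k'-1}{k'}}+C_\delta\|p\|_{C^1(B_v)}^{\frac{k'-1}{3k'-1}-\delta}.
\end{equation*}
Although Lemma~\ref{l:sa_perturbation_moments} is stated for the global level $\ell$, its proof uses Assumption~\ref{a:Decay} only at that level, and since $\ell<-\ell_c$ implies (via Proposition~\ref{p:BCDecay}) that truncated connection decay holds at every level below $\ell$, we may freely apply the lemma at any level $\ell'\leq\ell$.

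For \eqref{e:sa_general_perturbation_a}, observe that $\{f+s\geq\ell\}=\{f\geq\ell-s\}$ and $\{f+\rho\geq\ell\}=\{f+(\rho-s)\geq\ell-s\}$, so we apply the display above with $\ell'=\ell-s$ and $p=\rho-s$. For $v\in\Z^d\cap\Lambda_{m^{--}}$, Lemma~\ref{l:FR_general}(1) bounds the first term by $c_\delta m^{-(\beta-2d-\delta)(k'-1)/k'}$, while the interior estimate \eqref{e:FR_general1} (extracted from the proof of Lemma~\ref{l:FR_general}) bounds $\|\rho-s\|_{C^1(B_v)}$ by $Cm^{-(\beta-d)(1-\lambda)}$ and hence the second term by $C_\delta m^{-(\beta-d)(1-\lambda)[(k'-1)/(3k'-1)-\delta]}$. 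Both exponents are strictly positive once $\beta>2d$ and $\lambda,\delta$ are small, so summing over the $O(m^d)$ such cubes and dividing by $m^d$ yields decay to zero.

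For \eqref{e:sa_general_perturbation_b}, apply the display with $\ell'=\ell$ and $p=\rho$. For $v\in\Z^d\setminus\Lambda_{m^+}$, Lemma~\ref{l:FR_general}(2) and the exterior estimate \eqref{e:FR_general3} furnish
\begin{equation*}
\lvert\E[\mu_\SA(B_v,f,\ell)-\mu_\SA(B_v,f+\rho,\ell)]\rvert\leq C_\delta\dist(v,\Lambda_m)^{-\gamma_1}+C_\delta\dist(v,\Lambda_m)^{-\gamma_2}
\end{equation*}
with $\gamma_1=(\beta-2d-\delta)\frac{k'-1}{k'}$ and $\gamma_2=(\beta-d)\bigl(\frac{k'-1}{3k'-1}-\delta\bigr)$. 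Lemma~\ref{l:ElementarySum} with $r=m^{1-\lambda}$ and $n=m$ bounds the corresponding sums by $cm^{d-1-(\gamma_i-1)(1-\lambda)}$, which tends to zero whenever $\gamma_i>1+(d-1)/(1-\lambda)$; for $\lambda,\delta$ sufficiently small this reduces to $\gamma_i>d$.

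The remaining content is a numerical verification: the conditions $(\beta-2d)(k'-1)/k'>d$ and $(\beta-d)(k'-1)/(3k'-1)>d$ are both implied by $\beta>\beta_\SA(k)=6d(3k'-1)/(k'-2)$ with room to spare (for $k\geq 4$, i.e.\ $k'\geq 8$, one has $\beta_\SA\geq 23d$ while the stricter condition requires only $\beta>d(4k'-2)/(k'-1)\leq 30d/7$ at $k'=8$, and tends to $4d$ as $k'\to\infty$). The main subtlety is tracking the two competing exponents $\frac{k'-1}{k'}$ and $\frac{k'-1}{3k'-1}$ from Lemma~\ref{l:sa_perturbation_moments}, arising respectively from the topological/stability and geometric contributions; balancing them is exactly what dictates the threshold $\beta_\SA(k)$.
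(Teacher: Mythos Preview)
Your proposal is correct and follows essentially the same route as the paper: decompose via additivity (Lemma~\ref{l:additivity_SA}), apply Lemma~\ref{l:sa_perturbation_moments} with the deterministic perturbations $\rho-s$ (at level $\ell-s$) and $\rho$ respectively, bound the finite-range instability probability via Lemma~\ref{l:FR_general} and the $C^1$ norm via \eqref{e:FR_general1}/\eqref{e:FR_general3}, and sum using Lemma~\ref{l:ElementarySum} for part~(b). You also correctly flag the level-shift issue in applying Lemma~\ref{l:sa_perturbation_moments} at $\ell-s$, a point the paper passes over silently.
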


\begin{proof}
By additivity of $\mu_\SA$ (i.e.\ Lemma~\ref{l:additivity_SA})
\begin{equation}\label{e:sa_general_perturbation_1}
    \begin{aligned}
    &\left\lvert\E[\mu_\SA(\Lambda_{m^{--}},f+s,\ell)- \mu_\SA(\Lambda_{m^{--}},f+\rho,\ell)]\right\rvert\\
    &\qquad\qquad\qquad\qquad\qquad\leq \sum_{v\in\Z^d\cap\Lambda_{m^{--}}}\lvert\E[\mu_\SA(B_v,f+\rho,\ell)-\mu_\SA(B_v,f+s,\ell)]\rvert\\
    &\qquad\qquad\qquad\qquad\qquad\leq (2m)^d\sup_{v\in\Z^d\cap\Lambda_{m^{--}}}\lvert\E[\mu_\SA(B_v,f+\rho,\ell)-\mu_\SA(B_v,f+s,\ell)]\rvert.
    \end{aligned}
\end{equation}
Let $v\in\Z^d\cap\Lambda_{m^{--}}$, then applying Lemma~\ref{l:sa_perturbation_moments} with $p=\rho-s$, for any $\delta>0$ we have
    \begin{equation}\label{e:sa_general_perturbation_2}
    \begin{aligned}
        \lvert\E[\mu_\SA(B_v,f+\rho,\ell)-\mu_\SA(B_v,f+s,\ell)]\rvert&=\lvert\E[\mu_\SA(B_v,f+\rho-s,\ell-s)-\mu_\SA(B_v,f,\ell-s)]\rvert\\
        &\leq C_\delta\P(v\in\mathcal{U}_{FR}(f,\rho-s,\ell-s))^{\frac{k^\prime-1}{k^\prime}}+C_\delta\|\rho-s\|_{C^1(B_v)}^{\frac{k^\prime-1}{3k^\prime-1}-\delta}\\
        &\leq C_\delta\left(m^{-(\beta-2d-\delta)\frac{k^\prime-1}{k^\prime}}+m^{-(\beta-d)(1-\lambda)\left(\frac{k^\prime-1}{3k^\prime-1}-\delta\right)}\right)
    \end{aligned}
    \end{equation}
where the final inequality uses Lemma~\ref{l:FR_general} along with \eqref{e:FR_general1}. Since $\beta>\beta_\SA(k)>3d$, the right-hand side of \eqref{e:sa_general_perturbation_2} decays uniformly in $v$ (provided that $\delta<1$). Combining this with \eqref{e:sa_general_perturbation_1} proves \eqref{e:sa_general_perturbation_a}.

If $v\in\Z^d\setminus\Lambda_{m^+}$ and $\delta>0$, then an application of Lemma~\ref{l:sa_perturbation_moments} with $p=\rho$ yields
\begin{equation}\label{e:sa_general_perturbation_3}
    \begin{aligned}
        \lvert\E[\mu_\SA(B_v,f+\rho,\ell)-&\mu_\SA(B_v,f,\ell)]\rvert\leq C_\delta\P(v\in\mathcal{U}_{FR}(f,\rho))^{\frac{k^\prime-1}{k^\prime}}+C_\delta\|\rho\|_{C^1(B_v)}^{\frac{k^\prime-1}{3k^\prime-1}-\delta}\\
        &\qquad\qquad\quad\leq C_\delta\left(\dist(v,\Lambda_m)^{-(\beta-2d-\delta)\frac{k^\prime-1}{k^\prime}}+\dist(v,\Lambda_m)^{-(\beta-d)\left(\frac{k^\prime-1}{3k^\prime-1}-\delta\right)}\right)
    \end{aligned}
    \end{equation}
where the final inequality uses Lemma~\ref{l:FR_general} and \eqref{e:FR_general3}. Since $\beta>\beta_\SA(k)=3d\frac{3k^\prime-1}{k^\prime-2}$, elementary manipulations show that
\begin{displaymath}
    \min\left\{(\beta-2d)\frac{k^\prime-1}{k^\prime},(\beta-d)\frac{k^\prime-1}{3k^\prime-1}\right\}>d.
\end{displaymath}
Therefore taking $\lambda,\delta>0$ sufficiently small, we may apply Lemma~\ref{l:ElementarySum} to see that
\begin{align*}
    \sum_{v\in\Z^d\setminus\Lambda_{m^+}}\lvert\E[\mu_\SA(B_v,f,\ell)-\mu_\SA(B_v,f+\rho,\ell)]\rvert&\leq\sum_{v\in\Z^d\setminus\Lambda_{m^+}}C_\delta \dist(v,\Lambda_m)^{-d-2\eta}\\
    &\leq C_\delta^\prime m^{d-1}m^{-(d+2\eta-1)(1-\lambda)}\leq C^\prime_\delta m^{-\eta}
\end{align*}
for some $\eta>0$, which proves \eqref{e:sa_general_perturbation_b}.
\end{proof}

\begin{proof}[Proof of Theorem~\ref{t:posvar}]
    The desired statement will follow from Lemma~\ref{l:positivity_sufficient} provided that we verify condition \eqref{e:PositivitySufficient}. By Theorem~\ref{t:FirstOrder}
    \begin{displaymath}
        \lim_{s\to\infty}c_\star(\ell-s)-c_\star(\ell)\neq 0.
    \end{displaymath}
    Therefore we can find a bounded interval $I\subset(0,\infty)$ such that for all $s\in I$
    \begin{equation}\label{e:limiting_constant_positive}
        \lvert c_\star(\ell-s)-c_\star(\ell)\rvert\geq c_0
    \end{equation}
    for some constant $c_0>0$. We then let $\rho=s\overline{q}\ast\ind_{\Lambda_m}$ for some $m\in\N$ (which will be chosen large, as specified in the arguments below).
    
    By the additivity of $\mu_\star$ proven in Lemma~\ref{l:ChangeDecomp} (when $\star=\Vol$), Lemma~\ref{l:additivity_SA} (when $\star=\SA$) and Lemma~\ref{l:ECbasics} (when $\star=\EC$), for any $n>m^+$
    \begin{equation}\label{e:positivity_final_1}
        \E[\mu_\star(\Lambda_n,f,\ell)-\mu_\star(\Lambda_n,f+\rho,\ell)]=\Circled{1}+\Circled{2}+\Circled{3}+\Circled{4}+\Circled{5}+\Circled{6}
    \end{equation}
    where
    \begin{align*}
        \Circled{1}&=\E[\mu_\star(\Lambda_{m^{--}},f,\ell)-\mu_\star(\Lambda_{m^{--}},f+s,\ell)]\\
        \Circled{2}&=\E[\mu_\star(\Lambda_{m^{--}},f+s,\ell)-\mu_\star(\Lambda_{m^{--}},f+\rho,\ell)]\\
        \Circled{3}&=\E[\mu_\star(\overline{\Lambda_{m^+}\setminus\Lambda_{m^{--}}},f,\ell)-\mu_\star(\overline{\Lambda_{m^+}\setminus\Lambda_{m^{--}}},f+\rho,\ell)]\\
        \Circled{4}&=-\E[\mu_\star(\partial\Lambda_{m^{--}},f,\ell)-\mu_\star(\partial\Lambda_{m^{--}},f+\rho,\ell)]\\
        \Circled{5}&=\E[\mu_\star(\overline{\Lambda_{n}\setminus\Lambda_{m^{+}}},f,\ell)-\mu_\star(\overline{\Lambda_{n}\setminus\Lambda_{m^{+}}},f+\rho,\ell)]\\
        \Circled{6}&=-\E[\mu_\star(\partial\Lambda_{m^+},f,\ell)-\mu_\star(\partial\Lambda_{m^+},f+\rho,\ell)].
    \end{align*}
    By Theorem~\ref{t:FirstOrder} and \eqref{e:limiting_constant_positive}
    \begin{displaymath}
        \lvert\Circled{1}\rvert=(2m^{--})^d\lvert c_\star(\ell)-c_\star(\ell-s)\rvert+o(m^d)\geq c_0m^d+o(m^d)
    \end{displaymath}
    as $m\to\infty$. We will now show that terms $\Circled{2}-\Circled{6}$ above are $o(m^d)$ as $m\to\infty$ (uniformly in $n$). Then by taking $m$ sufficiently large, the absolute value of \eqref{e:positivity_final_1} will be bounded below by a positive constant, uniformly in $n$, as required.

    We first note that when $\star\in\{\Vol,\SA\}$, we have $\Circled{4}=\Circled{6}=0$ (see the proofs of Lemmas~\ref{l:ChangeDecomp} and~\ref{l:additivity_SA}). When $\star=\EC$, by Lemma~\ref{l:ECBound}
    \begin{align*}
        \lvert\Circled{4}\rvert&\leq c_d\E\left[\overline{N}_\mathrm{Crit}(\partial\Lambda_{m^{--}},f)+\overline{N}_\mathrm{Crit}(\partial\Lambda_{m^{--}},f+\rho)\right],\\
        \lvert\Circled{6}\rvert&\leq c_d\E\left[\overline{N}_\mathrm{Crit}(\partial\Lambda_{m^{+}},f)+\overline{N}_\mathrm{Crit}(\partial\Lambda_{m^{+}},f+\rho)\right].
    \end{align*}
    Since $\rho\in C^2(\R^d)$ and $f$ is stationary, applying Proposition~\ref{p:TMB} to each stratum of $\partial\Lambda_{m^{--}}$ and $\partial\Lambda_{m^+}$ shows that the two expressions above are $O(m^{d-1})$ as $m\to\infty$. Hence for each choice of $\star$, $\Circled{4}$ and $\Circled{6}$ are $o(m^d)$.
    
    We next consider $\Circled{3}$; if $\star=\Vol$ then
    \begin{displaymath}
        \lvert\Circled{3}\rvert\leq \Vol[\Lambda_{m^+}\setminus\Lambda_{m^{--}}]\leq C_dm^{d-\lambda}
    \end{displaymath}
    for some constant $C_d>0$ depending only on $d$. If $\star=\SA$ then by Lemma~\ref{l:SATrivialBound}
    \begin{align*}
        \lvert\Circled{3}\rvert&\leq \E\left[\mathcal{H}^{d-1}[\{f=\ell\}\cap \Lambda_{m^+}\setminus\Lambda_{m^{--}}]+ \mathcal{H}^{d-1}[\{f+\rho=\ell\}\cap \Lambda_{m^+}\setminus\Lambda_{m^{--}}]\right]\\
        &\leq c \Vol[\Lambda_{m^+}\setminus\Lambda_{m^{--}}]\leq C_d^\prime m^{d-\lambda}
    \end{align*}
    where the penultimate inequality uses Proposition~\ref{p:SAMoments} (and the fact that the $C^2$-norm of $\rho$ on any unit cube is bounded uniformly in $m$). If $\star=\EC$, then by Lemma~\ref{l:ECBound}
    \begin{displaymath}
        \lvert\Circled{3}\rvert\leq\E\left[\overline{N}_\mathrm{Crit}(\overline{\Lambda_{m^+}\setminus\Lambda_{m^{--}}},f)+\overline{N}_\mathrm{Crit}(\overline{\Lambda_{m^+}\setminus\Lambda_{m^{--}}},f+\rho)\right]=O(m^{d-\lambda})
    \end{displaymath}
    by the same argument as above. So for each choice of $\star$, $\Circled{3}=o(m^d)$.

    When $\star=\SA$, Lemma~\ref{l:sa_general_perturbation_bounds} immediately yields that $\Circled{2}=o(m^d)$ and, when combined with additivity of $\mu_\SA$ (Lemma~\ref{l:additivity_SA}), that
    \begin{displaymath}
    \lvert\Circled{5}\rvert\leq\sum_{v\in\Z^d\setminus\Lambda_{m^+}}\lvert\E[\mu_\SA(B_v,f,\ell)-\mu_\SA(B_v,f+\rho,\ell)]\rvert=o(1)
    \end{displaymath}
    as $m\to\infty$. This completes the proof for $\star=\SA$.
    
    Turning to $\star=\Vol$; by Lemmas~\ref{l:ChangeDecomp},~\ref{l:VolMoments} and~\ref{l:FR_general}
    \begin{align*}
    \lvert\Circled{2}\rvert&\leq\sum_{v\in\Z^d\cap\Lambda_{m^{--}}}\E\left[\Vol\left[B_v\cap\{\lvert f+s-\ell\rvert\leq\lvert\rho-s\rvert\}\right]\right]+\P(v\in\mathcal{U}_\mathrm{FR}(f,\rho-s,\ell-s))\\
        &\leq\sum_{v\in\Z^d\cap\Lambda_{m^{--}}}c\|\rho-s\|_{C(B_v)}+c_\delta m^{-\beta+2d+\delta}\\
        &\leq (2m+1)^d\left(Cm^{-(\beta-d)(1-\lambda)}+c_\delta m^{-\beta+2d+\delta}\right)
    \end{align*}
    where the final inequality follows from the bound on $\|\rho-s\|$ in \eqref{e:FR_general1}. Since $\beta>3d$, taking $\lambda,\delta>0$ sufficiently small ensures that this bound is $o(1)$.

    Making use of Lemmas~\ref{l:ChangeDecomp},~\ref{l:VolMoments} and~\ref{l:FR_general} once more, we have
    \begin{align*}
        \lvert\Circled{5}\rvert&\leq\sum_{v\in\Z^d\setminus\Lambda_{m^{+}}}\E\left[\Vol\left[B_v\cap\{\lvert f-\ell\rvert\leq\lvert\rho\rvert\}\right]\right]+\P(v\in\mathcal{U}_\mathrm{FR}(f,\rho,\ell))\\
        &\leq\sum_{v\in\Z^d\setminus\Lambda_{m^{+}}}c\|\rho\|_{C(B_v)}+c_\delta \dist(v,\Lambda_m)^{-\beta+2d+\delta}\\
        &\leq C_\delta m^{d-1}m^{-(\beta-2d-1-\delta)(1-\lambda)}
    \end{align*}
    where the final inequality uses \eqref{e:FR_general3} and Lemma~\ref{l:ElementarySum}. Since $\beta>3d$, taking $\lambda,\delta>0$ sufficiently small ensures that this expression is $o(1)$ as $m\to\infty$, as required.

    Finally we let $\star=\EC$, then by Lemma~\ref{l:ChangeDecompEC}, H\"older's inequality, Proposition~\ref{p:TMB} and Lemma~\ref{l:FR_general}
    \begin{align*}
    \lvert\Circled{2}\rvert&\leq\sum_{v\in\Z^d\cap\Lambda_{m^{--}}}\E\left[\left(\overline{N}_\mathrm{Crit}(B_v,f)+\overline{N}_\mathrm{Crit}(B_v,f+\rho)\right)^{k-1}\right]^{\frac{1}{k-1}}\P(v\in\mathcal{U}_\mathrm{FR}(f,\rho-s,\ell-s))^\frac{k-2}{k-1}\\
    &\leq\sum_{v\in\Z^d\cap\Lambda_{m^{--}}}c_\delta m^{-(\beta-2d-\delta)\frac{k-2}{k-1}}\leq c_\delta^\prime m^{d-(\beta-2d-\delta)\frac{k-2}{k-1}}.
    \end{align*}
    Since $\beta>2d$, taking $\delta>0$ sufficiently small ensures that $\Circled{2}=o(m^d)$. By similar reasoning
    \begin{align*}
    \lvert\Circled{5}\rvert&\leq\sum_{v\in\Z^d\setminus\Lambda_{m^+}}\E\left[\left(\overline{N}_\mathrm{Crit}(B_v,f)+\overline{N}_\mathrm{Crit}(B_v,f+\rho)\right)^{k-1}\right]^{\frac{1}{k-1}}\P(v\in\mathcal{U}_\mathrm{FR}(f,\rho,\ell))^\frac{k-2}{k-1}\\
    &\leq\sum_{v\in\Z^d\setminus\Lambda_{m^+}}c_\delta \dist(v,\Lambda_m)^{-(\beta-2d-\delta)\frac{k-2}{k-1}}\leq c_\delta^\prime m^{d-1}m^{-(\beta-2d-\delta)(1-\lambda)\frac{k-2}{k-1}+1-\lambda}
    \end{align*}
    where the final inequality uses Lemma~\ref{l:ElementarySum}. Since $\beta>\frac{k-1}{k-3}3d$, taking $\lambda$ and $\delta$ sufficiently small ensures that this expression is $o(1)$ as $m\to\infty$. This completes the proof in the case that $\star=\EC$, and so completes the proof of the theorem.    
\end{proof}

\section{Surface area perturbation}\label{s:SA_perturb}
In this section we prove Lemma~\ref{l:SA_Stable} which controls the geometric contribution to changes in the surface area functional (for a deterministic function) under perturbation. The proof is based on projecting the original level set onto its perturbed analogue. Before defining this projection, we need a preliminary result.
\begin{lemma}\label{l:TopSeparation}
    Let $g,p,D$ and $H$ be as given in Lemma~\ref{l: morse continuity} (or Lemma~\ref{l:TopStab}). If $A$ and $A^\prime$ are two components of $\{g=\ell\}\cap D$ such that $H(A,t)\cap H(A^\prime,s)\neq\emptyset$ for some $t,s\in[0,1]$ then $A=A^\prime$.
\end{lemma}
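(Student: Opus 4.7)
The plan is to transport the supposed intersection point back to time zero along a path that remains in the original level set $\{g=\ell\}$, and use connectedness to conclude $A=A'$.

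First I would upgrade the hypothesis $H(\{g\geq\ell\},r)=\{g+rp\geq\ell\}$ to the level-set identity
\[
H(\{g=\ell\}\cap D,\,r)=\{g+rp=\ell\}\cap D \qquad\text{for every }r\in[0,1].
\]
This follows because $H(\cdot,r)$ is a homeomorphism of $D$ and so must map the complementary open set $\{g<\ell\}\cap D$ onto $\{g+rp<\ell\}\cap D$; taking topological boundaries in $D$ (and using local topological stability, which forbids critical points of $g+rp$ at level $\ell$, so that the boundary of $\{g+rp\geq\ell\}$ in $D$ is precisely $\{g+rp=\ell\}\cap D$ modulo the strata of $\partial D$, which are preserved by $H$) gives the displayed equality.

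The case $t=s$ is immediate from injectivity of $H(\cdot,t)$. For $t\neq s$, fix $x\in H(A,t)\cap H(A',s)$ and write $x=H(a,t)=H(a',s)$ with $a\in A$, $a'\in A'$. The level-set identity forces $x\in\{g+tp=\ell\}\cap\{g+sp=\ell\}$, which, since $t\neq s$, is equivalent to $p(x)=0$ and $g(x)=\ell$. The crucial consequence is that then $x\in\{g+rp=\ell\}$ for \emph{every} $r\in[0,1]$.

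Next I would define
\[
\gamma(r):=H(\cdot,r)^{-1}(x),\qquad r\in[\min\{t,s\},\max\{t,s\}].
\]
Joint continuity of $H$ on the compact Hausdorff space $D\times[0,1]$ makes the map $(y,r)\mapsto(H(y,r),r)$ a continuous bijection, hence a homeomorphism, so its inverse $(y,r)\mapsto(H(\cdot,r)^{-1}(y),r)$ is continuous and $\gamma$ is a continuous path in $D$. Since $x\in\{g+rp=\ell\}\cap D$ for every $r$, the level-set identity gives $\gamma(r)\in\{g=\ell\}\cap D$ throughout. As $\gamma(t)=a\in A$ and $\gamma(s)=a'\in A'$, the image $\gamma([\min\{t,s\},\max\{t,s\}])$ is a connected subset of $\{g=\ell\}\cap D$ intersecting both components $A$ and $A'$, so $A=A'$.

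The only non-routine step is the level-set identity, and in particular handling the strata of $\partial D$; I expect this to be painless because $H$ preserves each stratum by definition of a stratified isotopy, so the argument can be applied stratum by stratum.
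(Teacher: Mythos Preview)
Your argument is correct and takes a genuinely different route from the paper's. The paper proceeds by contradiction: it sets $\varepsilon:=\min_{A_1\neq A_2}\min_{u\in[0,1]}\dist(H(A_1,u),H(A_2,u))>0$, observes (as you do) that $p(x)=0$ so $x\in\{g+up=\ell\}$ for every $u$, and then applies the intermediate value theorem to $u\mapsto\dist(x,H(A',u))$ to find a time at which $x$ is at distance exactly $\varepsilon/2$ from \emph{every} image component, contradicting $x\in\{g+up=\ell\}$. Your approach instead inverts the isotopy: the map $(y,r)\mapsto(H(y,r),r)$ is a continuous bijection of the compact Hausdorff space $D\times[0,1]$, hence a homeomorphism, so $\gamma(r):=H(\cdot,r)^{-1}(x)$ is a continuous path in $\{g=\ell\}\cap D$ joining $a\in A$ to $a'\in A'$. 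This is arguably cleaner: it is constructive rather than by contradiction, and it avoids the paper's implicit appeal to finiteness of the set of components of $\{g=\ell\}\cap D$ (needed to ensure the double minimum defining $\varepsilon$ is attained and positive). The level-set identity $H(\{g=\ell\}\cap D,r)=\{g+rp=\ell\}\cap D$ is indeed the only point requiring care; your sketch via complements and closures works once one notes that local topological stability rules out stratified critical points at level $\ell$, so that $\{g\leq\ell\}\cap D=\overline{\{g<\ell\}\cap D}$ in every stratum.
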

\begin{proof}
    If $t=s$ then this follows from the fact that $H(\cdot,t)$ is a homeomorphism, so we may assume $t\neq s$. We define
    \begin{displaymath}
        \varepsilon=\min_{A_1\neq A_2}\min_{u\in[0,1]}\dist(H(A_1,u),H(A_2,u))
    \end{displaymath}
    where the minimum is taken over all distinct components $A_1$ and $A_2$ of $\{g=\ell\}\cap B$. Note that $\varepsilon>0$ since the distance between image components is continuous in $u$. By assumption we may choose $x\in H(A,t)\cap H(A^\prime,s)$. Then by definition of $H$, $(g+tp)(x)=\ell=(g+sp)(x)$. Since $t\neq s$, we see that $p(x)=0$ and so
    \begin{displaymath}
        x\in \bigcap_{u\in[0,1]}\{g+up=\ell\}.
    \end{displaymath}
    Let us assume, for a contradiction, that $A\neq A^\prime$, then
    \begin{displaymath}
        \dist(x,H(A^\prime,t))\geq\varepsilon\quad\text{and}\quad\dist(x,H(A^\prime,s))=0.
    \end{displaymath}
    Hence by the intermediate value theorem, $\dist(x,H(A^\prime,u))=\varepsilon/2$ for some $u\in(t,s)$. By definition of $\varepsilon$ this implies that $x\notin\{g+up=\ell\}$ which yields the required contradiction.
\end{proof}

We can now define our normal projection and derive its properties.
\begin{lemma}\label{l:SA_bijection}
    Let $g,p$ and $B:=B_w$ satisfy Assumption~\ref{a:SA} and suppose also that $p(x)=0$ for all $x$ within distance $2A_0/A_1$ of $\partial B$. Then there exists an injective, $C^1$ function  $\varphi$ defined on a neighbourhood (in $B$) of $\{g=\ell\}\cap B$ such that for all $x$
    \begin{displaymath}
        \varphi(x)=x+r_x\frac{\nabla g(x)}{\lvert\nabla g(x)\rvert}\quad\text{where }\lvert r_x\rvert\leq \frac{2A_0}{A_1}\qquad\text{and}\qquad (g+p)(\varphi(x))=g(x).
    \end{displaymath}
    Moreover if $L$ is a component of $\{g=\ell\}\cap B$ and $H$ is the stratified isotopy defined in Lemma~\ref{l: morse continuity} (or Lemma~\ref{l:TopStab} when this lemma can be applied) then $\varphi(L)=H(L,1)$.
\end{lemma}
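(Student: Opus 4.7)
The plan is to construct $\varphi$ via the implicit function theorem applied along the gradient direction of $g$. Let $\nu(x) := \nabla g(x)/|\nabla g(x)|$, which is well-defined and $C^1$ on a small open neighbourhood $U$ of $\{g=\ell\}\cap B$ where $|\nabla g|\geq \tfrac{3}{4}A_1$ (possible by continuity and condition~(2) of Assumption~\ref{a:SA}). Introduce
\[
F(x,t) := (g+p)(x+t\nu(x))-g(x),
\]
so that the desired $\varphi(x) = x + r(x)\nu(x)$ satisfies $(g+p)(\varphi(x))=g(x)$ precisely when $F(x,r(x))=0$.

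Using Taylor expansion together with the bounds $|\nabla^2 g|\leq A_2$ and $\|p\|_{C^1}\leq A_0$, and the hypothesis $A_0<A_1^2/(C_dA_2)$ with $C_d$ chosen large enough, I would establish for all $x\in U$ and $|t|\leq 2A_0/A_1$ that $\partial_t F(x,t)\geq \tfrac{3}{4}A_1 - 2A_0A_2/A_1-A_0 > 0$ and that $F(x,-2A_0/A_1) < 0 < F(x, 2A_0/A_1)$. The intermediate value theorem and strict monotonicity then yield a unique $r(x)\in[-2A_0/A_1,2A_0/A_1]$ with $F(x,r(x))=0$, and the implicit function theorem upgrades $r$ to a $C^1$ function on $U$, so $\varphi\in C^1(U)$. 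Wherever $p$ vanishes on all of $\{x+t\nu(x):|t|\leq 2A_0/A_1\}$, we have $F(x,0)=0$ and thus $r(x)=0$ by uniqueness; this applies near $\partial B$ under the hypothesis that $p\equiv 0$ on the $2A_0/A_1$-neighbourhood of $\partial B$, so $\varphi$ equals the identity there.

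To verify injectivity, observe that $(g+p)(\varphi(x))=g(x)$ forces $g(x)=g(y)$ whenever $\varphi(x)=\varphi(y)$, reducing the claim to injectivity of $\varphi$ on each individual level set $\{g=a\}\cap U$. Each such level set is a $C^1$ hypersurface whose second fundamental form has norm $O(A_2/A_1)$, hence admits an embedded tubular neighbourhood of radius at least $cA_1/A_2$ for some absolute constant $c>0$; by enlarging $C_d$ if necessary, the normal displacement $|r(x)|\leq 2A_0/A_1$ lies strictly inside this radius, and the normal projection --- which agrees with $\varphi$ on the level set --- is injective.

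For the identity $\varphi(L)=H(L,1)$, I would introduce the parameter family $\varphi_s(x):=x+r_s(x)\nu(x)$, where $r_s(x)$ is the unique small solution of $(g+sp)(x+r_s(x)\nu(x))=g(x)$ produced by the same argument with $sp$ in place of $p$. Then $\varphi_0=\mathrm{id}$, $\varphi_1=\varphi$, and $s\mapsto\varphi_s(x)$ is continuous; moreover $\varphi_s$ equals the identity wherever $p$ vanishes, matching the behaviour of $H(\cdot,s)$ there. Lemma~\ref{l:TopSeparation} guarantees that the images $\{H(L',s)\}$ of distinct components of $\{g=\ell\}\cap B$ remain pairwise disjoint throughout $s\in[0,1]$, so the continuously varying connected set $\varphi_s(L)\subseteq\{g+sp=\ell\}\cap B$ cannot jump between components; since $\varphi_0(L)=L=H(L,0)$, continuity forces $\varphi_s(L)\subseteq H(L,s)$ for all $s$. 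Finally, because $\varphi$ is an injective $C^1$ map from the compact hypersurface (with boundary) $L$ into $H(L,1)$, invariance of domain makes $\varphi(L)$ open in $H(L,1)$; being also closed and nonempty inside the connected set $H(L,1)$, we conclude $\varphi(L)=H(L,1)$. The delicate bookkeeping in the Taylor estimates --- verifying that a single choice of $C_d$ (depending only on $d$) simultaneously controls strict monotonicity of $F$, the endpoint sign conditions, and the tubular-neighbourhood radius --- is the main technical obstacle.
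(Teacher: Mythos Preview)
Your argument follows the same overall strategy as the paper: construct $\varphi$ by the implicit function theorem along the normal direction $\nu(x)$, then identify $\varphi(L)$ with $H(L,1)$ via the interpolating family $\varphi_s$, Lemma~\ref{l:TopSeparation}, and an open/closed connectedness argument. The two places where you take a different route both need a little more care. For injectivity, you appeal to a tubular neighbourhood of radius $cA_1/A_2$ for the level hypersurface $\{g=a\}$; this is not a black-box consequence of bounded second fundamental form alone (a hypersurface can have small reach while having bounded curvature), and justifying it here amounts to the explicit Taylor computation the paper does instead: from $\varphi(x)=\varphi(y)$ one has $g(x)=g(y)$, and writing $y-x=s_x\nabla g(x)-s_y\nabla g(y)$ and dotting with $y-x$ yields $|y-x|^2\le (9A_0A_2/A_1^2)|y-x|^2$, forcing $x=y$. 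For openness of $\varphi(L)$ in $H(L,1)$, invariance of domain does not apply directly because $L$ can have boundary on $\partial B$; the paper instead shows $\varphi$ is an open map by using the implicit function theorem in the interior and the fact that $\varphi$ is the identity near $\partial B$. Both shortcuts are repairable along exactly these lines, so your proposal is correct in substance; the paper's version simply replaces the two cited results by the short direct estimates that prove them in this setting.
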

\begin{proof}
    The continuity properties of $\varphi$ will be proven by applying the implicit function theorem locally, however first we establish some global existence and uniqueness properties. We begin by showing that $\varphi$ is well-defined. For $x\in B$ such that $\nabla g(x)\neq 0$, define $e_x=\nabla g(x)/\lvert\nabla g(x)\rvert$. Let $x$ be within distance $A_1/(3A_2)$ of $\{g=\ell\}\cap B$, then by Assumption~\ref{a:SA} $\lvert\nabla g(x)\rvert>2A_1/3$ and so $e_x$ is well defined. If $x$ is within a distance of $2A_0/A_1$ from $\partial B$ then $g(x)=(g+p)(x)$ so we may take $r_x=0$ and $\varphi(x)=x$. Otherwise, by a Taylor expansion
\begin{displaymath}
\lvert g(x+re_x)-g(x)-r\partial_{e_x}g(x)\rvert\leq A_2r^2
\end{displaymath}
for any $r$ (such that $x+re_x\in B$) where $\partial_{e_x}$ denotes the partial derivative in the direction $e_x$. Setting $r=2A_0/A_1$ and noting that $\partial_{e_x}g(x)> 2A_1/3$ and $\lvert p\rvert<A_0$ we have
\begin{align*}
    (g+p)(x+re_x)&>g(x)+4A_0/3-A_2r^2-A_0>g(x)\quad\text{and}\\
    (g+p)(x-re_x)&<g(x)-4A_0/3+A_2r^2+A_0<g(x).
\end{align*}
Hence, by the intermediate value theorem, there exists an $r_x$ satisfying $\lvert r_x\rvert\leq 2A_0/A_1$ and $(g+p)(x+r_xe_x)=g(x)$. To see that this value is unique, we note that for $\lvert r\rvert\leq 2A_0/A_1$
\begin{displaymath}
    \lvert \nabla g(x+re_x)-\nabla g(x)\rvert\leq A_2\lvert r\rvert
\end{displaymath}
(by Assumption~\ref{a:SA}) and so
\begin{align*}
    \frac{d}{dr}g(x+re_x)=e_x\cdot\nabla g(x+re_x)\geq\lvert\nabla g(x)\rvert-A_2\lvert r\rvert\geq \frac{2A_1}{3}-A_2\frac{2A_0}{A_1}>0.
\end{align*}
Hence $\varphi$ is well-defined for all $x$ in a neighbourhood of $\{g=\ell\}\cap B$. Note that the uniqueness argument also applies for $x$ close to the boundary of $B$ (provided we restrict to $r$ such that $x+re_x\in B$).

Next we show injectivity of $\varphi$. Let $x,y\in B$ be within distance $A_1/(3A_2)$ of $\{g=\ell\}\cap B$ such that $\varphi(x)=\varphi(y)$. By definition we then have
\begin{displaymath}
    y-x=s_x\nabla g(x)-s_y\nabla g(y)=(s_x-s_y)\nabla g(x)+s_y(\nabla g(x)-\nabla g(y))
\end{displaymath}
where $\lvert s_x\rvert,\lvert s_y\rvert\leq 3A_0/A_1^2$. Taking the dot product with $y-x$ and using the triangle inequality
\begin{displaymath}
    \lvert y-x\rvert^2\leq \lvert s_x-s_y\rvert\lvert\nabla g(x)\cdot(y-x)\rvert+\lvert s_y\rvert\;\lvert(\nabla g(x)-\nabla g(y))\cdot(y-x)\rvert.
\end{displaymath}
Since $g(x)=g(y)$, a Taylor expansion of $g$ at $x$ shows that $\lvert\nabla g(x)\cdot(y-x)\rvert\leq A_2\lvert y-x\rvert^2$. Substituting this in above (and using our assumption on $\nabla^2 g$) we have
\begin{displaymath}
    \lvert y-x\rvert^2\leq A_2\lvert s_x-s_y\rvert\lvert y-x\rvert^2+A_2\lvert s_y\rvert \lvert y-x\rvert^2\leq\frac{9A_0A_2}{A_1^2}\lvert y-x\rvert^2.
\end{displaymath}
By Assumption~\ref{a:SA} the above inequality can only hold if $x=y$, yielding injectivity.

Next we show that if $L$ is a component of $\{g=\ell\}\cap B$ then $\varphi(L)\subseteq H(L,1)$. For $t\in[0,1]$, since $\|tp\|_{C^1(B)}\leq \|p\|_{C^1(B)}$ we may define $\varphi_t:\{g=\ell\}\to\{g+tp=\ell\}$ as $\varphi_t(x)=x+r_{t,x}e_x$ using the same argument for defining $\varphi$ as above. It will be enough to show that $\varphi_t$ is continuous in $t$ and apply Lemma~\ref{l:TopSeparation}.

For any $x\in\{g=\ell\}\cap\mathrm{Int}B$, $t\in[0,1]$ and $\lvert r\rvert\leq 2A_0/A_1$
\begin{align*}
    \frac{\partial}{\partial r}(g+tp)(x+re_x)&=e_x\cdot\nabla (g+tp)(x+re_x)\geq \lvert\nabla g(x)\rvert-A_2\lvert r\rvert-A_0>0.
\end{align*}
Hence we may apply the implicit function theorem to
\begin{equation}\label{e:IFT}
    (r,t,x)\mapsto(g+tp)(x+re_x)-g(x)\;:\;\R^{d+2}\to\R
\end{equation}
at the point $(r_{t,x},t,x)$ to find a $C^1$ mapping $(t,x)\to \tilde{r}_{t,x}$ such that $(g+tp)(x+\tilde{r}_{t,x}e_x)=g(x)$. By the earlier uniqueness argument, $r_{t,x}=\tilde{r}_{t,x}$. By compactness, the mapping extends to the Cartesian product of $[0,1]$ and a neighbourhood of $\{g=\ell\}$ in $B$. In particular, $\varphi(x)=x+r_{1,x}e_x$ is $C^1$ and for any given $x\in\{g=\ell\}\cap B$ the mapping $t\mapsto\varphi_t(x)=x+r_{t,x}e_x$ is continuous in $t$.

By Lemma~\ref{l:TopSeparation} and compactness
\begin{displaymath}
    \eta:=\inf_{L_1\neq L_2}\inf_{t,s\in[0,1]}\dist( H(L_1,t),H(L_2,s))>0
\end{displaymath}
where the infimum is taken over distinct components $L_1$ and $L_2$ of $\{g=\ell\}\cap B$. Let $x\in\{g=\ell\}\cap B$ and $t_0\in[0,1]$, then $\varphi_{t_0}(x)\in\{g+t_0p=\ell\}=H(\{g=\ell\},t_0)$. We then let $L$ be the component of $\{g=\ell\}\cap B$ such that $\varphi_{t_0}(x)\in H(L,t_0)$. By continuity, for all $\lvert t-t_0\rvert$ sufficiently small $\lvert\varphi_t(x)-\varphi_{t_0}(x)\rvert<\eta/2$. Hence by the definition of $\eta$, we must have $\varphi_t(x)\in H(L,t)$. This is true for all $t_0$, so by compactness we can cover $[0,1]$ by finitely many such intervals to conclude that $\varphi_t(x)\in H(L,t)$ for all $t\in[0,1]$. In particular $x=\varphi_0(x)\in H(L,0)=L$ and $\varphi(x)=\varphi_1(x)\in H(L,1)$ as required.

Finally we show that $\varphi(L)=H(L,1)$ whenever $L$ is a component of $\{g=\ell\}\cap B$. Our previous use of the implicit function theorem shows that $\varphi$ is continuous on $\{g=\ell\}\cap \mathrm{Int} B$, and continuity on a neighbourhood of $\partial B$ follows since $\varphi$ is equal to the identity here. Therefore compactness of $L$ implies that $\varphi(L)$ is compact and hence closed (in the Euclidean topology on $\R^d$). In particular $\varphi(L)$ is also closed in the subspace topology for $\{g+p=\ell\}\cap B$.

Next we claim that $\varphi:L\to H(L,1)$ is an open map (with respect to the subspace topologies on $L$ and $H(L,1)$). For $x\in L$ such that $\dist(x,\partial B)\geq 2A_0/A_1$ and any neighbourhood $U$ (in $L$) of $x$, by the implicit function theorem there is a neighbourhood $V\subseteq U$ such that $\varphi(V)$ contains the intersection of $\{g+p=\ell\}$ and some neighbourhood of $\varphi(x_0)$. For $x\in L$ satisfying $\dist(x,\partial B)<2A_0/A_1$, by definition $\varphi$ is equal to the identity on a neighbourhood of $x$. Hence $\varphi$ is an open map, proving the claim.

From the previous three paragraphs, we see that $\varphi(L)$ is a non-empty subset of $H(L,1)$ which is both open and closed in the subspace topology for $H(L,1)$. Since $H(L,1)$ is connected (recall that $H(\cdot,t)$ is a homeomorphism) we conclude that $\varphi(L)=H(L,1)$.
\end{proof}

\begin{lemma}\label{l:SA_AreaFormula}
    There exist constants $C_d,C_d^\prime>0$ depending only on $d$ such that the following holds. Let $g,p,B$ satisfy Assumption~\ref{a:SA} with $C_d$ and suppose that $p(x)=0$ for all $x$ within distance $2A_0/A_1$ of $\partial B$. Let $E\subset B$ be an open set with diameter at most $A_1/3A_2$ and distance at least $2A_0/A_1$ from $\partial B$. Then
    \begin{displaymath}
        \left\lvert\mathcal{H}^{d-1}\left[\varphi(\{g=\ell\}\cap E)\right]-\mathcal{H}^{d-1}\left[\{g=\ell\}\cap E\right]\right\rvert\leq C_d^\prime\frac{A_0A_2}{A_1^2}\mathcal{H}^{d-1}\left[\{g=\ell\}\cap E\right].
    \end{displaymath}
\end{lemma}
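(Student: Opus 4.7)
The plan is to apply the area (change of variables) formula to the normal projection $\varphi$ constructed in Lemma~\ref{l:SA_bijection}. Since that lemma gives $\varphi$ as a $C^1$ injection on a neighbourhood of $\{g=\ell\}\cap E$, and since $S := \{g=\ell\}\cap E$ is a $C^2$ embedded $(d-1)$-submanifold (its tangent space at $x$ is $T_x S = \nabla g(x)^\perp$, well defined because $|\nabla g(x)| \geq 2A_1/3$), the area formula yields
\[
\mathcal{H}^{d-1}[\varphi(S)] = \int_S J\varphi(x)\,d\mathcal{H}^{d-1}(x),
\]
where $J\varphi(x) = \sqrt{\det\bigl((D\varphi(x)|_{T_xS})^T D\varphi(x)|_{T_xS}\bigr)}$. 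The entire proof then reduces to the pointwise estimate $|J\varphi(x)-1| \leq C_d' A_0 A_2/A_1^2$ for $x \in S$.

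To compute the restricted differential, I take any $v \in T_x S$ and a smooth curve $\gamma$ in $S$ with $\gamma(0)=x$, $\gamma'(0)=v$. Writing $\varphi(y)=y+r_y e_y$ with $e_y = \nabla g(y)/|\nabla g(y)|$, and differentiating at $t=0$,
\[
D\varphi(x)[v] = v + (Dr_x[v])\,e_x + r_x\,De_x[v].
\]
The unit-vector identity $e_x\cdot De_x[v]=0$ shows that $v + r_x De_x[v] \in T_x S$ while $(Dr_x[v])e_x$ is normal to $T_xS$; so on $T_xS$ the map $D\varphi(x)|_{T_xS}$ decomposes as $A+e_x b^T$ with $A:=I+r_x P\circ De_x|_{T_xS}$ and a linear functional $b$ on $T_xS$ representing $Dr_x$. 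Using $|De_x|\leq A_2/|\nabla g(x)|\leq 3A_2/(2A_1)$ and $|r_x|\leq 2A_0/A_1$ from Lemma~\ref{l:SA_bijection} yields $\|A-I\|\leq C\,A_0A_2/A_1^2$.

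The bound on $|b|=|Dr_x|$ comes from implicit differentiation of the defining relation $(g+p)(\varphi(\gamma(t)))=g(\gamma(t))=\ell$. Writing $\alpha := \nabla(g+p)(\varphi(x))$, this gives
\[
(Dr_x[v])(\alpha\cdot e_x) = -\alpha\cdot v - r_x(\alpha\cdot De_x[v]).
\]
The key observation is $\alpha\cdot v = (\alpha-\nabla g(x))\cdot v$ (since $\nabla g(x)\perp v$), and $|\alpha-\nabla g(x)|\leq A_2|r_x|+A_0\leq C\,A_0A_2/A_1$ (using $A_2\geq 1$, $A_1\leq 1$, $A_0\leq A_1/C_d$); moreover $|\alpha\cdot e_x|\geq |\nabla g(x)|-C A_0A_2/A_1\geq A_1/2$ provided $C_d$ in Assumption~\ref{a:SA} is sufficiently large. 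Combining these estimates with $|\alpha|\leq 2$ and $|De_x[v]|\leq CA_2/A_1\cdot|v|$ gives $|Dr_x[v]|\leq C'\,A_0A_2/A_1^2\cdot|v|$.

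With $\eta := C''A_0A_2/A_1^2 \leq 1/2$ (again guaranteed by choosing $C_d$ large), we have $\|A-I\|\leq\eta$ and $|b|\leq\eta$, so
\[
J\varphi(x)^2 = \det(A^TA + bb^T) = \det(A)^2\bigl(1+b^T(A^TA)^{-1}b\bigr) = 1 + O(\eta),
\]
hence $|J\varphi(x)-1|\leq C_d'A_0A_2/A_1^2$. Integrating over $S$ yields the lemma. The main obstacle is the implicit-differentiation step for $Dr_x$: one has to exploit the cancellation $\nabla g(x)\cdot v=0$ to gain an extra factor of $A_2/A_1$ in the estimate of $|\alpha\cdot v|$, without which the final bound would only be $O(A_0/A_1)$, which would not suffice.
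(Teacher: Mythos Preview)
Your argument is correct in outline and reaches the same conclusion as the paper via essentially the same derivative estimates on $r_x$ and $e_x$. There is one unjustified step: you assert $|\alpha|\le 2$, but Assumption~\ref{a:SA} gives no upper bound on $|\nabla g|$ (only $|\nabla g|>A_1$ on the level set and $|\nabla^2 g|<A_2$), so $|\alpha|=|\nabla(g+p)(\varphi(x))|$ need not be bounded by an absolute constant. The fix is to keep the sharper estimate $|De_x[v]|\le C A_2|v|/|\nabla g(x)|$ rather than $CA_2|v|/A_1$, and to use $|\alpha|\le |\nabla g(x)|+CA_0A_2/A_1$; the $|\nabla g(x)|$ factors then cancel in the product $|\alpha|\cdot|De_x[v]|$, giving $|r_x\,\alpha\cdot De_x[v]|\le C A_0A_2/A_1\cdot|v|$ and hence the desired $|Dr_x[v]|\le C' A_0A_2/A_1^2\cdot|v|$. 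With this correction your proof goes through.

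The paper packages the computation differently: rather than working intrinsically on $S$, it parameterises $\{g=\ell\}\cap E$ as a graph $G:\widetilde{E}\subset\R^{d-1}\to\R^d$, applies the area formula to $G$ and to $\varphi\circ G$, and then bounds every entry of the ambient $d\times d$ matrix $D\varphi-I$ by $C_d' A_0A_2/A_1^2$ (using the same implicit-differentiation estimates you derive). Your intrinsic route avoids choosing a chart and makes the tangential cancellation $\nabla g(x)\cdot v=0$ explicit, at the price of the extra linear algebra ($A+e_xb^T$ decomposition and the matrix determinant lemma). The paper's route is slightly more direct once one observes that an entrywise bound on $D\varphi-I$ automatically controls the tangential Jacobian on any hyperplane; both approaches rest on identical pointwise bounds for $\partial r_x$ and $\partial e_x$.
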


\begin{proof}
In the proof we let $C_d^\prime$ be a positive constant depending only on $d$, the value of which may change from line to line. To begin with we find a parameterisation of the $(d-1)$-dimensional set $\{g=\ell\}\cap E$. We fix $x^*\in\{g=\ell\}\cap E$ and choose a basis $e_1,\dots,e_d$ for $\R^d$ such that $e_d=e_{x^*}$. For all $x\in E$
\begin{displaymath}
    \partial_{e_d}g(x)\geq\partial_{e_d}g(x^*)-A_2\lvert x-x^*\rvert>A_1-A_2\lvert x-x^*\rvert
\end{displaymath}
which is strictly positive since $\diam E<A_1/(3A_2)$. Therefore if we define
\begin{displaymath}
    \widetilde{E}=\{(x_1,\dots,x_{d-1})\;|\;(x_1,\dots,x_d)\in\{g=\ell\}\cap E\text{ for some }x_d\in\R\}
\end{displaymath}
we see that for all $(x_1,\dots,x_{d-1})\in\widetilde{E}$ there exists a unique $z=z(x_1,\dots,x_{d-1})$ such that $g(x_1,\dots,x_{d-1},z)=\ell$. Moreover since $E$ is open, the implicit function theorem implies that $z$ is a $C^1$ function of $x_1,\dots,x_{d-1}$. We write $G(x_1,\dots,x_{d-1})=(x_1,\dots,x_{d-1},z(x_1,\dots,x_{d-1}))$.

We recall that the area formula states that for a Lipschitz function $F:\R^n\to\R^m$ where $n\leq m$ and a measurable $A\subseteq\R^n$,
\begin{equation*}
    \int_A J_F(x)\;dx=\int_{\R^m}\mathcal{H}^0\left[A\cap F^{-1}(y)\right]\;d\mathcal{H}^n(y)
\end{equation*}
where the Jacobian of $F$ is defined by $J_F^2:=\det DF(DF)^*$. Applying this to $F=G$ and $A=\widetilde{E}=G^{-1}(E\cap\{g=\ell\})$ yields
\begin{equation}\label{e:AreaFormula1}
\begin{aligned}
    \int_{\widetilde{E}}J_G(x)\;dx&=\int_{\R^d}\mathcal{H}^0\left[\widetilde{E}\cap G^{-1}(y)\right]\;d\mathcal{H}^{d-1}(y)=\int_{E\cap\{g=\ell\}}1\;d\mathcal{H}^{d-1}(y)=\mathcal{H}^{d-1}\left[E\cap\{g=\ell\}\right]
\end{aligned}
\end{equation}
since $G$ is a bijection from $\widetilde{E}$ to $E\cap\{g=\ell\}$. Applying the area formula to $\varphi\circ G$ with similar reasoning yields
\begin{equation}\label{e:AreaFormula2}
    \int_{\widetilde{E}}J_{\varphi\circ G}(x)\;dx=\mathcal{H}^{d-1}\left[\varphi(E\cap\{g=\ell\})\right].
\end{equation}
Next we observe that since the derivative of $\varphi$ is a square matrix
\begin{align*}
    J_{\varphi\circ G}^2=\det D\varphi DG (D\varphi DG)^*=\det D\varphi \det DG(DG)^* \det (D\varphi)^*=(\det D\varphi)^2 J_G^2.
\end{align*}
Combining this with \eqref{e:AreaFormula1} and \eqref{e:AreaFormula2}, we see that the lemma will follow if we can prove that
\begin{equation}\label{e:AreaFormula3}
    \lvert \det D\varphi -1\rvert\leq C_d^\prime\frac{A_0A_2}{A_1^2}.
\end{equation}
Recalling that $\varphi(x)=x+r_xe_x$ we have
\begin{displaymath}
    \frac{\partial(\varphi(x))_i}{\partial x_j}=\ind_{i=j}+\frac{\partial r_x}{\partial x_j}(e_x)_i+r_x\frac{\partial (e_x)_i}{\partial x_j}.
\end{displaymath}
We now claim that there exists $C_d^\prime>0$ depending only on $d$ such that for $j\in\{1,\dots,d\}$
\begin{equation}\label{e:SA_Claim}
    \left\lvert\frac{\partial r_x}{\partial x_j}\right\rvert\leq C_d^\prime\frac{A_0A_2}{A_1^2}\quad\text{and}\quad\left\lvert\frac{\partial e_{x}}{\partial x_j}\right\rvert\leq C_d^\prime\frac{A_2}{A_1}.
\end{equation}
Combined with the facts that $\lvert e_{x}\rvert=1$ and $\lvert r_x\rvert\leq 2A_0/A_1$ this claim implies that each entry of the Jacobian matrix differs from the corresponding entry of the identity matrix by at most $C_d^\prime A_0A_2/A_1^2$. Since $A_0A_2/A_1^2<1$, this yields \eqref{e:AreaFormula3} and hence the statement of the lemma.

It remains to prove the claim \eqref{e:SA_Claim}. First we note that by Assumption~\ref{a:SA} and the fact that $\diam(E)<A_1/(2A_2)$, for all $x\in E$
    \begin{equation}\label{e:SA_Jacob1}
        \frac{\partial g(x)}{\partial x_d}>\frac{A_1}{2}\quad\text{and}\quad\frac{\partial g(x)}{\partial x_i}<\frac{A_1}{2}\quad\text{for }i=1,\dots,d-1.
    \end{equation}
    By the quotient rule, for $i=1,\dots,d$
    \begin{align*}
        \frac{\partial e_{x}}{\partial x_i}=\frac{\partial}{\partial x_i}\frac{\nabla g(x)}{\lvert \nabla g(x)\rvert}=\frac{\partial_{e_i}\nabla g(x)}{\lvert \nabla g(x)\rvert}-\frac{\nabla g(x)}{\lvert\nabla g(x)\rvert}\sum_{k=1}^d\frac{\partial_{e_k}g(x)}{\lvert\nabla g(x)\rvert}\frac{\partial_{e_k}\partial_{e_i}g(x)}{\lvert\nabla g(x)\rvert}.
    \end{align*}
    By considering each fraction on the right hand side in turn, we see from Assumption~\ref{a:SA}  that 
    \begin{equation}\label{e:AreaFormula4}
        \left\lvert \frac{\partial e_{x}}{\partial x_i}\right\rvert\leq C_d^\prime\frac{A_2}{\lvert \nabla g(x)\rvert}.
    \end{equation}
    Next, by applying the implicit function theorem to $(x,r)\mapsto (g+p)(x+re_x)-g(x)=:P(x,r)$ we see that
    \begin{equation}\label{e:AreaFormula5}
        \frac{\partial r_x}{\partial x_j}=-\left(\frac{\partial P}{\partial r}\right)^{-1}\frac{\partial P}{\partial x_j}=-\frac{\partial_{x_j}(g+p)(\star)-\partial_{x_j}g(x)+r\nabla(g+p)(\star)\cdot\partial_{x_j}e_x}{\nabla(g+p)(\star)\cdot e_x}
    \end{equation}
    where $\star:=x+r_xe_x$. By Assumption~\ref{a:SA}
    \begin{align*}
        \left\lvert\nabla g(\star)-\nabla g(x)\right\rvert\leq C_d^\prime A_2\lvert r\rvert,\;\;
        \lvert\nabla p(\star)\rvert\leq A_0,\;\;
        \nabla(g+p)(\star)\cdot e_x&\geq\lvert\nabla g(x)\rvert-A_2\lvert r\rvert-A_0>\frac{A_1}{4}.
    \end{align*}
    Substituting these and \eqref{e:AreaFormula4} into \eqref{e:AreaFormula5} proves the left hand side of \eqref{e:SA_Claim}.
\end{proof}

\begin{proof}[Proof of Lemma~\ref{l:SA_Stable}]
By Lemma~\ref{l:SAComponentDecomp} the change in $\SA_\infty$ is bounded above by
\begin{multline}\label{e:SA_Bound1}
    \sum_{L\in\mathrm{Comp}}\lvert\mathcal{H}^{d-1}[H(L,1)]-\mathcal{H}^{d-1}[L]\rvert\\
    \leq\sum_{L\in\mathrm{Comp}}\left\lvert\mathcal{H}^{d-1}\Big[H(L,1)\setminus(\partial B)_{+\frac{7A_0}{A_1}}\Big]-\mathcal{H}^{d-1}\Big[L\setminus(\partial B)_{+\frac{5A_0}{A_1}}\Big]\right\rvert\\
    +\mathcal{H}^{d-1}\Big[\{g+p=\ell\} \cap (\partial B)_{+\frac{7A_0}{A_1}}\Big]+\mathcal{H}^{d-1}\Big[\{g=\ell\} \cap (\partial B)_{+\frac{5A_0}{A_1}}\Big]
\end{multline}
where the latter bound follows from the triangle inequality.

Choose $\theta\in C^2(B)$ such that $0\leq \theta\leq 1$ and
\begin{displaymath}
    \theta(x)=\begin{cases}
        1 &\text{if }x\in B\setminus(\partial B)_{+\frac{3A_0}{A_1}}\\
        0 &\text{if }x\in(\partial B)_{+\frac{2A_0}{A_1}}.
    \end{cases}
\end{displaymath}
If we define $\overline{p}:=\theta p$, then $g$ and $\overline{p}$ satisfy the conditions of Lemma~\ref{l:SA_bijection} and we obtain a bijection $\varphi:\{g=\ell\}\cap B\to\{g+\overline{p}=\ell\}\cap B$. Let $\overline{H}$ denote the stratified isotopy of $B$ mapping $\{g=\ell\}\cap B$ to $\{g+\overline{p}=\ell\}\cap B$ which exists by Lemma~\ref{l: morse continuity}. Since $\lvert\varphi(x)-x\rvert\leq 2A_0/A_1$, for any $L\in \mathrm{Comp}$ we have
\begin{equation}\label{e:SA_Boundary}
    \overline{H}(L,1)\setminus(\partial B)_{+\frac{7A_0}{A_1}}\subseteq\varphi\left(L\setminus(\partial B)_{+\frac{5A_0}{A_1}}\right)\subseteq \overline{H}(L,1)\setminus(\partial B)_{+\frac{3A_0}{A_1}}.
\end{equation}
Let $H$ denote the stratified isotopy of $B$ defined by Lemma~\ref{l:TopStab} which maps $\{g=\ell\}\cap B$ to $\{g+p=\ell\}\cap B$ (whilst also preserving connections to unbounded components). We claim that
\begin{equation}\label{e:SA_isotopyclaim}
    H(L,1)\setminus(\partial B)_{+\frac{3A_0}{A_1}}=\overline{H}(L,1)\setminus(\partial B)_{+\frac{3A_0}{A_1}},
\end{equation}
which intuitively follows because $p$ and $\overline{p}$ agree on $B\setminus(\partial B)_{+\frac{3A_0}{A_1}}$. Assuming this claim, by \eqref{e:SA_Boundary} we have
\begin{multline*}
    \sum_{L\in\mathrm{Comp}}\left\lvert\mathcal{H}^{d-1}\Big[\varphi\big(L\setminus(\partial B)_{+\frac{5A_0}{A_1}}\big)\Big]-\mathcal{H}^{d-1}\Big[H(L,1)\setminus(\partial B)_{+\frac{7A_0}{A_1}}\Big]\right\rvert\\
\begin{aligned}
    &\leq\mathcal{H}^{d-1}\Big[\{g+\overline{p}=\ell\}\cap(\partial B)_{+\frac{7A_0}{A_1}}\setminus(\partial B)_{+\frac{3A_0}{A_1}}\Big]\\
    &\leq\mathcal{H}^{d-1}\Big[\{g+p=\ell\}\cap(\partial B)_{+\frac{7A_0}{A_1}}\Big].    
\end{aligned}
\end{multline*}
Combining this with \eqref{e:SA_Bound1} we find that
\begin{align*}
    \lvert\SA_\infty[g,B]-\SA_\infty[g+p,B]\rvert\leq&\sum_{L\in\mathrm{Comp}}\left\lvert\mathcal{H}^{d-1}\Big[\varphi\big(L\setminus(\partial B)_{+\frac{5A_0}{A_1}}\big)\Big]-\mathcal{H}^{d-1}\Big[L\setminus(\partial B)_{+\frac{5A_0}{A_1}}\Big]\right\rvert\\
        &+2\mathcal{H}^{d-1}\Big[\{g=\ell\}\cap (\partial B)_{+\frac{7A_0}{A_1}}\Big]\\
        &+2\mathcal{H}^{d-1}\Big[\{g+p=\ell\}\cap (\partial B)_{+\frac{7A_0}{A_1}}\Big].
\end{align*}
Tiling $L\setminus(\partial B)_{+\frac{5A_0}{A_1}}$ by open dyadic cubes with diameter less than $A_1/3A_2$ and applying Lemma~\ref{l:SA_AreaFormula} on each cube, we conclude that
\begin{displaymath}
    \left\lvert\mathcal{H}^{d-1}\Big[\varphi\big(L\setminus(\partial B)_{+\frac{5A_0}{A_1}}\big)\Big]-\mathcal{H}^{d-1}\Big[L\setminus(\partial B)_{+\frac{5A_0}{A_1}}\Big]\right\rvert\leq C_d^\prime\frac{A_0A_2}{A_1^2}\mathcal{H}^{d-1}\Big[L\setminus(\partial B)_{+\frac{5A_0}{A_1}}\Big]
\end{displaymath}
which, together with the previous equation, yields the statement of the lemma.

It remains only to prove the claim \eqref{e:SA_isotopyclaim}. Since $\theta(x)\in[0,1]$
\begin{align*}
    \bigcup_{L\in\mathrm{Comp}}\overline{H}(L,[0,1])=\bigcup_{t\in[0,1]}\{g+t\overline{p}=\ell\}\subseteq\bigcup_{t\in[0,1]}\{g+tp=\ell\}=\bigcup_{L\in\mathrm{Comp}}H(L,[0,1]).
\end{align*}
By Lemma~\ref{l:TopSeparation}, for distinct $L_1,L_2\in\mathrm{Comp}$ the distance between $H(L_1,[0,1])$ and $H(L_2,[0,1])$ is strictly positive. Therefore each of the connected sets $\overline{H}(L,[0,1])$ for $L\in\mathrm{Comp}$ must be contained in at most one $H(L^\prime,[0,1])$. Since $L\in\overline{H}(L,[0,1])\cap H(L,[0,1])$ we conclude that $L=L^\prime$, that is
\begin{displaymath}
    \overline{H}(L,[0,1])\subseteq H(L,[0,1])\quad\text{for all }L\in\mathrm{Comp}.
\end{displaymath}
Since the isotopies respect level sets and $\overline{p}=p$ on $B\setminus(\partial B)_{+\frac{3A_0}{A_1}}$ we have
\begin{align*}
    \overline{H}(L,1)\setminus (\partial B)_{+\frac{3A_0}{A_1}}\subseteq H(L,[0,1])\cap\{g+\overline{p}=\ell\}\setminus(\partial B)_{+\frac{3A_0}{A_1}}=H(L,1)\setminus(\partial B)_{+\frac{3A_0}{A_1}}.
\end{align*}
The union over $L$ of the sets on the left and right are both equal to $\{g+p=\ell\}\setminus(\partial B)_{+\frac{3A_0}{A_1}}$ and since the sets on the right are disjoint for different $L$, we conclude that
\begin{displaymath}
    \overline{H}(L,1)\setminus (\partial B)_{+\frac{3A_0}{A_1}}=H(L,1)\setminus(\partial B)_{+\frac{3A_0}{A_1}}
\end{displaymath}
as required.
\end{proof}

\bigskip
\bibliographystyle{abbrv}
\bibliography{paper}

@book {at07,
    AUTHOR = {Adler, Robert J. and Taylor, Jonathan E.},
     TITLE = {Random fields and geometry},
    SERIES = {Springer Monographs in Mathematics},
 PUBLISHER = {Springer, New York},
      YEAR = {2007},
     PAGES = {xviii+448},
      ISBN = {978-0-387-48112-8},
   MRCLASS = {60G60 (58J65)},
  MRNUMBER = {2319516},
MRREVIEWER = {Jos\'{e} Rafael Le\'{o}n},
}

@book {aw09,
    AUTHOR = {Aza\"{\i}s, Jean-Marc and Wschebor, Mario},
     TITLE = {Level sets and extrema of random processes and fields},
 PUBLISHER = {John Wiley \& Sons, Inc., Hoboken, NJ},
      YEAR = {2009},
     PAGES = {xii+393},
      ISBN = {978-0-470-40933-6},
   MRCLASS = {60-02 (60E15 60G05 60G15 60G60 60G70)},
  MRNUMBER = {2478201},
MRREVIEWER = {Anna Amirdjanova},
       DOI = {10.1002/9780470434642},
       URL = {https://doi.org/10.1002/9780470434642},
}

@article{agls25,
  title={Zeros and Critical Points of Gaussian Fields: Cumulants Asymptotics and Limit Theorems},
  author={Ancona, Michele and Gass, Louis and Letendre, Thomas and Stecconi, Michele},
  journal={arXiv preprint arXiv:2501.10226},
  year={2025}
}

@article {bg17,
    AUTHOR = {Beffara, Vincent and Gayet, Damien},
     TITLE = {Percolation of random nodal lines},
   JOURNAL = {Publ. Math. Inst. Hautes \'{E}tudes Sci.},
  FJOURNAL = {Publications Math\'{e}matiques. Institut de Hautes \'{E}tudes
              Scientifiques},
    VOLUME = {126},
      YEAR = {2017},
     PAGES = {131--176},
      ISSN = {0073-8301},
   MRCLASS = {60K35 (81P45 94A17)},
  MRNUMBER = {3735866},
       DOI = {10.1007/s10240-017-0093-0},
       URL = {https://doi.org/10.1007/s10240-017-0093-0},
}

@book {bgt87,
    AUTHOR = {Bingham, N. H. and Goldie, C. M. and Teugels, J. L.},
     TITLE = {Regular variation},
    SERIES = {Encyclopedia of Mathematics and its Applications},
    VOLUME = {27},
 PUBLISHER = {Cambridge University Press, Cambridge},
      YEAR = {1987},
     PAGES = {xx+491},
      ISBN = {0-521-30787-2},
   MRCLASS = {26A12 (11K65 11N60 30-02 40E05 60-02 60Fxx)},
  MRNUMBER = {898871},
MRREVIEWER = {R. A. Maller},
       DOI = {10.1017/CBO9780511721434},
       URL = {https://doi.org/10.1017/CBO9780511721434},
}

@article {bmm22,
    AUTHOR = {Beliaev, Dmitry and McAuley, Michael and Muirhead, Stephen},
     TITLE = {Fluctuations of the number of excursion sets of planar
              {G}aussian fields},
   JOURNAL = {Probab. Math. Phys.},
  FJOURNAL = {Probability and Mathematical Physics},
    VOLUME = {3},
      YEAR = {2022},
    NUMBER = {1},
     PAGES = {105--144},
      ISSN = {2690-0998},
   MRCLASS = {60G15 (58K05 60G60)},
  MRNUMBER = {4420297},
       DOI = {10.2140/pmp.2022.3.105},
       URL = {https://doi.org/10.2140/pmp.2022.3.105},
}

@article {el16,
    AUTHOR = {Estrade, Anne and Le\'{o}n, Jos\'{e} R.},
     TITLE = {A central limit theorem for the {E}uler characteristic of a
              {G}aussian excursion set},
   JOURNAL = {Ann. Probab.},
  FJOURNAL = {The Annals of Probability},
    VOLUME = {44},
      YEAR = {2016},
    NUMBER = {6},
     PAGES = {3849--3878},
      ISSN = {0091-1798},
   MRCLASS = {60F05 (60D05 60G15 60G60)},
  MRNUMBER = {3572325},
MRREVIEWER = {Matthias Schulte},
       DOI = {10.1214/15-AOP1062},
       URL = {https://doi.org/10.1214/15-AOP1062},
}

@article{gas25,
  title={Spectral Criteria for the Asymptotics of Local Functionals of Gaussian Fields and Their Application to Nodal Volumes},
  author={Gass, Louis},
  journal={arXiv preprint arXiv:2501.07356},
  year={2025}
}

@book {gri99,
    AUTHOR = {Grimmett, Geoffrey},
     TITLE = {Percolation},
    SERIES = {Grundlehren der mathematischen Wissenschaften [Fundamental
              Principles of Mathematical Sciences]},
    VOLUME = {321},
   EDITION = {Second},
 PUBLISHER = {Springer-Verlag, Berlin},
      YEAR = {1999},
     PAGES = {xiv+444},
      ISBN = {3-540-64902-6},
   MRCLASS = {60K35 (60-02 82B43)},
  MRNUMBER = {1707339},
MRREVIEWER = {Neal Madras},
       DOI = {10.1007/978-3-662-03981-6},
       URL = {https://doi.org/10.1007/978-3-662-03981-6},
}

@book {hh80,
    AUTHOR = {Hall, P. and Heyde, C. C.},
     TITLE = {Martingale limit theory and its application},
    SERIES = {Probability and Mathematical Statistics},
 PUBLISHER = {Academic Press, Inc. [Harcourt Brace Jovanovich, Publishers],
              New York-London},
      YEAR = {1980},
     PAGES = {xii+308},
      ISBN = {0-12-319350-8},
   MRCLASS = {60-02 (60B12 60F05 60G42)},
  MRNUMBER = {624435},
MRREVIEWER = {David J. Aldous},
}

@article{hlr24,
  title={Functional central limit theorem for topological functionals of Gaussian critical points},
  author={Hirsch, Christian and Lachi{\`e}ze-Rey, Rapha{\"e}l},
  journal={arXiv preprint arXiv:2411.11429},
  year={2024}
}

@article {js17,
    AUTHOR = {Jain, Sudhir Ranjan and Samajdar, Rhine},
     TITLE = {Nodal portraits of quantum billiards: domains, lines, and
              statistics},
   JOURNAL = {Rev. Modern Phys.},
  FJOURNAL = {Reviews of Modern Physics},
    VOLUME = {89},
      YEAR = {2017},
    NUMBER = {4},
     PAGES = {045005, 66},
      ISSN = {0034-6861},
   MRCLASS = {60K40 (60J67 60K35)},
  MRNUMBER = {3746353},
       DOI = {10.1103/RevModPhys.89.045005},
       URL = {https://doi.org/10.1103/RevModPhys.89.045005},
}

@book {kal21,
    AUTHOR = {Kallenberg, Olav},
     TITLE = {Foundations of modern probability},
    SERIES = {Probability Theory and Stochastic Modelling},
    VOLUME = {99},
      EDITION = {{T}hird},
 PUBLISHER = {Springer, Cham},
      YEAR = {2021},
     PAGES = {xii+946},
      ISBN = {978-3-030-61871-1; 978-3-030-61870-4},
   MRCLASS = {60-01 (60A10 60G05)},
  MRNUMBER = {4226142},
MRREVIEWER = {Myron Hlynka},
       DOI = {10.1007/978-3-030-61871-1},
       URL = {https://doi.org/10.1007/978-3-030-61871-1},
}

@article {kv18,
    AUTHOR = {Kratz, Marie and Vadlamani, Sreekar},
     TITLE = {Central limit theorem for {L}ipschitz-{K}illing curvatures of
              excursion sets of {G}aussian random fields},
   JOURNAL = {J. Theoret. Probab.},
  FJOURNAL = {Journal of Theoretical Probability},
    VOLUME = {31},
      YEAR = {2018},
    NUMBER = {3},
     PAGES = {1729--1758},
      ISSN = {0894-9840},
   MRCLASS = {60F05 (53C65 60D05 60G10 60G15 60G60)},
  MRNUMBER = {3842168},
       DOI = {10.1007/s10959-017-0760-6},
       URL = {https://doi.org/10.1007/s10959-017-0760-6},
}

@article {mn24,
    AUTHOR = {Maini, Leonardo and Nourdin, Ivan},
     TITLE = {Spectral central limit theorem for additive functionals of
              isotropic and stationary {G}aussian fields},
   JOURNAL = {Ann. Probab.},
  FJOURNAL = {The Annals of Probability},
    VOLUME = {52},
      YEAR = {2024},
    NUMBER = {2},
     PAGES = {737--763},
      ISSN = {0091-1798},
   MRCLASS = {60F05 (60G15 60G60 60H07)},
  MRNUMBER = {4718405},
       DOI = {10.1214/23-aop1669},
       URL = {https://doi.org/10.1214/23-aop1669},
}

@article {mrv23,
    AUTHOR = {Muirhead, Stephen and Rivera, Alejandro and Vanneuville, Hugo
              and K\"{o}hler-Schindler, Laurin},
     TITLE = {The phase transition for planar {G}aussian percolation models
              without {FKG}},
   JOURNAL = {Ann. Probab.},
  FJOURNAL = {The Annals of Probability},
    VOLUME = {51},
      YEAR = {2023},
    NUMBER = {5},
     PAGES = {1785--1829},
      ISSN = {0091-1798},
   MRCLASS = {60K35 (60G15 60G60 82B43)},
  MRNUMBER = {4642224},
MRREVIEWER = {Vladislav Kargin},
       DOI = {10.1214/23-aop1633},
       URL = {https://doi.org/10.1214/23-aop1633},
}

@article {mv20,
    AUTHOR = {Muirhead, Stephen and Vanneuville, Hugo},
     TITLE = {The sharp phase transition for level set percolation of smooth
              planar {G}aussian fields},
   JOURNAL = {Ann. Inst. Henri Poincar\'{e} Probab. Stat.},
  FJOURNAL = {Annales de l'Institut Henri Poincar\'{e} Probabilit\'{e}s et
              Statistiques},
    VOLUME = {56},
      YEAR = {2020},
    NUMBER = {2},
     PAGES = {1358--1390},
      ISSN = {0246-0203},
   MRCLASS = {60G60 (60K35)},
  MRNUMBER = {4076787},
MRREVIEWER = {Xinyi Li},
       DOI = {10.1214/19-AIHP1006},
       URL = {https://doi.org/10.1214/19-AIHP1006},
}

@article {mui23,
    AUTHOR = {Muirhead, Stephen},
     TITLE = {A sprinkled decoupling inequality for {G}aussian processes and
              applications},
   JOURNAL = {Electron. J. Probab.},
  FJOURNAL = {Electronic Journal of Probability},
    VOLUME = {28},
      YEAR = {2023},
     PAGES = {Paper No. 107, 25},
   MRCLASS = {60G15 (60E15 60G60 60K35)},
  MRNUMBER = {4633744},
       DOI = {10.1214/23-ejp994},
       URL = {https://doi.org/10.1214/23-ejp994},
}

@article {mul17,
    AUTHOR = {M\"{u}ller, Dennis},
     TITLE = {A central limit theorem for {L}ipschitz-{K}illing curvatures
              of {G}aussian excursions},
   JOURNAL = {J. Math. Anal. Appl.},
  FJOURNAL = {Journal of Mathematical Analysis and Applications},
    VOLUME = {452},
      YEAR = {2017},
    NUMBER = {2},
     PAGES = {1040--1081},
      ISSN = {0022-247X},
   MRCLASS = {60F05 (60G10 60G15 60G17 60G60 60H07)},
  MRNUMBER = {3632692},
       DOI = {10.1016/j.jmaa.2017.03.036},
       URL = {https://doi.org/10.1016/j.jmaa.2017.03.036},
}

@article {ns16,
    AUTHOR = {Nazarov, F. and Sodin, M.},
     TITLE = {Asymptotic laws for the spatial distribution and the number of
              connected components of zero sets of {G}aussian random
              functions},
   JOURNAL = {J. Math. Phys. Anal. Geom.},
  FJOURNAL = {Journal of Mathematical Physics, Analysis, Geometry},
    VOLUME = {12},
      YEAR = {2016},
    NUMBER = {3},
     PAGES = {205--278},
      ISSN = {1812-9471},
   MRCLASS = {60G15},
  MRNUMBER = {3522141},
MRREVIEWER = {Giacomo Aletti},
       DOI = {10.15407/mag12.03.205},
       URL = {https://doi.org/10.15407/mag12.03.205},
}

@article {pen01,
    AUTHOR = {Penrose, Mathew D.},
     TITLE = {A central limit theorem with applications to percolation,
              epidemics and {B}oolean models},
   JOURNAL = {Ann. Probab.},
  FJOURNAL = {The Annals of Probability},
    VOLUME = {29},
      YEAR = {2001},
    NUMBER = {4},
     PAGES = {1515--1546},
      ISSN = {0091-1798},
   MRCLASS = {60F05 (60D05 60K35 82B43)},
  MRNUMBER = {1880230},
MRREVIEWER = {Sreenivasan Ravi},
       DOI = {10.1214/aop/1015345760},
       URL = {https://doi.org/10.1214/aop/1015345760},
}

@article{pra19,
	author = {{Pranav, Pratyush} and {Adler, Robert J.} and {Buchert, Thomas} and {Edelsbrunner, Herbert} and {Jones, Bernard J. T.} and {Schwartzman, Armin} and {Wagner, Hubert} and {van de Weygaert, Rien}},
	title = {Unexpected topology of the temperature fluctuations in the cosmic microwave background},
	url= "https://doi.org/10.1051/0004-6361/201834916",
	journal = {A\&A},
	year = 2019,
	volume = 627,
	pages = "A163",
}

@article{wor96,
author = {Worsley, K. J. and Marrett, S. and Neelin, P. and Vandal, A. C. and Friston, K. J. and Evans, A. C.},
title = {A unified statistical approach for determining significant signals in images of cerebral activation},
journal = {Human Brain Mapping},
volume = {4},
number = {1},
pages = {58-73},
url = {https://onlinelibrary.wiley.com/doi/abs/10.1002/%28SICI%291097-0193%281996%294%3A1%3C58%3A%3AAID-HBM4%3E3.0.CO%3B2-O},
year = {1996}
}

@article {zha01,
    AUTHOR = {Zhang, Yu},
     TITLE = {A martingale approach in the study of percolation clusters on
              the {$\mathbb{Z}^d$} lattice},
   JOURNAL = {J. Theoret. Probab.},
  FJOURNAL = {Journal of Theoretical Probability},
    VOLUME = {14},
      YEAR = {2001},
    NUMBER = {1},
     PAGES = {165--187},
      ISSN = {0894-9840},
   MRCLASS = {60K35 (60G42)},
  MRNUMBER = {1822899},
MRREVIEWER = {Rahul Roy},
       DOI = {10.1023/A:1007877216583},
       URL = {https://doi.org/10.1023/A:1007877216583},
}

@article {gs23,
    AUTHOR = {Gass, Louis and Stecconi, Michele},
     TITLE = {The number of critical points of a {G}aussian field:
              finiteness of moments},
   JOURNAL = {Probab. Theory Related Fields},
  FJOURNAL = {Probability Theory and Related Fields},
    VOLUME = {190},
      YEAR = {2024},
    NUMBER = {3-4},
     PAGES = {1167--1197},
      ISSN = {0178-8051},
   MRCLASS = {60D05 (58K05 60F05 60F25 60G15 60G60)},
  MRNUMBER = {4811822},
       DOI = {10.1007/s00440-024-01273-5},
       URL = {https://doi.org/10.1007/s00440-024-01273-5},
}

@book {jan97,
    AUTHOR = {Janson, Svante},
     TITLE = {Gaussian {H}ilbert spaces},
    SERIES = {Cambridge Tracts in Mathematics},
    VOLUME = {129},
 PUBLISHER = {Cambridge University Press, Cambridge},
      YEAR = {1997},
     PAGES = {x+340},
      ISBN = {0-521-56128-0},
   MRCLASS = {60G35 (60H15 62M20 81T08)},
  MRNUMBER = {1474726},
MRREVIEWER = {Amarjit Budhiraja},
       DOI = {10.1017/CBO9780511526169},
       URL = {https://doi.org/10.1017/CBO9780511526169},
}

@book {leo99,
    AUTHOR = {Leonenko, Nikolai},
     TITLE = {Limit theorems for random fields with singular spectrum},
    SERIES = {Mathematics and its Applications},
    VOLUME = {465},
 PUBLISHER = {Kluwer Academic Publishers, Dordrecht},
      YEAR = {1999},
     PAGES = {viii+401},
      ISBN = {0-7923-5635-7},
   MRCLASS = {60G60 (60F05 62M40)},
  MRNUMBER = {1687092},
MRREVIEWER = {George Stoica},
       DOI = {10.1007/978-94-011-4607-4},
       URL = {https://doi.org/10.1007/978-94-011-4607-4},
}

@article{mw11,
 title={On the area of excursion sets of spherical {G}aussian eigenfunctions},
 author={D. Marinucci and I. Wigman},
 journal={J. Math. Phys.},
 volume={52},
 pages={093301},
 year={2011},
}

@article{mm25,
  title={Limit theorems for the number of sign and level-set clusters of the Gaussian free field},
  author={McAuley, Michael and Muirhead, Stephen},
  journal={arXiv preprint arXiv:2501.14707},
  year={2025}
}

@article {aadlm23,
    AUTHOR = {Armentano, Diego and Aza\"{\i}s, Jean Marc and Dalmao, Federico and
              Le\'{o}n, Jos\'{e} Rafael and Mordecki, Ernesto},
     TITLE = {On the finiteness of the moments of the measure of level sets
              of random fields},
   JOURNAL = {Braz. J. Probab. Stat.},
  FJOURNAL = {Brazilian Journal of Probability and Statistics},
    VOLUME = {37},
      YEAR = {2023},
    NUMBER = {1},
     PAGES = {219--245},
      ISSN = {0103-0752},
   MRCLASS = {Expansion},
  MRNUMBER = {4580892},
       DOI = {10.1214/23-bjps568},
       URL = {https://doi.org/10.1214/23-bjps568},
}

@article {bmm24b,
    AUTHOR = {Beliaev, Dmitry and McAuley, Michael and Muirhead, Stephen},
     TITLE = {A covariance formula for the number of excursion set
              components of {G}aussian fields and applications},
   JOURNAL = {Ann. Inst. Henri Poincar\'{e} Probab. Stat.},
  FJOURNAL = {Annales de l'Institut Henri Poincar\'{e} Probabilit\'{e}s et
              Statistiques},
    VOLUME = {61},
      YEAR = {2025},
    NUMBER = {1},
     PAGES = {713--745},
      ISSN = {0246-0203},
   MRCLASS = {60G60 (60G15)},
  MRNUMBER = {4863058},
       DOI = {10.1214/23-aihp1430},
       URL = {https://doi.org/10.1214/23-aihp1430},
}

@article{bmm24,
author = {Dmitry Beliaev and Michael McAuley and Stephen Muirhead},
title = {{A central limit theorem for the number of excursion set components of Gaussian fields}},
volume = {52},
journal = {The Annals of Probability},
number = {3},
publisher = {Institute of Mathematical Statistics},
pages = {882 -- 922},
keywords = {central limit theorem, component count, Excursion sets, Gaussian fields, Level sets},
year = {2024},
doi = {10.1214/23-AOP1672},
URL = {https://doi.org/10.1214/23-AOP1672}
}

@article {bel23,
    AUTHOR = {Beliaev, Dmitry},
     TITLE = {Smooth {G}aussian fields and percolation},
   JOURNAL = {Probab. Surv.},
  FJOURNAL = {Probability Surveys},
    VOLUME = {20},
      YEAR = {2023},
     PAGES = {897--937},
      ISSN = {1549-5787},
   MRCLASS = {60K35 (60D05 60G15 60G60)},
  MRNUMBER = {4676692},
       DOI = {10.1214/23-ps24},
       URL = {https://doi.org/10.1214/23-ps24},
}

@article {bk13,
    AUTHOR = {Beliaev, D. and Kereta, Z.},
     TITLE = {On the {B}ogomolny-{S}chmit conjecture},
   JOURNAL = {J. Phys. A},
  FJOURNAL = {Journal of Physics. A. Mathematical and Theoretical},
    VOLUME = {46},
      YEAR = {2013},
    NUMBER = {45},
     PAGES = {455003, 5},
      ISSN = {1751-8113},
   MRCLASS = {60K35 (82B43)},
  MRNUMBER = {3131565},
       DOI = {10.1088/1751-8113/46/45/455003},
       URL = {https://doi-org.libproxy.helsinki.fi/10.1088/1751-8113/46/45/455003},
}

@article {ccn87,
    AUTHOR = {Chayes, J. T. and Chayes, L. and Newman, C. M.},
     TITLE = {Bernoulli percolation above threshold: an invasion percolation
              analysis},
   JOURNAL = {Ann. Probab.},
  FJOURNAL = {The Annals of Probability},
    VOLUME = {15},
      YEAR = {1987},
    NUMBER = {4},
     PAGES = {1272--1287},
      ISSN = {0091-1798},
   MRCLASS = {60K35 (82A43)},
  MRNUMBER = {905331},
MRREVIEWER = {H. Kesten},
       URL =
              {http://links.jstor.org.libproxy.helsinki.fi/sici?sici=0091-1798(198710)15:4<1272:BPATAI>2.0.CO;2-2&origin=MSN},
}

@incollection {ghm01,
    AUTHOR = {Georgii, Hans-Otto and H\"{a}ggstr\"{o}m, Olle and Maes, Christian},
     TITLE = {The random geometry of equilibrium phases},
 BOOKTITLE = {Phase transitions and critical phenomena, {V}ol. 18},
    SERIES = {Phase Transit. Crit. Phenom.},
    VOLUME = {18},
     PAGES = {1--142},
 PUBLISHER = {Academic Press, San Diego, CA},
      YEAR = {2001},
   MRCLASS = {82B20 (82-02 82B26 82B31 82B43)},
  MRNUMBER = {2014387},
MRREVIEWER = {Bruno Nachtergaele},
       DOI = {10.1016/S1062-7901(01)80008-2},
       URL = {https://doi-org.libproxy.helsinki.fi/10.1016/S1062-7901(01)80008-2},
}

@book {hat02,
    AUTHOR = {Hatcher, Allen},
     TITLE = {Algebraic topology},
 PUBLISHER = {Cambridge University Press, Cambridge},
      YEAR = {2002},
     PAGES = {xii+544},
      ISBN = {0-521-79160-X; 0-521-79540-0},
   MRCLASS = {55-01 (55-00)},
  MRNUMBER = {1867354},
MRREVIEWER = {Donald W. Kahn},
}

@article {mar14,
    AUTHOR = {Marinucci, Domenico and Wigman, Igor},
     TITLE = {On nonlinear functionals of random spherical eigenfunctions},
   JOURNAL = {Comm. Math. Phys.},
  FJOURNAL = {Communications in Mathematical Physics},
    VOLUME = {327},
      YEAR = {2014},
    NUMBER = {3},
     PAGES = {849--872},
      ISSN = {0010-3616},
   MRCLASS = {60G60 (33C45 60F05 60H07)},
  MRNUMBER = {3192051},
MRREVIEWER = {Sreekar Vadlamani},
       DOI = {10.1007/s00220-014-1939-7},
       URL = {https://doi-org.libproxy.helsinki.fi/10.1007/s00220-014-1939-7},
}

@article {nic17,
    AUTHOR = {Nicolaescu, Liviu I.},
     TITLE = {A {CLT} concerning critical points of random functions on a
              {E}uclidean space},
   JOURNAL = {Stochastic Process. Appl.},
  FJOURNAL = {Stochastic Processes and their Applications},
    VOLUME = {127},
      YEAR = {2017},
    NUMBER = {10},
     PAGES = {3412--3446},
      ISSN = {0304-4149},
   MRCLASS = {60F05 (60B20 60D05 60G15)},
  MRNUMBER = {3692320},
       DOI = {10.1016/j.spa.2017.02.009},
       URL = {https://doi.org/10.1016/j.spa.2017.02.009},
}

@article {riv21,
    AUTHOR = {Rivera, Alejandro},
     TITLE = {Talagrand's inequality in planar {G}aussian field percolation},
   JOURNAL = {Electron. J. Probab.},
  FJOURNAL = {Electronic Journal of Probability},
    VOLUME = {26},
      YEAR = {2021},
     PAGES = {Paper No. 26, 25},
   MRCLASS = {60G60 (60K35 82B43 82C43)},
  MRNUMBER = {4235477},
       DOI = {10.1214/21-EJP585},
       URL = {https://doi.org/10.1214/21-EJP585},
}

@article{ros16,
  title={The geometry of spherical random fields},
  author={Rossi, Maurizia},
  journal={arXiv preprint arXiv:1603.07575},
  year={2016}
}

@article {sev21,
    AUTHOR = {Severo, Franco},
     TITLE = {Sharp phase transition for {G}aussian percolation in all
              dimensions},
   JOURNAL = {Ann. H. Lebesgue},
  FJOURNAL = {Annales Henri Lebesgue},
    VOLUME = {5},
      YEAR = {2022},
     PAGES = {987--1008},
   MRCLASS = {82B43 (60G15 60G60 60K35)},
  MRNUMBER = {4526244},
MRREVIEWER = {Yoshihiro Abe},
       DOI = {10.5802/ahl.141},
       URL = {https://doi.org/10.5802/ahl.141},
}

@book {rw06,
    AUTHOR = {Rasmussen, Carl Edward and Williams, Christopher K. I.},
     TITLE = {Gaussian processes for machine learning},
    SERIES = {Adaptive Computation and Machine Learning},
 PUBLISHER = {MIT Press, Cambridge, MA},
      YEAR = {2006},
     PAGES = {xviii+248},
      ISBN = {978-0-262-18253-9},
   MRCLASS = {68T05 (60G15 62G08 62H30 93E35)},
  MRNUMBER = {2514435},
MRREVIEWER = {Wenbo V. Li},
}

@article {kl96,
    AUTHOR = {Kesten, Harry and Lee, Sungchul},
     TITLE = {The central limit theorem for weighted minimal spanning trees
              on random points},
   JOURNAL = {Ann. Appl. Probab.},
  FJOURNAL = {The Annals of Applied Probability},
    VOLUME = {6},
      YEAR = {1996},
    NUMBER = {2},
     PAGES = {495--527},
      ISSN = {1050-5164},
   MRCLASS = {60D05 (60F05)},
  MRNUMBER = {1398055},
MRREVIEWER = {David J. Aldous},
       DOI = {10.1214/aoap/1034968141},
       URL = {https://doi-org.libproxy.helsinki.fi/10.1214/aoap/1034968141},
}

@article {kz97,
    AUTHOR = {Kesten, Harry and Zhang, Yu},
     TITLE = {A central limit theorem for ``critical'' first-passage
              percolation in two dimensions},
   JOURNAL = {Probab. Theory Related Fields},
  FJOURNAL = {Probability Theory and Related Fields},
    VOLUME = {107},
      YEAR = {1997},
    NUMBER = {2},
     PAGES = {137--160},
      ISSN = {0178-8051},
   MRCLASS = {60K35 (60F05 82B43)},
  MRNUMBER = {1431216},
MRREVIEWER = {Olle H\"{a}ggstr\"{o}m},
       DOI = {10.1007/s004400050080},
       URL = {https://doi-org.libproxy.helsinki.fi/10.1007/s004400050080},
}

@article {lee97,
    AUTHOR = {Lee, Sungchul},
     TITLE = {The central limit theorem for {E}uclidean minimal spanning
              trees. {I}},
   JOURNAL = {Ann. Appl. Probab.},
  FJOURNAL = {The Annals of Applied Probability},
    VOLUME = {7},
      YEAR = {1997},
    NUMBER = {4},
     PAGES = {996--1020},
      ISSN = {1050-5164},
   MRCLASS = {60D05 (05C05 60F05 60K35)},
  MRNUMBER = {1484795},
MRREVIEWER = {Mathew D. Penrose},
       DOI = {10.1214/aoap/1043862422},
       URL = {https://doi-org.libproxy.helsinki.fi/10.1214/aoap/1043862422},
}

@book {sah23,
    AUTHOR = {Sahimi, Muhammad},
     TITLE = {Applications of percolation theory},
    SERIES = {Applied Mathematical Sciences},
    VOLUME = {213},
   EDITION = {Second},
 PUBLISHER = {Springer, Cham},
      YEAR = {2023},
     PAGES = {xxi+679},
      ISBN = {978-3-031-20385-5; 978-3-031-20386-2},
   MRCLASS = {82-01 (74-01 76-01 82B43 82B44)},
  MRNUMBER = {4572843},
       DOI = {10.1007/978-3-031-20386-2},
       URL = {https://doi.org/10.1007/978-3-031-20386-2},
}

@article {lpps92,
    AUTHOR = {Langlands, R. P. and Pichet, C. and Pouliot, Ph. and
              Saint-Aubin, Y.},
     TITLE = {On the universality of crossing probabilities in
              two-dimensional percolation},
   JOURNAL = {J. Statist. Phys.},
  FJOURNAL = {Journal of Statistical Physics},
    VOLUME = {67},
      YEAR = {1992},
    NUMBER = {3-4},
     PAGES = {553--574},
      ISSN = {0022-4715},
   MRCLASS = {82B43 (82-04 82B28)},
  MRNUMBER = {1171144},
       DOI = {10.1007/BF01049720},
       URL = {https://doi-org.libproxy.helsinki.fi/10.1007/BF01049720},
}

@article{lh98,
  title = {Universal finite-size scaling functions for percolation on three-dimensional lattices},
  author = {Lin, Chai-Yu and Hu, Chin-Kun},
  journal = {Phys. Rev. E},
  volume = {58},
  issue = {2},
  pages = {1521--1527},
  numpages = {0},
  year = {1998},
  month = {Aug},
  publisher = {American Physical Society},
  doi = {10.1103/PhysRevE.58.1521},
  url = {https://link.aps.org/doi/10.1103/PhysRevE.58.1521}
}

@article{bs99,
  title={Universality class for bootstrap percolation with m= 3 on the cubic lattice},
  author={Branco, NS and Silva, Cristiano J},
  journal={International Journal of Modern Physics C},
  volume={10},
  number={05},
  pages={921--930},
  year={1999},
  publisher={World Scientific}
}

@article{ck95,
  title={Universality, thresholds and critical exponents in correlated percolation},
  author={Chaves, CM and Koiller, Belita},
  journal={Physica A: Statistical Mechanics and its Applications},
  volume={218},
  number={3-4},
  pages={271--278},
  year={1995},
  publisher={Elsevier}
}

@article {ber02,
    AUTHOR = {Berry, M. V.},
     TITLE = {Statistics of nodal lines and points in chaotic quantum
              billiards: perimeter corrections, fluctuations, curvature},
   JOURNAL = {J. Phys. A},
  FJOURNAL = {Journal of Physics. A. Mathematical and General},
    VOLUME = {35},
      YEAR = {2002},
    NUMBER = {13},
     PAGES = {3025--3038},
      ISSN = {0305-4470},
   MRCLASS = {81Q50},
  MRNUMBER = {1913853},
MRREVIEWER = {Gabriel Gustavo Carlo},
       DOI = {10.1088/0305-4470/35/13/301},
       URL = {https://doi.org/10.1088/0305-4470/35/13/301},
}

@article {cmw16,
    AUTHOR = {Cammarota, V. and Marinucci, D. and Wigman, I.},
     TITLE = {Fluctuations of the {E}uler-{P}oincar\'{e} characteristic for
              random spherical harmonics},
   JOURNAL = {Proc. Amer. Math. Soc.},
  FJOURNAL = {Proceedings of the American Mathematical Society},
    VOLUME = {144},
      YEAR = {2016},
    NUMBER = {11},
     PAGES = {4759--4775},
      ISSN = {0002-9939},
   MRCLASS = {60G60 (33C55 42C10 57R20 60B10 60D05)},
  MRNUMBER = {3544528},
MRREVIEWER = {Kirstin Strokorb},
       DOI = {10.1090/proc/13299},
       URL = {https://doi.org/10.1090/proc/13299},
}

@article {cm18,
    AUTHOR = {Cammarota, Valentina and Marinucci, Domenico},
     TITLE = {A quantitative central limit theorem for the
              {E}uler-{P}oincar\'{e} characteristic of random spherical
              eigenfunctions},
   JOURNAL = {Ann. Probab.},
  FJOURNAL = {The Annals of Probability},
    VOLUME = {46},
      YEAR = {2018},
    NUMBER = {6},
     PAGES = {3188--3228},
      ISSN = {0091-1798},
   MRCLASS = {60G60 (33C55 42C10 53C65 62M15)},
  MRNUMBER = {3857854},
       DOI = {10.1214/17-AOP1245},
       URL = {https://doi.org/10.1214/17-AOP1245},
}

@article {cmw16b,
    AUTHOR = {Cammarota, Valentina and Marinucci, Domenico and Wigman, Igor},
     TITLE = {On the distribution of the critical values of random spherical
              harmonics},
   JOURNAL = {J. Geom. Anal.},
  FJOURNAL = {Journal of Geometric Analysis},
    VOLUME = {26},
      YEAR = {2016},
    NUMBER = {4},
     PAGES = {3252--3324},
      ISSN = {1050-6926},
   MRCLASS = {60G60 (33C55 42C10 60B10 60D05)},
  MRNUMBER = {3544960},
       DOI = {10.1007/s12220-015-9668-5},
       URL = {https://doi-org.libproxy.helsinki.fi/10.1007/s12220-015-9668-5},
}

@article {cm20,
    AUTHOR = {Cammarota, Valentina and Marinucci, Domenico},
     TITLE = {A reduction principle for the critical values of random
              spherical harmonics},
   JOURNAL = {Stochastic Process. Appl.},
  FJOURNAL = {Stochastic Processes and their Applications},
    VOLUME = {130},
      YEAR = {2020},
    NUMBER = {4},
     PAGES = {2433--2470},
      ISSN = {0304-4149},
   MRCLASS = {60G60 (33C55 42C10 53C65 62M15)},
  MRNUMBER = {4074706},
       DOI = {10.1016/j.spa.2019.07.006},
       URL = {https://doi-org.libproxy.helsinki.fi/10.1016/j.spa.2019.07.006},
}

@article {drrv23,
    AUTHOR = {Duminil-Copin, Hugo and Rivera, Alejandro and Rodriguez,
              Pierre-Fran\c{c}ois and Vanneuville, Hugo},
     TITLE = {Existence of an unbounded nodal hypersurface for smooth
              {G}aussian fields in dimension {$d\geq 3$}},
   JOURNAL = {Ann. Probab.},
  FJOURNAL = {The Annals of Probability},
    VOLUME = {51},
      YEAR = {2023},
    NUMBER = {1},
     PAGES = {228--276},
      ISSN = {0091-1798},
   MRCLASS = {60K35 (60G15 60G60)},
  MRNUMBER = {4515694},
MRREVIEWER = {Vladislav Kargin},
       DOI = {10.1214/22-aop1594},
       URL = {https://doi.org/10.1214/22-aop1594},
}

@article {mar15,
    AUTHOR = {Marinucci, Domenico and Rossi, Maurizia},
     TITLE = {Stein-{M}alliavin approximations for nonlinear functionals of
              random eigenfunctions on {$\mathbb{S}^d$}},
   JOURNAL = {J. Funct. Anal.},
  FJOURNAL = {Journal of Functional Analysis},
    VOLUME = {268},
      YEAR = {2015},
    NUMBER = {8},
     PAGES = {2379--2420},
      ISSN = {0022-1236},
   MRCLASS = {60G60 (42C10 60B10 60D05)},
  MRNUMBER = {3318653},
MRREVIEWER = {Abdelhamid Boussejra},
       DOI = {10.1016/j.jfa.2015.02.004},
       URL = {https://doi-org.libproxy.helsinki.fi/10.1016/j.jfa.2015.02.004},
}

@article {ms24,
    AUTHOR = {Muirhead, Stephen and Severo, Franco},
     TITLE = {Percolation of strongly correlated {G}aussian fields, {I}:
              {D}ecay of subcritical connection probabilities},
   JOURNAL = {Probab. Math. Phys.},
  FJOURNAL = {Probability and Mathematical Physics},
    VOLUME = {5},
      YEAR = {2024},
    NUMBER = {2},
     PAGES = {357--412},
      ISSN = {2690-0998},
   MRCLASS = {60K35 (60G60 82C43)},
  MRNUMBER = {4749810},
MRREVIEWER = {Vladislav Kargin},
       DOI = {10.2140/pmp.2024.5.357},
       URL = {https://doi.org/10.2140/pmp.2024.5.357},
}

@article {mui24,
    AUTHOR = {Muirhead, Stephen},
     TITLE = {Percolation of strongly correlated {G}aussian fields {II}.
              {S}harpness of the phase transition},
   JOURNAL = {Ann. Probab.},
  FJOURNAL = {The Annals of Probability},
    VOLUME = {52},
      YEAR = {2024},
    NUMBER = {3},
     PAGES = {838--881},
      ISSN = {0091-1798},
   MRCLASS = {60K35 (60G60)},
  MRNUMBER = {4736694},
       DOI = {10.1214/23-aop1673},
       URL = {https://doi.org/10.1214/23-aop1673},
}

@article {npr19,
    AUTHOR = {Nourdin, Ivan and Peccati, Giovanni and Rossi, Maurizia},
     TITLE = {Nodal statistics of planar random waves},
   JOURNAL = {Comm. Math. Phys.},
  FJOURNAL = {Communications in Mathematical Physics},
    VOLUME = {369},
      YEAR = {2019},
    NUMBER = {1},
     PAGES = {99--151},
      ISSN = {0010-3616},
   MRCLASS = {58J50 (60F05 60G60)},
  MRNUMBER = {3959555},
MRREVIEWER = {Jos\'{e} Rafael Le\'{o}n},
       DOI = {10.1007/s00220-019-03432-5},
       URL = {https://doi.org/10.1007/s00220-019-03432-5},
}

@inproceedings {dc18,
    AUTHOR = {Duminil-Copin, Hugo},
     TITLE = {Sixty years of percolation},
 BOOKTITLE = {Proceedings of the {I}nternational {C}ongress of
              {M}athematicians---{R}io de {J}aneiro 2018. {V}ol. {IV}.
              {I}nvited lectures},
     PAGES = {2829--2856},
 PUBLISHER = {World Sci. Publ., Hackensack, NJ},
      YEAR = {2018},
   MRCLASS = {82B43 (60K35)},
  MRNUMBER = {3966513},
}

@article{sev23,
    author = {Severo, Franco},
    title = "{Uniqueness of Unbounded Component for Level Sets of Smooth Gaussian Fields}",
    journal = {International Mathematics Research Notices},
    pages = {rnad262},
    year = {2023},
    month = {11},
    issn = {1073-7928},
    doi = {10.1093/imrn/rnad262},
    url = {https://doi.org/10.1093/imrn/rnad262},
    eprint = {https://academic.oup.com/imrn/advance-article-pdf/doi/10.1093/imrn/rnad262/53348045/rnad262.pdf},
}

\end{document}